\providecommand{\tabularnewline}{\\}
\providecommand{\algorithmname}{Algorithm}
\newcommand{\R}{\mathbb{R}}
\setlist[itemize]{leftmargin=1.5em}
\setlist[enumerate]{leftmargin=1.5em}
\newcommand{\cP}{\mathcal{P}}
\newcommand{\RR}{\mathbb{R}}
\newcommand{\argmin}{\mathop{\mathrm{argmin}}}
\DeclareMathOperator{\ind}{\mathds{1}}  
\newcommand{\ud}{{\,\mathrm{d}}}
\newcommand{\rd}{{\rm d}}
\newcommand{\supp}{{\rm supp}}
\numberwithin{equation}{section}
\definecolor{yly}{RGB}{0,150,0}
\definecolor{pink}{RGB}{150,0,0}
\begin{document}
\theoremstyle{plain} \newtheorem{lemma}{\textbf{Lemma}} \newtheorem{prop}{\textbf{Proposition}}\newtheorem{theorem}{\textbf{Theorem}}\setcounter{theorem}{0}
\newtheorem{corollary}{\textbf{Corollary}} \newtheorem{assumption}{\textbf{Assumption}}
\newtheorem{example}{\textbf{Example}} \newtheorem{definition}{\textbf{Definition}}
\newtheorem{fact}{\textbf{Fact}} \newtheorem{condition}{\textbf{Condition}}\theoremstyle{definition}

\theoremstyle{remark}\newtheorem{remark}{\textbf{Remark}}\newtheorem{claim}{\textbf{Claim}}\newtheorem{conjecture}{\textbf{Conjecture}}
\title{Learning Gaussian Mixtures Using the\\ Wasserstein-Fisher-Rao Gradient Flow}
\author{Yuling Yan\thanks{The first two authors contributed equally.} \thanks{Department of Operations Research and Financial Engineering, Princeton
University, Princeton, NJ 08544, USA; Email: \texttt{yulingy@princeton.edu}.}\and Kaizheng Wang\footnotemark[1] \thanks{Department of Industrial Engineering and Operations Research, Columbia
University, New York, NY 10027, USA; Email: \texttt{kaizheng.wang@columbia.edu}.} \and Philippe Rigollet\thanks{Department of Mathematics, MIT, Cambridge, MA 02139, USA; Email: \texttt{rigollet@math.mit.edu}.}}

\maketitle
\begin{abstract}

Gaussian mixture models form a flexible and expressive parametric family of distributions that has found applications in a wide variety of applications. Unfortunately, fitting these models to data is a notoriously hard problem from a computational perspective. Currently, only moment-based methods enjoy theoretical guarantees while likelihood-based methods are dominated by heuristics such as Expectation-Maximization that are known to fail in simple examples. In this work, we propose a new algorithm to compute the nonparametric maximum likelihood estimator (NPMLE) in a Gaussian mixture model. Our method is based on gradient descent over the space of probability measures equipped with the Wasserstein-Fisher-Rao geometry for which we establish convergence guarantees. In practice, it can be approximated using an interacting particle system where the weight and location of particles are updated alternately.  We conduct
extensive numerical experiments to confirm the effectiveness of the
proposed algorithm compared not only to classical benchmarks but also to similar gradient descent algorithms with respect to simpler geometries. In particular, these simulations illustrate the benefit of updating both weight and location of the interacting particles.
\end{abstract}

\noindent \textbf{Keywords: }Gaussian mixture model, nonparametric
MLE, Wasserstein-Fisher-Rao geometry, optimal transport, Wasserstein gradient flows, overparameterization


\section{Introduction}

Owing to their flexibility and versatility, mixture models have emerged as central objects of statistical modeling since their introduction by Pearson in the nineteenth century. However, this flexibility often comes at computational cost: such models are often hard to fit and, until quite recently, the computational aspects of mixture models have been overlooked. Still today, theory and practice diverge, with the former largely focuses on the method of moments while the latter is dominated by variational approaches, chiefly maximum likelihood. The goal of this work is to reduce this gap by developing new algorithms for maximum likelihood estimation that are supported by theoretical guarantees. 

Consider i.i.d.~samples $\{X_{i}\}_{1\leq i\leq N}\in\mathbb{R}^{d}$
generated from an isotropic Gaussian\footnote{All of this work extends to general mixtures. See the Appendix for a general treatment.} mixture $\rho^{\star}*\mathcal{N}(0,I_{d})$
with density function
\[
(\rho^{\star}*\phi) \left(x\right)=\int_{\mathbb{R}^{d}}\phi\left(x-y\right)\rho^{\star}\left(\mathrm{d}y\right),
\]
where $\rho^{\star}$ is a mixing distribution over $\mathbb{R}^{d}$,
and $\phi(x)=(2\pi)^{-d/2}\exp(-\Vert x\Vert_{2}^{2}/2)$ is the density
function of the isotropic Gaussian distribution $\mathcal{N}(0,I_{d})$.
The goal is to learn the Gaussian mixture $\rho^{\star}*\mathcal{N}(0,I_{d})$
from $N$ samples. 

The negative log-likelihood for this problem is defined as
$$
\ell_{N}\left(\rho\right)=-\frac{1}{N}\sum_{i=1}^{N}\log\left[(\rho*\phi)\left(X_{i}\right)\right]\,.
$$
While $\ell_N$ itself is trivially a convex functional of $\rho$, the class of measures over which it is minimized is often not. This is for example the case of finite mixture models where $\rho$ is restricted to be a measure supported on at most $k$ atoms. This lack of convexity in the constraint is the main source of computational difficulty for this problem. To overcome this limitation,  \citet{kiefer1956consistency} proposed the nonparametric maximum likelihood estimator (NPMLE) which prescribes to minimize $\ell_N$ over the set $\mathcal{P}(\mathbb{R}^{d})$ of all probability distributions over $\RR^d$:
\begin{equation}
	\widehat \rho\in\argmin_{\rho\in\mathcal{P}(\mathbb{R}^{d})}\ell_{N}\left(\rho\right)\,.\label{eq:NPMLE}
\end{equation}
While $\mathcal{P}(\mathbb{R}^d)$ is convex, it is infinite-dimensional. In fact, NPMLE can be seen as an extreme instance of overparameterization, a phenomenon that is currently challenging conventional statistical wisdom in deep learning theory~\cite{chizat2018global,AllLiSon19}. In fact, the solution of~\eqref{eq:NPMLE} enjoys interesting structural properties when $d=1$. More specifically, in this case, the optimization problem \eqref{eq:NPMLE} admits a unique solution $\widehat \rho$ \citep{jewell1982mixtures, lindsay1993uniqueness}
which furthermore is supported on at most $N$ atoms \citep{lindsay1983ageometry} in general. This upper bound
was improved to $O_{\mathbb{P}}(\log N)$ by \citet{polyanskiy2020self} when $\rho^{\star}$
is sub-Gaussian. Quite strikingly, little is known about
the structure of NPMLE in dimension $d \ge 2$. In fact, to the best of our knowledge, Theorem~\ref{thm:NPMLE-basics} below is the first to establish existence of a solution to~\eqref{eq:NPMLE} in any dimension.

Classical statistical results provide Hellinger risk bounds for $\widehat \rho \ast \phi$ as an estimator of $\rho^\star \ast \phi$  \citep{zhang2009generalized, dicker2016high, saha2020nonparametric}. These rates are commensurate with minimax optimality even over the larger class of $C^\infty$ density functions up to logarithmic factors.

Despite notable contributions, the computational aspects of NPMLE are still vastly under-explored. Most of these contributions employ a  discretization scheme by setting a fine grid in advance and solve \eqref{eq:NPMLE} with the additional constraint that $\rho$ is supported on the grid~\citep{lindsay1983ageometry,jiang2009general,koenker2014convex,zhang2022efficient}. While well understood theoretically, those methods suffer from the curse of dimensionality, and their complexity scales exponentially with the dimension $d$. To overcome this limitation, \citet{zhang2022efficient}
proposed a heuristic that alternately updates the weights and the support using the Expectation-Maximization (EM) algorithm but it does not come with theoretical guarantees. Another notable contribution is the support reduction algorithm of \citet{groeneboom2008support} which is also computationally inefficient since it requires to compute the minimizer of a nonconvex function in $\mathbb{R}^{d}$ in each iteration. 

In this work, we propose to solve~\eqref{eq:NPMLE} using gradient descent in the space of probability measures endowed with the Wasserstein-Fisher-Rao (WFR) geometry~\citep{chizat2018interpolating,kondratyev2016new,liero2018optimal,gallouet2017jko}. More specifically, we introduce the WFR gradient flow of the negative log-likelihood $\ell_N$ and show that it converges to the NPMLE under mild conditions.  In turn, we implement this WFR gradient flow using Euler discretization in time and particle discretization in space, thus resulting in a system of weighted interacting particles. In essence, the resulting Algorithm~\ref{alg:WFR-GD} alternatively updates locations and weights, like the EM algorithm described above but the updates coming from the WFR gradient flow are inherently different.

As the name indicates, the WFR geometry is a composite of the Wasserstein geometry~\citep{ott01,ambrosio2008gradient} and the Fisher-Rao geometry~\citep{bauer2016uniqueness}. The former component governs updates of the support while the latter governs the weight updates. Either of these geometries leads to its own gradient descent algorithm but our numerical results indicate that their combination is the key to achieving fast convergence; see Section~\ref{sec:numerical}. In fact, we show that the fixed-location EM algorithm from prior literature~\citep[see, e.g.][]{jiang2009general} implements gradient descent with respect to the Fisher-Rao geometry. As a byproduct of our analysis, we also show that it converges to the NPMLE in the infinite-particle regime under certain conditions.

\section{Nonparametric Maximum Likelihood Estimator (NPMLE)}

In this section, we examine optimality conditions for the optimization problem~\eqref{eq:NPMLE}. To that end, denote by  $\delta\ell_{N}(\rho)$ the first variation of  $\ell_{N}$ at a measure $\rho$ and observe\footnote{Explicit calculations follow from standard arguments in calculus of variations and are deferred to Appendix~\ref{subsec:First-variation}.} that it is given by
\begin{equation}
	\delta\ell_{N}\left(\rho\right):x\mapsto-\frac{1}{N}\sum_{i=1}^{N}\frac{\phi\left(x-X_{i}\right)}{(\rho*\phi)\left(X_{i}\right)}.\label{eq:first-variation}
\end{equation}

The following theorem shows the existence as well as the optimality condition
of NPMLE in general dimension. The proof is deferred to Appendix \ref{sec:proof-thm-NPMLE-basics}.

\begin{theorem}\label{thm:NPMLE-basics} The following properties
	hold for NPMLE:
	\begin{enumerate}
		\item (Existence) The minimizer of the optimization problem \eqref{eq:NPMLE}
		exists.
		\item (Optimality condition) A distribution $\widehat{\rho}\in\mathcal{P}(\mathbb{R}^{d})$
		is an NPMLE if and only if (i) $\delta\ell_{N}(\widehat{\rho})(x)\geq-1$
		holds for all $x\in\mathbb{R}^{d}$, and (ii) $\delta\ell_{N}(\widehat{\rho})(x)=-1$
		for $\widehat{\rho}$-a.e.~$x$.
	\end{enumerate}
\end{theorem}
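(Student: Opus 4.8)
The plan is to treat the two parts separately, using the direct method of the calculus of variations for existence and convex first-order conditions for optimality.

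For existence, the main difficulty is that $\mathcal{P}(\mathbb{R}^{d})$ is not weakly compact, so a minimizing sequence could lose mass to infinity. I would remove this obstacle by first reducing to measures supported on the compact convex hull $K=\mathrm{conv}\{X_{1},\dots,X_{N}\}$. Let $P$ denote the Euclidean projection onto $K$. Since each $X_{i}\in K$ and the projection onto a closed convex set satisfies $\|P(y)-z\|\le\|y-z\|$ for all $z\in K$, we have $\|X_{i}-P(y)\|\le\|X_{i}-y\|$ for every $y$; because $\phi$ is radially decreasing, this yields $\phi(X_{i}-P(y))\ge\phi(X_{i}-y)$. Consequently, for the pushforward $\rho'=P_{\#}\rho$ one gets $(\rho'*\phi)(X_{i})\ge(\rho*\phi)(X_{i})$ for all $i$, hence $\ell_{N}(\rho')\le\ell_{N}(\rho)$, so that $\inf_{\mathcal{P}(\mathbb{R}^{d})}\ell_{N}=\inf_{\mathcal{P}(K)}\ell_{N}$. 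On $\mathcal{P}(K)$, which is weakly compact by Prokhorov's theorem, the map $\rho\mapsto(\rho*\phi)(X_{i})=\int_{K}\phi(X_{i}-y)\,\mathrm{d}\rho(y)$ is weakly continuous and bounded below by $\min_{y\in K}\phi(X_{i}-y)>0$, so $\ell_{N}$ is weakly continuous there and attains its minimum; this minimizer is then global over $\mathcal{P}(\mathbb{R}^{d})$.

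For the optimality condition I would exploit convexity: each $\rho\mapsto(\rho*\phi)(X_{i})$ is linear and $-\log$ is convex, so $\ell_{N}$ is convex on $\mathcal{P}(\mathbb{R}^{d})$. Fixing a candidate $\widehat{\rho}$ with $\ell_{N}(\widehat{\rho})<\infty$ (so that $(\widehat{\rho}*\phi)(X_{i})>0$ for all $i$, as holds for any minimizer) and an arbitrary $\nu\in\mathcal{P}(\mathbb{R}^{d})$, consider $\rho_{t}=(1-t)\widehat{\rho}+t\nu$ and $g(t)=\ell_{N}(\rho_{t})$, which is convex on $[0,1]$. Differentiating the finite sum at $t=0^{+}$ — legitimate since $(\rho_{t}*\phi)(X_{i})$ is affine in $t$ and stays positive near $0$ — gives
\[
g'(0^{+})=\int\delta\ell_{N}(\widehat{\rho})\,\mathrm{d}\nu+1 .
\]
By convexity, $\widehat{\rho}$ is a global minimizer if and only if $g'(0^{+})\ge0$ for every $\nu$, i.e.\ $\int\delta\ell_{N}(\widehat{\rho})\,\mathrm{d}\nu\ge-1$ for all $\nu\in\mathcal{P}(\mathbb{R}^{d})$.

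It then remains to convert this variational inequality into the two pointwise statements. Taking $\nu=\delta_{x}$ for arbitrary $x$ yields $\delta\ell_{N}(\widehat{\rho})(x)\ge-1$, which is condition (i); conversely, integrating (i) against any $\nu$ recovers the inequality, so (i) is equivalent to optimality. Finally I would invoke the self-consistency identity, valid for any $\widehat{\rho}$ directly from the explicit form~\eqref{eq:first-variation},
\[
\int\delta\ell_{N}(\widehat{\rho})\,\mathrm{d}\widehat{\rho}=-\frac{1}{N}\sum_{i=1}^{N}\frac{(\widehat{\rho}*\phi)(X_{i})}{(\widehat{\rho}*\phi)(X_{i})}=-1 :
\]
since under (i) the integrand is everywhere $\ge-1$ yet integrates to $-1$ against the probability measure $\widehat{\rho}$, it must equal $-1$ for $\widehat{\rho}$-a.e.\ $x$, giving condition (ii). The main obstacle is the existence part, specifically ruling out escape of mass to infinity; the projection-to-the-convex-hull trick is what renders the direct method applicable, after which the optimality part reduces to the routine convex first-variation computation anchored by the identity above.
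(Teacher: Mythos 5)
Your proposal is correct, and while the optimality-condition part coincides with the paper's argument (directional derivatives of the convex functional, testing with $\nu=\delta_{x}$, and the self-consistency identity $\int\delta\ell_{N}(\widehat{\rho})\,\mathrm{d}\widehat{\rho}=-1$ to upgrade (i) to (ii)), your existence proof takes a genuinely different route. The paper runs the direct method on a minimizing sequence and establishes tightness by a mass-reallocation argument: it compares $\rho_{n}$ with the modified measure $\rho_{n,r}$ that dumps the mass outside $\Omega_{r}$ onto $\mathsf{Unif}(\Omega)$, shows the likelihood strictly improves by an amount controlled by $\rho_{n}(\Omega_{r}^{\mathrm{c}})\underline{\phi}(\mathrm{diam}(\Omega))/\|\phi\|_{\infty}$, and plays this against near-optimality $\ell_{N}(\rho_{n})-\ell_{N}(\rho_{n,r})\le 1/n$ to bound the escaping mass, then invokes Prokhorov. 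Your projection trick replaces all of this: the non-expansiveness of the Euclidean projection onto $K=\mathrm{conv}\{X_{i}\}$ together with the radial monotonicity of the Gaussian gives $\ell_{N}(P_{\#}\rho)\le\ell_{N}(\rho)$ pointwise on $\mathcal{P}(\mathbb{R}^{d})$, reducing at a stroke to the weakly compact set $\mathcal{P}(K)$, on which $\ell_{N}$ is weakly continuous because $(\rho*\phi)(X_{i})$ is bounded away from zero. Your argument is shorter and yields a strictly stronger structural byproduct for free — a minimizer supported inside the convex hull of the data, sharper than the paper's Lemma~\ref{lem-npmle-support}, which only localizes the support to an enlargement $\Omega_{R}$. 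What the paper's longer route buys is generality: its appendix proves the theorem for any kernel satisfying Assumption~\ref{assumption-pdf-0} (positivity, continuity, decay at infinity), where no radial monotonicity is available and your pushforward inequality $\phi(X_{i}-P(y))\ge\phi(X_{i}-y)$ fails; your proof is tied to radially decreasing (or radially quasi-concave) kernels such as the isotropic Gaussian of the main text. For the theorem as stated there, your proof is complete and valid.
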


\begin{remark}We show in Appendix \ref{sec:proof-thm-NPMLE-basics}
	that for any $\rho\in\mathcal{P}(\mathbb{R}^{d})$, $\int\delta\ell_{N}(\rho)\mathrm{d}\rho=-1$
	always holds. As a result, the optimality condition (ii) in Theorem
	\ref{thm:NPMLE-basics} is implied by (i), which means that $\delta\ell_{N}(\widehat{\rho})(x)\geq-1$
	for all $x\in\mathbb{R}$ alone is already the necessary and sufficient
	condition for $\widehat{\rho}$ to be the NPMLE. However, we keep
	both conditions in the theorem as each of them reveal important structural
	information of NPMLE.
	
\end{remark}

Theorem \ref{thm:NPMLE-basics} asserts the existence of NPMLE, but
its uniqueness  when $d\ge 2$ is still an open problem. Although
the uniqueness is not settled, the convergence theory in this paper
is still valid: in this case NPMLE refers to any minimizer of  \eqref{eq:NPMLE}.

\paragraph{Notation.}
We use $\mathcal{P}(\mathbb{R}^{d})$ to denote the space of probability
measures over $\mathbb{R}^{d}$, $\mathcal{P}_2(\mathbb{R}^{d})$ to denote the space of probability
measures over $\mathbb{R}^{d}$ with finite second moments, and $\mathcal{P}_{\mathsf{ac}}(\mathbb{R}^{d})$
to denote the space of probability measures that are absolutely continuous
with respect to the Lebesgue measure on $\mathbb{R}^{d}$. Let $\Delta^{m-1}$ be the $m-1$ dimensional probability simplex.
For any $\rho \in \cP (\RR^d)$, $\mathsf{supp} (\rho)$ denotes its support set, i.e. the smallest closed set $C \subseteq \RR^d$ such that $\rho(C) = 1$. For any mapping $T:\mathbb{R}^{d}\to\mathbb{R}^{d}$ and any distribution
$\rho\in\mathcal{P}(\mathbb{R}^{d})$, let $T_{\#}\rho$ be the pushforward (or image measure) of $\rho$ by $T$, which is defined by  $T_{\#}\rho(A)=\rho(T^{-1}(A))$
for any Borel set $A$ in $\mathbb{R}^{d}$. For any $x\in\mathbb{R}^{d}$,
we use $\delta_{x}$ to denote the Dirac mass at point $x$. For two
probability measures $\mu,\nu\in\mathcal{P}(\mathbb{R}^{d})$, we
use $\mu\ll\nu$ to denote that $\mu$ is absolutely continuous with
respect to $\nu$. For a sequence $\{ \rho_n \}_{n=0}^{\infty}$ in $\cP (\RR^d)$ and $\rho \in \cP (\RR^d)$, we write $\rho_n \overset{\mathrm{w}}{\to} \rho$ if $\rho_n$ weakly converges to $\rho$, i.e.~$\int_{\RR^d} f(x) \rho_n (\rd x) \to \int_{\RR^d} f(x) \rho (\rd x)$ holds for every bounded continuous function $f:~\RR^d \to \RR$.
Let $C_{\mathrm{c}}^{\infty}(\mathbb{R}^{d})$
be the set of smooth functions with compact support in $\mathbb{R}^{d}$.
We say that $(\rho_{t})_{t\geq0}$ is a distributional solution to
the partial differential equation (PDE) $\partial_{t}\rho_{t}=-\mathsf{div}(\rho_{t}v_{t})+\rho_{t}\alpha_{t}$
where $v_{t}:\mathbb{R}^{d}\to\mathbb{R}^{d}$ and $\alpha_{t}:\mathbb{R}^{d}\to\mathbb{R}$,
if for any $\varphi\in C_{\mathrm{c}}^{\infty}(\mathbb{R}^{d})$ it
holds that
\[
\frac{\mathrm{d}}{\mathrm{d}t}\int_{\mathbb{R}^{d}}\varphi\left(x\right)\rho_{t}\left(\mathrm{d}x\right)=\int_{\mathbb{R}^{d}}
\left[
\left\langle \nabla\varphi\left(x\right),v_{t}\left(x\right)\right\rangle
+  \varphi(x) \alpha_t (x) \right]
\rho_{t}\left(\mathrm{d}x\right).
\]
Finally we use the shorthand ODE to refer to ordinary differential equations.

\section{Wasserstein-Fisher-Rao gradient descent}

Gradient flows over probability measures are a useful tool in the development of sampling algorithms where the goal is to produce samples from a target measure. A classical example arises when $\pi$ is, for example, a Bayesian posterior known only up to normalizing constant. In this context, Wasserstein-Fisher-Rao (WFR) gradient flows have recently emerged as a strong alternative to vanilla Wasserstein gradient flows. Indeed, they provide the backbone of the birth-death sampling algorithm of~\citet{LuLuNol19} as well as the particle-based method proposed in~\citet{LuSleWan22}. 
We refer the reader to the recent manuscript of~\cite{Che22} for more details on sampling and the role of Wasserstein gradient flows in this context. 

The likelihood maximization problem of interest in the present paper differs from sampling questions because its aims at optimizing a different objective. Nevertheless, it remains an optimization problem and the machinery of gradient flows over the space of probability measures may be deployed in this context. To the best of our knowledge, this paper present the first attempt at such a deployment. More specifically,
our main algorithm to solve~\eqref{eq:NPMLE}  relies on a specific discretization of the Wasserstein-Fisher-Rao gradient flow. We begin with a short introduction to gradient flows over metric spaces of probability measures that can be safely skipped by experts.

\subsection{Gradient flows over metric spaces of probability measures}

Gradient flows over metric spaces of probability measures is a central topic of the calculus of variations that has found applications in variety of fields ranging from analysis and geometry to probability and statistics. We briefly discuss the main idea behind this powerful tool and refer the reader to the formidable book of~\cite{ambrosio2008gradient} for more details about this deep question.

Recall that our goal is to derive a gradient flow for the functional $\ell_N$ over the space of probability measures. The nature of this gradient flow is simply a curve $(\rho_t)_{t\ge 0}$ such that $\partial_t \rho_t = - \nabla \ell_N(\rho_t)$ for a notion of gradient $\nabla$ to be defined. In their seminal work,~\cite{JorKinOtt98} were able to define a gradient flow over the Wasserstein space by analogy to the Euclidean gradient flow without the need to actually define a gradient. We follow their approach and define the gradient flow of $\ell_N$ with respect to a suitable geometry with geodesic distance  $d(\cdot, \cdot)$ over the space of probability measures as
$$
\partial_t \rho_t = \lim_{\eta \to 0} \frac{\rho_t^\eta-\rho_t}{\eta}\,,
$$
where 
$$
\rho_{t}^{\eta}\coloneqq\underset{\rho\in\mathcal{P}(\mathbb{R}^{d})}{\arg\min}\left\{ \int_{\mathbb{R}^{d}}\delta\ell_{N}\left(\rho_{t}\right)\mathrm{d}\left(\rho-\rho_{t}\right)+\frac{1}{2\eta}{\sf d}^{2}\left(\rho,\rho_{t}\right)\right\}
$$
Given a distance, the existence of a limiting absolutely continuous curve $(\rho_t)_{t\geq 0}$ is an important and central question that we omit in this overview.

Our main focus in this work is the Wasserstein-Fisher-Rao distance which is a composite of the Fisher-Rao distance and the (quadratic) Wasserstein distance. We now introduce these three distances. Recall that a geodesic distance between two points measures the shortest curve that links these two points. The difference between these three distances is governed by the differential structure put on probability distributions, which roughly corresponds to type of curves that are allowed. The length of these curves is then measured using a Riemannian metric which in all cases is rather straightforward so we focus our discussion on curves. In turn, these curves and their lengths define a geometry on the space of probability measures.

\paragraph{Fisher-Rao distance.}
The Fisher-Rao distance is linked to reaction equations of the form
\begin{equation}
	\label{eq:react}
	\partial_{t}\rho_{t}=\rho_{t}\Big(\alpha_{t}-\int\alpha_{t}\mathrm{d}\rho_{t}\Big)\,,
\end{equation}
where $\alpha_t(x) \in \R$ is a scalar  that governs how much mass is created at $x \in \RR^d$ and time $t$. It is easy to see that these dynamics preserve the total mass 1 of probability distributions. Among all such curves  that link $\rho_0$ to $\rho_1$, the Fisher-Rao geodesic is the one that minimizes the total length. More specifically, the Fisher-Rao distance $d_{\sf FR}$ is defined as~\citep{bauer2016uniqueness},
\begin{align}
	d_{\mathsf{FR}}^{2}\left(\rho_{0},\rho_{1}\right) & =\inf\bigg\{\int_{0}^{1}\int\Big[\Big(\alpha_{t}-\int\alpha_{t}\mathrm{d}\rho_{t}\Big)^{2}\Big]\mathrm{d}\rho_{t}\mathrm{d}t:~
	\left(\rho_{t},\alpha_{t}\right)_{t\in[0,1]}\,\text{solves}\nonumber \\
	& \qquad\qquad\qquad\partial_{t}\rho_{t}=\rho_{t}\Big(\alpha_{t}-\int\alpha_{t}\mathrm{d}\rho_{t}\Big)\bigg\}.\label{eq:FR-metric}
\end{align}
While this will not be useful for our problem, it is worth noting that
\[
d_{\mathsf{FR}}^{2}\left(\rho_{0},\rho_{1}\right)=4\int\Big|\sqrt{\frac{\ud\rho_{0}}{\ud \lambda}}-\sqrt{\frac{\ud \rho_{1}}{\ud \lambda}}\Big|^{2}\mathrm{d}\lambda,
\]
where $\lambda$ is any positive measure such that $\rho_0, \rho_1 \ll \lambda$ (e.g., $\lambda=\rho_0+\rho_1$.). Hence, up to a constant factor, the Fisher-Rao distance is simply the Hellinger distance that is well-known to statisticians.

\paragraph{Wasserstein distance.}

Given two probability measures $\rho_{0},\rho_{1}\in\mathcal{P}(\mathbb{R}^{d})$,
the (quadratic) Wasserstein distance between $\rho_{0}$ and $\rho_{1}$
is defined as \citep{villani2009optimal}
\[
d_{\mathsf{W}}^{2}\left(\rho_{0},\rho_{1}\right)=\inf_{\pi\in\Pi(\rho_{0},\rho_{1})}\int\left\Vert x-y\right\Vert _{2}^{2}\pi\left(\mathrm{d}x,\mathrm{d}y\right),
\]
where the infimum is taken over all couplings of $\rho_{0}$ and
$\rho_{1}$. 

It also admits a geodesic distance interpretation by means of the Benamou-Brenier formula:
\begin{align}
	d_{\mathsf{W}}^{2}\left(\rho_{0},\rho_{1}\right) & =\inf\bigg\{\int_{0}^{1}\int\left\Vert v_{t}\right\Vert ^{2}\mathrm{d}\rho_{t}\mathrm{d}t:\left(\rho_{t},v_{t}\right)_{t\in[0,1]}\;\text{solves}\;\partial_{t}\rho_{t}=-\mathsf{div}\left(\rho_{t}v_{t}\right)\bigg\}.\label{eq:Wass-metric}
\end{align}
Here admissible curves are given by the continuity equation 
\begin{equation}
	\label{eq:CE}
	\partial_{t}\rho_{t}=-\mathsf{div}\left(\rho_{t}v_{t}\right)\,,
\end{equation}
which describes the evolution of a density of particles in $\R^d$ evolving according to time-dependent vector field $v_t :\R^d \to \R^d$.

\paragraph{Wasserstein-Fisher-Rao distance.}

The geometry underlying the Wasserstein-Fisher-Rao distance is built using  curves that satisfy the following evolution equation:
$$
\partial_{t}\rho_{t}=-\mathsf{div}\left(\rho_{t}v_{t}\right)+\rho_{t}\Big(\alpha_{t}-\int\alpha_{t}\mathrm{d}\rho_{t}\Big)\,.
$$
Note that the right-hand side is precisely the sum of the right-hand sides for the reaction equation~\eqref{eq:react} and the continuity equation~\eqref{eq:CE} that govern the Fisher-Rao and the Wasserstein geometry respectively. As such it is a composite of the two geometries.

In turn, the Wasserstein-Fisher-Rao distance $d_{\mathsf{WFR}}$ is defined as~\citep{chizat2018interpolating,kondratyev2016new,liero2018optimal,gallouet2017jko}
\begin{align}
	d_{\mathsf{WFR}}^{2}\left(\rho_{0},\rho_{1}\right) & =\inf\bigg\{\int_{0}^{1}\int\Big[\left\Vert v_{t}\right\Vert ^{2}+\Big(\alpha_{t}-\int\alpha_{t}\mathrm{d}\rho_{t}\Big)^{2}\Big]\mathrm{d}\rho_{t}\mathrm{d}t:\left(\rho_{t},v_{t},\alpha_{t}\right)_{t\in[0,1]}\,\text{solves}\nonumber \\
	& \qquad\qquad\qquad\partial_{t}\rho_{t}=-\mathsf{div}\left(\rho_{t}v_{t}\right)+\rho_{t}\Big(\alpha_{t}-\int\alpha_{t}\mathrm{d}\rho_{t}\Big)\bigg\}.\label{eq:WFR-metric}
\end{align}
Equipped with this distance, we are in a position to define the WFR gradient flow and its time discretization, the WFR gradient descent.

\subsection{Wasserstein-Fisher-Rao gradient descent}

In this section we introduce our main algorithm: Wasserstein-Fisher-Rao Gradient Descent.

The gradient flow $\{\rho_{t}\}_{t\geq0}$ of the negative log-likelihood $\ell_{N}(\rho)$
in $\mathcal{P}_2(\mathbb{R}^{d})$ with respect to the Wasserstein-Fisher-Rao
distance $d_{\mathsf{WFR}}(\cdot,\cdot)$ is given by
\begin{equation}
	\partial_{t}\rho_{t}=-\left[1+\delta\ell_{N}\left(\rho_{t}\right)\right]\rho_{t}+ \mathsf{div}\left(\rho_{t}\nabla\delta\ell_{N}\left(\rho_{t}\right)\right).\label{eq:WFR-GF}
\end{equation}
The formal derivation of \eqref{eq:WFR-GF} is  based on the Riemannian
structure underlying the Wasserstein-Fisher-Rao metric can be found
in Appendix \ref{subsec:WFR-derivation}. 

The WFR gradient flow is not readily implementable for because of two obstacles. First, it is described in continuous time. Second, it requires the manipulation of full probability measures $\rho_t$ on $\R^d$ which are infinite dimensional objects.

To overcome the first obstacle, we employ a straightforward time-discretization scheme to obtain a WFR gradient descent. This algorithm produces a sequence of probability measures $\{\rho_{n}\}_{n\geq0}$.  It makes the following two steps alternately:
\begin{subequations}\label{eq:WFR-GD}
	\begin{align}
		& \frac{\mathrm{d}\widetilde\rho_{n}}{\mathrm{d}\rho_{n}}=1-\eta \left[1+\delta\ell_{N}\left(\rho_{n}\right)\right]\qquad\qquad\text{(Fisher-Rao gradient update)}\label{eq:WFR-GD-FR}\\
		& \rho_{n+1}=\left[\mathrm{id}-\eta\nabla\delta\ell_{N}\left(\widetilde\rho_{n}\right)\right]_{\#}\widetilde\rho_{n}\qquad\:\:\:\text{(Wasserstein gradient update)}\label{eq:WFR-GD-W}
	\end{align}
\end{subequations}for $n=0,1,\ldots$, where $\eta>0$ is
the step size. 
Note that each corresponds to a summand on the right-hand side of~\eqref{eq:WFR-GF}
In fact,  \eqref{eq:WFR-GD-FR} is one step of gradient descent update
with respect to the Fisher-Rao geometry, and  \eqref{eq:WFR-GD-W} corresponds to a gradient step with respect to the Wasserstein geometry. It
is also worth mentioning that  \eqref{eq:WFR-GD} is related to the
splitting scheme in \citet{gallouet2017jko}:  \eqref{eq:WFR-GD} can
be viewed as forward Euler scheme (explicit scheme) for numerical
approximation of  \eqref{eq:WFR-GF}, while \citet{gallouet2017jko}
uses backward Euler scheme (implicit scheme). The implementation of the latter requires optimization over probability measures in each iterate, which is a difficult problem.

The following theorem shows that Wasserstein-Fisher-Rao gradient descent converges when initialized from a distribution that puts weight on the entire space. The proof can be found in \Cref{sec-convergence-proofs}. In fact, a similar result  for the continuous-time Wasserstein-Fisher-Rao gradient flow can be derived at the cost of additional technical considerations.

\begin{theorem}[Convergence to NPMLE] \label{thm:WFR-convergence}
	Suppose that the initialization $\rho_{0}\in\mathcal{P} (\mathbb{R}^d)$ satisfies $\supp(\rho_0) = \RR^d$. Consider the Wasserstein-Fisher-Rao gradient descent $\{\rho_{n}\}_{n\geq0}$
	defined in  \eqref{eq:WFR-GD}. 
	There exists $\eta_{0} > 0$ determined by the samples $\{X_{i}\}_{1\leq i\leq N}$, such that if $0<\eta\leq\eta_{0}$ and $\rho_{n}\overset{\mathrm{w}}{\to}\widehat{\rho}$ when $n\to\infty$, then $\widehat{\rho}$ is the NPMLE.
\end{theorem}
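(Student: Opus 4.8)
The plan is to show that the weak limit $\widehat\rho$ satisfies the two optimality conditions of \Cref{thm:NPMLE-basics}, namely (i) $\delta\ell_N(\widehat\rho)(x)\ge-1$ for all $x\in\RR^d$ and (ii) $\delta\ell_N(\widehat\rho)=-1$ holding $\widehat\rho$-a.e. Throughout write $g_n\coloneqq\delta\ell_N(\rho_n)$ and $c_{n,i}\coloneqq(\rho_n*\phi)(X_i)$, so that $g_n(x)=-\tfrac1N\sum_i\phi(x-X_i)/c_{n,i}\le 0$ by \eqref{eq:first-variation}. The backbone is a discrete descent estimate showing $\ell_N(\rho_n)$ is non-increasing for $\eta\le\eta_0$; from it I extract the identification of the limit and condition (ii) cheaply, whereas condition (i) is the genuine difficulty and is where $\supp(\rho_0)=\RR^d$ enters. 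Concretely, I would first establish a one-step inequality of the form $\ell_N(\rho_{n+1})\le\ell_N(\rho_n)-\tfrac{\eta}{2}\big[\int(1+g_n)^2\,\rd\rho_n+\int\|\nabla g_n\|^2\,\rd\widetilde\rho_n\big]$, whose two bracketed terms are the squared Fisher--Rao and Wasserstein gradient norms attached to the half-steps \eqref{eq:WFR-GD-FR}--\eqref{eq:WFR-GD-W} and whose $O(\eta^2)$ remainder is absorbed by smoothness. This needs a uniform lower bound $c_{n,i}\ge c_{\min}>0$: since $\ell_N(\rho_n)=-\tfrac1N\sum_i\log c_{n,i}$ is non-increasing and each $\log c_{n,i}\le-\tfrac d2\log(2\pi)$, the bound $\ell_N(\rho_n)\le\ell_N(\rho_0)$ keeps every $c_{n,i}$ bounded below (set up by induction alongside the descent). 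That lower bound yields uniform control of $g_n,\nabla g_n,\nabla^2 g_n$ (Gaussian tails), making $T_n=\mathrm{id}-\eta\nabla\delta\ell_N(\widetilde\rho_n)$ a global diffeomorphism of $\RR^d$. Telescoping and using $\ell_N\ge\tfrac d2\log(2\pi)$ give $\sum_n\int(1+g_n)^2\,\rd\rho_n<\infty$ and $\sum_n\int\|\nabla g_n\|^2\,\rd\widetilde\rho_n<\infty$; in particular both integrals tend to $0$.

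Next I identify the limit and prove (ii). Testing $\rho_n\overset{\mathrm w}{\to}\widehat\rho$ against the bounded continuous functions $\phi(X_i-\cdot)$ gives $c_{n,i}\to\widehat c_i\coloneqq(\widehat\rho*\phi)(X_i)\ge c_{\min}$, whence $g_n\to g_\infty\coloneqq\delta\ell_N(\widehat\rho)$ in $C^2$ uniformly on compact sets. Since $(1+g_n)^2\to(1+g_\infty)^2\ge0$ locally uniformly, a standard lower-semicontinuity argument along weakly convergent measures gives $\int(1+g_\infty)^2\,\rd\widehat\rho\le\liminf_n\int(1+g_n)^2\,\rd\rho_n=0$, so $g_\infty=-1$ holds $\widehat\rho$-a.e., which is condition (ii).

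The crux is condition (i). Suppose for contradiction $g_\infty(x_0)<-1$ for some $x_0$. As $g_\infty$ is smooth and vanishes at infinity, its infimum is attained and is $<-1$; take $x_0$ a global minimizer, so $\nabla g_\infty(x_0)=0$, and fix $\epsilon>0$ with $U\coloneqq\{x:g_\infty(x)<-1-\epsilon\}$ nonempty (open, bounded). Full support is preserved by the dynamics: the Fisher--Rao factor $1-\eta(1+g_n)\ge1-\eta>0$ is everywhere positive, so \eqref{eq:WFR-GD-FR} preserves supports, and \eqref{eq:WFR-GD-W} pushes forward by the diffeomorphism $T_n$; hence $\supp(\rho_n)=\RR^d$ and $\rho_n(U)>0$ for all $n$. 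On $U$ one has $1+g_n\le-\epsilon/2$ for large $n$, so the Fisher--Rao step multiplies mass on $U$ by at least $1+\tfrac{\eta\epsilon}{2}$. The decisive geometric point is that the Wasserstein step moves mass in the direction $-\nabla g_n$, i.e.\ by approximate gradient descent on $g_\infty$, which decreases $g_\infty$ along trajectories and therefore maps the sublevel set $U$ essentially into itself. Combining the two half-steps yields $\rho_{n+1}(U)\ge(1+\tfrac{\eta\epsilon}{2})\rho_n(U)$ for all large $n$, so $\rho_n(U)$ grows geometrically from the positive value guaranteed by full support; this contradicts $\rho_n(U)\le1$ (equivalently, it contradicts $\rho_n(U)\to0$, which already follows from the first step since $(1+g_n)^2\ge\epsilon^2/4$ on $U$). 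Hence $g_\infty\ge-1$ everywhere, and conditions (i)--(ii) together with \Cref{thm:NPMLE-basics} identify $\widehat\rho$ as the NPMLE.

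The hard part is the invariance claim: turning ``the Wasserstein step descends $g_\infty$'' into a rigorous statement that $U$ is almost forward-invariant under $T_n$. Near $\partial U$ the first-order decrease $-\eta\|\nabla g_\infty\|^2$ can be comparable to the $O(\eta^2)$ Taylor remainder and to the discrepancy $\nabla\delta\ell_N(\widetilde\rho_n)-\nabla g_\infty$ precisely at boundary points where $\|\nabla g_\infty\|$ is small (e.g.\ degenerate minima). Making the clean recursion $\rho_{n+1}(U)\ge(1+c\eta)\rho_n(U)$ hold up to a negligible additive error therefore requires some care: working with nested sublevel sets $\{g_\infty<-1-\epsilon\}\subseteq\{g_\infty<-1-\epsilon/2\}$ and a boundary-layer estimate, while using that the mass leaking across $\partial U$ is governed by the Wasserstein gradient, which is small in the $L^2(\widetilde\rho_n)$ sense by the summability from the first step. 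Everything else—the descent lemma, the $C^2$ convergence of $g_n$, and the verification of the two optimality conditions—is routine once the a priori lower bound on the $c_{n,i}$ is in place.
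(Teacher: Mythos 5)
Your strategy is essentially the paper's (Theorem~\ref{thm-convergence-wfr-iteration}): monotone descent with a uniform lower bound $c_{n,i}\geq c_{\min}$ obtained by induction from $\ell_{N}(\rho_{n})\leq\ell_{N}(\rho_{0})$ (Lemma~\ref{lem-loss-bounds}), locally uniform convergence of $\delta\ell_{N}(\rho_{n})$ and $\nabla\delta\ell_{N}(\rho_{n})$ to $\delta\ell_{N}(\widehat{\rho})$, preservation of full support by both half-steps, and a contradiction in which the Fisher--Rao step inflates the mass of a sublevel set of $\delta\ell_{N}(\widehat{\rho})$ by a factor $1+c\eta\epsilon$ while the Wasserstein step does not deflate it, so the mass blows up geometrically. (Two minor remarks: the paper does not need your step proving condition (ii) --- by the Remark after Theorem~\ref{thm:NPMLE-basics}, condition (i) alone characterizes the NPMLE --- and it measures the Fisher--Rao decrease by a KL divergence, Lemma~\ref{lem-FR-KL}, rather than by $\int(1+g_{n})^{2}\,\rd\rho_{n}$; your variant of the descent estimate is nonetheless workable.)

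The genuine gap is the one you name and leave unresolved: forward-invariance of the sublevel set under the approximate gradient step. Your set $U=\{g_{\infty}<-1-\epsilon\}$ is taken at an arbitrary level, whose boundary may contain critical points of $g_{\infty}$, and there your nested-sublevel/boundary-layer sketch does not close. The summability $\sum_{n}\int\|\nabla g_{n}\|^{2}\,\rd\widetilde{\rho}_{n}<\infty$ controls a time-average rather than the per-step leakage, and bounding the leaked mass by the $\widetilde{\rho}_{n}$-mass of a boundary layer times the step length requires knowing that $\widetilde{\rho}_{n}$ puts little mass near $\partial U$ --- which is precisely what is unknown, so the argument is circular. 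Moreover any additive error $\delta_{n}$ in the recursion $\rho_{n+1}(U)\geq(1+c\eta)\rho_{n}(U)-\delta_{n}$ destroys the blow-up contradiction: full support gives only $\rho_{n}(U)>0$ at each fixed $n$, with no uniform lower bound, and your own first step already forces $\rho_{n}(U)\to0$, which is perfectly compatible with the perturbed recursion. The paper removes the problem before it arises: since $\delta\ell_{N}(\widehat{\rho})\in C^{d}(\mathbb{R}^{d})$ (Assumption~\ref{assumption-pdf-0}), Sard's lemma supplies a \emph{regular value} $y\in(-1-\varepsilon,-1-\varepsilon/2)$, so that $\xi=\inf_{\{g_{\infty}=y\}}\|\nabla g_{\infty}\|_{2}>0$ on the compact level set; Lemma~\ref{lem-level-set} then shows that one step $x\mapsto x-\eta g(x)$ with $\eta\leq1/L$ and $\sup_{\bar{S}}\|g-\nabla g_{\infty}\|_{2}\leq\xi/11$ maps $\bar{S}=\{g_{\infty}\leq y\}$ \emph{exactly} into itself, via a dichotomy: either $\|\nabla g_{\infty}(x)\|_{2}$ dominates the error, in which case the step decreases $g_{\infty}$, or it does not, in which case $x$ lies at distance at least $\xi/(2L)$ from the boundary level set and the small step cannot escape $\bar{S}$. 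With exact invariance, $\widetilde{\rho}_{n}(\bar{S})\geq\rho_{n}(\bar{S})$ holds with no additive loss, the multiplicative recursion is clean, and the contradiction goes through. This regular-value selection is the single missing idea in your write-up; the rest of your argument matches the paper's proof.
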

The convergence result in Theorem \ref{thm:WFR-convergence} is conditional:
we only show that if WFR gradient descent converges weakly
to some limiting distribution $\widehat{\rho}$, then $\widehat{\rho}$
is an NPMLE. This is similar to the convergence theory established by \citet{chizat2018global} in their study of the training dynamics for shallow neural networks. This limitation is due to a lack of geodesic convexity which prevents us from establishing unconditional global convergence guarantees.

To overcome the second obstacle, we also need to discretize $\rho_n$ in space. Thanks to the Wasserstein update~\eqref{eq:WFR-GD-W}, we need not use a fixed grid as in previous algorithms for NPMLE. Instead, observe that if we initialize WFR gradient descent at a distribution supported on $m$ atoms, then $\rho_{n}$ remains supported on $m$ atoms for all $n$. In this case, $\rho_n$ describes the evolution of $m$ interacting particles with weights.

More concretely, consider the initialization
\[
\rho_{0}=\sum_{l=1}^{m}\omega_{0}^{(l)}\delta_{\mu_{0}^{(l)}},\quad\text{where }\mu_{0}^{(1)},\ldots,\mu_{0}^{(m)}\in\mathbb{R}^{d}\text{ and }\omega_{0}=(\omega_{0}^{(1)},\ldots,\omega_{0}^{(m)})\in\Delta^{m-1},
\]
where the location of particles are independently sampled from the
data points $\{X_{i}\}_{1\leq i\leq N}$.  The following theorem
gives a precise characterization of the Wasserstein-Fisher-Rao gradient
flow initialized from $\rho_{0}$. The proof is deferred to Appendix
\ref{sec:proof-WFR-ODE}.

\begin{theorem}[Particle Wasserstein-Fisher-Rao gradient flow]\label{thm:WFR-ODE}The system of coupled
	ODE \begin{subequations}\label{eq:WFR-ODE}
		\begin{align}
			\dot{\mu}_{t}^{(j)} & =\frac{1}{N}\sum_{i=1}^{N}\frac{\phi\left(X_{i}-\mu_{t}^{(j)}\right)}{\sum_{l=1}^{m}\omega_{t}^{(j)}\phi\left(X_{i}-\mu_{t}^{(l)}\right)}\left(X_{i}-\mu_{t}^{(j)}\right),\\
			\dot{\omega}_{t}^{(j)} & =\left[\frac{1}{N}\sum_{i=1}^{N}\frac{\phi\left(X_{i}-\mu_{t}^{(j)}\right)}{\sum_{l=1}^{m}\omega_{t}^{(j)}\phi\left(X_{i}-\mu_{t}^{(l)}\right)}-1\right]\omega_{t}^{(j)},
		\end{align}
	\end{subequations}with initialization $\mu_{0}^{(1)},\ldots\mu_{0}^{(m)}\overset{\text{i.i.d.}}{\sim}\mathsf{Uniform}(\{X_{i}\}_{1\leq i\leq N})$
	and $\omega_{0}=[\omega_{0}^{(j)}]_{1\leq j\leq m}\in\Delta^{m-1}$
	has unique solution on any time interval $[0,T]$. Moreover, the flow
	$(\rho_{t})_{t\geq0}$ defined as
	\begin{equation}
		\rho_{t}\coloneqq\sum_{l=1}^{m}\omega_{t}^{(l)}\delta_{\mu_{t}^{(l)}}\label{eq:WFT-discrete}
	\end{equation}
	is the Wasserstein-Fisher-Rao gradient flow, i.e.~
	a distributional solution to the PDE  \eqref{eq:WFR-GF}.
	
\end{theorem}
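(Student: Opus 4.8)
The plan is to treat the two assertions separately: first the global well-posedness of the ODE system \eqref{eq:WFR-ODE}, and then the verification that the resulting empirical measure solves \eqref{eq:WFR-GF} in the distributional sense defined in the notation section.

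For existence and uniqueness, I would begin by noting that the right-hand side of \eqref{eq:WFR-ODE} is a smooth function of $(\mu^{(1)},\ldots,\mu^{(m)},\omega)$ on the open set where each denominator $D_i := \sum_{l=1}^m \omega^{(l)}\phi(X_i-\mu^{(l)}) = (\rho*\phi)(X_i)$ is strictly positive (which holds at the initialization, since $\omega_0\in\Delta^{m-1}$ and $\phi>0$). Picard--Lindel\"of then yields a unique local solution, and the real work is to exhibit a forward-invariant compact set on which the flow stays, so that the local solution continues to all of $[0,T]$. Three a priori invariants accomplish this. (i) Summing the weight equations and using $\sum_j \omega^{(j)}\phi(X_i-\mu^{(j)})/D_i = 1$ gives $\frac{\mathrm d}{\mathrm dt}\sum_j\omega_t^{(j)} = 1 - \sum_j\omega_t^{(j)}$, so $\sum_j\omega_t^{(j)}\equiv 1$. (ii) Each weight equation is linear in $\omega_t^{(j)}$, whence $\omega_t^{(j)} = \omega_0^{(j)}\exp(\int_0^t(\cdots)\,\mathrm ds)\ge 0$; combined with (i), the weights remain in $\Delta^{m-1}$. (iii) The location drift is a nonnegative combination $\dot\mu_t^{(j)} = \sum_i c_i(X_i-\mu_t^{(j)})$ with $c_i\ge 0$, so with $R:=\max_i\|X_i\|$ one computes $\frac{\mathrm d}{\mathrm dt}\tfrac12\|\mu_t^{(j)}\|^2 = \sum_i c_i(\langle\mu_t^{(j)},X_i\rangle - \|\mu_t^{(j)}\|^2) < 0$ whenever $\|\mu_t^{(j)}\| > R$; hence the closed ball of radius $R$ (indeed the convex hull of the data, which contains the initialization) is forward invariant.

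These invariants close the argument: on the compact set where $\|\mu^{(j)}\|\le R$ for all $j$ and $\omega\in\Delta^{m-1}$, every pairwise distance satisfies $\|X_i-\mu^{(l)}\|\le 2R$, so $D_i \ge (2\pi)^{-d/2}e^{-2R^2}>0$ uniformly. The denominators are therefore bounded away from zero along the trajectory, the right-hand side is Lipschitz on this set, and a standard continuation argument rules out finite-time blow-up, so the solution extends uniquely to any $[0,T]$. For the second assertion I would verify the weak formulation directly. Using $\phi(z)=\phi(-z)$ and $(\rho_t*\phi)(X_i)=\sum_l\omega_t^{(l)}\phi(X_i-\mu_t^{(l)})=D_i$, evaluating \eqref{eq:first-variation} at an atom gives $\delta\ell_N(\rho_t)(\mu_t^{(j)}) = -\frac1N\sum_i \phi(X_i-\mu_t^{(j)})/D_i$, and $\nabla\phi(z)=-z\phi(z)$ gives $\nabla\delta\ell_N(\rho_t)(\mu_t^{(j)}) = \frac1N\sum_i (\mu_t^{(j)}-X_i)\phi(X_i-\mu_t^{(j)})/D_i$. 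Comparing with \eqref{eq:WFR-ODE} shows precisely that $\dot\mu_t^{(j)} = -\nabla\delta\ell_N(\rho_t)(\mu_t^{(j)}) =: v_t(\mu_t^{(j)})$ and $\dot\omega_t^{(j)} = -[1+\delta\ell_N(\rho_t)(\mu_t^{(j)})]\,\omega_t^{(j)} =: \alpha_t(\mu_t^{(j)})\omega_t^{(j)}$.

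With these identifications in hand, the verification is immediate: for any $\varphi\in C_{\mathrm c}^\infty(\mathbb R^d)$, differentiating $\int\varphi\,\mathrm d\rho_t = \sum_l\omega_t^{(l)}\varphi(\mu_t^{(l)})$ by the product and chain rules and substituting the two ODEs yields $\frac{\mathrm d}{\mathrm dt}\int\varphi\,\mathrm d\rho_t = \sum_l\omega_t^{(l)}[\langle\nabla\varphi(\mu_t^{(l)}),v_t(\mu_t^{(l)})\rangle + \varphi(\mu_t^{(l)})\alpha_t(\mu_t^{(l)})]$, which is exactly the distributional form of \eqref{eq:WFR-GF} with $v_t=-\nabla\delta\ell_N(\rho_t)$ and $\alpha_t=-[1+\delta\ell_N(\rho_t)]$; consistency with mass preservation is automatic since $\int\alpha_t\,\mathrm d\rho_t=0$ by the identity $\int\delta\ell_N(\rho_t)\,\mathrm d\rho_t=-1$ from the Remark above. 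The main obstacle is the global-in-time extension in the first part: the vector field is only locally Lipschitz and degenerates precisely where the mixture density $D_i$ vanishes, so everything hinges on the invariance of the data's convex hull for the locations, which furnishes the uniform lower bound on $D_i$. Once that geometric fact is secured, both the well-posedness and the weak-form computation are routine.
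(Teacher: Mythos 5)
Your proposal is correct and follows essentially the same route as the paper's proof: Picard--Lindel\"of local well-posedness on the region where the mixture denominators $D_i=(\rho_t*\phi)(X_i)$ are bounded away from zero, forward invariance of the simplex for the weights and of a data-determined ball for the locations, extension of the solution to all of $[0,T]$, and a direct product-rule verification of the weak formulation with $v_t=-\nabla\delta\ell_N(\rho_t)$ and $\alpha_t=-\left[1+\delta\ell_N(\rho_t)\right]$. The only differences are cosmetic: you extend by a maximal-interval continuation argument and get weight positivity from the exponential representation $\omega_t^{(j)}=\omega_0^{(j)}\exp\left(\int_0^t a_j(s)\,\mathrm{d}s\right)$, whereas the paper restarts Picard--Lindel\"of on successive intervals of a uniform length $t_0$ and argues positivity by a Gr\"onwall contradiction; your scalar equation $\dot{s}_t=1-s_t$ for the total mass is, if anything, a slightly cleaner justification of the paper's claim that $\sum_{j}\omega_t^{(j)}\equiv 1$.
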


In practice, we can obtain a time discretization of the Wasserstein-Fisher-Rao
gradient flow by discretizing the ODE system  \eqref{eq:WFR-ODE},
which gives the Wasserstein-Fisher-Rao gradient descent algorithm
as summarized in Algorithm \ref{alg:WFR-GD}.

\begin{algorithm}[h]
	\caption{Wasserstein-Fisher-Rao gradient descent.}
	
	\label{alg:WFR-GD}\begin{algorithmic}
		
		\STATE \textbf{{Input}}: data $\{X_{i}\}_{1\leq i\leq n}$, number
		of particles $m$, step size $\eta > 0$, maximum number of iterations
		$t_{0}$.
		
		\STATE \textbf{{Initialization}}: draw $\mu_{0}^{(1)},\ldots,\mu_{0}^{(m)}\overset{\text{i.i.d.}}{\sim}\mathsf{Unif}(\{X_{i}\}_{1\leq i\leq n})$
		and $\omega_{0}^{(1)}=\cdots=\omega_{0}^{(m)}=1/m$.
		
		\vspace{0.2em}
		
		\STATE \textbf{{Updates}}: \textbf{for }$t=0,1,\ldots,t_{0}$ \textbf{do}
		
		\STATE \vspace{-1em}
		\begin{subequations} 
			\begin{align*}
				\mu_{t+1}^{(j)} & =\mu_{t}^{(j)}+\eta\frac{1}{N}\sum_{i=1}^{N}\frac{\phi\left(X_{i}-\mu_{t}^{(j)}\right)}{\sum_{l=1}^{m}\omega_{t}^{(j)}\phi\left(X_{i}-\mu_{t}^{(l)}\right)}\left(X_{i}-\mu_{t}^{(j)}\right),\\
				\omega_{t+1}^{(j)} & =\omega_{t}^{(j)}+\eta \left[\frac{1}{N}\sum_{i=1}^{N}\frac{\phi\left(X_{i}-\mu_{t+1}^{(j)}\right)}{\sum_{l=1}^{m}\omega_{t}^{(j)}\phi\left(X_{i}-\mu_{t+1}^{(l)}\right)}-1\right]\omega_{t}^{(j)},
			\end{align*}
		\end{subequations} for all $j=1,\ldots,m$. Here $\phi(x)=(2\pi)^{-d/2}\exp(-\Vert x\Vert_{2}^{2}/2)$
		is the probability density function of $\mathcal{N}(0,I_{d})$.
		
		\STATE \textbf{{Output}} $\rho=\sum_{j=1}^{m}\omega_{t_{0}}^{(j)}\delta_{\mu_{t_{0}}^{(j)}}$
		as the (approximate) NMPLE. 
		
	\end{algorithmic}
\end{algorithm}



The proof techniques employed to establish Theorem~\ref{thm:WFR-convergence} do not cover Algorithm~\ref{alg:WFR-GD} unfortunately since the initial measure is not supported on the whole space. Nevertheless, since the NPMLE is known to be supported on a small number of atoms~\citep{polyanskiy2020self} in certain cases, it is likely that taking $m$ large enough will be sufficient to establish convergence results. This conjecture is supported by our numerical experiments in  Section~\ref{sec:numerical}.

\subsection{Surrogate geometries}

As discussed above the Wasserstein-Fisher-Rao (WFR) geometry is obtained as a composite of the Wasserstein geometry and the Fisher-Rao geometry. In turn, WFR gradient descent alternates between a Fisher-Rao gradient step and a Wasserstein gradient step. This observation raises a burning question: is the composite nature of WFR gradient descent necessary to obtain good performance of are either of these two building blocks alone sufficient? In this subsection, we explore properties and limitations of these two natural alternatives.

\paragraph{Fisher-Rao gradient descent.}
We show in Appendix~\ref{subsec:FR-derivation} that the Fisher-Rao
gradient flow $(\rho_{t})_{t\geq0}$ of the function $\ell_{N}(\rho)$
is defined by the following PDE:
\begin{equation}
	\partial_{t}\rho_{t}=-\left[1+\delta\ell_{N}\left(\rho_{t}\right)\right]\rho_{t}.\label{eq:Fisher-Rao-GF}
\end{equation}

By time discretization, one readily obtains the Fisher-Rao gradient descent updates $\{\rho_{n}\}_{n\geq0}$:
\begin{equation}
	\frac{\mathrm{d}\rho_{n+1}}{\mathrm{d}\rho_{n}}=1-\gamma\left[1+\delta\ell_{N}\left(\rho_{n}\right)\right],\qquad n=0,1,\ldots,\label{eq:Fisher-Rao-GD}
\end{equation}
where $\gamma>0$ is the step size. Although Fisher-Rao gradient flow/descent
is derived in an abstract way using Riemannian geometry, it has intimate
connection with some well-known algorithms.
\begin{enumerate}
	\item {\it Fisher-Rao gradient flow as proximal gradient flow}. The Fisher-Rao
	gradient flow \eqref{eq:Fisher-Rao-GF} can be viewed as the continuous-time
	limit of the proximal gradient algorithm under the Fisher-Rao metric:
	\begin{equation}
		\partial_{t}\rho_{t}=\lim_{\eta\to0+}\frac{\rho_{t}^{\eta}-\rho_{t}}{\eta},\quad\text{where}\quad\rho_{t}^{\eta}\coloneqq\underset{\rho\in\mathcal{P}(\mathbb{R}^{d})}{\arg\min}\left\{ \int_{\mathbb{R}^{d}}\delta\ell_{N}\left(\rho_{t}\right)\mathrm{d}\left(\rho-\rho_{t}\right)+\frac{1}{2\eta}d_{\mathsf{FR}}^{2}\left(\rho,\rho_{t}\right)\right\} .\label{eq:FR-proximal}
	\end{equation}
	\item {\it Fisher-Rao gradient flow as mirror flow}. The Fisher-Rao gradient flow \eqref{eq:Fisher-Rao-GF} can also be viewed as
	the continuous-time limit of mirror descent algorithm for $\ell_{N}(\rho)$
	with Kullback-Leibler (KL) divergence
	\begin{equation}
		\partial_{t}\rho_{t}=\lim_{\eta\to0+}\frac{\rho_{t}^{\eta}-\rho_{t}}{\eta},\quad\text{where}\quad\rho_{t}^{\eta}\coloneqq\underset{\rho\ll\rho_{t}}{\arg\min}\left\{ \int_{\mathbb{R}^{d}}\delta\ell_{N}\left(\rho_{t}\right)\mathrm{d}\left(\rho-\rho_{t}\right)+\frac{1}{\eta}\mathsf{KL}\left(\rho\,\Vert\,\rho_{t}\right)\right\} .\label{eq:FR-mirror}
	\end{equation}
	\item {\it Fisher-Rao gradient descent as fixed-location EM algorithm}.
	When $\rho_{0}$ is discrete, Fisher-Rao gradient descent \eqref{eq:Fisher-Rao-GD}
	with step size $\gamma=1$ coincides with the fixed-location EM algorithm
	for Gaussian mixture model in e.g.~\citet{jiang2009general}.
\end{enumerate}
Formal justifications of the above three connections can be found
in Appendix \ref{subsec:FR-equivalence}. The  theorem below shows
that when $\rho_{0}$ is diffused,
the Fisher-Rao gradient descent enjoys appealing convergence
property. The proof is deferred to \Cref{sec-convergence-proofs}.

\begin{theorem}[Convergence to NPMLE] \label{thm:FR-convergence}
	Suppose that the initialization $\rho_{0}\in\mathcal{P} (\mathbb{R}^{d})$ satisfies $\supp (\rho_0) = \RR^d$. Consider the Fisher-Rao gradient descent $\{\rho_{n}\}_{n\geq0}$ defined
	in \eqref{eq:Fisher-Rao-GD}. 
	There exists $\eta_{0}$ determined by the samples $\{X_{i}\}_{1\leq i\leq N}$, such that if $0<\eta\leq\eta_{0}$ and $\rho_{n}\overset{\mathrm{w}}{\to}\widehat{\rho}$ when $n\to\infty$, then $\widehat{\rho}$ is the NPMLE.
\end{theorem}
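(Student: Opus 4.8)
The plan is to verify, via the optimality characterization of Theorem~\ref{thm:NPMLE-basics}, that the assumed weak limit $\widehat\rho$ satisfies the first-order condition. By the Remark following that theorem it suffices to show that the function $g_\infty := 1 + \delta\ell_N(\widehat\rho)$ is nonnegative on all of $\RR^d$, since condition (ii) is then automatic. Throughout I write $g_n := 1 + \delta\ell_N(\rho_n)$, so that the update~\eqref{eq:Fisher-Rao-GD} reads (with step size $\eta$) $\mathrm{d}\rho_{n+1} = (1-\eta g_n)\,\mathrm{d}\rho_n$, and I record the mass-preservation identity $\int g_n\,\mathrm{d}\rho_n = 0$, which follows from $\int\delta\ell_N(\rho_n)\,\mathrm{d}\rho_n = -1$ and confirms that each $\rho_n$ is a probability measure.

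First I would transfer the assumed weak convergence into convergence of first variations. Because $y\mapsto\phi(X_i - y)$ is bounded and continuous, weak convergence gives $(\rho_n*\phi)(X_i)\to(\widehat\rho*\phi)(X_i)=:\widehat p_i$ for each of the finitely many samples, and $\widehat p_i>0$ since $\phi>0$ and $\widehat\rho$ is a probability measure. As $g_n(x) = 1 - \frac1N\sum_i \phi(x-X_i)/(\rho_n*\phi)(X_i)$ depends on $x$ only through the bounded factors $\phi(x-X_i)\le(2\pi)^{-d/2}$, the estimate $|g_n(x) - g_\infty(x)|\le \frac{(2\pi)^{-d/2}}{N}\sum_i|1/(\rho_n*\phi)(X_i) - 1/\widehat p_i|$ shows that $g_n\to g_\infty$ \emph{uniformly} on $\RR^d$; moreover $g_\infty$ is smooth, hence continuous.

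The key structural observation is that Fisher--Rao descent merely reweights the initial measure and therefore cannot shrink its support. Writing $f_n := \mathrm{d}\rho_n/\mathrm{d}\rho_0$, the update telescopes to $f_n = \prod_{k=0}^{n-1}(1-\eta g_k)$. Since $\delta\ell_N(\rho_k)<0$ strictly we have $g_k<1$, so every factor $1-\eta g_k$ is strictly positive as soon as $\eta\le 1$; the sole role of the sample-dependent threshold $\eta_0$ is to guarantee this positivity. Consequently $f_n>0$ everywhere, $\rho_n$ and $\rho_0$ are mutually absolutely continuous, and $\supp(\rho_n)=\supp(\rho_0)=\RR^d$ for every $n$. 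In particular $\rho_n(U)>0$ for every nonempty open set $U$.

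Finally I would argue by contradiction. Suppose $g_\infty(x_0)<0$ for some $x_0$. By continuity of $g_\infty$ and the uniform convergence $g_n\to g_\infty$, there are an open neighborhood $U\ni x_0$, a constant $\varepsilon>0$, and an index $n_0$ with $g_n(x)\le-\varepsilon$ for all $x\in U$ and $n\ge n_0$. Then for $x\in U$ each factor obeys $1-\eta g_k(x)\ge 1+\eta\varepsilon$, so $f_n(x)\ge f_{n_0}(x)(1+\eta\varepsilon)^{\,n-n_0}$, and integrating against $\rho_0$ yields $\rho_n(U)\ge(1+\eta\varepsilon)^{\,n-n_0}\rho_{n_0}(U)$. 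Since $\rho_{n_0}(U)>0$, the right-hand side diverges, contradicting $\rho_n(U)\le 1$. Hence $g_\infty\ge0$ everywhere, which is exactly the optimality condition, and $\widehat\rho$ is the NPMLE. I expect the main obstacle to be the rigorous passage from weak convergence to uniform convergence of the first variations, with the denominators staying bounded away from zero; once that is secured, support invariance and the geometric-growth contradiction form the conceptual heart and are comparatively routine.
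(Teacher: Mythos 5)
Your proof is correct, and it reaches the conclusion by a leaner route than the paper. The paper derives Theorem~\ref{thm:FR-convergence} as the $\eta=0$ special case of a general convergence theorem for Wasserstein-Fisher-Rao descent (Theorem~\ref{thm-convergence-wfr-iteration}), whose machinery includes the descent lemma (Lemma~\ref{lem-FR-KL}), the compact-support/lower-bound estimates of Lemma~\ref{lem-loss-bounds} (needed to show $\inf_{x\in\Omega}(\rho_n*\phi)(x)\geq c_0$ via the monotone decrease $\ell_N(\rho_{n+1})\leq\ell_N(\rho_n)$), equicontinuity plus bounded convergence to get uniform convergence of $\delta\ell_N(\rho_n)$ on compact sets, and Sard's lemma to build a sublevel set that the Wasserstein step cannot leak mass out of. You share the same conceptual skeleton at the end --- support invariance of the multiplicative reweighting, plus the geometric growth $\rho_n(U)\geq(1+\eta\varepsilon)^{n-n_0}\rho_{n_0}(U)$ contradicting $\rho_n(U)\leq 1$, concluded via the optimality condition of Theorem~\ref{thm:NPMLE-basics} and its Remark --- but your treatment of the analytic step is genuinely different and simpler: since $\delta\ell_N(\rho)$ depends on $\rho$ only through the $N$ scalars $(\rho*\phi)(X_i)$, and $(\rho_n*\phi)(X_i)\to(\widehat\rho*\phi)(X_i)>0$ by weak convergence and strict positivity of $\phi$, you obtain uniform convergence of $g_n$ on all of $\RR^d$ from a one-line bound, with the denominators bounded away from zero for free from the limit rather than from a descent argument. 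This means you never invoke monotonicity of $\ell_N$, the KL descent inequality, or any level-set geometry; the price is that your argument is specific to pure Fisher-Rao descent and would not extend to the WFR case, where the Wasserstein pushforward destroys the simple telescoping $f_n=\prod_{k=0}^{n-1}(1-\eta g_k)$ and one genuinely needs the paper's sublevel-set invariance (hence Sard's lemma and the step-size restriction tied to the samples). Your side observation is also accurate: for the pure Fisher-Rao update, $g_k<1$ holds strictly because $\delta\ell_N(\rho_k)<0$ everywhere, so any $\eta\in(0,1]$ keeps the density $1-\eta g_k$ strictly positive and one may take $\eta_0=1$ independently of the data, consistent with the paper's Theorem~\ref{thm-convergence-wfr-iteration} allowing $\gamma\in(0,1]$ unconditionally (the sample-dependent threshold there constrains only the Wasserstein step size).
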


We can also show a similar result for the continuous-time Fisher-Rao gradient flow  $(\rho_{t})_{t\geq0}$ (i.e.~a distributional solution to the PDE \eqref{eq:Fisher-Rao-GF}): if $\rho_{t}\overset{\mathrm{w}}{\to}\widehat{\rho}$ as $t\to\infty$, then $\widehat{\rho}$ is an NPMLE.

Theorem \ref{thm:FR-convergence} provides convergence guarantees
for Fisher-Rao gradient flow/descent when initialized from a well-spread distribution. In practice, however, we
can only initialize from a discrete distribution with $m$ particles
\[
\rho_{0}=\sum_{l=1}^{m}\omega_{0}^{(l)}\delta_{\mu^{(l)}},\text{ where }\mu^{(1)},\ldots,\mu^{(m)}\in\mathbb{R}^{d}\text{ and }\omega_{0}=(\omega_{0}^{(1)},\ldots,\omega_{0}^{(m)})\in\Delta^{m-1}.
\]
The following theorem characterizes the Fisher-Rao gradient flow when
initialized from a discrete distribution $\rho_{0}$ with $m$ mass
points. 

\begin{theorem}[Particle Fisher-Rao gradient flow]\label{thm:FR-ODE}The
	ODE system 
	\begin{equation}
		\dot{\omega}_{t}^{(j)}=-\omega_{t}^{(j)}\left[1-\frac{1}{N}\sum_{i=1}^{N}\frac{\phi\left(X_{i}-\mu^{(j)}\right)}{\sum_{l=1}^{m}\omega_{t}^{(l)}\phi\left(X_{i}-\mu^{(l)}\right)}\right],\qquad1\leq j\leq m\label{eq:FR-ODE}
	\end{equation}
	with initialization $\omega_{0}=[\omega_{0}^{(j)}]_{1\leq j\leq m}\in\Delta^{m-1}$
	has unique solution on any time interval $[0,T]$. Moreover, the flow
	$(\rho_{t})_{t\geq0}$ defined as
	\begin{equation}
		\rho_{t}\coloneqq\sum_{l=1}^{m}\omega_{t}^{(l)}\delta_{\mu^{(l)}}\label{eq:FR-discrete-solution}
	\end{equation}
	is the Fisher-Rao gradient flow, i.e.~\eqref{eq:FR-discrete-solution}
	is a distributional solution to the PDE \eqref{eq:Fisher-Rao-GF}.
	
\end{theorem}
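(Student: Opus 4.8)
The plan is to establish the two assertions separately: first, global well-posedness of the ODE system~\eqref{eq:FR-ODE}, and second, that the curve~\eqref{eq:FR-discrete-solution} solves the PDE~\eqref{eq:Fisher-Rao-GF} in the distributional sense. For the first part I would invoke the Cauchy--Lipschitz (Picard--Lindel\"of) theorem together with an invariance argument confining the trajectory to the compact simplex $\Delta^{m-1}$; for the second I would substitute the ansatz $\rho_t = \sum_{l=1}^{m} \omega_t^{(l)} \delta_{\mu^{(l)}}$ into the weak formulation and verify that it reduces exactly to~\eqref{eq:FR-ODE}.

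For well-posedness, the key preliminary observation is that the locations $\mu^{(1)},\dots,\mu^{(m)}$ are fixed, so the right-hand side of~\eqref{eq:FR-ODE} is a function of the weights alone, with denominators $\sum_{l=1}^{m} \omega^{(l)} \phi(X_i - \mu^{(l)})$. Since $\phi > 0$ everywhere, for any $\omega \in \Delta^{m-1}$ this denominator is a convex combination bounded below by $\min_{l} \phi(X_i - \mu^{(l)}) > 0$, uniformly over the simplex. Hence the vector field is smooth---indeed real-analytic---on an open neighborhood of $\Delta^{m-1}$, and in particular locally Lipschitz, which yields a unique local solution. To upgrade this to a global solution on any $[0,T]$, I would show that $\Delta^{m-1}$ is forward-invariant. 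Nonnegativity follows because each equation has the multiplicative form $\dot\omega^{(j)} = \omega^{(j)} g_j(\omega)$, so each coordinate hyperplane $\{\omega^{(j)}=0\}$ is invariant and positive coordinates stay positive; total mass is conserved by the direct computation $\frac{\mathrm{d}}{\mathrm{d}t}\sum_j \omega_t^{(j)} = 1 - \sum_j \omega_t^{(j)}$, obtained by summing~\eqref{eq:FR-ODE} and using $\frac{1}{N}\sum_i \big(\sum_j \omega^{(j)}\phi(X_i-\mu^{(j)})\big)\big/\big(\sum_l \omega^{(l)}\phi(X_i-\mu^{(l)})\big) = 1$, so that $\sum_j \omega_0^{(j)} = 1$ forces $\sum_j \omega_t^{(j)} \equiv 1$. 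Since the trajectory remains in the compact set $\Delta^{m-1}$ it cannot blow up, and the local solution extends to all of $[0,T]$.

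For the distributional claim, recall that $(\rho_t)_{t\ge0}$ solves~\eqref{eq:Fisher-Rao-GF} weakly if $\frac{\mathrm{d}}{\mathrm{d}t}\int \varphi\,\mathrm{d}\rho_t = -\int \varphi(x)\,[1+\delta\ell_{N}(\rho_t)(x)]\,\mathrm{d}\rho_t(x)$ for every $\varphi \in C_{\mathrm{c}}^{\infty}(\mathbb{R}^d)$, the transport term being absent since the Fisher--Rao geometry induces no velocity field. Substituting~\eqref{eq:FR-discrete-solution} gives $\int \varphi\,\mathrm{d}\rho_t = \sum_l \omega_t^{(l)}\varphi(\mu^{(l)})$, so the left-hand side equals $\sum_l \dot\omega_t^{(l)}\varphi(\mu^{(l)})$ while the right-hand side equals $-\sum_l \omega_t^{(l)}[1+\delta\ell_{N}(\rho_t)(\mu^{(l)})]\varphi(\mu^{(l)})$. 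Matching these for all test functions $\varphi$ is equivalent to the per-atom identity $\dot\omega_t^{(j)} = -\omega_t^{(j)}[1+\delta\ell_{N}(\rho_t)(\mu^{(j)})]$. It then remains to evaluate the first variation~\eqref{eq:first-variation} at an atom: since $(\rho_t*\phi)(X_i) = \sum_l \omega_t^{(l)}\phi(X_i-\mu^{(l)})$ and $\phi$ is even, one obtains $\delta\ell_{N}(\rho_t)(\mu^{(j)}) = -\frac{1}{N}\sum_i \frac{\phi(X_i-\mu^{(j)})}{\sum_l \omega_t^{(l)}\phi(X_i-\mu^{(l)})}$, and substituting recovers~\eqref{eq:FR-ODE} verbatim.

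The genuinely substantive steps are the two invariance claims for $\Delta^{m-1}$ and the uniform lower bound on the denominators, as these are precisely what guarantee a well-defined, non-degenerate, and globally extendable flow. The remainder---the weak-form substitution and the evaluation of $\delta\ell_{N}$ at the atoms---is a direct calculation, and I do not anticipate any real obstacle there beyond routine bookkeeping.
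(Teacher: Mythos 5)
Your proposal is correct, and its second half --- substituting the atomic ansatz into the weak formulation, matching coefficients of $\varphi(\mu^{(j)})$, and evaluating $\delta\ell_N(\rho_t)$ at the atoms --- is the same calculation as the paper's. Where you genuinely diverge is the mechanism for global existence. The paper runs a quantitative Picard--Lindel\"of argument: it lower-bounds the denominators $\sum_l y_l \phi(X_i-\mu^{(l)})$ on an explicit $\varepsilon$-ball around $\omega_0$, computes an explicit Lipschitz constant $C_{\mathsf{Lip}}$ and bound $M$ for the vector field, obtains a unique solution on $[0,t_0]$ with $t_0$ depending only on these constants, verifies $\omega_{t}\in\Delta^{m-1}$ on that interval (positivity via a Gr\"onwall contradiction at the first touching time, using $\dot\omega^{(j)}_t\geq-\omega^{(j)}_t$), and then re-runs the same argument on $[t_0,2t_0],[2t_0,3t_0],\dots$. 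You instead establish forward invariance of $\Delta^{m-1}$ once --- nonnegativity from the multiplicative structure $\dot\omega^{(j)}=\omega^{(j)}g_j(\omega)$, i.e.\ the integrating-factor identity $\omega^{(j)}_t=\omega^{(j)}_0\exp\bigl(\int_0^t g_j(\omega_s)\,\mathrm{d}s\bigr)$, and mass conservation from $\frac{\mathrm{d}}{\mathrm{d}t}S_t=1-S_t$ with $S_0=1$ --- and then invoke the standard continuation criterion: a maximal solution confined to a compact set, on which the field is locally Lipschitz and bounded (your uniform denominator bound $\sum_l\omega^{(l)}\phi(X_i-\mu^{(l)})\geq\min_l\phi(X_i-\mu^{(l)})>0$ over the simplex), cannot escape in finite time. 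The two routes prove the same statement; yours is shorter and avoids the constant bookkeeping, while the paper's explicit constants would be reusable if one wanted quantitative step-size or stability estimates. Two smaller points in your favor: your exponential representation makes positivity immediate where the paper argues by contradiction, and your computation $\frac{\mathrm{d}}{\mathrm{d}t}\sum_j\omega^{(j)}_t=1-\sum_j\omega^{(j)}_t$ is actually more careful than the paper's, which asserts the derivative equals $0$ outright --- an identity valid only once $\sum_j\omega^{(j)}_t=1$ is already known; your linear-ODE argument with $S_0=1$ closes that small loop cleanly.
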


The proof is deferred to Appendix \ref{sec:proof-FR-ODE}. For the purpose of implementation, we can further discretize the gradient flow \eqref{eq:FR-discrete-solution} with respect to time 
and obtain the Fisher-Rao gradient descent; see Algorithm~\ref{alg:FR-GD}. 

\begin{algorithm}[h]
	\caption{Fisher-Rao gradient descent.}
	
	\label{alg:FR-GD}\begin{algorithmic}
		
		\STATE \textbf{{Input}}: data $\{X_{i}\}_{1\leq i\leq n}$, number
		of particles $m$, step sizes $\eta>0$, maximum number of iterations
		$t_{0}$.
		
		\STATE \textbf{{Initialization}}: draw $\mu^{(1)},\ldots,\mu^{(m)}\overset{\text{i.i.d.}}{\sim}\mathsf{Uniform}(\{X_{i}\}_{1\leq i\leq n})$
		and $\omega_{0}^{(1)}=\cdots=\omega_{0}^{(m)}=1/m$.
		
		\vspace{0.2em}
		
		\STATE \textbf{{Updates}}: \textbf{for }$t=0,1,\ldots,t_{0}$ \textbf{do}
		
		\STATE \vspace{-1em}
		\begin{subequations} 
			\begin{align*}
				\omega_{t+1}^{(j)} & =\omega_{t}^{(j)}+\eta\left[\frac{1}{N}\sum_{i=1}^{N}\frac{\phi\left(X_{i}-\mu^{(j)}\right)}{\sum_{l=1}^{m}\omega_{t}^{(j)}\phi\left(X_{i}-\mu^{(l)}\right)}-1\right]\omega_{t}^{(j)},
			\end{align*}
		\end{subequations} for all $j=1,\ldots,m$. Here $\phi(x)=(2\pi)^{-d/2}\exp(-\Vert x\Vert_{2}^{2}/2)$
		is the probability density function of $\mathcal{N}(0,I_{d})$.
		
		\STATE \textbf{{Output}} $\rho=\sum_{j=1}^{m}\omega_{t_{0}}^{(j)}\delta_{\mu^{(j)}}$
		as the (approximate) NMPLE.
		
	\end{algorithmic}
\end{algorithm}

A quick inspection of the pseudo-cpde presented in Algorithm~\ref{alg:FR-GD} reveals a fatal flaw: the locations
of the particles are fixed at their initialization and only the weights of the particles
are updated. This introduces a systematic approximation error that may scale exponentially with the dimension $d$. This diagnosic is supported by our numerical experiments in Section~\ref{sec:numerical}.

\paragraph{Wasserstein gradient descent.}
Wasserstein gradient flows have received significant attention recently both from a theoretical perspective~\citep{ambrosio2008gradient,santambrogio2017euclidean} and as a useful tool in a variety of applications ranging from sampling~\citep{chewi2020gradient}, to variation inference~\citep{lambert2022variational}, as well the theory of neural networks~\citep{chizat2018global,SanAblBlo22}. This practical success is largely enabled by the fact that Wasserstein gradient flows can be implemented using interacting particle systems.

In fact, akin to prior work on shallow neural networks~\citep{chizat2018global, mei2018mean}, the Wasserstein gradient flow of the negative log-likelihood precisely describes the dynamics of gradient descent on the location parameters of the fitted mixture. Interested readers are referred to Theorem \ref{thm:Wass-Euclidean-connection} in Appendix \ref{sec:Wass-theory} for a rigorous statement. 


In Appendix \ref{subsec:Wass-derivation} we derive the gradient flow
of $\ell_{N}(\rho)$ under the Wasserstein geometry, which evolves
according to the PDE
\begin{equation}
	\partial_{t}\rho_{t}=\mathsf{div}\left(\rho_{t}\nabla\delta\ell_{N}\left(\rho_{t}\right)\right)\label{eq:Wass-GF}
\end{equation}
The corresponding discrete time algorithm is
\[
\rho_{t+1}=\left[\mathrm{id}-\eta\nabla\delta\ell_{N}\left(\rho_{t}\right)\right]_{\#}\rho_{t},\qquad t=0,1,\ldots
\]
for some step size $\eta>0$. When initialized from a discrete distribution
\[
\rho_{0}=\frac{1}{m}\sum_{l=1}^{m}\delta_{\mu_{0}^{(l)}},\text{ where }\mu_{0}^{(1)},\ldots,\mu_{0}^{(m)}\in\mathbb{R}^{d},
\]
the following theorem gives a concise characterization of the Wasserstein
gradient flow. The proof can be found in Appendix \ref{sec:proof-Wass-ODE}.

\begin{theorem}[Particle Wasserstein gradient flow]\label{thm:Wass-ODE}The
	ODE system 
	\begin{align}
		\dot{\mu}_{t}^{(j)} & =\frac{1}{N}\sum_{i=1}^{N}\frac{\phi\left(X_{i}-\mu_{t}^{(j)}\right)}{\sum_{l=1}^{m}\omega_{t}^{(j)}\phi\left(X_{i}-\mu_{t}^{(l)}\right)}\left(X_{i}-\mu_{t}^{(j)}\right),\label{eq:Wass-ODE}
	\end{align}
	with initialization $\mu_{0}^{(1)},\ldots\mu_{0}^{(m)}\overset{\text{i.i.d.}}{\sim}\mathsf{Uniform}(\{X_{i}\}_{1\leq i\leq N})$
	has unique solution on any time interval $[0,T]$. Moreover, the flow
	$(\rho_{t})_{t\geq0}$ defined as
	\begin{equation}
		\rho_{t}\coloneqq\frac{1}{m}\sum_{l=1}^{m}\delta_{\mu_{t}^{(l)}}\label{eq:Wass-discrete}
	\end{equation}
	is the Wasserstein gradient flow, i.e.~ \eqref{eq:Wass-discrete}
	is a distributional solution to the PDE  \eqref{eq:Wass-GF}.
	
\end{theorem}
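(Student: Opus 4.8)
The plan is to prove the two assertions separately: first establish well-posedness of the ODE system~\eqref{eq:Wass-ODE}, then verify that the resulting empirical measure is a distributional solution of~\eqref{eq:Wass-GF}. Throughout I use that, for the pure Wasserstein flow, the weights never evolve and remain $\omega_t^{(l)}\equiv 1/m$, so that the denominator appearing in~\eqref{eq:Wass-ODE} equals $\frac{1}{m}\sum_{l}\phi(X_i-\mu_t^{(l)}) = (\rho_t*\phi)(X_i)$.

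For well-posedness, I would view the right-hand side of~\eqref{eq:Wass-ODE} as a vector field $F=(F^{(1)},\ldots,F^{(m)})$ on $(\RR^d)^m$ with $F^{(j)}(\mu^{(1)},\ldots,\mu^{(m)}) = \frac{1}{N}\sum_{i=1}^{N}\frac{\phi(X_i-\mu^{(j)})}{\frac{1}{m}\sum_{l}\phi(X_i-\mu^{(l)})}(X_i-\mu^{(j)})$. Since $\phi$ is everywhere strictly positive and $C^\infty$, each denominator is bounded away from zero on compact sets, so $F$ is $C^\infty$ and in particular locally Lipschitz; the Picard--Lindel\"of theorem then yields local existence and uniqueness. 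To extend the solution to all of $[0,T]$, I would rule out finite-time blow-up via the elementary pointwise bound $\frac{\phi(X_i-\mu^{(j)})}{\frac{1}{m}\sum_{l}\phi(X_i-\mu^{(l)})}\le m$, obtained by retaining only the $l=j$ term in the denominator. This gives the linear growth estimate $\|\dot\mu_t^{(j)}\|\le m\big(\frac{1}{N}\sum_i\|X_i\|+\|\mu_t^{(j)}\|\big)$, and Gr\"onwall's inequality keeps each trajectory bounded on $[0,T]$, precluding blow-up and promoting the local solution to a global one.

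For the second assertion, I would first compute the velocity field explicitly. Using $\nabla_x\phi(x-X_i)=-(x-X_i)\phi(x-X_i)$ together with the first-variation formula~\eqref{eq:first-variation}, one finds $-\nabla\delta\ell_N(\rho)(x)=\frac{1}{N}\sum_i\frac{(X_i-x)\phi(x-X_i)}{(\rho*\phi)(X_i)}$. Evaluating at $x=\mu_t^{(j)}$ with $\rho=\rho_t=\frac{1}{m}\sum_l\delta_{\mu_t^{(l)}}$ shows that~\eqref{eq:Wass-ODE} is exactly $\dot\mu_t^{(j)}=v_t(\mu_t^{(j)})$ for the field $v_t:=-\nabla\delta\ell_N(\rho_t)$. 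I would then verify the weak formulation directly: for any $\varphi\in C_{\mathrm{c}}^{\infty}(\RR^d)$, differentiating $\int\varphi\,\rho_t(\rd x)=\frac{1}{m}\sum_l\varphi(\mu_t^{(l)})$ and applying the chain rule gives $\frac{\mathrm{d}}{\mathrm{d}t}\int\varphi\,\rho_t(\rd x)=\frac{1}{m}\sum_l\langle\nabla\varphi(\mu_t^{(l)}),v_t(\mu_t^{(l)})\rangle=\int\langle\nabla\varphi,v_t\rangle\,\rho_t(\rd x)$, which is precisely the distributional form of $\partial_t\rho_t=-\mathsf{div}(\rho_t v_t)=\mathsf{div}(\rho_t\nabla\delta\ell_N(\rho_t))$ with reaction term $\alpha_t\equiv 0$.

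I expect the only genuine obstacle to be global (rather than merely local) existence: everything hinges on the uniform bound by $m$ on the ratios $\phi(X_i-\mu^{(j)})/(\rho_t*\phi)(X_i)\cdot\frac1m$, which simultaneously prevents the denominators from vanishing and controls the growth of the vector field. Once that bound is secured, both the Gr\"onwall argument and the passage to the weak formulation are routine, since $\rho_t$ is a finite sum of Dirac masses and all interchange-of-limit issues collapse to differentiating a finite sum of smooth functions of $t$.
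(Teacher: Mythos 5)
Your proposal is correct, and two of its three components coincide with the paper's proof: local well-posedness via Picard--Lindel\"of (the denominator is a positive sum of positive terms, so the vector field is smooth, hence locally Lipschitz), and the verification that $\rho_t$ solves \eqref{eq:Wass-GF} distributionally by differentiating the finite sum $\frac{1}{m}\sum_{l}\varphi(\mu_t^{(l)})$ against a test function, which is essentially word-for-word the paper's computation. Where you genuinely diverge is the globalization step. You use the location-free bound $\phi(X_i-\mu^{(j)})\big/\big[\frac{1}{m}\sum_{l}\phi(X_i-\mu^{(l)})\big]\leq m$ (keep only the $l=j$ term), which makes the vector field of globally linear growth, so Gr\"onwall plus the standard escape-from-compacts continuation criterion extends the solution to all of $[0,T]$. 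The paper instead proves a forward-invariance property: it computes $\frac{\mathrm{d}}{\mathrm{d}t}\Vert\mu_t^{(j)}\Vert_2^2$ and shows it is negative whenever $\Vert\mu_t^{(j)}\Vert_2>R\coloneqq\max_{i}\Vert X_i\Vert_2$, so the ball of radius $R$ (which contains the initialization, since particles start at data points) is preserved; on the region $\max_j\Vert z_j\Vert_2\leq 2R$ the denominator is bounded below by an explicit $\delta$, yielding uniform Lipschitz and sup-norm constants and hence a fixed Picard step length $t_0$ that is iterated on $[t_0,2t_0]$, $[2t_0,3t_0]$, and so on. Your route is shorter and avoids tracking explicit constants, since the bound by $m$ holds everywhere; the paper's route buys the stronger qualitative fact that all particles remain in the radius-$R$ ball for all time (rather than merely being exponentially bounded in $t$), an estimate it reuses verbatim in the Wasserstein--Fisher--Rao case (Theorem \ref{thm:WFR-ODE}). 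The only implicit ingredient in your identification of \eqref{eq:Wass-ODE} with $\dot{\mu}_t^{(j)}=-\nabla\delta\ell_N(\rho_t)(\mu_t^{(j)})$ via \eqref{eq:first-variation} is the symmetry $\phi(x)=\phi(-x)$, which indeed holds for the Gaussian kernel of the theorem.
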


By time discretization, we also have the Wasserstein gradient descent
algorithm; see Algorithm \ref{alg:Wasserstein-GD}. 
\begin{algorithm}[h]
	\caption{Wasserstein gradient descent.}
	
	\label{alg:Wasserstein-GD}\begin{algorithmic}
		
		\STATE \textbf{{Input}}: data $\{X_{i}\}_{1\leq i\leq n}$, number
		of particles $m$, step sizes $\eta>0$, maximum number of iterations
		$t_{0}$.
		
		\STATE \textbf{{Initialization}}: draw $\mu_{0}^{(1)},\ldots,\mu_{0}^{(m)}\overset{\text{i.i.d.}}{\sim}\mathsf{Unif}(\{X_{i}\}_{1\leq i\leq n})$.
		
		\vspace{0.2em}
		
		\STATE \textbf{{Updates}}: \textbf{for }$t=0,1,\ldots,t_{0}$ \textbf{do}
		
		\STATE \vspace{-1em}
		\begin{subequations} 
			\begin{align*}
				\mu_{t+1}^{(j)} & =\mu_{t}^{(j)}+\eta\frac{1}{N}\sum_{i=1}^{N}\frac{\phi\left(X_{i}-\mu_{t}^{(j)}\right)}{m^{-1}\sum_{l=1}^{m}\phi\left(X_{i}-\mu_{t}^{(l)}\right)}\left(X_{i}-\mu_{t}^{(j)}\right),
			\end{align*}
		\end{subequations} for all $j=1,\ldots,m$. Here $\phi(x)=(2\pi)^{-d/2}\exp(-\Vert x\Vert_{2}^{2}/2)$
		is the probability density function of $\mathcal{N}(0,I_{d})$.
		
		\STATE \textbf{{Output}} $\rho=m^{-1}\sum_{j=1}^{m}\delta_{\mu_{t_{0}}^{(j)}}$
		as the (approximate) NMPLE.
		
	\end{algorithmic}
\end{algorithm}

Unlike the Fisher-Rao gradient descent algorithm, the approximation error of the Wasserstein flow may be mitigated since it allows particles to evolve in space. Nevertheless, this movement can take a long time to move mass from one part of the space at initialization to a distant part of the space. Instead, Wasserstein-Fisher-Rao gradient descent allows for particles to not only evolve in space but also have changing weights, thus greatly improving the performance compared to vanilla Wasserstein gradient descent. This superiority is, again, demonstrated in numerical experiments. In particular, owing to Theorem \ref{thm:Wass-Euclidean-connection}, these experiments indicate that WFR gradient descent dominates the classical gradient descent on the location parameters of the mixing distribution.

The reader will notice here the absence of convergence results analogous to Theorem~\ref{thm-convergence-wfr-iteration} for WFR and Theorem~\ref{thm:FR-convergence} for Fisher-Rao gradient descent. Unfortunately, we were not able to derive such convergence results. The dynamics of the Wasserstein gradient flow are quite intricate and Appendix~\ref{sec:Wass-theory} is devoted to establishing partial results that shed light on them.

\section{Numerical experiments \label{sec:numerical}}

In this section, we conduct a series of numerical experiments to validate
and complement our theory. We consider two mixing distributions in
$\mathbb{R}^{d}$: the first one is a continuous isotropic Gaussian
distribution 
\[
\rho_{\mathsf{c}}=\mathcal{N}\left(0,I_{d}\right),
\]
and the other one is a discrete distribution motivated by \citet{jin2016local}
\[
\rho_{\mathsf{d}}=\left(\frac{1}{3}\delta_{-1}+\frac{1}{3}\delta_{1}+\frac{1}{3}\delta_{10}\right)\otimes\left(\delta_{0}\right)^{\otimes(d-1)}.
\]
The second mixing distribution $\rho_{\mathsf{d}}$ is a product distribution
with its first margin being a uniform distribution over $\{-1,1,10\}$
and the rest $d-1$ margins being a degenerate distribution taking
a constant zero. According to \citet{jin2016local}, classical EM and
gradient descent algorithm fail to learn the location of each component
of this Gaussian mixture even with infinite samples and known
weights.

\subsection{Instability of classical algorithms\label{subsec:numerical-1}}

\begin{figure}[t]
\centering

\begin{tabular}{cc}
\qquad(a) bad local minima  & \qquad(b) global minima\tabularnewline
\includegraphics[scale=0.4]{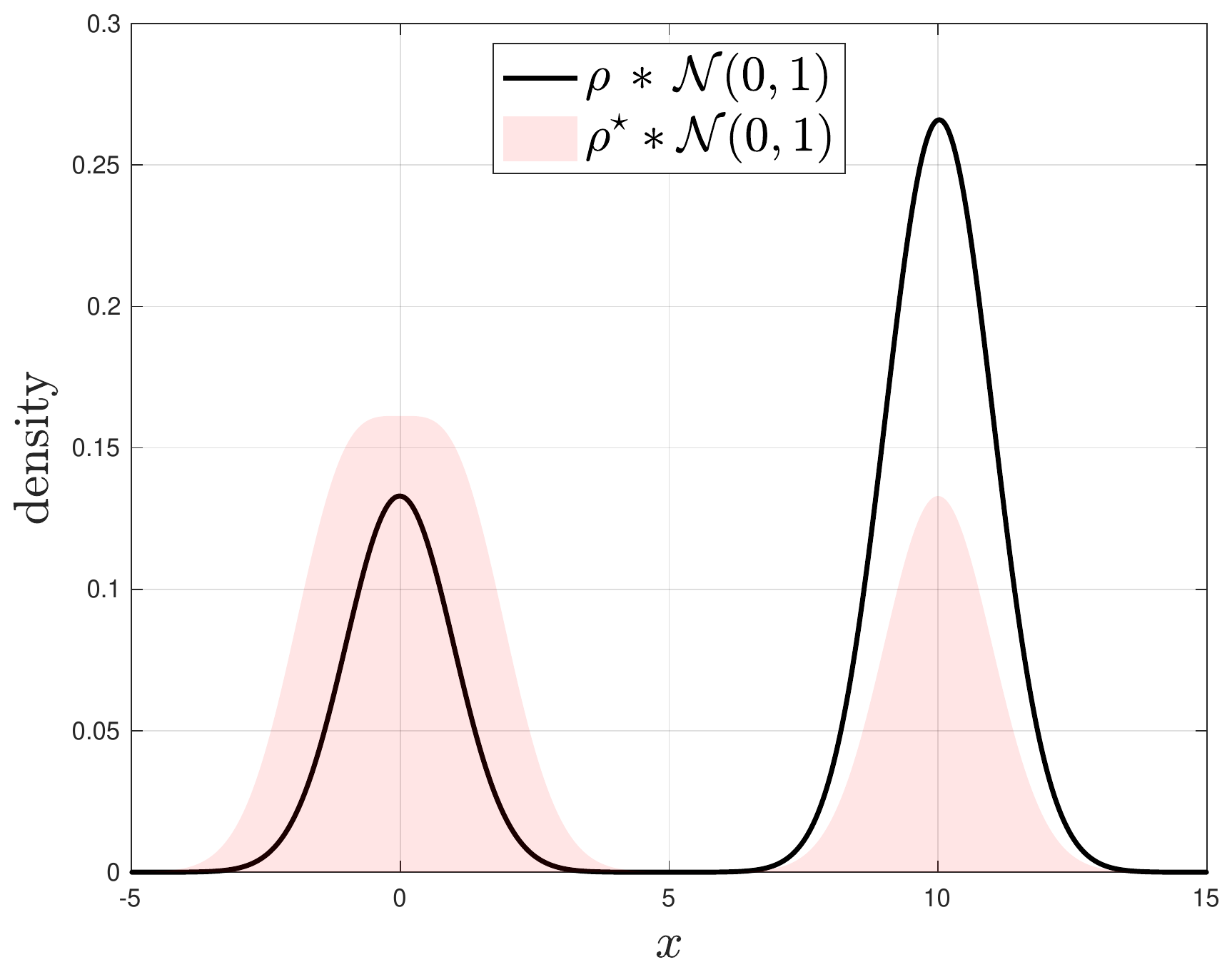} & \includegraphics[scale=0.4]{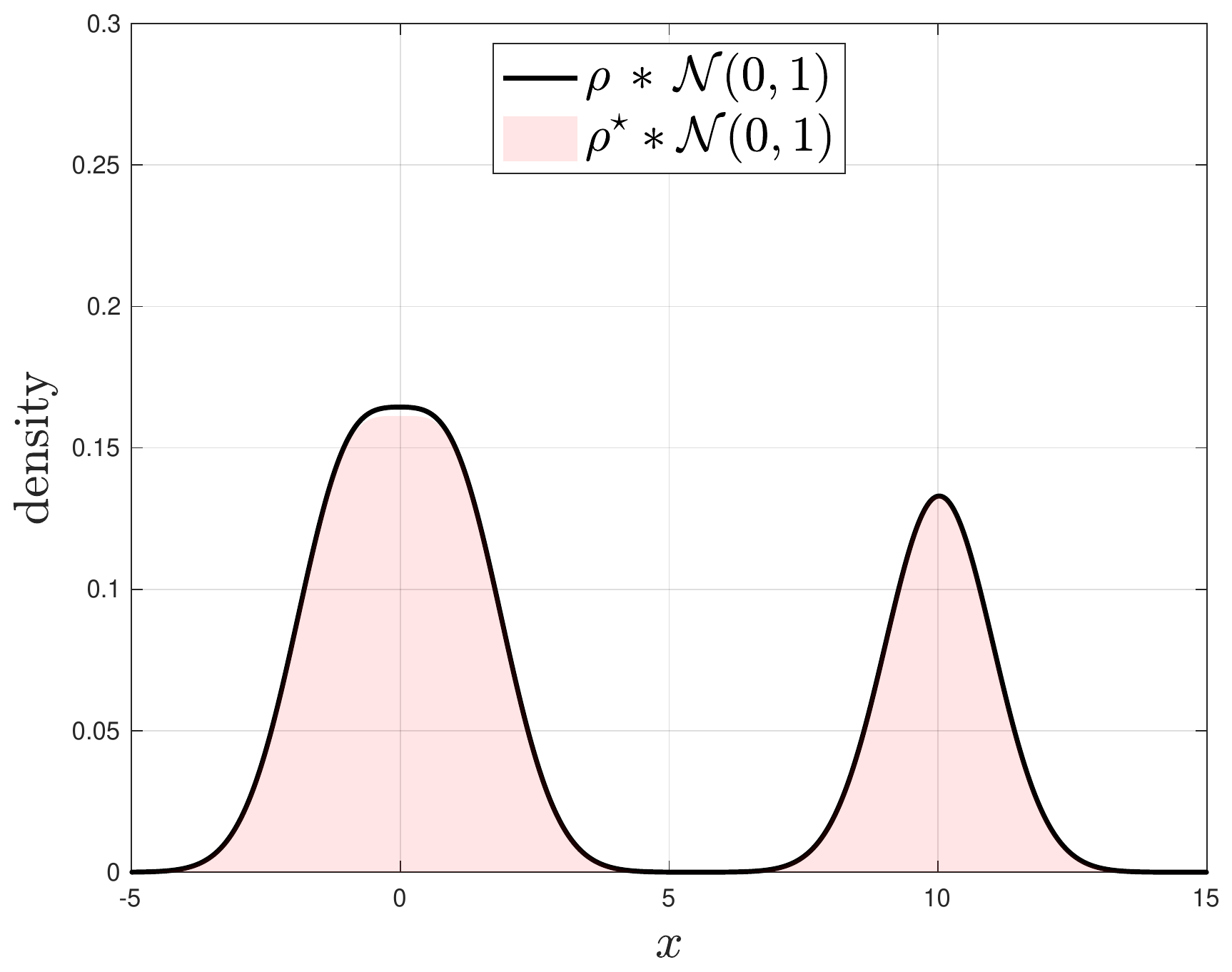}\tabularnewline
\end{tabular}

\caption{The density plots of $\rho*\mathcal{N}(0,1)$ (learned Gaussian mixture)
and $\mathcal{\rho^{\star}*\mathcal{N}}(0,1)$ (true Gaussian mixture),
where $\rho=\frac{1}{3}\delta_{\mu_{1}}+\frac{1}{3}\delta_{\mu_{2}}+\frac{1}{3}\delta_{\mu_{3}}$,
$\rho^{\star}=\rho_{\mathsf{d}}$ and $(\mu_{1},\mu_{2},\mu_{3})$
is the output returned by EM and GD algorithms. Figure \ref{fig:stability}(a)
shows the learned Gaussian mixture when $(\mu_{1},\mu_{2},\mu_{3})$
is a bad local minimum of $\ell$, and Figure \ref{fig:stability}(b)
corresponds to the case when $(\mu_{1},\mu_{2},\mu_{3})$ is a global
minimum of $\ell$. \label{fig:stability}}
\end{figure}

In this section, we compare the two classical algorithms for learning
Gaussian mixture when the mixing distribution $\rho^\star=\rho_{\mathsf{d}}$: (i) expectation--maximization (EM) algorithm, and
(ii) the gradient descent (GD) algorithm, with Wasserstein-Fisher-Rao
gradient descent algorithm (cf.~Algorithm \ref{alg:WFR-GD}) proposed
in this paper. For the first two algorithms, we assume that the number
of mixture components $k=3$ and the weights $\omega_{1}^{\star}=\omega_{2}^{\star}=\omega_{3}^{\star}=1/3$
are known a priori, and implement the standard EM and GD algorithms
for solving the MLE
\[
\min_{\mu_1, \mu_2, \mu_3}\,\ell\left(\mu_{1},\mu_{2},\mu_{3}\right)\coloneqq-\frac{1}{N}\sum_{i=1}^{N}\log\bigg\{\frac{1}{3}\sum_{j=1}^{3}\frac{1}{\left(2\pi\right)^{d/2}}\exp\left[-\frac{1}{2}\left\Vert X_{i}-\mu_{j}\right\Vert _{2}^{2}\right]\bigg\}.
\]
The updating rule for EM algorithm is given by
\begin{equation}
\mu_{j}^{t+1}=\frac{\sum_{i=1}^{N}\omega_{i,j}^{t}X_{i}}{\sum_{i=1}^{N}\omega_{i,j}^{t}}\qquad\text{where}\qquad\omega_{i,j}^{t}=\frac{\omega_{j}^{\star}\phi\left(X_{i};\mu_{j}^{t},I_{d}\right)}{\sum_{l=1}^{3}\omega_{l}^{\star}\phi\left(X_{i};\mu_{l}^{t},I_{d}\right)}\quad\forall\,i\in\left[N\right].\label{eq:EM-update}
\end{equation}
for all $j=1,2,3$ and $t\geq0$, with random initialization from
the samples $\mu_{1}^{0},\mu_{2}^{0},\mu_{3}^{0}\overset{\text{i.i.d.}}{\sim}\mathsf{Uniform}(\{X_{i}\}_{1\leq i\leq N})$.
The GD algorithm coincides with the Wasserstein gradient descent (cf.~Algorithm
\ref{alg:Wasserstein-GD}) with $m=3$ particles.

We consider the one-dimensional setting (i.e.~$d=1$) for simplicity
of visualization, and this turns out to be enough for showing the
instability of EM and GD. We generate $N=1500$ samples $\{X_{i}\}_{1\leq i\leq N}$
from $\rho^{\star}*\mathcal{N}(0,1)$. Then we fix these samples and
run $100$ independent trials of the three algorithms. For EM, we
run $200$ iterations. For GD and Wasserstein-Fisher-Rao gradient
descent, we set all the step sizes to be $\eta=0.1$ and run $t_{0}=1000$
iterations. Both EM and GD have two possible outputs: (i) the first
one is $\mu_{1}\approx\mu_{2}\approx10$ and $\mu_{3}\approx0$ (up
to permutation), which is a bad local minimum of $\ell$; (ii) the
second one is $\mu_{1}\approx-1$, $\mu_{2}\approx1$ and $\mu_{3}\approx10$,
which is the global minimum of $\ell$. Figure \ref{fig:stability}
displays the learned Gaussian mixtures correspond to these two outputs,
and it is clear that both algorighms fail to learn the true Gaussian
mixture when they converge to the bad local minima. In the 100 independent
trials, EM converges to the bad local minimum for $23$ times, while
GD converges to the bad local minimum for $32$ times, both exhibiting instability
vis-\`{a}-vis random initialization. In contrast, as we will show in Section
\ref{subsec:numerical-3}, Wasserstein-Fisher-Rao gradient descent
with number of particles $m=500$ converges to NPMLE stably and learns
the Gaussian mixture as in Figure \ref{fig:optimality}(c). 

\subsection{Superiority of Wasserstein-Fisher-Rao gradient descent \label{subsec:numerical-2}}

\begin{figure}[t]
\centering

\begin{tabular}{cc}
\qquad(a) training error for discrete $\rho^{\star}$ & \qquad(b) test error for discrete $\rho^{\star}$\tabularnewline
\includegraphics[scale=0.4]{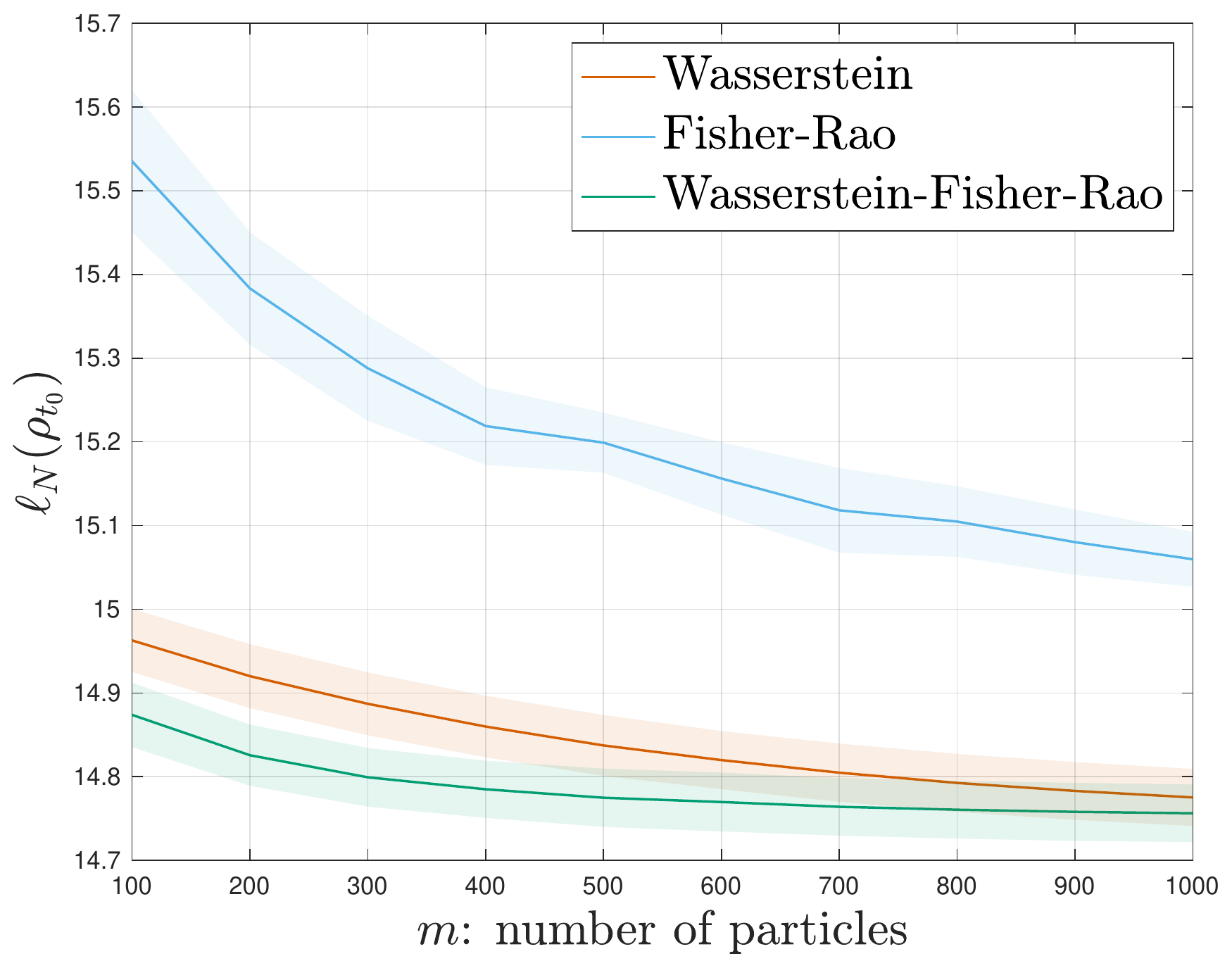} & \includegraphics[scale=0.4]{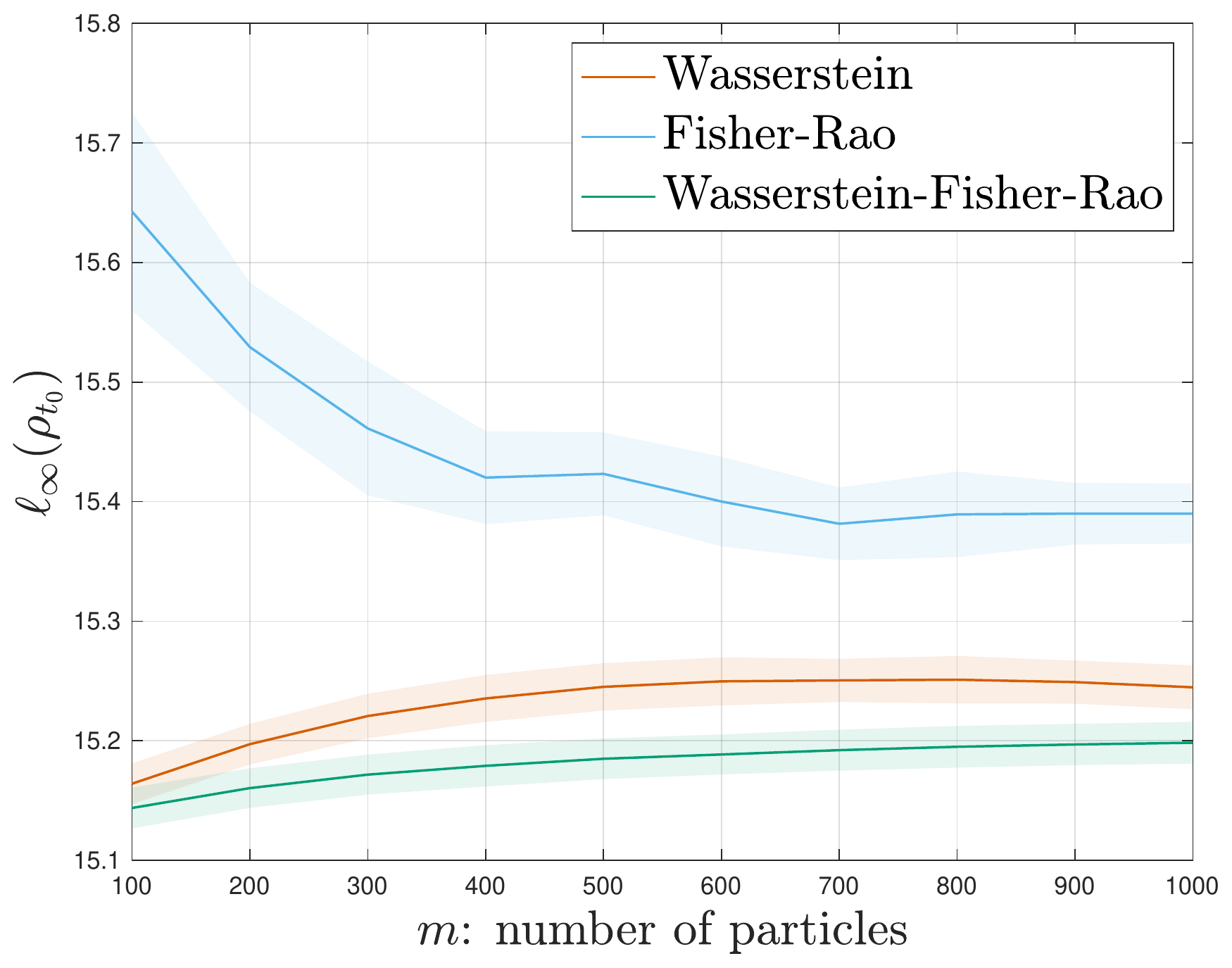}\tabularnewline
\qquad(c) training error for continuous $\rho^{\star}$ & \qquad(d) test error for continuous $\rho^{\star}$\tabularnewline
\includegraphics[scale=0.4]{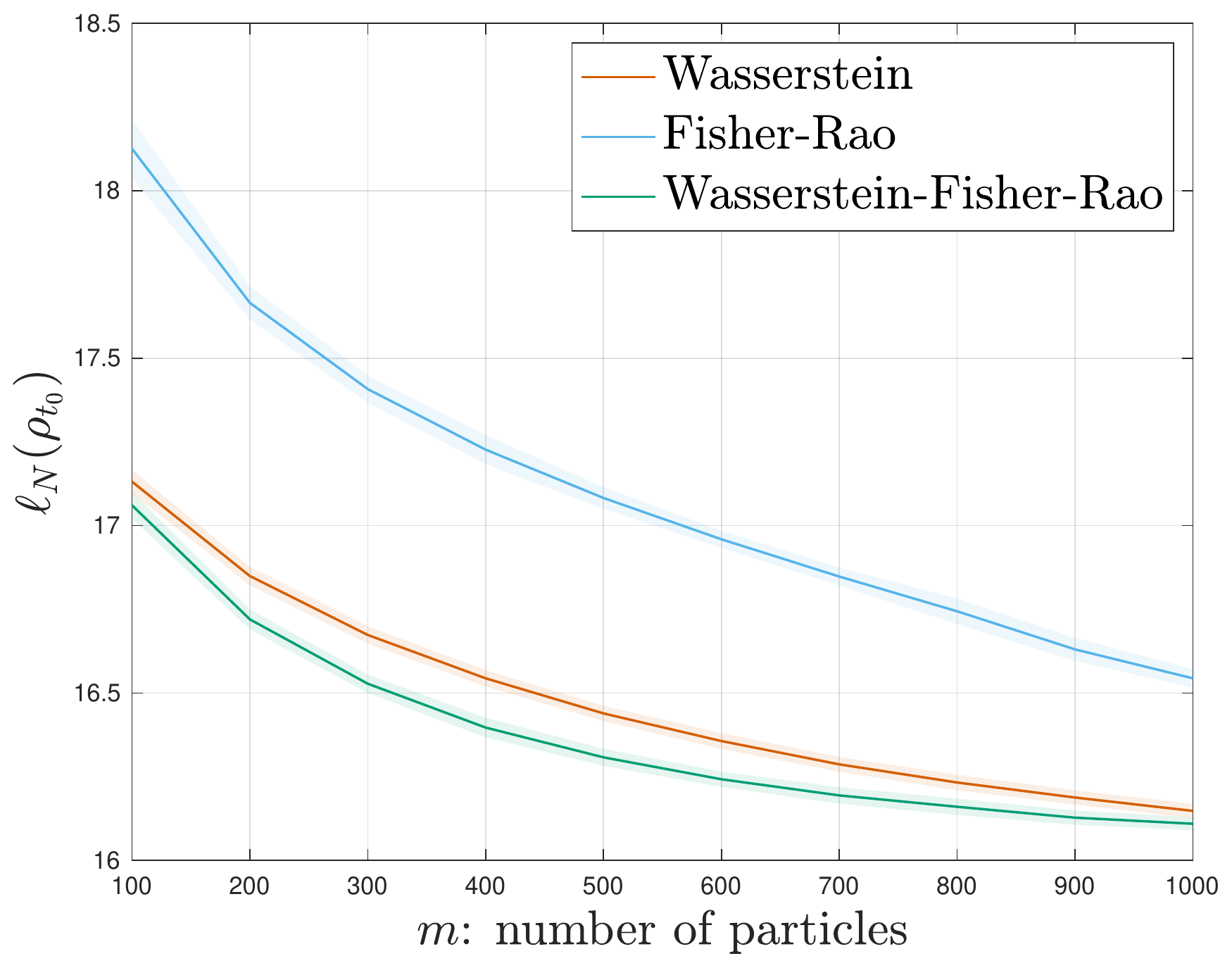} & \includegraphics[scale=0.4]{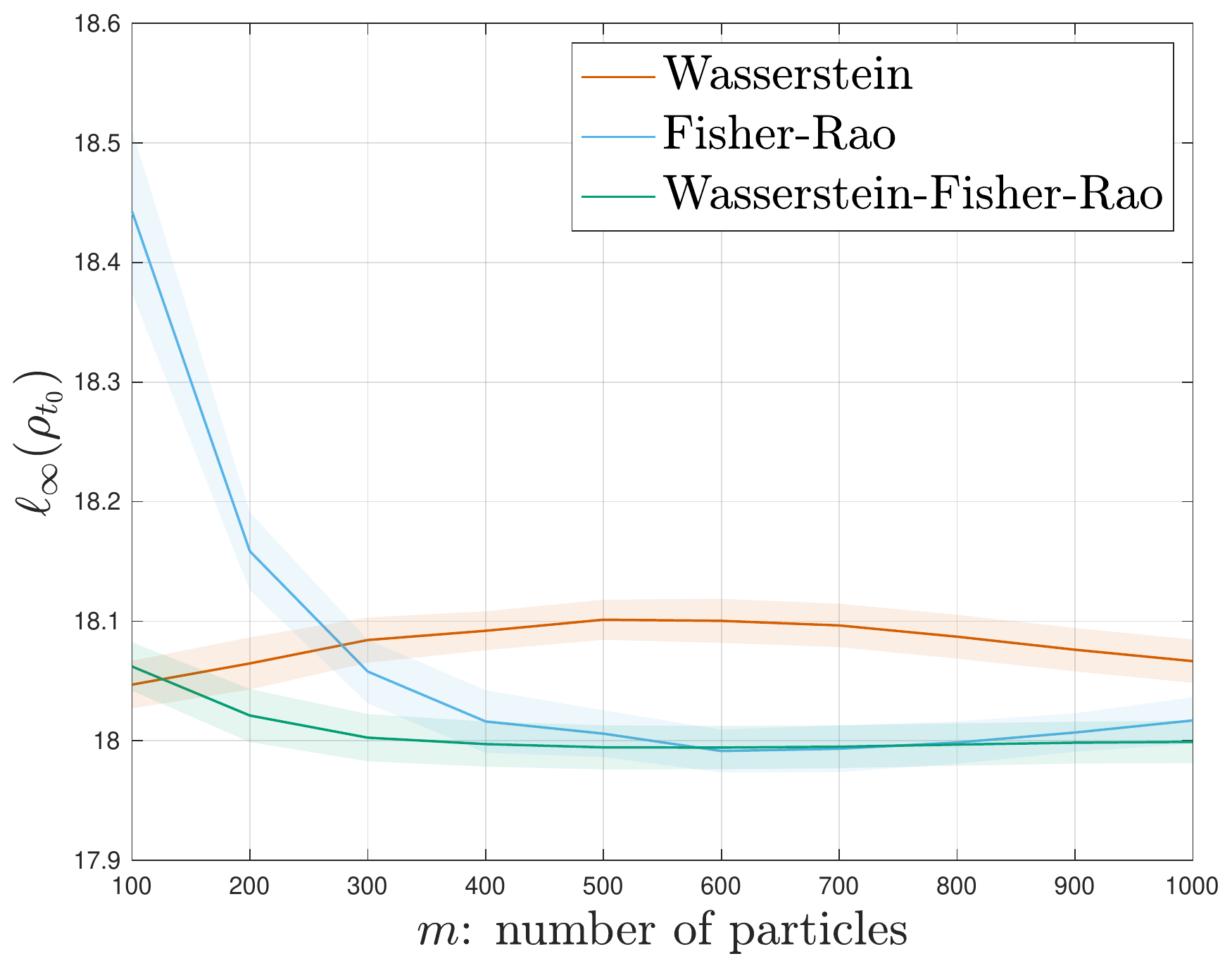}\tabularnewline
\end{tabular}

\caption{Training or testing error (with error bars) of the three algorithms
vs.~the number of particles. The training and testing errors
are evaluated using $\ell_{N}(\rho)$ and $\ell_{\infty}(\rho)$ respectively.
Figure (a) an (b) display the training and testing errors when the
mixing distribution $\rho^{\star}=\rho_{\mathsf{d}}$ is discrete,
while Figure (c) and (d) show the training and testing errors when
the mixing distribution $\rho^{\star}=\rho_{\mathsf{c}}$ is continuous.
The results are reported over $20$ independent trials for $N=1500$,
$d=10$, and $t_{0}=1000$. \label{fig:loss_vs_m}}
\end{figure}

\begin{figure}[t]
\centering

\begin{tabular}{cc}
\qquad(a) training error for discrete $\rho^{\star}$ & \qquad(b) test error for discrete $\rho^{\star}$\tabularnewline
\includegraphics[scale=0.4]{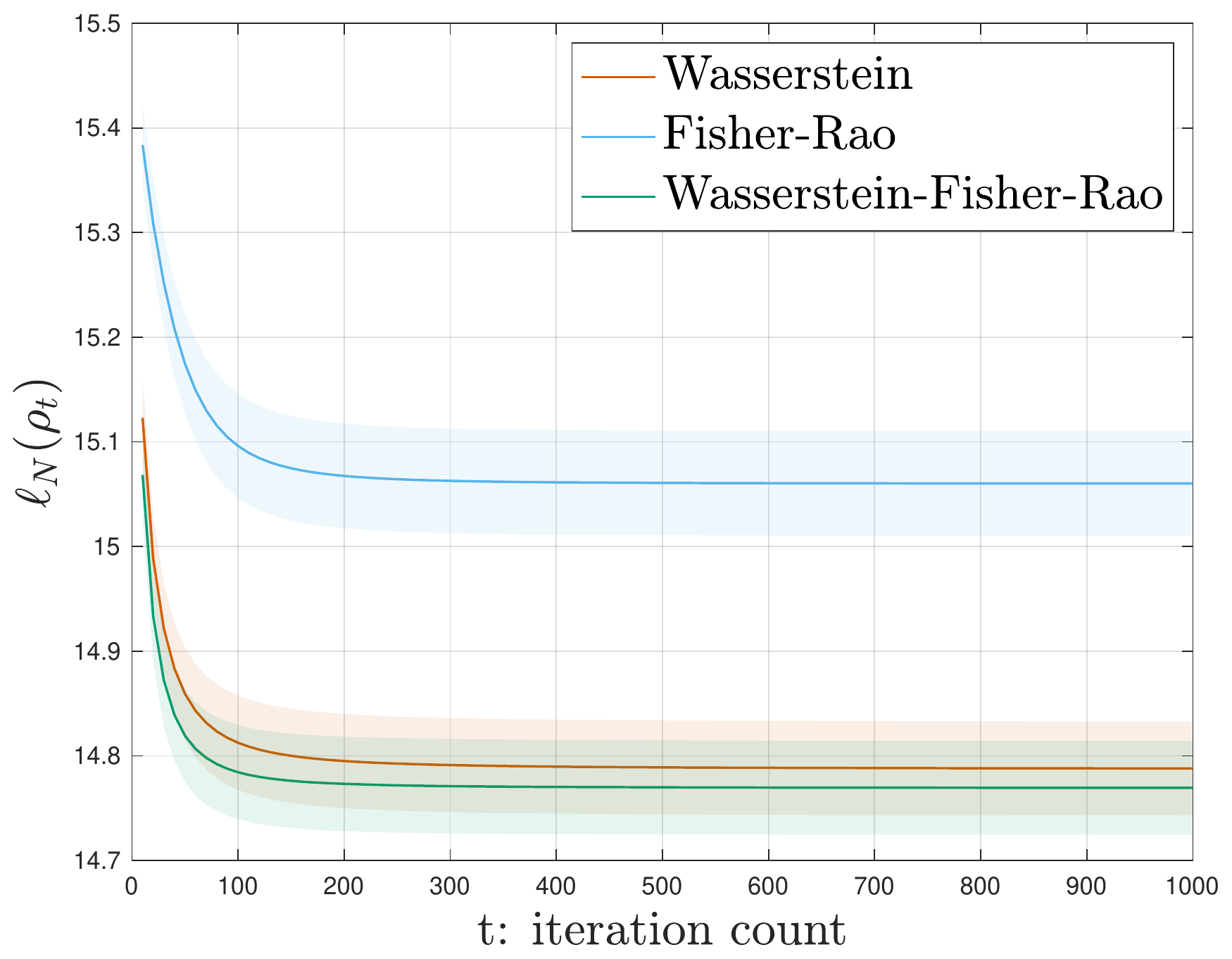} & \includegraphics[scale=0.4]{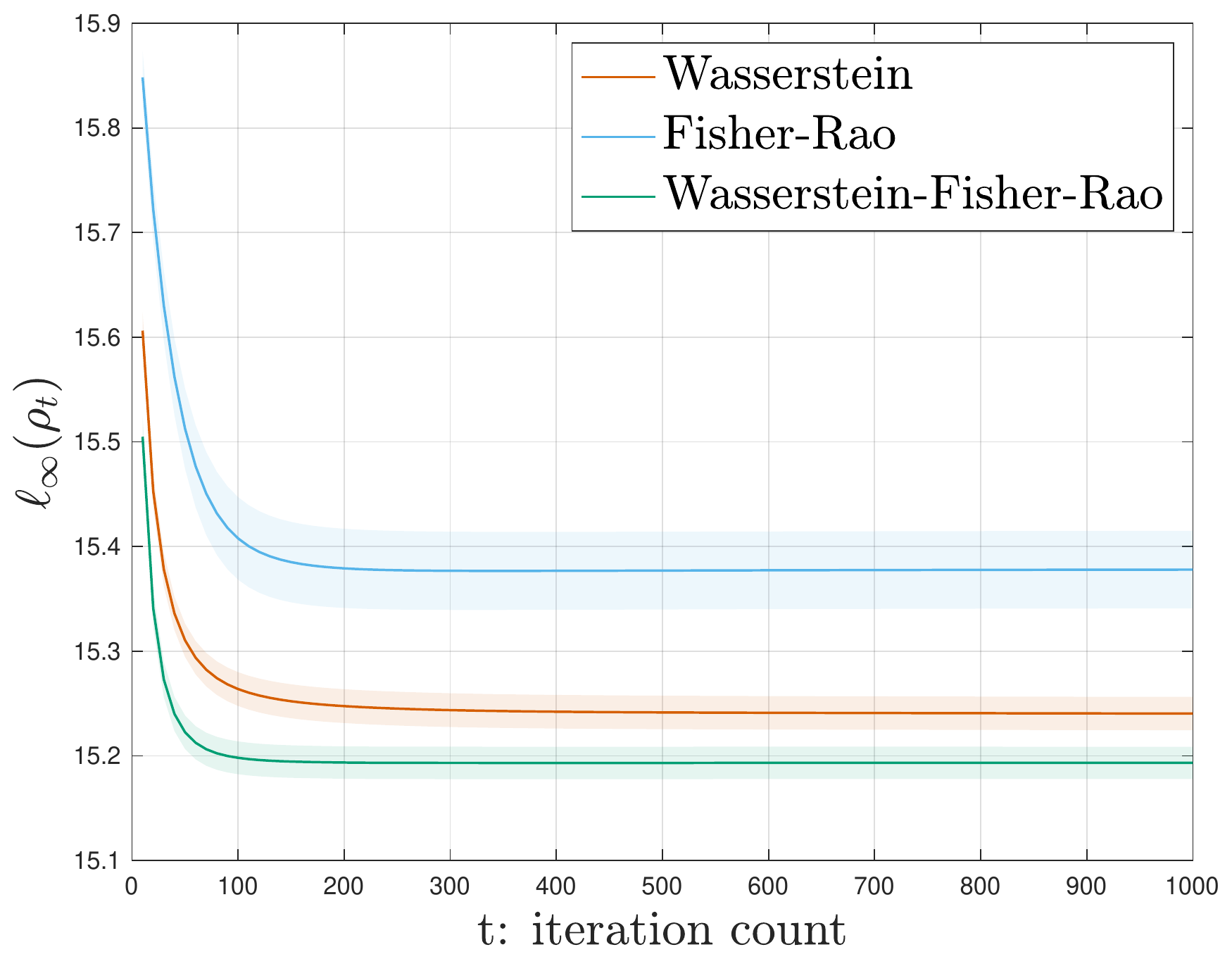}\tabularnewline
\qquad(c) training error for continuous $\rho^{\star}$ & \qquad(d) test error for continuous $\rho^{\star}$\tabularnewline
\includegraphics[scale=0.4]{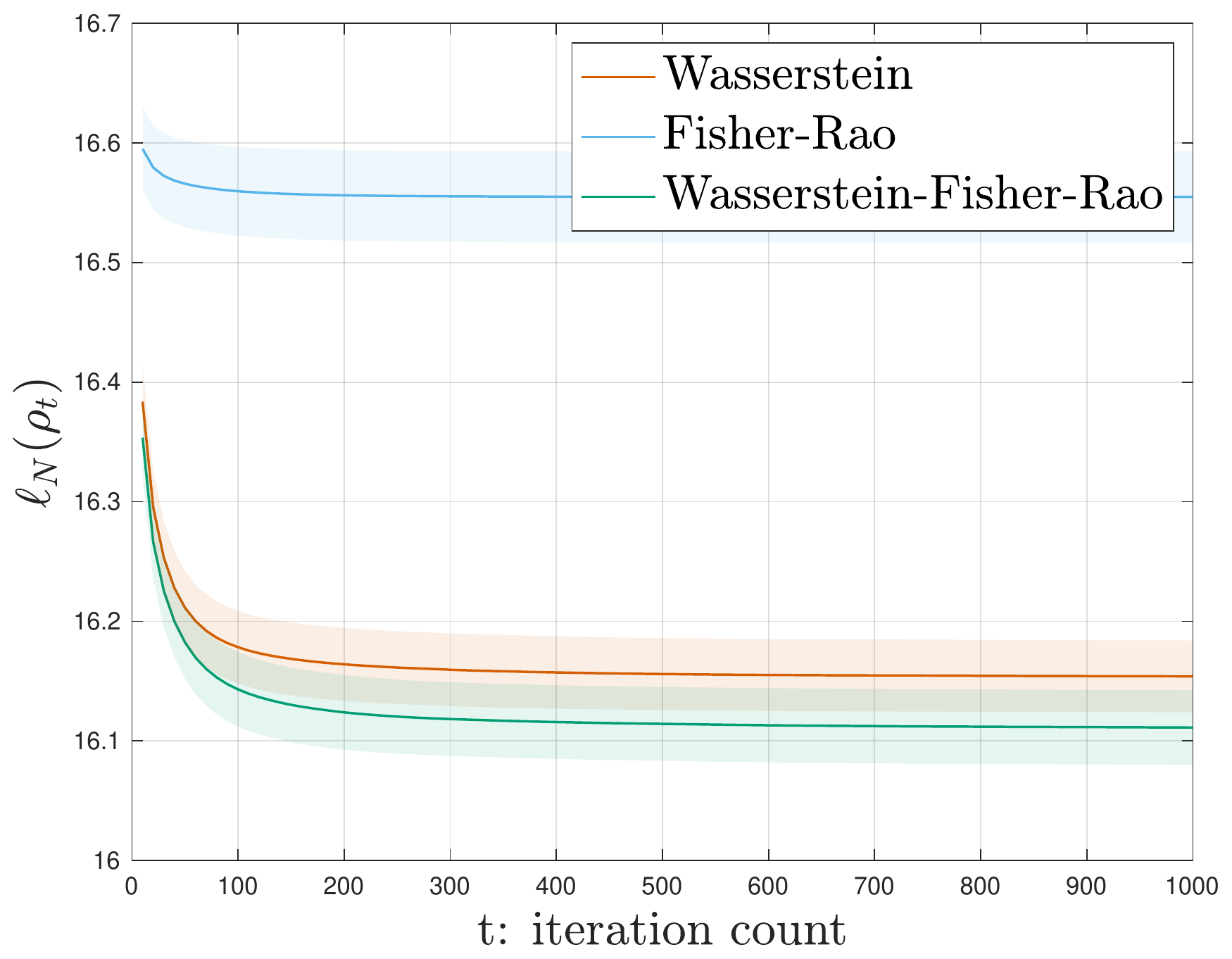} & \includegraphics[scale=0.4]{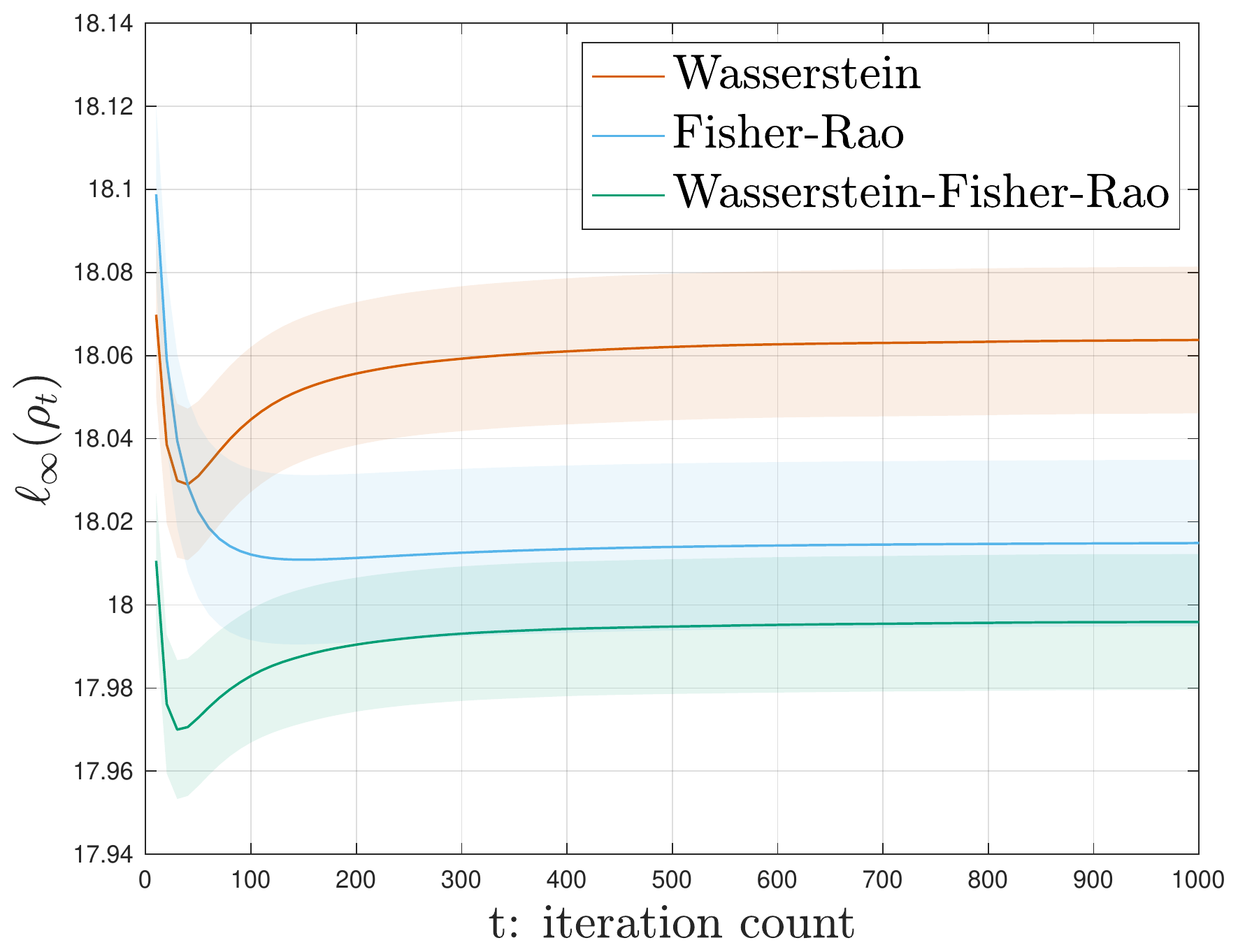}\tabularnewline
\end{tabular}

\caption{Training or testing error (with error bars) of the three algorithms
vs.~the iteration count. The training and the testing errors are
evaluated using $\ell_{N}(\rho_{t})$ and $\ell_{\infty}(\rho_{t})$
respectively. Figure (a) an (b) display training and testing errors
when the mixing distribution $\rho^{\star}=\rho_{\mathsf{d}}$ is
discrete, while Figure (c) and (d) show training and testing errors
when the mixing distribution $\rho^{\star}=\rho_{\mathsf{c}}$ is
continuous. The results are reported over $20$ independent trials
for $N=1500$, $d=10$, and $m=500$. \label{fig:loss_vs_iter}}
\end{figure}

In this section we compare the empirical performance of three gradient
descent algorithms studied in this paper: (i) Fisher-Rao gradient
descent (cf.~Algorithm \ref{alg:FR-GD}), (ii) Wasserstein-Fisher-Rao
gradient descent (cf.~Algorithm \ref{alg:WFR-GD}) and (iii) Wasserstein
gradient descent (cf.~Algorithm \ref{alg:Wasserstein-GD}). We set
the sample size $N=1500$, the dimension $d=10$, maximum number of
iterations $t_{0}=1000$, the step size $\eta=10^{-1}$ for Algorithm
\ref{alg:FR-GD} and Algorithm \ref{alg:Wasserstein-GD}, and $\eta = 10^{-2}$ for Algorithm \ref{alg:WFR-GD}. Figure \ref{fig:loss_vs_m}(a) displays
the negative log-likelihood $\ell_{N}(\rho)$ (with one standard deviation
error bars) vs.~the number of particles $m$ over 20 independent
trials for the three algorithms. Unlike the previous experiment, we
generate fresh samples $\{X_{i}\}_{1\leq i\leq N}$ in each independent
trial. As we can see, the loss decreases as we use more particles,
and Wasserstein-Fisher-Rao gradient descent achieves the smallest
loss uniformly for all $m$. It can also be observed that the marginal
benefit of increasing the number of particles becomes negligible for
Wasserstein-Fisher-Rao gradient descent when $m>500$. Similarly,
Figure \ref{fig:loss_vs_m}(b) depicts the negative log-likelihood
$\ell_{N}(\rho_{t})$ (with one standard deviation error bars) vs.~the
iteration count $t$ over 20 independent trials for the three algorithms.
We can see that Wasserstein-Fisher-Rao gradient descent again achieves
the smallest loss uniformly in all iteration, confirming again the
superiority of the algorithm.

\subsection{Certifying the optimality condition for NPMLE in one dimension \label{subsec:numerical-3}}

\begin{figure}[t]
\centering

\begin{tabular}{cc}
\qquad(a) optimality gap for discrete $\rho^{\star}$ & \qquad(b) optimality gap for continuous $\rho^{\star}$\tabularnewline
\includegraphics[scale=0.4]{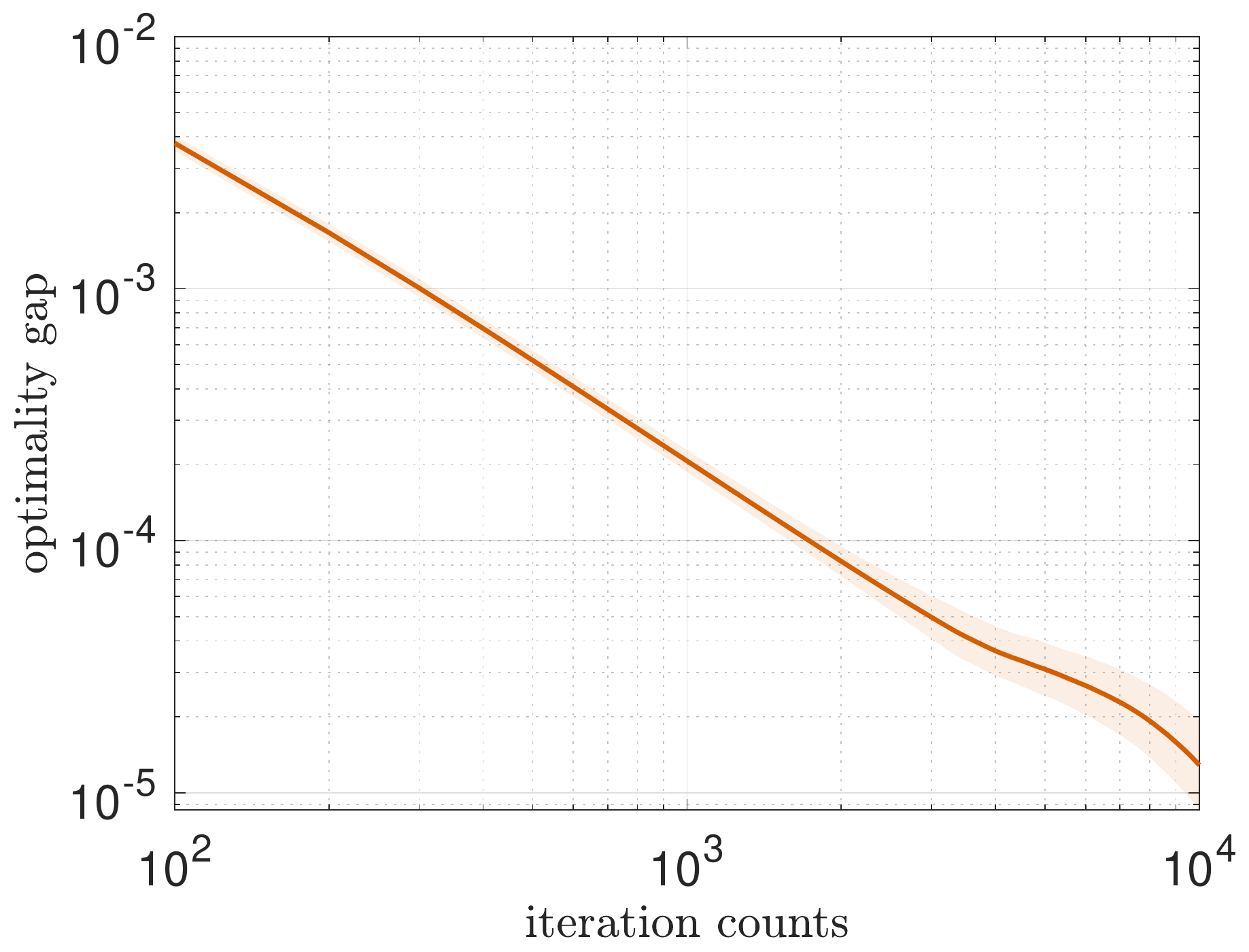} & \includegraphics[scale=0.4]{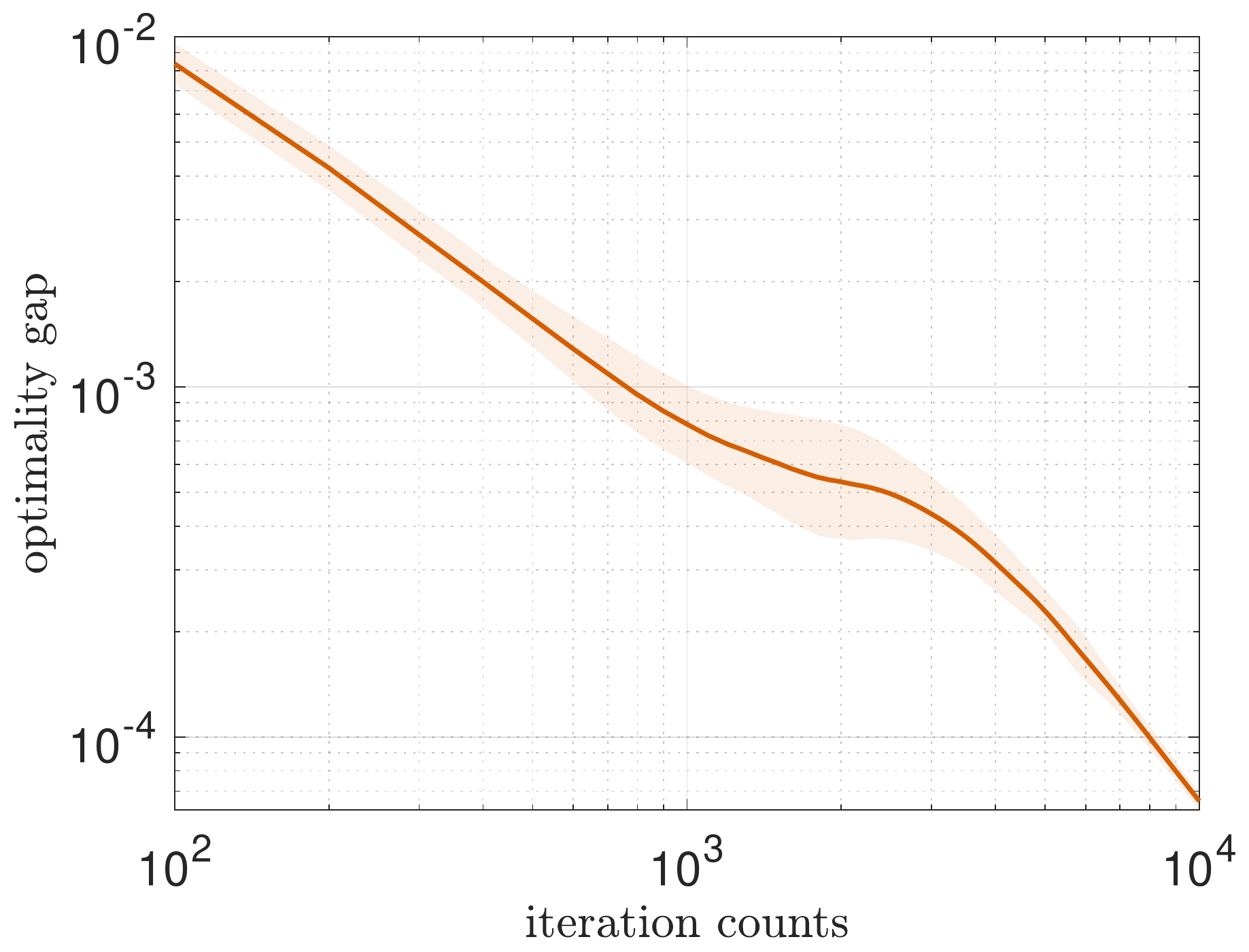}\tabularnewline
\qquad(c) density plot for discrete $\rho^{\star}$ & \qquad(d) density plot for continuous $\rho^{\star}$\tabularnewline
\includegraphics[scale=0.4]{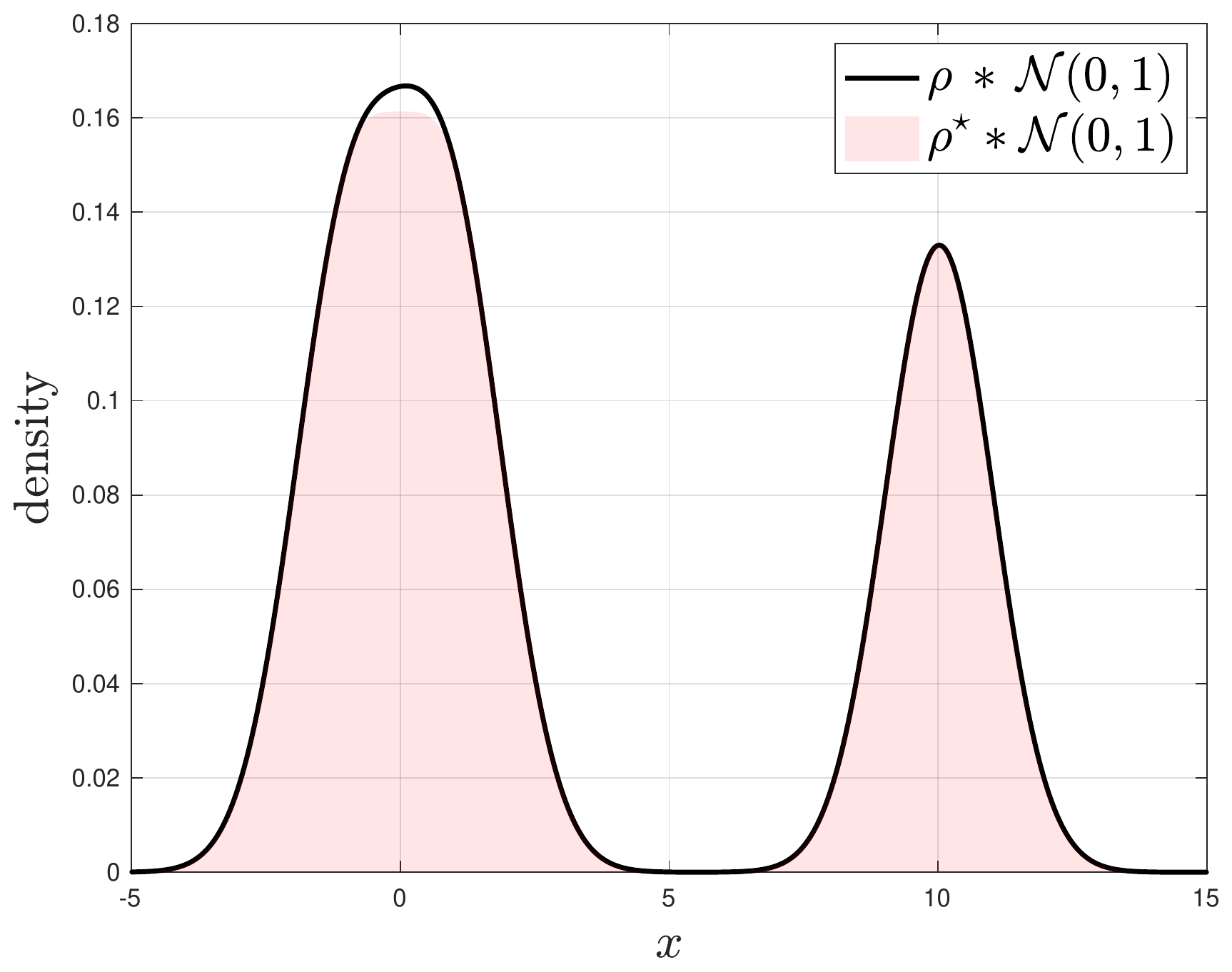} & \includegraphics[scale=0.4]{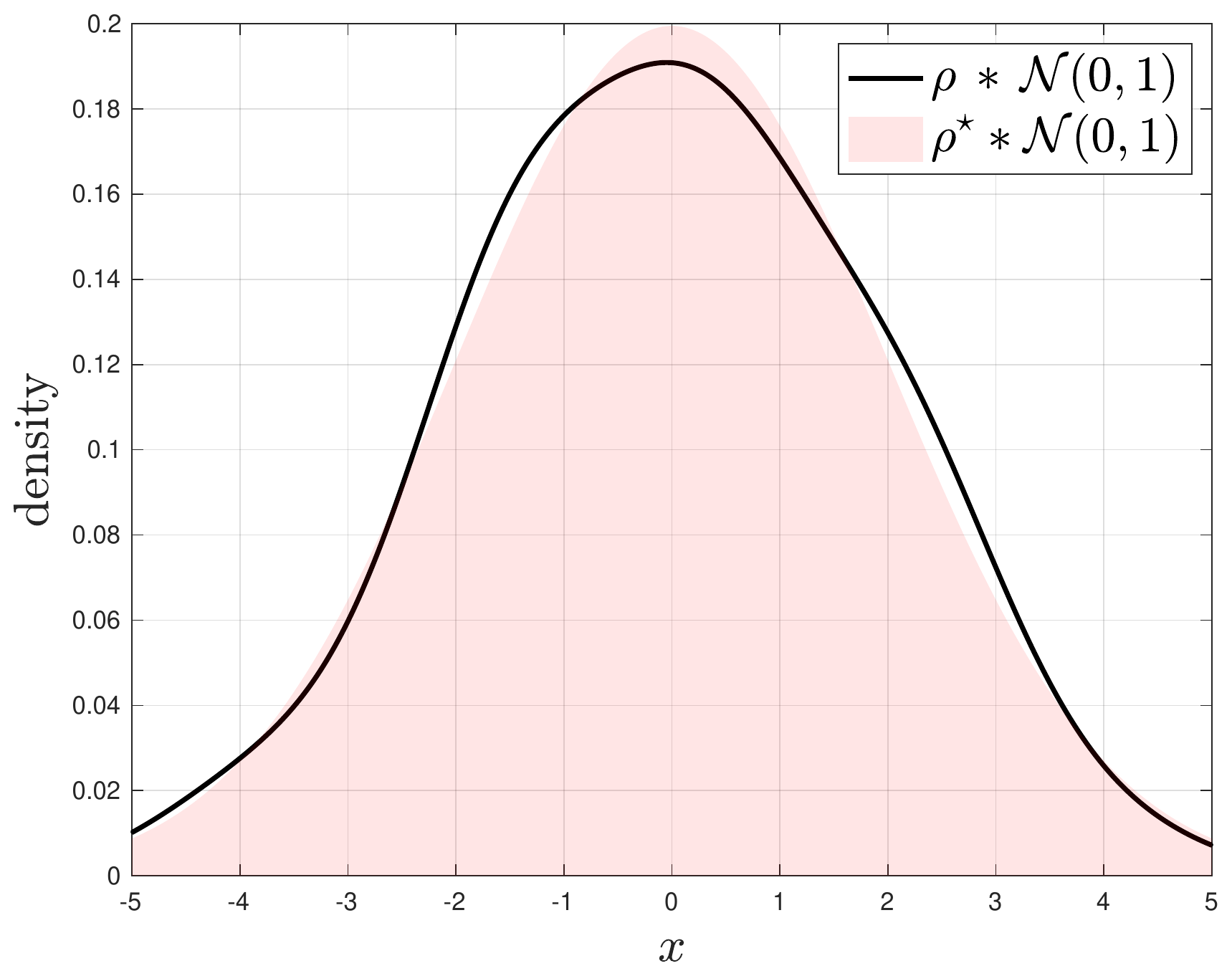}\tabularnewline
\end{tabular}

\caption{Figures (a)-(b) display sub-optimality gaps (cf.~(\ref{eq:suboptimality-gap}))
vs.~iteration count for Wasserstein-Fisher-Rao gradient descent,
with discrete mixing distribution $\rho^{\star}=\rho_{\mathsf{d}}$
in Figure (a) and $\rho^{\star}=\rho_{\mathsf{c}}$ in Figure (b).
The error bars are computed over $20$ independent trials. Figures
(c)-(d) are density plots of $\rho$ and $\rho^{\star}$ convolved
with standard Gaussian, with discrete mixing distribution $\rho^{\star}=\rho_{\mathsf{d}}$
in (c) and continuous mixing distribution $\rho^{\star}=\rho_{\mathsf{c}}$
in (d). The results are reported for $N=1500$, $d=1$ and $m=500$.
\label{fig:optimality}}
\end{figure}

\begin{figure}[t]
\centering

\begin{tabular}{c}
\qquad(a) first variation $\delta\ell_{N}(\rho)$ for discrete $\rho^{\star}=\frac{1}{3}\delta_{-1}+\frac{1}{3}\delta_{1}+\frac{1}{3}\delta_{10}$\tabularnewline
\includegraphics[scale=0.55]{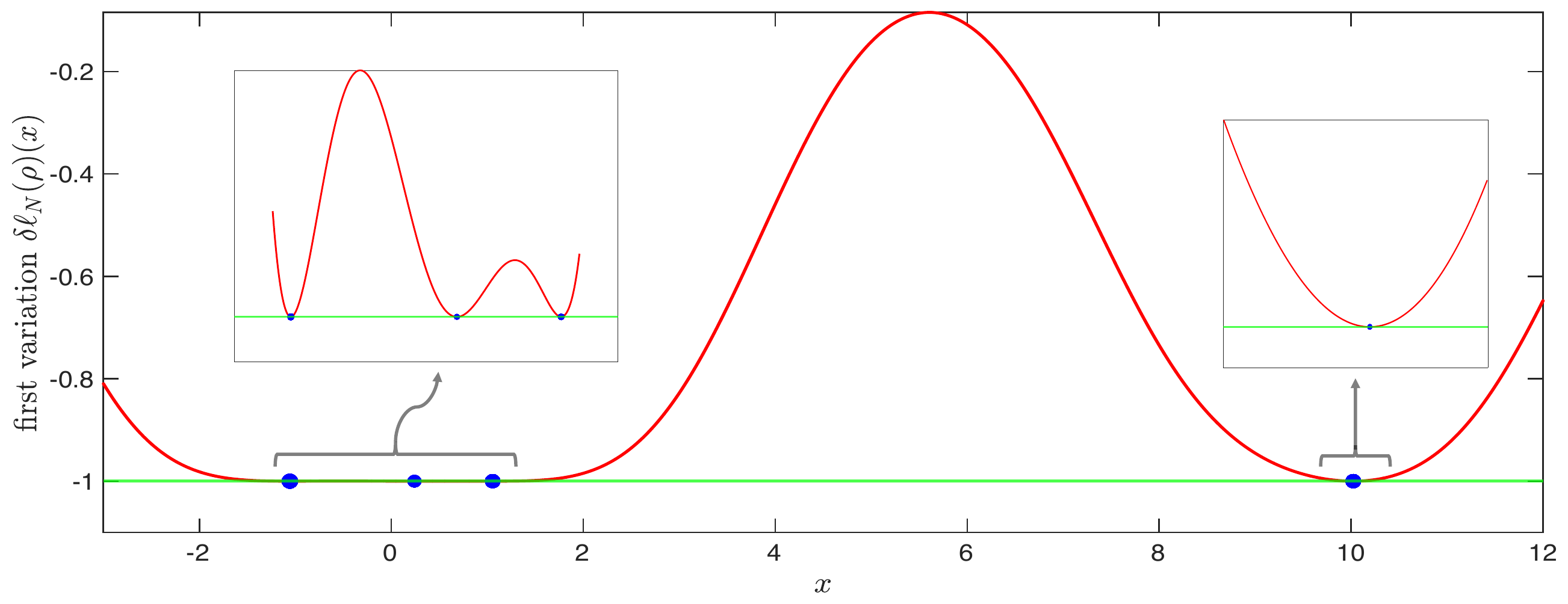}\tabularnewline
\qquad(b) first variation $\delta\ell_{N}(\rho)$ for continuous
$\rho^{\star}=\mathcal{N}(0,1)$\tabularnewline
\includegraphics[scale=0.55]{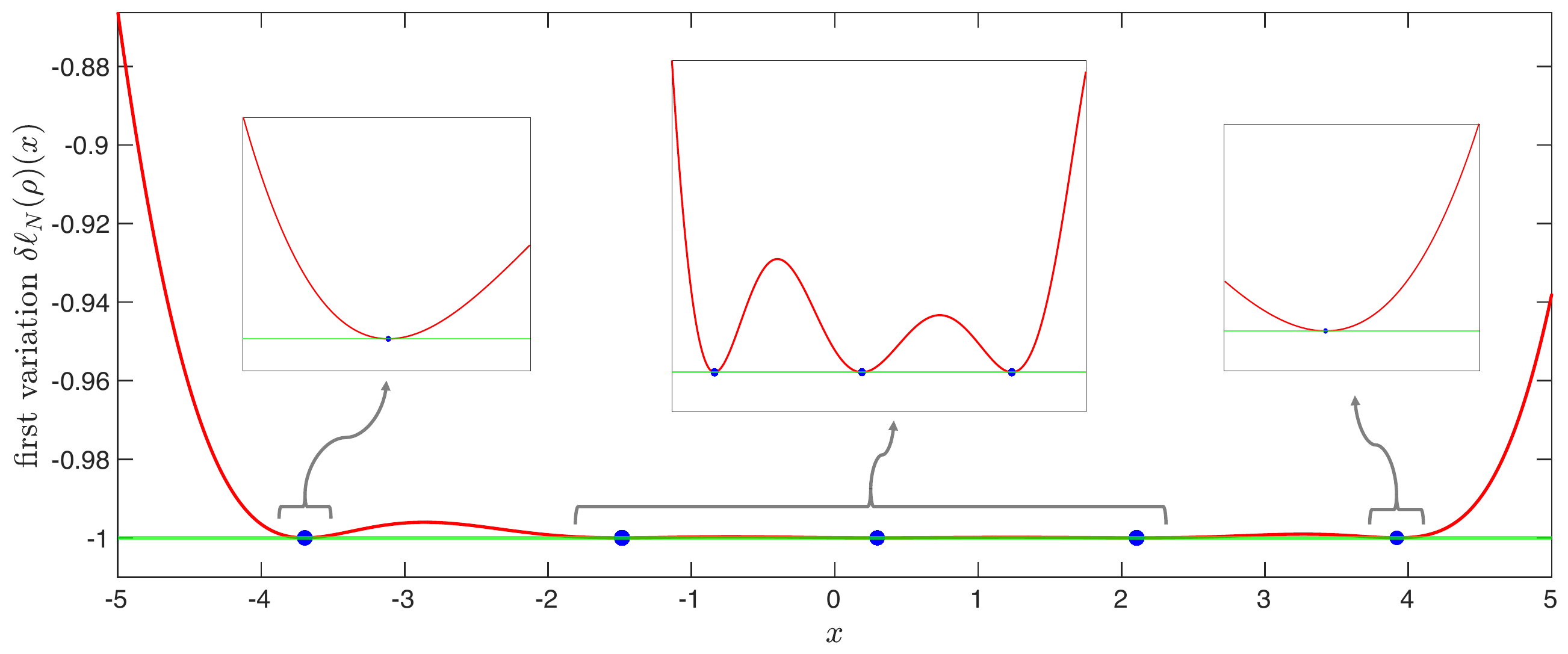}\tabularnewline
\end{tabular}

\caption{The first variation $\delta\ell_{N}(\rho)$ (in red line) for discrete
$\rho^{\star}=\rho_{\mathsf{d}}$ in the upper panel and continuous
$\rho^{\star}=\rho_{\mathsf{c}}$ in the lower panel. The blue dots
are $\{(\mu_{j},\delta\ell_{N}(\rho)(\mu_{j})):1\protect\leq j\protect\leq m\}$,
where the size of these dots are proportional to the weights $\{\omega_{j}\}_{1\protect\leq j\protect\leq m}$.
The green line is a horizontal line $y=-1$. The two subfigures in
the upper panel zoom in the two regions $-1.5\protect\leq x\protect\leq1.5$
and $9.8\protect\leq x\protect\leq10.2$, and the three subfigures
in the lower panel zoom in the three regions $-4\protect\leq x\protect\leq-3.4$,
$-2\protect\leq x\protect\leq3$ and $3.8\protect\leq x\protect\leq4.1$.
\label{fig:Exp3_fv}}
\end{figure}

The convergence guarantees in this paper only cover the infinite-particle
limit of Wasserstein-Fisher-Rao gradient descent (cf.~Algorithm \ref{alg:WFR-GD}).
In this section, we provide numerical evidence that Algorithm \ref{alg:WFR-GD}
converges to (approximate) NPMLE when we use a large number of particles. 

Recall from Theorem \ref{thm:NPMLE-basics} and the following remark that $\rho\in\mathcal{P}(\mathbb{R}^{d})$
is NPMLE if and only if $\delta\ell_{N}(\rho)(x)\geq-1$ holds for
all $x\in\mathbb{R}^{d}$. We focus on the one-dimensional setting (i.e.~$d=1$)
since it is computationally affordable to check the function value
of $\delta\ell_{N}(\rho)(x)$ in one dimension (e.g.~over a fine grid). For a discrete distribution
$\rho=\sum_{j=1}^{m}\omega_{j}\delta_{\mu_{j}}$, we define the following suboptimality gaps:
\[
\mathsf{gap}\left(\rho\right) \coloneqq\sup_{x\in\mathbb{R}}\max\left\{ -1-\delta\ell_{N}\left(\rho\right)\left(x\right),0\right\},\qquad
\widehat{\mathsf{gap}}(\rho)\coloneqq\max_{x\in\mathsf{grid}(\rho)}\max\left\{ -1-\delta\ell_{N}\left(\rho\right)\left(x\right),0\right\}.
\label{eq:suboptimality-gap}
\]
It is clear that when the first optimality gap $\mathsf{gap}(\rho)=0$, Theorem \ref{thm:NPMLE-basics} asserts that $\rho$ is
the NPMLE. However $\mathsf{gap}(\rho)$ is in general difficult to compute, and a practical scheme is to approximatly evaluate the supremum over $\mathbb{R}$ by the maximum over a fine grid $\mathsf{grid}(\rho)\subseteq\mathbb{R}$, which gives the second optimality gap $\widehat{\mathsf{gap}}$. In our experiments, we take $\mathsf{grid}(\rho)$ to be a $0.01$-net over $[\min_{1\leq j\leq m} \mu_j-1,\max_{1\leq j \leq m}\mu_j +1]$.

We set the sample size $N=1500$, the dimension $d=1$ and run Wasserstein-Fisher-Rao
gradient descent (cf.~Algorithm \ref{alg:WFR-GD}) with number of
particles $m=500$, step sizes $\eta_{1}=\eta_{2}=10^{-1}$. Figure
\ref{fig:optimality} illustrates the suboptimality gap $\widehat{\mathsf{gap}}$ in (\ref{eq:suboptimality-gap})
(with one standard deviation error bars) vs.~the iteration count
over $20$ independent trials as well as the density plot of the output
of the algorithm convolved with $\mathcal{N}(0,1)$. Roughly speaking,
both suboptimality gaps decreases inversely proportional to the iteration
counts. Lastly, Figure \ref{fig:Exp3_fv} depicts the first variation
$\delta\ell_{N}(\rho)$, which clearly shows that the optimality condition
is approximately satisfied with high precision; see the caption of
Figure \ref{fig:Exp3_fv} for more details.

\section{Discussion}

The current paper proposes to solve the NPMLE for Gaussian mixtures
using an interacting particle system driven by Wasserstein-Fisher-Rao
gradient descent. In the infinite-particle limit, we show that the
proposed algorithm converges to NPMLE under certain conditions. In
practice, we conduct extensive numerical experiments to illustrate
the capability of the proposed algorithm in exactly computing NPMLE
using a finite (or even small) number of particles, and also to demonstrate
the superiority of the proposed algorithm compared to other algorithms.
Moving forward, there are numerous possible extensions that merit
future investigation. For example, our convergence theory only holds
when the Wasserstein-Fisher-Rao gradient descent is initialized from
a distribution supported on the whole space; it would be of interest to extend the current
analysis to the more practical scenario where the algorithm is initialized
from a discrete distribution with a finite number of particles. Another
interesting direction is to develop algorithms for learning Gaussian
mixtures beyond the isotropic case (i.e.~without assuming that the
covariance matrices are identity) using, for example, Wasserstein-Fisher-Rao
gradient flow over the Bures-Wasserstein space \citep{lambert2022variational}.

\section*{Acknowledgements}
The authors thank Donghao Wang for a helpful discussion.
Y.~Yan is supported in part by Charlotte Elizabeth Procter Honorific Fellowship from Princeton University. Part of this work was done during Y.~Yan's visit to MIT in Fall 2022.  K.~Wang is supported by an NSF grant DMS-2210907 and a start-up grant at Columbia University. P.~Rigollet is supported by NSF grants IIS-1838071, DMS-2022448, and CCF-2106377.

\newpage

\appendix

\section{Preliminaries}

In the main text, we focused on the Gaussian mixture model where $\phi(x)=(2\pi)^{-d/2}\exp(-\Vert x\Vert_{2}^{2}/2)$.
In fact, the algorithms and theorems in the current paper are also
valid for a more general class of probability density $\phi$. In
the appendices, we only assume that $\phi$ satisfies the following
regularity assumption. 

\begin{assumption}[Regularity]\label{assumption-pdf-0} Assume that
the density $\phi(x)>0$ for any $x\in\mathbb{R}^{d}$. Furthermore,
$\phi\in C^{\max\{d,2\}}(\RR^{d})$, $\lim_{\|x\|_{2}\to\infty}\phi(x)=0$,
$\sup_{x\in\mathbb{R}^{d}}\|\nabla\phi(x)\|_{2}<\infty$ and $\sup_{x\in\mathbb{R}^{d}}\|\nabla^{2}\phi(x)\|_{2}<\infty$.
\end{assumption}

It is clear that the Gaussian kernel $\phi(x)=(2\pi)^{-d/2}\exp(-\Vert x\Vert_{2}^{2}/2)$
satisfies Assumption \ref{assumption-pdf-0}. We also define the following
sets and quantites that will be useful throughout the proof.

\begin{definition}Let $\Omega=\mathsf{conv}(\{X_{i}\}_{1\leq i\leq N})$.
For $r\geq0$, define $\Omega_{r}=\{x\in\mathbb{R}^{d}:\,\mathrm{dist}(x,\Omega)\leq r\}$,
$\bar{\phi}(r)=\sup_{\|x\|_{2}\geq r}\phi(x)$ and $\underline{\phi}(r)=\inf_{\|x\|_{2}\leq r}\phi(x)$.
\end{definition}

The following lemma shows that NPMLE is compactly supported. The proof
can be found in Appendix \ref{sec-lem-npmle-support-proof}.

\begin{lemma}[Compact support of NPMLE]\label{lem-npmle-support}
Let Assumption \ref{assumption-pdf-0} hold and $\widehat{\rho}$
be any optimal solution to \eqref{eq:NPMLE}. Define $R_{1}=\inf\{r\geq0:~\bar{\phi}(r)\leq\underline{\phi}[\mathrm{diam}(\Omega)]/2\}$
and 
\[
R=\inf\bigg\{ r\geq0:~\bar{\phi}(r)\leq\frac{\bar{\phi}(R_{1})\underline{\phi}(R_{1}+\mathrm{diam}(\Omega))}{8\bar{\phi}(0)}\bigg\}.
\]
Then, we have $\mathsf{supp}(\widehat{\rho})\subseteq\Omega_{R}$. 

\end{lemma}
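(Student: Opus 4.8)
The plan is to derive the support bound entirely from the first-order optimality conditions of Theorem~\ref{thm:NPMLE-basics} together with the explicit first variation \eqref{eq:first-variation}. Abbreviate $w_i := (\widehat{\rho}*\phi)(X_i)$, which is strictly positive because $\phi>0$ and $\widehat{\rho}$ is a probability measure; the optimality conditions then read $\frac{1}{N}\sum_{i=1}^N \phi(x-X_i)/w_i \le 1$ for all $x\in\RR^d$, with equality for $\widehat{\rho}$-a.e.~$x$. Since $\phi\in C^{\max\{d,2\}}$ and the $w_i$ are fixed positive constants, $\delta\ell_N(\widehat{\rho})$ is continuous, so $\{x:\delta\ell_N(\widehat{\rho})(x)=-1\}$ is closed and of full $\widehat{\rho}$-measure; being closed it contains $\mathsf{supp}(\widehat{\rho})$, and hence the equality $\frac{1}{N}\sum_i \phi(x-X_i)/w_i=1$ holds at every $x\in\mathsf{supp}(\widehat{\rho})$. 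The intuition is that a support point $x$ lying far from the data forces every numerator $\phi(x-X_i)$ to be tiny, pushing this sum strictly below $1$, a contradiction. Making this precise requires two estimates: an upper bound on the numerators for far-away $x$, and a lower bound on the denominators $w_i$.

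The denominator bound is the heart of the argument. One clean route evaluates the stationarity inequality at an interior anchor $x_0=X_1\in\Omega$: since $\|X_1-X_i\|\le\mathrm{diam}(\Omega)$ gives $\phi(X_1-X_i)\ge\underline{\phi}(\mathrm{diam}(\Omega))$, we obtain $\frac{1}{N}\sum_i 1/w_i \le 1/\underline{\phi}(\mathrm{diam}(\Omega))$. A second route, which reproduces the stated radius $R$, first proves a mass-concentration estimate $\widehat{\rho}(\Omega_{R_1})\ge c_0$ with $c_0 = \bar{\phi}(R_1)/(4\bar{\phi}(0))$, and then bounds each denominator below by restricting the convolution integral to $\Omega_{R_1}$: for $y\in\Omega_{R_1}$ and $X_i\in\Omega$ one has $\|X_i-y\|\le R_1+\mathrm{diam}(\Omega)$, whence $w_i\ge \underline{\phi}(R_1+\mathrm{diam}(\Omega))\,\widehat{\rho}(\Omega_{R_1})\ge c_0\,\underline{\phi}(R_1+\mathrm{diam}(\Omega))=:c$. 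The factor $8\bar{\phi}(0)$ in the definition of $R$ is precisely what absorbs $c_0$ while leaving a safety factor of $2$, i.e.~$\bar{\phi}(R)\le c/2$.

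With these in hand I would close by contradiction. Suppose some $x\in\mathsf{supp}(\widehat{\rho})$ satisfies $\mathrm{dist}(x,\Omega)>R$. Then $\|x-X_i\|\ge\mathrm{dist}(x,\Omega)>R$ for every $i$, so $\phi(x-X_i)\le\bar{\phi}(R)$; combining with the support equality and the uniform lower bound $w_i\ge c$ gives $1=\frac{1}{N}\sum_i \phi(x-X_i)/w_i\le\bar{\phi}(R)/c\le\tfrac12$, a contradiction. Hence $\mathsf{supp}(\widehat{\rho})\subseteq\Omega_R$. The first route is in fact sharper: $\bar{\phi}(R_1)\le\tfrac12\underline{\phi}(\mathrm{diam}(\Omega))$ combined with $\frac{1}{N}\sum_i 1/w_i\le 1/\underline{\phi}(\mathrm{diam}(\Omega))$ already yields $1\le\bar{\phi}(R_1)/\underline{\phi}(\mathrm{diam}(\Omega))\le\tfrac12$ for any support point with $\mathrm{dist}(x,\Omega)>R_1$, giving the stronger inclusion $\mathsf{supp}(\widehat{\rho})\subseteq\Omega_{R_1}\subseteq\Omega_R$.

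The main obstacle is the uniform lower bound on the denominators $(\widehat{\rho}*\phi)(X_i)$, equivalently certifying that the optimal $\widehat{\rho}$ cannot push all of its mass off to infinity. In the first route this is a one-line consequence of optimality at an interior data point; the conservative route instead isolates the mass-concentration bound $\widehat{\rho}(\Omega_{R_1})\ge c_0$ as a separate step. The remaining ingredients are routine consequences of Assumption~\ref{assumption-pdf-0}: positivity of $\underline{\phi}$ on bounded sets (so the $w_i$ and their reciprocals are finite and positive), and the monotonicity and continuity of $\bar{\phi}$ and $\underline{\phi}$, which guarantee that the infima defining $R_1$ and $R$ are attained and that $\bar{\phi}(R)\le c/2$.
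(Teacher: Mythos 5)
Your proof is correct, and your first route is genuinely different from (and locally sharper than) the paper's argument. The paper never exploits the stationarity inequality at a data point: it obtains the crucial lower bound on the denominators $(\widehat{\rho}*\phi)(X_i)$ purely at the level of the objective value, via Lemma~\ref{lem-loss-bounds} --- comparing $\ell_N(\widehat{\rho})\le\ell_N(\rho_0)$ for a reference $\rho_0$ supported on $\Omega$, deducing first the mass bound $\widehat{\rho}(\Omega_{R_1})\ge e^{-\ell_N(\rho_0)}/[2\bar{\phi}(0)]$ and then $\inf_{x\in\Omega}(\widehat{\rho}*\phi)(x)\ge c_0$ --- and only afterwards invokes the far-field bound $\sup_{y\in\Omega_r^{\mathrm{c}}}|\delta\ell_N(\widehat{\rho})(y)|\le 2e^{\ell_N(\rho_0)}\bar{\phi}(0)\bar{\phi}(r)/\underline{\phi}(R_1+\mathrm{diam}(\Omega))$ together with $\delta\ell_N(\widehat{\rho})=-1$ on the support. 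Your second route reconstructs essentially this argument with matching constants (note, though, that your mass-concentration claim $\widehat{\rho}(\Omega_{R_1})\ge\bar{\phi}(R_1)/[4\bar{\phi}(0)]$ is asserted rather than derived; it does follow, e.g., from $w_i\le\bar{\phi}(0)\widehat{\rho}(\Omega_{R_1})+\bar{\phi}(R_1)$ applied to an index $i$ with $w_i\ge\underline{\phi}(\mathrm{diam}(\Omega))$, whose existence your anchor inequality supplies). Your first route is the more interesting one: evaluating $\delta\ell_N(\widehat{\rho})(X_1)\ge-1$ yields $\frac{1}{N}\sum_{i=1}^N w_i^{-1}\le\underline{\phi}(\mathrm{diam}(\Omega))^{-1}$, and since only this \emph{average} of reciprocals enters the far-field estimate, you bypass the mass-concentration step entirely and obtain $\mathsf{supp}(\widehat{\rho})\subseteq\Omega_{R_1}$, strictly stronger than the stated $\Omega_R$ (you should add the one-line check that $R_1\le R$: the threshold defining $R$ is at most $\bar{\phi}(R_1)/8\le\underline{\phi}(\mathrm{diam}(\Omega))/16$, and $\bar{\phi}$ is non-increasing). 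What the paper's heavier route buys is reusability: Lemma~\ref{lem-loss-bounds} holds for \emph{any} $\mu$ with $\ell_N(\mu)\le\ell_N(\rho_0)$, not just exact minimizers, and the paper reuses precisely this along the descent iterates in the convergence analysis (the constant $c_0$ in Lemma~\ref{lem-gd} and Theorem~\ref{thm-convergence-wfr-iteration}), where your anchor inequality --- resting on first-order optimality of $\widehat{\rho}$ --- is unavailable. Finally, you carefully justify a step the paper glosses over, namely that the equality $\delta\ell_N(\widehat{\rho})=-1$ holds at \emph{every} point of $\mathsf{supp}(\widehat{\rho})$ (not merely $\widehat{\rho}$-a.e.), via continuity of the first variation and the definition of the support as the smallest closed set of full measure.
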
 

\section{Proof of structual results for NPMLE}

In this section, we prove the two structural results for NPMLE, namely
Theorem \ref{thm:NPMLE-basics} and Lemma \ref{lem-npmle-support} under Assumption \ref{assumption-pdf-0}.

\subsection{Proof of Theorem \ref{thm:NPMLE-basics}\label{sec:proof-thm-NPMLE-basics}}

\paragraph{Part 1: existence of NPMLE.}

Note that the loss function $\ell_{N}$ is lower bounded
\begin{equation}
\ell_{N}\left(\rho\right)=-\frac{1}{N}\sum_{i=1}^{N}\log\left[(\rho*\phi)\left(X_{i}\right)\right]\geq-\log\left\Vert \phi\right\Vert _{\infty},\label{eq:loss_lower_bounded}
\end{equation}
where the last inequality follows from $\rho*\phi(x)=\int_{\mathbb{R}^{d}}\phi\left(y-x\right)\rho\left(\mathrm{d}y\right)\leq\Vert\phi\Vert_{\infty}$
for any $x\in\mathbb{R}^{d}$. Therefore there exists a sequence of
probability distribution $\{\rho_{n}\}$ such that
\begin{equation}
\ell_{N}\left(\rho_{n}\right)\leq\inf_{\rho\in\mathcal{P}(\mathbb{R}^{d})}\ell_{N}\left(\rho\right)+\frac{1}{n}.\label{eq:sequence-approach-inf}
\end{equation}
Now we argue that $\{\rho_{n}\}$ is tight. To that end, we show that there exists $r>0$ such that for any $\varepsilon>0$, it holds $\rho_n(\Omega_r)\geq 1-\varepsilon$ for $n$ large enough.

For any $n$ and $r>0$,  define
\[
\rho_{n,r}\coloneqq\rho_n\left(\Omega_{r}\right)\cdot\rho_{n}|_{\Omega_{r}}+\rho_n\left(\Omega_{r}^{\mathrm{c}}\right)\cdot\mathsf{Unif}\left(\Omega\right),
\]
where $\rho_{n}|_{\Omega_{r}}(\cdot)=\rho_n(\cdot|\Omega_r)$ is the conditional  distribution of $\rho_{n}$
given $\Omega_{r}$, and $\mathsf{Unif}(\Omega)$ is the uniform distribution
on $\Omega$. We have
\begin{align*}
\ell_{N}\left(\rho_{n}\right)-\ell_{N}\left(\rho_{n,r}\right) & =-\frac{1}{N}\sum_{i=1}^{N}\log\left[(\rho_{n}*\phi)\left(X_{i}\right)\right]+\frac{1}{N}\sum_{i=1}^{N}\log\left[(\rho_{n,r}*\phi)\left(X_{i}\right)\right]=\frac{1}{N}\sum_{i=1}^{N}\log\left[\frac{(\rho_{n,r}*\phi)\left(X_{i}\right)}{(\rho_{n}*\phi)\left(X_{i}\right)}\right].
\end{align*}
Note that for each $i\in[N]$
\begin{align*}
\log\left[\frac{(\rho_{n,r}*\phi)\left(X_{i}\right)}{(\rho_{n}*\phi)\left(X_{i}\right)}\right] & =\log\left[\frac{\int_{\Omega_{r}}\phi\left(X_{i}-y\right)\rho_{n}\left(\mathrm{d}y\right)+\rho_{n}\left(\Omega_{r}^{\mathrm{c}}\right)\int_{\Omega}\phi\left(X_{i}-y\right)\mathrm{d}y}{\int_{\Omega_{r}}\phi\left(X_{i}-y\right)\rho_{n}\left(\mathrm{d}y\right)+\int_{\Omega_{r}^{\mathrm{c}}}\phi\left(X_{i}-y\right)\rho_{n}\left(\mathrm{d}y\right)}\right]\\
 & =\log\left[1+\frac{\rho_{n}\left(\Omega_{r}^{\mathrm{c}}\right)\int_{\Omega}\phi\left(X_{i}-y\right)\mathrm{d}y-\int_{\Omega_{r}^{\mathrm{c}}}\phi\left(X_{i}-y\right)\rho_{n}\left(\mathrm{d}y\right)}{(\rho_{n}*\phi)\left(X_{i}\right)}\right]\\
 & \geq\log\left[1+\frac{\rho_{n}\left(\Omega_{r}^{\mathrm{c}}\right)\left[\underline{\phi}\left(\mathrm{diam}\left(\Omega\right)\right)-\overline{\phi}\left(r\right)\right]}{\left\Vert \phi\right\Vert _{\infty}}\right].
\end{align*}
We can choose $r>0$ to be sufficiently large so that $\overline{\phi}(r)\leq\underline{\phi}(\mathrm{diam}(\Omega))/2$,
and therefore for each $i\in[N]$
\[
\ell_{N}\left(\rho_{n}\right)-\ell_{N}\left(\rho_{n,r}\right)\geq
\log\left[1+\frac{\rho_{n}\left(\Omega_{r}^{\mathrm{c}}\right)\underline{\phi}\left(\mathrm{diam}\left(\Omega\right)\right)}{2\left\Vert \phi\right\Vert _{\infty}}\right].
\]
In view of  \eqref{eq:sequence-approach-inf}, we know that 
\[
\ell_{N}\left(\rho_{n}\right)-\ell_{N}\left(\rho_{n,r}\right)\leq\frac{1}{n}.
\]
Taking the above two inequalities collectively give
\[
\rho_{n}\left(\Omega_{r}^{\mathrm{c}}\right)\leq\frac{2\left\Vert \phi\right\Vert _{\infty}\left[\exp\left(\frac{1}{n}\right)-1\right]}{\underline{\phi}\left(\mathrm{diam}\left(\Omega\right)\right)}.
\]
Therefore we have
\[
\rho_{n}\left(\Omega_{r}^{\mathrm{c}}\right)\leq\varepsilon\qquad\text{as long as}\qquad n\geq n_{\varepsilon}\coloneqq\left\lceil 1/\log\left[1+\frac{\varepsilon\underline{\phi}\left(\mathrm{diam}\left(\Omega\right)\right)}{2\left\Vert \phi\right\Vert _{\infty}}\right]\right\rceil ,
\]
which implies that $\{\rho_{n}\}$ is tight. We conclude using Prokhorov's
theorem: there exists a subsequence $\{\rho_{n_{k}}\}$
and $\widehat{\rho}\in\mathcal{P}(\mathbb{R}^{d})$ such that $\rho_{n_{k}}$
converges weakly to $\widehat{\rho}$ which must be a 
minimizer of  \eqref{eq:NPMLE}. 

\paragraph{Part 2: optimality condition.}

First of all, it is straightforward to check that for any $\rho\in\mathcal{M}(\mathbb{R}^{d})$
\[
\int_{\mathbb{R}^{d}}\delta\ell_{N}\left(\rho\right)\left(x\right)\rho\left(\mathrm{d}x\right)=-\frac{1}{N}\sum_{i=1}^{N}\frac{\int\phi\left(x-X_{i}\right)\rho\left(\mathrm{d}x\right)}{(\rho*\phi)\left(X_{i}\right)}=-\frac{1}{N}\sum_{i=1}^{N}\frac{(\rho*\phi)\left(X_{i}\right)}{(\rho*\phi)\left(X_{i}\right)}=-1.
\]

If $\widehat{\rho}\in\mathcal{M}(\mathbb{R}^{d})$ is the optimal
solution to  \eqref{eq:NPMLE}, then for any $x\in\mathbb{R}^{d}$
and any $\varepsilon\in[0,1]$ we have 
\[
\ell_{N}\left(\widehat{\rho}\right)\leq\ell_{N}\left(\left(1-\varepsilon\right)\widehat{\rho}+\varepsilon\delta_{x}\right)=\ell_{N}\left(\rho+\varepsilon\left(\delta_{x}-\widehat{\rho}\right)\right).
\]
As a result we have
\[
\delta\ell_{N}\left(\widehat{\rho}\right)\left(x\right)+1=\int_{\mathbb{R}^{d}}\delta\ell_{N}\left(\widehat{\rho}\right)\mathrm{d}\left(\delta_{x}-\widehat{\rho}\right)=\lim_{\varepsilon\to0}\frac{1}{\varepsilon}\left[\ell_{N}\left(\widehat{\rho}+\varepsilon\left(\delta_{x}-\widehat{\rho}\right)\right)-\ell_{N}\left(\widehat{\rho}\right)\right]\geq0.
\]
Since $x$ is arbitrary, this implies that $\delta\ell_{N}(\widehat{\rho})(x)\geq-1$
for any $x\in\mathbb{R}^{d}$. Combine this with $\int_{\mathbb{R}^{d}}\delta\ell_{N}(\widehat{\rho})\mathrm{d}\widehat{\rho}=-1$
readily gives $\delta\ell_{N}(\widehat{\rho})(x)=-1$ for $\widehat{\rho}$-a.e.~$x$. 

Conversely, if $\widehat{\rho}\in\mathcal{M}(\mathbb{R}^{d})$ satisfies $\delta\ell_{N}(\widehat{\rho})(x)\geq-1$
for all $x\in\mathbb{R}^{d}$, then for any $\rho\in\mathcal{M}(\mathbb{R}^{d})$, it holds
\[
0\le \int_{\mathbb{R}^{d}}\delta\ell_{N}\left(\widehat{\rho}\right)\mathrm{d}\rho+1 
= \int_{\mathbb{R}^{d}}\delta\ell_{N}\left(\widehat{\rho}\right)\mathrm{d}\left(\rho-\widehat{\rho}\right)
=\lim_{\varepsilon\to0}\frac{1}{\varepsilon}\left[\ell_{N}\left(\widehat{\rho}+\varepsilon\left(\rho-\widehat{\rho}\right)\right)-\ell_{N}\left(\widehat{\rho}\right)\right]
\le 
\ell_{N}\left(\rho\right)-\ell_{N}\left(\widehat{\rho}\right)
\]

where the last inequality follows from convexity of the functional $\rho \mapsto \ell_{N}(\rho)$.
The above display yields that that $\widehat{\rho}$ is
a global minimizer of  \eqref{eq:NPMLE}. 

\subsection{Proof of Lemma \ref{lem-npmle-support} \label{sec-lem-npmle-support-proof}}

By Theorem \ref{thm:NPMLE-basics}, $\delta\ell_{N}(\widehat{\rho})=-1$
over $\mathsf{supp}(\widehat{\rho})$. We will show that $|\delta\ell_{N}(\widehat{\rho})(y)|<1/2$
when $y$ is too far away from $\Omega$, and then conclude that $\mathsf{supp}(\widehat{\rho})$
must stay close to $\Omega$. To that end, we present some useful
estimates in the following lemma that will also be useful later. 

\begin{lemma}\label{lem-loss-bounds} Let Assumption \ref{assumption-pdf-0}
hold. For any $\rho\in\mathcal{P}(\mathbb{R}^{d})$, $x\in\Omega$
and $r\geq0$, we have 
\[
\rho(\Omega_{r})\underline{\phi}(r+\mathrm{diam}(\Omega))\leq(\rho*\phi)(x)\leq\rho(\Omega_{r}^{\mathrm{c}})\bar{\phi}(r)+\rho(\Omega_{r})\bar{\phi}(0)\leq\bar{\phi}(r)+\rho(\Omega_{r})\bar{\phi}(0).
\]
As a result, 
\begin{align*}
-\log\Big(\bar{\phi}(r)+\rho(\Omega_{r})\bar{\phi}(0)\Big)\leq\ell_{N}(\rho)\leq-\log\Big(\rho(\Omega_{r})\underline{\phi}(r+\mathrm{diam}(\Omega))\Big).
\end{align*}
Take any $R\geq0$ such that $\bar{\phi}(R)\leq e^{-\ell_{N}(\rho)}/2$.
For any $\mu\in\mathcal{P}(\mathbb{R}^{d})$ with $\ell_{N}(\mu)\leq\ell_{N}(\rho)$,
we have 
\begin{align*}
 & \mu(\Omega_{R})\geq e^{-\ell_{N}(\rho)}/[2\bar{\phi}(0)],\\
 & \inf_{x\in\Omega}(\mu*\phi)(x)\geq e^{-\ell_{N}(\rho)}\underline{\phi}(R+\mathrm{diam}(\Omega))/[2\bar{\phi}(0)],\\
 & \sup_{y\in\Omega_{r}^{c}}|\delta\ell_{N}(\mu)(y)|\leq\frac{2e^{\ell_{N}(\rho)}\bar{\phi}(0)}{\underline{\phi}(R+\mathrm{diam}(\Omega))}\cdot\bar{\phi}(r),\qquad\forall r\geq0.
\end{align*}
\end{lemma}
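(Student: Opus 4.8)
The plan is to establish the pointwise two-sided bound on $(\rho*\phi)(x)$ first, since every remaining claim follows from it by elementary manipulations. Fix $x\in\Omega$ and $r\ge 0$ and split the convolution integral across the partition $\mathbb{R}^d=\Omega_r\cup\Omega_r^{\mathrm c}$. Two geometric observations drive everything. First, if $y\in\Omega_r$, then choosing $z\in\Omega$ with $\|y-z\|_2\le r$ and using $\|x-z\|_2\le\mathrm{diam}(\Omega)$ (both $x,z\in\Omega$), the triangle inequality gives $\|x-y\|_2\le r+\mathrm{diam}(\Omega)$, whence $\phi(x-y)\ge\underline{\phi}(r+\mathrm{diam}(\Omega))$. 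Second, if $y\in\Omega_r^{\mathrm c}$, then $\|x-y\|_2\ge\mathrm{dist}(y,\Omega)>r$ (because $x\in\Omega$), so $\phi$ evaluated at any such argument is $\le\bar{\phi}(r)$. The lower bound then comes from restricting the integral to $\Omega_r$ and applying the first observation; the upper bound comes from bounding the $\Omega_r$-integrand by $\bar\phi(0)=\|\phi\|_\infty$ and the $\Omega_r^{\mathrm c}$-integrand by $\bar\phi(r)$ via the second observation, and the final coarsening simply uses $\rho(\Omega_r^{\mathrm c})\le 1$.

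Next I would deduce the $\ell_N$ bounds by specializing $x=X_i$, which is legitimate since every $X_i$ lies in $\Omega=\mathsf{conv}(\{X_i\})$. Applying the monotone decreasing map $-\log(\cdot)$ to the two pointwise bounds and averaging over $i$ yields precisely the displayed lower and upper bounds on $\ell_N(\rho)$; no new idea is required.

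Finally, the three estimates for a competitor $\mu$ with $\ell_N(\mu)\le\ell_N(\rho)$ are obtained by chaining the first two parts. For the mass bound, I would start from the lower bound $\ell_N(\mu)\ge-\log\big(\bar\phi(R)+\mu(\Omega_R)\bar\phi(0)\big)$ (the $\ell_N$ lower bound with $r=R$), exponentiate, and use both $e^{-\ell_N(\mu)}\ge e^{-\ell_N(\rho)}$ and $\bar\phi(R)\le e^{-\ell_N(\rho)}/2$ to absorb the $\bar\phi(R)$ term and solve for $\mu(\Omega_R)$. The infimum bound on $(\mu*\phi)$ over $\Omega$ then follows by inserting this mass bound into the pointwise lower bound of Part 1, again with $r=R$. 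The first-variation bound is the last step: writing $|\delta\ell_N(\mu)(y)|=\frac1N\sum_{i}\phi(y-X_i)/(\mu*\phi)(X_i)$, where all terms are positive since $\phi>0$, I would bound each numerator by $\bar\phi(r)$ using the second observation for $y\in\Omega_r^{\mathrm c}$ and bound each denominator from below by the $(\mu*\phi)$ infimum just established, producing exactly the claimed constant after taking the supremum over $y\in\Omega_r^{\mathrm c}$.

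There is no deep obstacle here; the argument is purely a matter of splitting integrals and tracking distances. The one point requiring care is the bookkeeping of the two radii: the pointwise bound of Part 1 is invoked once with the generic $r$ (to control the numerator of $\delta\ell_N$) and once with the fixed $R$ (to control the denominator, via the $(\mu*\phi)$ infimum), and one must keep the inequality $\ell_N(\mu)\le\ell_N(\rho)$ oriented correctly when exponentiating.
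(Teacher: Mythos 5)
Your proposal is correct and follows essentially the same route as the paper's proof: split the convolution integral over $\Omega_r$ and $\Omega_r^{\mathrm{c}}$, bound $\phi$ on each piece via $\underline{\phi}(r+\mathrm{diam}(\Omega))$ and $\bar{\phi}(r)$, take $-\log$ at the sample points $X_i\in\Omega$, and then chain the mass bound, the infimum bound, and the first-variation bound for the competitor $\mu$ exactly as in the paper. In fact your explicit triangle-inequality argument giving $\|x-y\|_2\leq r+\mathrm{diam}(\Omega)$ for $y\in\Omega_r$ corrects a sign typo in the paper's proof, which states this inequality with $\geq$.
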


The proof of Lemma \ref{lem-loss-bounds} is deferred to the end of
this section. We come back to proving Lemma \ref{lem-npmle-support}.
Choose any $\rho_{0}\in\mathcal{P}(\mathbb{R}^{d})$ supported on
$\Omega$. By Lemma \ref{lem-loss-bounds}, we have
\begin{align*}
\ell_{N}(\rho_{0})\leq-\log\Big(\underline{\phi}[\mathrm{diam}(\Omega)]\Big),\qquad\forall r\geq0.
\end{align*}
Take $R_{1}=\inf\{r\geq0:~\bar{\phi}(r)\leq\underline{\phi}[\mathrm{diam}(\Omega)]/2\}$.
By the continuity of $\bar{\phi}$, 
\[
\bar{\phi}(R_{1})\leq\underline{\phi}[\mathrm{diam}(\Omega)]/2\leq e^{-\ell_{N}(\rho_{0})}/2.
\]
Lemma \ref{lem-loss-bounds} implies that 
\begin{align*}
\sup_{y\in\Omega_{r}^{\mathrm{c}}}|\delta\ell_{N}(\widehat{\rho})(y)|\leq\frac{2e^{\ell_{N}(\rho_{0})}\bar{\phi}(0)}{\underline{\phi}(R_{1}+\mathrm{diam}(\Omega))}\cdot\bar{\phi}(r)\leq\frac{4\bar{\phi}(0)}{\bar{\phi}(R_{1})\underline{\phi}(R_{1}+\mathrm{diam}(\Omega))}\cdot\bar{\phi}(r),\qquad\forall r\geq0.
\end{align*}
Let 
\[
R=\inf\bigg\{ r\geq0:~\bar{\phi}(r)\leq\frac{\bar{\phi}(R_{1})\underline{\phi}(R_{1}+\mathrm{diam}(\Omega))}{8\bar{\phi}(0)}\bigg\}.
\]
The continuity of $\bar{\phi}$ leads to $\bar{\phi}(R)\leq\bar{\phi}(R_{1})\underline{\phi}(R_{1}+\mathrm{diam}(\Omega))/[8\bar{\phi}(0)]$
and thus $\sup_{y\in\Omega_{R}^{\mathrm{c}}}|\delta\ell_{N}(\widehat{\rho})(y)|\leq1/2<1$.
As a result, $\mathsf{supp}(\widehat{\rho})\cap\Omega_{R}^{\mathrm{c}}=\varnothing$
and $\mathsf{supp}(\widehat{\rho})\subseteq\Omega_{R}$.

\begin{proof}[Proof of Lemma \ref{lem-loss-bounds}]

Note that $\phi(x-y)\leq\bar{\phi}(0)$ for any $x,y\in\mathbb{R}^{d}$.
If $x\in\Omega$ and $y\in\Omega_{r}^{\mathrm{c}}$, then $\|x-y\|_{2}\geq r$
and thus $\phi(x-y)\leq\bar{\phi}(r)$. Hence, 
\begin{align*}
(\rho*\phi)(x) & \leq\int_{\Omega_{r}}\phi(x-y)\rho(\mathrm{d}y)+\int_{\Omega_{r}^{c}}\phi(x-y)\rho(\mathrm{d}y)\leq\bar{\phi}(0)\rho(\Omega_{r})+\bar{\phi}(r)\rho(\Omega_{r}^{\mathrm{c}}).
\end{align*}
If $x\in\Omega$ and $y\in\Omega_{r}$, then $\|x-y\|_{2}\geq r+\mathrm{diam}(\Omega)$
and thus $\phi(x-y)\geq\underline{\phi}(r+\mathrm{diam}(\Omega))$.
Therefore, 
\begin{align*}
(\rho*\phi)(x) & \geq\int_{\Omega_{r}}\phi(x-y)\rho(\mathrm{d}y)\geq\underline{\phi}(r+\mathrm{diam}(\Omega))\rho(\Omega_{r}).
\end{align*}
The desired bounds on $\ell_N(\cdot)$ become obvious. If $\mu\in\mathcal{P}(\mathbb{R}^{d})$
and $\ell_{N}(\mu)\leq\ell_{N}(\rho)$, then our estimates of $\ell$
implies that 
\begin{align*}
 & -\log\Big(\bar{\phi}(r)+\mu(\Omega_{r})\bar{\phi}(0)\Big)\leq\ell_{N}(\mu)\leq\ell_{N}(\rho),\qquad\forall r\geq0.
\end{align*}
Hence, $\bar{\phi}(r)+\mu(\Omega_{r})\bar{\phi}(0)\geq e^{-\ell_{N}(\rho)}$.
By Assumption \ref{assumption-pdf-0}, $\lim\limits _{r\to\infty}\bar{\phi}(r)=0$.
Take any $R\geq0$ such that $\bar{\phi}(R)\leq e^{-\ell_{N}(\rho)}/2$.
Then, 
\begin{align*}
 & \mu(\Omega_{R})\geq[e^{-\ell_{N}(\rho)}-\bar{\phi}(r)]/\bar{\phi}(0)\geq e^{-\ell_{N}(\rho)}/[2\bar{\phi}(0)],\\
 & \inf_{x\in\Omega}(\mu*\phi)(x)\geq\mu(\Omega_{R})\underline{\phi}(R+\mathrm{diam}(\Omega))\geq e^{-\ell_{N}(\rho)}\underline{\phi}(R+\mathrm{diam}(\Omega))/[2\bar{\phi}(0)].
\end{align*}

For any $r\geq0$, we have $\|X-y\|_{2}\geq r$ whenever $X\in\mathsf{supp}(\nu)$
and $y\in\Omega_{r}^{\mathrm{c}}$. Then, 
\begin{align*}
|\delta\ell_{N}(\mu)(y)|=\frac{1}{N}\sum_{i=1}^{N}\frac{\phi(X_{i}-y)}{(\mu*\phi)(X_{i})}\leq\frac{\bar{\phi}(r)}{\inf_{x\in\Omega}(\mu*\phi)(x)},\qquad\forall r\geq0,~~y\in\Omega_{r}^{\mathrm{c}}.
\end{align*}
The proof is finished by combining this and the lower bound on $\inf_{x\in\Omega}(\mu*\phi)(x)$
we have established above.

\end{proof}

\section{Derivation of gradient flows over $\mathcal{P}_{2}(\mathbb{R}^{d})$}

\subsection{First variation \label{subsec:First-variation}}

Recall that the population and finite-sample loss functions are 
\[
\ell_{\infty}\left(\rho\right)=\mathbb{E}_{X\sim(\rho^{\star}*\phi)}\left\{ \log\left[(\rho*\phi)\left(X\right)\right]\right\} ,\qquad\ell_{N}\left(\rho\right)=-\frac{1}{N}\sum_{i=1}^{N}\log\left[(\rho*\phi)\left(X_{i}\right)\right].
\]
The first variation of $\ell_{N}$ is defined as nay measurable function $\delta\ell(\rho):\mathbb{R}^d\to\mathbb{R}$
satisfying 
\[
\lim_{\varepsilon\to0}\frac{\ell\left(\rho+\varepsilon\mathcal{X}\right)-\ell\left(\rho\right)}{\varepsilon}=\int\delta\ell\left(\rho\right)\mathrm{d}\mathcal{X}
\]
for all signed measures $\mathcal{X}$ satisfying $\int\mathrm{d}\mathcal{X}=0$. In particular, it is easy to see that the first variation is defined up to an additive constant.

By direct computation, we have 
\begin{align*}
	\lim_{\varepsilon\to0}\frac{\ell_{N}\left(\rho+\varepsilon\mathcal{X}\right)-\ell_{N}\left(\rho\right)}{\varepsilon} & =-\frac{1}{N}\sum_{i=1}^{N}\lim_{\varepsilon\to0}\frac{\log\left[[\left(\rho+\varepsilon\mathcal{X}\right)*\phi]\left(X_{i}\right)\right]-\log\left[(\rho*\phi)\left(X_{i}\right)\right]}{\varepsilon}\\
	& =-\frac{1}{N}\sum_{i=1}^{N}\lim_{\varepsilon\to0}\frac{1}{\varepsilon}\log\left[1+\varepsilon\frac{(\mathcal{X}*\phi)\left(X_{i}\right)}{(\rho*\phi)\left(X_{i}\right)}\right]=-\frac{1}{N}\sum_{i=1}^{N}\frac{(\mathcal{X}*\phi)\left(X_{i}\right)}{(\rho*\phi)\left(X_{i}\right)}\\
	& =-\frac{1}{N}\sum_{i=1}^{N}\int\frac{\phi\left(x-X_{i}\right)}{(\rho*\phi)\left(X_{i}\right)}\mathcal{X}\left(\mathrm{d}x\right),
\end{align*}
As a result, we have 
\[
\delta\ell_{N}\left(\rho\right):x\to-\frac{1}{N}\sum_{i=1}^{N}\frac{\phi\left(x-X_{i}\right)}{(\rho*\phi)\left(X_{i}\right)}.
\]
Similarly, we can also compute 
\begin{align*}
	\lim_{\varepsilon\to0}\frac{\ell_{\infty}\left(\rho+\varepsilon\mathcal{X}\right)-\ell_{\infty}\left(\rho\right)}{\varepsilon} & =\lim_{\varepsilon\to0}\frac{1}{\varepsilon}\left[-\int\log\left[\frac{\left(\rho+\varepsilon\mathcal{X}\right)*\phi\left(x\right)}{(\rho*\phi)\left(x\right)}\right](\rho^{\star}*\phi)\left(x\right)\mathrm{d}x\right]\\
	& =-\int\frac{(\mathcal{X}*\phi)\left(x\right)}{(\rho*\phi)\left(x\right)}(\rho^{\star}*\phi)\left(x\right)\mathrm{d}x=-\int\frac{(\rho^{\star}*\phi)\left(x\right)}{(\rho*\phi)\left(x\right)}\left[\int\phi\left(x-y\right)\mathcal{X}\left(\mathrm{d}y\right)\right]\mathrm{d}x\\
	& =-\int\left[\int\frac{(\rho^{\star}*\phi)\left(x\right)}{(\rho*\phi)\left(x\right)}\phi\left(x-y\right)\mathrm{d}x\right]\mathcal{X}\left(\mathrm{d}y\right).
\end{align*}
which gives 
\[
\delta\ell_{\infty}\left(\rho\right):x\to-\int\frac{(\rho^{\star}*\phi)\left(y\right)}{(\rho*\phi)\left(y\right)}\phi\left(x-y\right)\mathrm{d}y.
\]

\subsection{Fisher-Rao gradient flow }

\subsubsection{A formal derivation of gradient flow using Riemannian geometry\label{subsec:FR-derivation}}

We first introduce a Riemannian structure over $\mathcal{P}_{2}(\mathbb{R}^{d})$
underlying the Fisher-Rao metric. Define the tangent space at $\rho\in\mathcal{P}_{2}(\mathbb{R}^{d})$
as 
\[
\mathrm{Tan}_{\rho}^{\mathsf{FR}}\mathcal{P}_{2}(\mathbb{R}^{d})\coloneqq\left\{ \zeta:\zeta=\rho\left(\alpha-\int\alpha\mathrm{d}\rho\right)\text{ for some }\alpha\text{ satisfying }\int\alpha^{2}\mathrm{d}\rho<\infty\right\} .
\]
We equip the tangent space $T_{\rho}^{\mathsf{FR}}\mathcal{P}_{2}(\mathbb{R}^{d})$
with the following Riemannian metric tensor $g_{\rho}^{\mathsf{FR}}(\cdot,\cdot):\mathrm{Tan}_{\rho}^{\mathsf{FR}}\mathcal{P}_{2}(\mathbb{R}^{d})\times\mathrm{Tan}_{\rho}^{\mathsf{FR}}\mathcal{P}_{2}(\mathbb{R}^{d})\to\mathbb{R}$
as 
\begin{align*}
	g_{\rho}^{\mathsf{FR}}\left(\zeta_{1},\zeta_{2}\right) & \coloneqq \int \frac{\zeta_1 \cdot\zeta_2}{\rho^2} \ud \rho\\
	&=\int_{\mathbb{R}^{d}}\left[\alpha_{1}\left(x\right)-\int_{\mathbb{R}^{d}}\alpha_{1}\mathrm{d}\rho\right]\left[\alpha_{2}\left(x\right)-\int_{\mathbb{R}^{d}}\alpha_{2}\mathrm{d}\rho\right]\rho\left(\mathrm{d}x\right)\\
	& =\int_{\mathbb{R}^{d}}\alpha_{1}\left(x\right)\alpha_{2}\left(x\right)\rho\left(\mathrm{d}x\right)-\left(\int_{\mathbb{R}^{d}}\alpha_{1}\mathrm{d}\rho\right)\left(\int_{\mathbb{R}^{d}}\alpha_{2}\mathrm{d}\rho\right),
\end{align*}
for any $\zeta_{1}=\rho(\alpha_{1}-\int\alpha_{1}\mathrm{d}\rho)$
and $\zeta_{2}=\rho(\alpha_{2}-\int\alpha_{2}\mathrm{d}\rho)$. The
metric induced by this Riemannian structure, namely the Fisher-Rao
metric $d_{\mathsf{FR}}(\cdot,\cdot)$, satisfies the following property:
\begin{align*}
	d_{\mathsf{FR}}^{2}\left(\rho_{0},\rho_{1}\right) & =\inf\bigg\{\int_{0}^{1}\int\Big[\Big(\alpha_{t}-\int\alpha_{t}\mathrm{d}\rho_{t}\Big)^{2}\Big]\mathrm{d}\rho_{t}\mathrm{d}t:\left(\rho_{t},\alpha_{t}\right)_{t\in[0,1]}\text{ solves }\partial_{t}\rho_{t}=\rho_{t}\alpha_{t}\bigg\}.
\end{align*}
Then we follow \citet{lu2019accelerating,gallouet2017jko} to derive the Fisher-Rao gradient flow with respect to the functional
$\ell_{N}$. Let $(\rho_{t})_{t\geq0}$ be a $C^{1}$ curve satisfying
$\rho_{0}=\rho$ with initial velocity 
\[
\partial_{t}\rho_{t}|_{t=0}=\zeta=\rho\left(\alpha-\int\alpha\mathrm{d}\rho\right).
\]
The Fisher-Rao gradient of $\ell_N$ at $\rho$ is defined as the function $\mathrm{grad}_{\mathsf{FR}}\ell_{N}\left(\rho\right) \in L^2(\rho)$ such that
\[
\frac{\mathrm{d}}{\mathrm{d}t}\ell_{N}\left(\rho_{t}\right)\Big|_{t=0}=g_{\rho}^{\mathsf{FR}}\left(\mathrm{grad}_{\mathsf{FR}}\ell_{N}\left(\rho\right),\zeta\right).
\]
To compute it, observe that the right-hand side of the above identity is given by
\begin{align*}
	\frac{\mathrm{d}}{\mathrm{d}t}\ell_{N}\left(\rho_{t}\right)\Big|_{t=0} 
	&= \int \delta \ell_N (\rho) \cdot\partial_t\rho_t\Big|_{t=0}\\
	& =\int\delta\ell_{N}\left(\rho\right),\zeta=\int\delta\ell_{N}\left(\rho\right)\left(\alpha-\int\alpha\mathrm{d}\rho\right)\mathrm{d}\rho\\
	& =\int\left(\delta\ell_{N}\left(\rho\right)-\int\delta\ell_{N}\left(\rho\right)\mathrm{d}\rho\right)\left(\alpha-\int\alpha\mathrm{d}\rho\right)\mathrm{d}\rho\\
	& =g_{\rho}^{\mathsf{FR}}\left(\rho\left(\delta\ell_{N}\left(\rho\right)-\int\delta\ell_{N}\left(\rho\right)\mathrm{d}\rho\right),\zeta\right).
\end{align*}
Therefore 
\[
g_{\rho}^{\mathsf{FR}}\left(\mathrm{grad}_{\mathsf{FR}}\ell_{N}\left(\rho\right)\cdot\zeta\right)=g_{\rho}^{\mathsf{FR}}\left(\rho\left(\delta\ell_{N}\left(\rho\right)-\int\delta\ell_{N}\left(\rho\right)\mathrm{d}\rho\right),\zeta\right)
\]
holds for any $\zeta\in\mathrm{Tan}_{\rho}^{\mathsf{FR}}\mathcal{P}_{2}(\mathbb{R}^{d})$,
and as a result 
\[
\mathrm{grad}_{\mathsf{FR}}\ell_{N}\left(\rho\right)=\rho\left[\delta\ell_{N}\left(\rho\right)-\int\delta\ell_{N}\left(\rho\right)\mathrm{d}\rho\right]=\rho\left[\delta\ell_{N}\left(\rho\right)+1\right],
\]
where we have used the fact that $\int\delta\ell_{N}(\rho)\mathrm{d}\rho=-1$.
Hence, the gradient flow of $\ell_{N}$ with respect to the
Fisher-Rao metric $d_{\mathsf{FR}}$ is given by 
\[
\partial_{t}\rho_{t}=-\mathrm{grad}_{\mathsf{FR}}\ell_{N}\left(\rho_{t}\right)=-\rho_{t}\left[\delta\ell_{N}\left(\rho_{t}\right)+1\right].
\]

\subsubsection{Other perspectives of Fisher-Rao gradient flow\label{subsec:FR-equivalence}}

In this section, we formally illustrate the connection between Fisher-Rao
gradient flow  \eqref{eq:Fisher-Rao-GF} with proximal gradient descent
and mirror descent. For simplicity, we focus on the case when $\rho_{t}$
is continuous; the case when $\rho_{t}$ is discrete is similar. We
also show the connection between the particle Fisher-Rao gradient
descent  \eqref{alg:FR-GD} and the EM algorithm.

\paragraph{Fisher-Rao gradient flow as proximal gradient flow.}

Consider the proximal gradient update in  \eqref{eq:FR-proximal}.
Recall that for $\mu,\nu\in\mathcal{P}_{\mathsf{ac}}(\mathbb{R}^{d})$,
the Fisher-Rao distance can be expressed as 
\[
d_{\mathsf{FR}}^{2}\left(\mu,\nu\right)=4\int\big|\sqrt{\mu\left(x\right)}-\sqrt{\nu\left(x\right)}\big|^{2}\mathrm{d}x.
\]
Note that for the purpose of defining a gradient flow, the metric
only matters up to its second-order local expansion 
\[
d_{\mathsf{FR}}^{2}\left(\mu,\nu\right)=\int\frac{\Delta^{2}(x)}{\nu\left(x\right)}\mathrm{d}x+\mathsf{higher}\text{-}\mathsf{order}\text{ }\mathsf{terms},
\]
where $\Delta=\mu-\nu$. Therefore we obtain an asymptotically (as
$\eta\to0$) equivalent problem 
\[
\rho_{t}^{\eta}\coloneqq\underset{\rho\in\mathcal{P}_{\mathsf{ac}}(\mathbb{R}^{d})}{\arg\min}\left\{ \int_{\mathbb{R}^{d}}\delta\ell_{N}\left(\rho_{t}\right)\mathrm{d}\left(\rho-\rho_{t}\right)+\frac{1}{2\eta}\int\frac{\left[\rho\left(x\right)-\rho_{t}\left(x\right)\right]^{2}}{\rho_{t}\left(x\right)}\mathrm{d}x\right\} .
\]
The first-order optimality condition is 
\[
\delta\ell_{N}\left(\rho_{t}\right)\left(x\right)+\frac{1}{\eta}\cdot\frac{\rho\left(x\right)-\rho_{t}\left(x\right)}{\rho_{t}\left(x\right)}=c
\]
for some constant $c\in\mathbb{R}$. This gives 
\[
\rho_{t}^{\eta}\left(x\right)=\rho_{t}\left(x\right)\left[1+x\eta-\eta\delta\ell_{N}\left(\rho_{t}\right)\left(x\right)\right].
\]
Since $\rho_{t}^{\eta}$ is a probability density, we have 
\[
1=\int_{\mathbb{R}^{d}}\rho_{t}^{\eta}\left(x\right)\mathrm{d}x=\int_{\mathbb{R}^{d}}\rho_{t}\left(x\right)\left[1+c\eta-\eta\delta\ell_{N}\left(\rho_{t}\right)\left(x\right)\right]\mathrm{d}x=1+c\eta-\eta,
\]
where we use the fact that $\int_{\mathbb{R}^{d}}\delta\ell_{N}(\rho)\mathrm{d}\rho=-1$
for any $\rho\in\mathcal{P}_{2}(\mathbb{R}^{d})$. This gives $c=-1$.
Therefore 
\[
\rho_{t}^{\eta}\left(x\right)=\rho_{t}\left(x\right)\left[1-\eta-\eta\delta\ell_{N}\left(\rho_{t}\right)\left(x\right)\right],
\]
and as a result, 
\[
\partial_{t}\rho_{t}=\lim_{\eta\to0+}\frac{\rho_{t}^{\eta}-\rho_{t}}{\eta}=-\left[1+\delta\ell_{N}\left(\rho_{t}\right)\right],
\]
which recovers the Fisher-Rao gradient flow  \eqref{eq:Fisher-Rao-GF}.

\paragraph{Fisher-Rao gradient flow as mirror flow.}

Recall that the mirror descent update is defined as

\[
\rho_{t}^{\eta}\coloneqq\underset{\rho\in\mathcal{P}_{\mathsf{ac}}(\mathbb{R}^{d})}{\arg\min}\int_{\mathbb{R}^{d}}\delta\ell_{N}\left(\rho_{t}\right)\mathrm{d}\left(\rho-\rho_{t}\right)+\frac{1}{\eta}\mathsf{KL}\left(\rho\,\Vert\,\rho_{t}\right).
\]
The first variation of $f(\cdot)\coloneqq\mathsf{KL}(\cdot\,\Vert\,\rho_{t})$
is given by
\[
\delta f\left(\rho\right)\left(x\right)=\log\left[\frac{\rho\left(x\right)}{\rho_{t}\left(x\right)}\right],
\]
therefore the first-order optimality condition reads
\[
\delta\ell_{N}\left(\rho_{t}\right)\left(x\right)+\frac{1}{\eta}\log\left[\frac{\rho\left(x\right)}{\rho_{t}\left(x\right)}\right]=c
\]
for some constant $c>0$. This gives
\[
\frac{\rho\left(x\right)}{\rho_{t}\left(x\right)}=\exp\left\{ \eta\left[c-\delta\ell_{N}\left(\rho_{t}\right)\left(x\right)\right]\right\} \propto\exp\left[-\eta\delta\ell_{N}\left(\rho_{t}\right)\left(x\right)\right].
\]
Since $\int_{\mathbb{R}^{d}}\rho_{t}^{\eta}(x)\mathrm{d}x=1$, we
know that the closed-form solution is given by

\begin{equation}
	\rho_{t}^{\eta}\left(x\right)=\frac{\rho_{t}\left(x\right)\exp\left[-\eta\delta\ell_{N}\left(\rho_{t}\right)\left(x\right)\right]}{\int\rho_{t}\left(y\right)\exp\left[-\eta\delta\ell_{N}\left(\rho_{t}\right)\left(y\right)\right]\mathrm{d}y}.\label{eq:MD-update-continuous}
\end{equation}
Then as $\eta\to0$, we can compute 
\begin{align*}
	\rho_{t}^{\eta}\left(x\right) & =\frac{\rho_{t}\left(x\right)\left[1-\eta\delta\ell_{N}\left(\rho_{t}\right)\left(x\right)+O\left(\eta^{2}\right)\right]}{\int\rho_{t}\left(y\right)\left[1-\eta\delta\ell_{N}\left(\rho_{t}\right)\left(y\right)+O\left(\eta^{2}\right)\right]\mathrm{d}y}=\frac{\rho_{t}\left(x\right)\left[1-\eta\delta\ell_{N}\left(\rho_{t}\right)\left(x\right)+O\left(\eta^{2}\right)\right]}{1+\eta+O\left(\eta^{2}\right)}\\
	& =\rho_{t}\left(x\right)\left\{ 1-\eta\left[1+\delta\ell_{N}\left(\rho_{t}\right)\left(x\right)\right]+O\left(\eta^{2}\right)\right\} ,
\end{align*}
where we use the fact that $\int\delta\ell_{N}(\rho)\mathrm{d}\rho=-1$
for any $\rho\in\mathcal{P}_{2}(\mathbb{R}^{d})$. Therefore the continuous-time
limit of mirror descent is 
\[
\partial_{t}\rho_{t}=\lim_{\eta\to0+}\frac{\rho_{t}^{\eta}\left(x\right)-\rho_{t}\left(x\right)}{\eta}=-\left[1+\delta\ell_{N}\left(\rho_{t}\right)\left(x\right)\right],
\]
which recovers the Fisher-Rao gradient flow  \eqref{eq:Fisher-Rao-GF}.

\paragraph{Fisher-Rao gradient descent as EM algorithm.}

Now we consider fitting a $m$-component Gaussian mixture model with
unknown weights $\{\omega^{(j)}\}_{1\leq j\leq m}$, known location
parameters $\{\mu_{j}\}_{1\leq j\leq m}$ and isotropic covariance.
Given the data $\{X_{i}\}_{1\leq i\leq N}$, the MLE is given by 
\[
\underset{\omega\in\Delta^{m-1}}{\arg\max}\ell\left(\omega\right)=\frac{1}{N}\sum_{i=1}^{N}\log\left[\sum_{j=1}^{m}\omega^{(j)}\phi\left(X_{i}-\mu_{j}\right)\right].
\]
The Expectation-Maximization algorithm for solving the above MLE is
given as follows. We first introduce the latent i.i.d.~random variables
$\{J_{i}\}_{1\leq i\leq N}$ distributed $\mathbb{P}(J_{i}=j)=\omega^{(j)}$
for $1\leq j\leq m$, then the distribution of the observed samples
is $X_{i}|J_{i}=j\sim\mathcal{N}(\mu_{j},I_{d})$. The joint distribution
of $(X_{i},J_{i})$ is given by 
\[
p_{\omega}\left(x,j\right)=\phi\left(X_{i}-\mu_{j}\right)\omega^{(j)},
\]
and conditional on $X_{i}=x$, the conditional distribution of $J_{i}$
is given by 
\[
p_{\omega}\left(j|x\right)=\frac{p_{\omega}\left(x,j\right)}{\sum_{l=1}^{m}p_{\omega}\left(x,l\right)}=\frac{\phi\left(X_{i}-\mu_{j}\right)\omega^{(j)}}{\sum_{l=1}^{m}\phi\left(X_{i}-\mu_{l}\right)\omega^{(l)}}.
\]
Given the current estimate $\omega_{t}$, the E-step amounts to computing
\begin{align*}
	Q\left(\omega|\omega_{t}\right) & =\frac{1}{N}\sum_{i=1}^{N}\sum_{j=1}^{m}p_{\omega_{t}}\left(j|X_{i}\right)\log p_{\omega}\left(X_{i},j\right)\\
	& =\frac{1}{N}\sum_{i=1}^{N}\sum_{j=1}^{m}\frac{\phi\left(X_{i}-\mu_{j}\right)\omega_{t}^{(j)}}{\sum_{l=1}^{m}\phi\left(X_{i}-\mu_{l}\right)\omega_{t}^{(l)}}\log\left[\phi\left(X_{i}-\mu_{j}\right)\omega^{(j)}\right].
\end{align*}
The M-step is to update 
\[
\omega_{t+1}\coloneqq\underset{\omega\in\Delta^{m-1}}{\arg\max}Q\left(\omega|\omega_{t}\right),
\]
which is given by 
\[
\omega_{t+1}^{(j)}=\frac{1}{N}\sum_{i=1}^{N}\frac{\phi\left(X_{i}-\mu_{j}\right)\omega_{t}^{(j)}}{\sum_{l=1}^{m}\phi\left(X_{i}-\mu_{l}\right)\omega_{t}^{(l)}}\qquad\forall\,1\leq j\leq m.
\]
This is equivalent to Algorithm \ref{alg:FR-GD} with step size $\eta=1$.

\subsection{Wasserstein gradient flow \label{subsec:Wass-derivation}}

We introduce the Riemannian structure over $\mathcal{P}_{2}(\mathbb{R}^{d})$
underlying the quadratic Wasserstein distance. We define the tangent
space at $\rho\in\mathcal{P}_{2}(\mathbb{R}^{d})$ to be 
\[
\mathrm{Tan}_{\rho}^{\mathsf{W}}\mathcal{P}_{2}(\mathbb{R}^{d})=\left\{ \zeta:\zeta=-\mathsf{div}\left(\rho\nabla u\right)\text{ for some }u\text{ satisfying }\int\Vert\nabla u\Vert_{2}^{2}\mathrm{d}\rho<\infty\right\} .
\]
We equip this tangent space with the $L^{2}(\rho)$ metric, namely
we define the Riemannian metric tensor $g_{\rho}^{\mathsf{W}}(\cdot,\cdot):\mathrm{Tan}_{\rho}^{\mathsf{W}}\mathcal{P}_{2}(\mathbb{R}^{d})\times\mathrm{Tan}_{\rho}^{\mathsf{W}}\mathcal{P}_{2}(\mathbb{R}^{d})\to\mathbb{R}$
as 
\begin{align*}
	g_{\rho}^{\mathsf{W}}\left(\zeta_{1},\zeta_{2}\right) & \coloneqq\int_{\mathbb{R}^{d}}\int_{\mathbb{R}^{d}}\left\langle \nabla u_{1},\nabla u_{2}\right\rangle \rho\left(\mathrm{d}x\right)
\end{align*}
for any $\zeta_{1}=-\mathsf{div}(\rho\nabla u_{1})$ and $\zeta_{2}=-\mathsf{div}(\rho\nabla u_{2})$.
The metric induced by this Riemannian structure recovers the quadratic
Wasserstein distance, namely 
\begin{align*}
	d_{\mathsf{W}}^{2}\left(\rho_{0},\rho_{1}\right) & =\inf\bigg\{\int_{0}^{1}\int\left\Vert v_{t}\right\Vert _{2}^{2}\mathrm{d}\rho_{t}\mathrm{d}t:\left(\rho_{t},v_{t}\right)_{t\in[0,1]}\text{ solves }\partial_{t}\rho_{t}+\mathsf{div}(\rho_{t}v_{t})=0\bigg\}\\
	& =\inf_{\pi\in\Pi(\rho_{0},\rho_{1})}\int\left\Vert x-y\right\Vert _{2}^{2}\pi\left(\mathrm{d}x,\mathrm{d}y\right),
\end{align*}
where $\Pi(\rho_{0},\rho_{1})$ is the set of couplings of $\rho_{0}$
and $\rho_{1}$. This is known as the Benamou-Brenier formula for
the Wasserstein distance. Then we derive the Wasserstein gradient
flow with respect to the functional $\ell_{N}$. Interested readers are referred to \citet{ambrosio2008gradient} for detailed introduction to Wasserstein gradient flow. Let $(\rho_{t})_{t\geq0}$
be a $C^{1}$ curve satisfying $\rho_{0}=\rho$ with initial velocity
\[
\partial_{t}\rho_{t}|_{t=0}=\zeta=-\mathsf{div}\left(\rho\nabla u\right).
\]
Then it should hold that 
\[
\frac{\mathrm{d}}{\mathrm{d}t}\ell_{N}\left(\rho_{t}\right)\Big|_{t=0}=g_{\rho}^{\mathsf{W}}\left(\mathrm{grad}_{\mathsf{W}}\ell_{N}\left(\rho\right),\zeta\right).
\]
The left hand side of the above equation equals to 
\begin{align*}
	\frac{\mathrm{d}}{\mathrm{d}t}\ell_{N}\left(\rho_{t}\right)\Big|_{t=0} & =\int\delta\ell_{N}\left(\rho\right)\zeta\mathrm{d}x=-\int\delta\ell_{N}\left(\rho\right)\mathsf{div}\left(\rho\nabla u\right)\mathrm{d}x\\
	& =-\int\left\langle \nabla\delta\ell_{N}\left(\rho\right),\nabla u\right\rangle \mathrm{d}\rho\\
	& =g_{\rho}^{\mathsf{W}}\left(-\mathsf{div}\left(\nabla\delta\ell_{N}\left(\rho\right)\rho\right),\zeta\right).
\end{align*}
Therefore 
\[
g_{\rho}^{\mathsf{W}}\left(\mathrm{grad}_{\mathsf{W}}\ell_{N}\left(\rho\right),\zeta\right)=g_{\rho}^{\mathsf{W}}\left(-\mathsf{div}\left(\nabla\delta\ell_{N}\left(\rho\right)\rho\right),\zeta\right)
\]
holds for any $\zeta\in\mathrm{Tan}_{\rho}^{\mathsf{W}}\mathcal{P}_{2}(\mathbb{R}^{d})$,
and as a result 
\[
\mathrm{grad}_{\mathsf{W}}\ell_{N}\left(\rho\right)=-\mathsf{div}\left(\nabla\delta\ell_{N}\left(\rho\right)\rho\right).
\]
This shows that the gradient flow of $\ell_{N}$ with respect to the
quadratic Wasserstein distance $d_{\mathsf{W}}$ is given by 
\[
\partial_{t}\rho_{t}=-\mathrm{grad}_{\mathsf{W}}\ell_{N}\left(\rho_{t}\right)=\mathsf{div}\left(\nabla\delta\ell_{N}\left(\rho_{t}\right)\rho_{t}\right).
\]

\subsection{Wasserstein-Fisher-Rao gradient flow \label{subsec:WFR-derivation}}

We introduce the Riemannian structure over $\mathcal{P}_{2}(\mathbb{R}^{d})$
underlying the Wasserstein-Fisher-Rao metric. Define the tangent space
at $\rho\in\mathcal{P}_{2}(\mathbb{R}^{d})$ to be 
\begin{align*}
	\mathrm{Tan}_{\rho}^{\mathsf{WFR}}\mathcal{P}_{2}(\mathbb{R}^{d}) & =\Big\{\zeta:\zeta=-\mathsf{div}\left(\rho\nabla u\right)+\rho\left(\alpha-\int\alpha\mathrm{d}\rho\right)\text{ for some }u,\alpha:\mathbb{R}^{d}\to\mathbb{R}\\
	& \qquad\quad\;\:\text{satisfying }\int(\alpha^{2}+\Vert\nabla u\Vert_{2}^{2})\mathrm{d}\rho<\infty\Big\}.
\end{align*}
We equip this tangent space with the Riemannian metric tensor $g_{\rho}^{\mathsf{WFR}}(\cdot,\cdot):\mathrm{Tan}_{\rho}^{\mathsf{WFR}}\mathcal{P}_{2}(\mathbb{R}^{d})\times\mathrm{Tan}_{\rho}^{\mathsf{WFR}}\mathcal{P}_{2}(\mathbb{R}^{d})\to\mathbb{R}$
defined as 
\begin{align*}
	g_{\rho}^{\mathsf{WFR}}\left(\zeta_{1},\zeta_{2}\right) & \coloneqq\int_{\mathbb{R}^{d}}\left\langle \nabla u_{1},\nabla u_{2}\right\rangle \rho\left(\mathrm{d}x\right)+\int_{\mathbb{R}^{d}}\left[\alpha_{1}\left(x\right)-\int_{\mathbb{R}^{d}}\alpha_{1}\mathrm{d}\rho\right]\left[\alpha_{2}\left(x\right)-\int_{\mathbb{R}^{d}}\alpha_{2}\mathrm{d}\rho\right]\rho\left(\mathrm{d}x\right)\\
	& =\int_{\mathbb{R}^{d}}\left\langle \nabla u_{1},\nabla u_{2}\right\rangle \rho\left(\mathrm{d}x\right)+\int_{\mathbb{R}^{d}}\alpha_{1}\left(x\right)\alpha_{2}\left(x\right)\rho\left(\mathrm{d}x\right)-\left(\int_{\mathbb{R}^{d}}\alpha_{1}\mathrm{d}\rho\right)\left(\int_{\mathbb{R}^{d}}\alpha_{2}\mathrm{d}\rho\right)
\end{align*}
for any $\zeta_{1}=-\mathsf{div}(\rho\nabla u_{1})+\rho(\alpha_{1}-\int\alpha_{1}\mathrm{d}\rho)$
and $\zeta_{2}=-\mathsf{div}(\rho\nabla u_{2})+\rho(\alpha_{2}-\int\alpha_{2}\mathrm{d}\rho)$.
The metric induced by the above Riemannian structure, namely the Wasserstein-Fisher-Rao
metric $d_{\mathsf{WFR}}(\cdot,\cdot)$, is defined as 
\begin{align*}
	d_{\mathsf{WFR}}^{2}\left(\rho_{0},\rho_{1}\right) & =\inf\bigg\{\int_{0}^{1}\int\Big[\left\Vert v_{t}\right\Vert ^{2}+\Big(\alpha_{t}-\int\alpha_{t}\mathrm{d}\rho_{t}\Big)^{2}\Big]\mathrm{d}\rho_{t}\mathrm{d}t:\left(\rho_{t},v_{t},\alpha_{t}\right)_{0\leq t\leq1}\\
	& \qquad\qquad\text{solves }\partial_{t}\rho_{t}=-\mathsf{div}(\rho_{t}v_{t})+\rho_{t}\alpha_{t}\bigg\}.
\end{align*}
Then we follow \citet{lu2019accelerating,gallouet2017jko} to derive the Wasserstein-Fisher-Rao gradient flow with respect
to the functional $\ell_{N}$. Let $(\rho_{t})_{t\geq0}$ be a $C^{1}$
curve satisfying $\rho_{0}=\rho$ with initial velocity 
\[
\partial_{t}\rho_{t}|_{t=0}=\zeta=-\mathsf{div}\left(\rho\nabla u\right)+\rho\left(\alpha-\int\alpha\mathrm{d}\rho\right).
\]
Then it should hold that 
\[
\frac{\mathrm{d}}{\mathrm{d}t}\ell_{N}\left(\rho_{t}\right)\Big|_{t=0}=g_{\rho}^{\mathsf{WFR}}\left(\mathrm{grad}_{\mathsf{WFR}}\ell_{N}\left(\rho\right),\zeta\right).
\]
The left hand side of the above equation equals to 
\begin{align*}
	\frac{\mathrm{d}}{\mathrm{d}t}\ell_{N}\left(\rho_{t}\right)\Big|_{t=0} & =\int\delta\ell_{N}\left(\rho\right)\zeta\mathrm{d}x=\int\delta\ell_{N}\left(\rho\right)\left[-\mathsf{div}\left(\rho\nabla u\right)+\rho\left(\alpha-\int\alpha\mathrm{d}\rho\right)\right]\mathrm{d}x\\
	& =-\int\left\langle \nabla\delta\ell_{N}\left(\rho\right),\nabla u\right\rangle \mathrm{d}\rho+\int\left(\delta\ell_{N}\left(\rho\right)-\int\delta\ell_{N}\left(\rho\right)\mathrm{d}\rho\right)\left(\alpha-\int\alpha\mathrm{d}\rho\right)\mathrm{d}\rho\\
	& =g_{\rho}^{\mathsf{WFR}}\left(-\mathsf{div}\left(\nabla\delta\ell_{N}\left(\rho\right)\rho\right)+\rho\left(\delta\ell_{N}\left(\rho\right)-\int\delta\ell_{N}\left(\rho\right)\mathrm{d}\rho\right),\zeta\right).
\end{align*}
Therefore 
\[
g_{\rho}^{\mathsf{WFR}}\left(\mathrm{grad}_{\mathsf{W}}\ell_{N}\left(\rho\right),\zeta\right)=g_{\rho}^{\mathsf{WFR}}\left(-\mathsf{div}\left(\nabla\delta\ell_{N}\left(\rho\right)\rho\right)+\rho\left(\delta\ell_{N}\left(\rho\right)-\int\delta\ell_{N}\left(\rho\right)\mathrm{d}\rho\right),\zeta\right)
\]
holds for any $\zeta\in\mathrm{Tan}_{\rho}^{\mathsf{WFR}}\mathcal{P}_{2}(\mathbb{R}^{d})$,
and as a result 
\begin{align*}
	\mathrm{grad}_{\mathsf{WFR}}\ell_{N}\left(\rho\right) & =-\mathsf{div}\left(\nabla\delta\ell_{N}\left(\rho\right)\rho\right)+\rho\left(\delta\ell_{N}\left(\rho\right)-\int\delta\ell_{N}\left(\rho\right)\mathrm{d}\rho\right)\\
	& =-\mathsf{div}\left(\nabla\delta\ell_{N}\left(\rho\right)\rho\right)+\rho\left[1+\delta\ell_{N}\left(\rho\right)\right],
\end{align*}
where we have used the fact that $\int\delta\ell_{N}(\rho)\mathrm{d}\rho=-1$.
This shows that the gradient flow of $\ell_{N}$ with respect to the
Wasserstein-Fisher-Rao metric $d_{\mathsf{WFR}}$ is given by 
\[
\partial_{t}\rho_{t}=-\mathrm{grad}_{\mathsf{WFR}}\ell_{N}\left(\rho_{t}\right)=\mathsf{div}\left(\nabla\delta\ell_{N}\left(\rho_{t}\right)\rho_{t}\right)-\rho_{t}\left[1+\delta\ell_{N}\left(\rho_{t}\right)\right].
\]

\section{Convergence theory (Proof of Theorem \ref{thm:FR-convergence} and
\ref{thm:WFR-convergence})}

\label{sec-convergence-proofs}

In this section, we provide a systematic treatment to the convergence
of Fisher-Rao gradient descent and Wasserstein-Fisher-Rao gradient
descent. Instead of proving Theorem \ref{thm:FR-convergence} and
\ref{thm:WFR-convergence} separately, we prove a more general convergence
result in Theorem \ref{thm-convergence-wfr-iteration}, which admits
Theorem \ref{thm:FR-convergence} and \ref{thm:WFR-convergence} as
its special cases.

To begin with, we define the Wasserstein gradient descent update. 

\begin{definition}[Wasserstein gradient descent]\label{defn-gd}
For any $\rho\in\mathcal{P}(\mathbb{R}^{d})$ and $\eta\geq0$, we
define $\rho^{\mathrm{W},\eta}=(\mathrm{id}-\eta\nabla\delta\ell_{N}(\rho))_{\#}\rho$.
Given an initial distribution $\rho_{0}\in\mathcal{P}(\mathbb{R}^{d})$,
the Wasserstein gradient descent for solving \eqref{eq:NPMLE} is
defined recursively by $\rho_{n+1}=\rho_{n}^{\mathrm{W},\eta}$, $\forall n\geq0$.
\end{definition}

In words, $\rho^{\mathrm{W},\eta}$ is the push forward of $\rho$
by the mapping $x\mapsto x-\eta\nabla\delta\ell_{N}(\rho)(x)$. For
any $p\geq1$, define the $p$-Wasserstein distance between $\rho_{0}$
and $\rho_{1}$ in $\mathcal{P}(\mathbb{R}^{d})$ as 
\[
W_{p}\left(\rho_{0},\rho_{1}\right)=\left[\inf_{\pi\in\Pi(\rho_{0},\rho_{1})}\int\left\Vert x-y\right\Vert _{p}^{p}\pi\left(\mathrm{d}x,\mathrm{d}y\right)\right]^{1/p}.
\]
The following lemma characterizes the decrease in loss function $\ell_{N}$
by running one step of Wasserstein gradient descent, which can be
lower bounded by the squared Wasserstein distance between the two
iterates. 

\begin{lemma}[Wasserstein gradient descent]\label{lem-gd} Let
Assumption \ref{assumption-pdf-0} hold. Choose any $\rho_{0}\in\mathcal{P}(\mathbb{R}^{d})$
and define 
\[
c_{0}=\frac{e^{-\ell_{N}(\rho_{0})}\underline{\phi}(\inf\{r\geq0:~\bar{\phi}(r)\leq e^{-\ell_{N}(\rho_{0})}/2\}+\mathrm{diam}(\Omega))}{2\bar{\phi}(0)}.
\]
Suppose that 
\[
0\leq\eta<\frac{c_{0}}{\sup_{x\in\mathbb{R}^{d}}\|\nabla^{2}\phi(x)\|_{2}+\sup_{x\in\mathbb{R}^{d}}\|\nabla\phi(x)\|_{2}^{2}/c_{0}}.
\]
Define $\rho_{n+1}=\rho_{n}^{\mathrm{W},\eta}$ for $n\geq0$. Then,
for any $n\geq0$, 
\begin{align*}
\ell_{N}(\rho_{n+1})-\ell_{N}(\rho_{n}) & \leq-\frac{\eta}{2}\mathbb{E}_{Y\sim\rho_{n}}\|\nabla\delta\ell_{N}(\rho_{n})(Y)\|_{2}^{2}\leq-\frac{1}{2\eta}W_{2}^{2}(\rho_{n+1},\rho_{n}).
\end{align*}
In addition, if $\mathsf{supp}(\rho_{0})=\mathbb{R}^{d}$, then $\mathsf{supp}(\rho_{n})=\mathbb{R}^{d}$
for all $n\geq0$. \end{lemma}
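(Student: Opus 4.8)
The plan is to read this as a descent lemma and prove it by tracking the loss along the displacement path generated by a single gradient step, wrapped in an induction that keeps the loss below its initial value $\ell_N(\rho_0)$. Fix $n$, abbreviate $g=\nabla\delta\ell_N(\rho_n)$, $M_1=\sup_x\|\nabla\phi(x)\|_2$, $M_2=\sup_x\|\nabla^2\phi(x)\|_2$, and $L=M_2/c_0+M_1^2/c_0^2$, so the hypothesis is exactly $\eta L<1$. Introduce the interpolation $T_t=\mathrm{id}-tg$, $\rho_t=(T_t)_\#\rho_n$ for $t\in[0,\eta]$, with $\rho_0=\rho_n$, $\rho_\eta=\rho_{n+1}$, and set $h(t)=\ell_N(\rho_t)$. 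Writing $u_i(t)=(\rho_t*\phi)(X_i)=\int\phi(X_i-x+tg(x))\rho_n(\mathrm{d}x)$, a direct computation using the definition of $g$ gives the descent direction $h'(0)=-\mathbb{E}_{Y\sim\rho_n}\|g(Y)\|_2^2$ and
\[
h''(t)=-\frac1N\sum_{i=1}^N\frac{u_i''(t)}{u_i(t)}+\frac1N\sum_{i=1}^N\Big(\frac{u_i'(t)}{u_i(t)}\Big)^2 ,
\]
where Assumption~\ref{assumption-pdf-0} and Cauchy--Schwarz yield $|u_i'(t)|\le M_1\,\mathbb{E}_{\rho_n}\|g\|_2$ and $|u_i''(t)|\le M_2\,\mathbb{E}_{\rho_n}\|g\|_2^2$.

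The quantitative heart of the argument is a uniform lower bound $u_i(t)\ge c_0$, which turns the two displays into the smoothness estimate $h''(t)\le L\,\mathbb{E}_{\rho_n}\|g\|_2^2$. Since every $X_i\in\Omega$, Lemma~\ref{lem-loss-bounds} (applied with $\mu=\rho_t$ and reference measure $\rho_0$) gives $u_i(t)=(\rho_t*\phi)(X_i)\ge\inf_{x\in\Omega}(\rho_t*\phi)(x)\ge c_0$ as soon as $\ell_N(\rho_t)\le\ell_N(\rho_0)$. Granting this bound on the step, a second-order Taylor expansion with $h'(0)=-\mathbb{E}_{\rho_n}\|g\|_2^2$ gives, for $t\in[0,\eta]$,
\[
h(t)\le h(0)-t\Big(1-\tfrac{tL}{2}\Big)\mathbb{E}_{\rho_n}\|g\|_2^2\le h(0)-\tfrac{t}{2}\,\mathbb{E}_{\rho_n}\|g\|_2^2 ,
\]
using $1-tL/2\ge 1-\eta L/2>1/2$; the case $t=\eta$ is the first claimed inequality.

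The main obstacle is the apparent circularity: the bound $u_i(t)\ge c_0$ requires $\ell_N(\rho_t)\le\ell_N(\rho_0)$, which is part of what we are proving. I would resolve this by a bootstrap within each step together with an outer induction on $n$. Assuming inductively $\ell_N(\rho_n)\le\ell_N(\rho_0)$ (trivial at $n=0$), set $\tau=\sup\{t\in[0,\eta]:\ell_N(\rho_s)\le\ell_N(\rho_0)\ \forall s\le t\}$, a maximum attained by continuity of $h$. On $[0,\tau]$ the lower bound and hence the displayed descent estimate hold, so $h$ is nonincreasing there with $h(\tau)\le\ell_N(\rho_n)\le\ell_N(\rho_0)$; continuity of $h$ together with $h'(0)=-\mathbb{E}_{\rho_n}\|g\|_2^2<0$ (when $\mathbb{E}_{\rho_n}\|g\|_2^2>0$; otherwise the path is constant) forces strict decrease and rules out $\tau<\eta$, so $\tau=\eta$ and the estimate holds on the whole step. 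This yields $\ell_N(\rho_{n+1})\le\ell_N(\rho_n)\le\ell_N(\rho_0)$, closing the induction and giving the first inequality. The second is immediate: $(\mathrm{id},T_\eta)_\#\rho_n$ couples $\rho_n$ and $\rho_{n+1}$, so $W_2^2(\rho_{n+1},\rho_n)\le\int\|\eta g\|_2^2\,\mathrm{d}\rho_n=\eta^2\mathbb{E}_{\rho_n}\|g\|_2^2$, which rearranges to $-\tfrac\eta2\mathbb{E}_{\rho_n}\|g\|_2^2\le-\tfrac1{2\eta}W_2^2(\rho_{n+1},\rho_n)$.

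Finally, for the support claim I would show $T_\eta=\mathrm{id}-\eta g$ is a homeomorphism of $\mathbb{R}^d$. Because each denominator $(\rho_n*\phi)(X_i)\ge c_0$, differentiating $g$ gives $\sup_x\|\nabla g(x)\|_2\le M_2/c_0$, so $\eta g$ is Lipschitz with constant $\eta M_2/c_0<1$ (as $\eta L<1$ forces $\eta M_2/c_0<1$); a contraction perturbation of the identity is a bi-Lipschitz homeomorphism of $\mathbb{R}^d$ onto itself. For a homeomorphism $T_\eta$ one has $\mathsf{supp}\big((T_\eta)_\#\rho_n\big)=\overline{T_\eta(\mathsf{supp}(\rho_n))}$, so $\mathsf{supp}(\rho_n)=\mathbb{R}^d$ implies $\mathsf{supp}(\rho_{n+1})=\mathbb{R}^d$, and induction finishes the proof.
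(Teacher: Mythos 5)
Your proposal is correct, and while it reuses the paper's scaffolding---the outer induction maintaining $\ell_{N}(\rho_{n})\leq\ell_{N}(\rho_{0})$, Lemma~\ref{lem-loss-bounds} to convert this into the uniform lower bound $c_{0}$ on the Gaussian-convolution denominators, the coupling $(\mathrm{id},T_{\eta})_{\#}\rho_{n}$ for the $W_{2}$ inequality, and the contraction-perturbation-of-identity argument for support preservation (the paper's Lemma~\ref{lem-lipschitz-perturbation})---your core descent estimate is proved by a genuinely different device. The paper (Lemma~\ref{lem-preservation}) compares the two endpoints directly: with $h=-\log$, it expands $h(a(x))-h(b(x))$ to second order around $b(x)=(\rho_{n}*\phi)(x)$ and bounds $a(x)-b(x)$ from both sides using $\sup\|\nabla\phi\|_{2}$ and $\sup\|\nabla^{2}\phi\|_{2}$, so it only ever divides by quantities attached to the current iterate. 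You instead interpolate in time, $\rho_{t}=(\mathrm{id}-tg)_{\#}\rho_{n}$, and control $h''(t)$, which forces you to lower-bound the denominators $u_{i}(t)$ at \emph{intermediate} times; your within-step continuity bootstrap (the maximal-$\tau$ argument, seeded by $h'(0)=-\mathbb{E}_{\rho_{n}}\|g\|_{2}^{2}\leq0$ and the outer inductive hypothesis) supplies exactly this and closes correctly. The trade is real and, in one respect, favors you: the paper's one-line inequality $h(a)\leq h(b)+h'(b)(a-b)+\tfrac{h''(b)}{2}(a-b)^{2}$ is valid only when $a\geq b$ (since $h'''<0$, the Lagrange remainder sits below $b$ when $a<b$, where $h''$ is larger), so a fully rigorous endpoint comparison also needs a lower bound on $a(x)$; your time interpolation with the bootstrap provides precisely the intermediate-value control that legitimizes the second-order remainder, at the cost of the extra continuity argument. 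Your constants match the paper's exactly---$\eta L<1$ with $L=M_{2}/c_{0}+M_{1}^{2}/c_{0}^{2}$ is the stated step-size condition, after Jensen's inequality $(\mathbb{E}\|g\|_{2})^{2}\leq\mathbb{E}\|g\|_{2}^{2}$---and your support-preservation step ($\eta M_{2}/c_{0}<1$, bi-Lipschitz homeomorphism, $\mathsf{supp}((T_{\eta})_{\#}\rho_{n})=\mathbb{R}^{d}$) coincides with the paper's modulo packaging.
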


\begin{proof}See Appendix \ref{sec-lem-gd-proof}. \end{proof}

Then we define the Fisher-Rao gradient descent update.

\begin{definition}[Fisher-Rao gradient descent]\label{defn-fr}
For any $\rho\in\mathcal{P}(\mathbb{R}^{d})$ and $\gamma\in[0,1]$,
we define $\rho^{\mathrm{FR},\gamma}\in\mathcal{P}(\mathbb{R}^{d})$
through 
\[
\frac{\mathrm{d}\rho^{\mathrm{FR},\gamma}}{\mathrm{d}\rho}=1-\gamma\left[\delta\ell_{N}(\rho)+1\right].
\]
Given an initial distribution $\rho_{0}\in\mathcal{P}(\mathbb{R}^{d})$,
the Fisher-Rao gradient descent for solving \eqref{eq:NPMLE} is defined
recursively by $\rho_{n+1}=\rho_{n}^{\mathrm{FR},\gamma}$, $\forall n\geq0$.
\end{definition}

It is easily seen that $\rho^{\mathrm{FR},\gamma}=(1-\gamma)\rho+\gamma\rho^{\mathrm{FR},1}$
and 
\[
\frac{\mathrm{d}\rho^{\mathrm{FR},1}}{\mathrm{d}\rho}(x)=-\delta\ell_{N}(\rho)(x)=\frac{1}{N}\sum_{i=1}^{N}\frac{\phi(X_{i}-x)}{(\rho*\phi)(X_{i})},\qquad\forall x\in\mathbb{R}^{d}.
\]
From Appendix \ref{subsec:FR-derivation} we know that Fisher-Rao
gradient descent with $\gamma=1$ can be viewed as fixed-location
EM algorithm. The following lemma shows that by running one step of
Fisher-Rao gradient descent, the decrease in loss function can be
lower bounded by the KL divergence between the two iterates.

\begin{lemma}[Fisher-Rao gradient descent]\label{lem-FR-KL} Choose
any $\rho\in\mathcal{P}(\mathbb{R}^{d})$ and $\gamma\in[0,1]$. We
have 
\[
\ell_{N}(\mu)\leq\ell_{N}(\rho)+\mathsf{KL}(\rho^{\mathrm{FR},1}\Vert\mu)-\mathsf{KL}(\rho^{\mathrm{FR},1}\Vert\rho),\qquad\forall\,\mu\ll\rho,
\]
and
\[
\ell_{N}(\rho^{\mathrm{FR},\gamma})-\ell_{N}(\rho)\leq-\mathsf{KL}(\rho^{\mathrm{FR},\gamma}\Vert\rho).
\]
\end{lemma}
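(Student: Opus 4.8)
The plan is to recognize that $\rho^{\mathrm{FR},1}$ is precisely the fixed-location EM update, i.e.\ the averaged posterior, and to run the classical EM monotonicity argument recast as a three-point KL inequality. For each $i$ write $p_i^\rho(\mathrm dy) = \phi(X_i - y)\rho(\mathrm dy)/(\rho*\phi)(X_i)$ for the posterior of the latent mean given $X_i$ under mixing measure $\rho$; each $p_i^\rho$ is a probability measure, and from Definition~\ref{defn-fr} one has $\frac{\mathrm d\rho^{\mathrm{FR},1}}{\mathrm d\rho} = \frac1N\sum_i\frac{\mathrm dp_i^\rho}{\mathrm d\rho}$, so that $\nu := \rho^{\mathrm{FR},1} = \frac1N\sum_{i}p_i^\rho$ is the average posterior.

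For the first inequality I would fix $\mu\ll\rho$ (if $\nu\not\ll\mu$ the right-hand side is $+\infty$ and there is nothing to prove, so assume $\nu\ll\mu\ll\rho$). Since $\mu\ll\rho$, the likelihood ratio rewrites as an average against the posterior, $(\mu*\phi)(X_i)/(\rho*\phi)(X_i) = \int \frac{\mathrm d\mu}{\mathrm d\rho}\,\mathrm dp_i^\rho$, and Jensen's inequality (concavity of $\log$, $p_i^\rho$ a probability measure) gives $\log\frac{(\mu*\phi)(X_i)}{(\rho*\phi)(X_i)}\ge\int\log\frac{\mathrm d\mu}{\mathrm d\rho}\,\mathrm dp_i^\rho$. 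Averaging over $i$, negating, and using $\nu=\frac1N\sum_i p_i^\rho$ yields $\ell_N(\mu)-\ell_N(\rho)\le-\int\log\frac{\mathrm d\mu}{\mathrm d\rho}\,\mathrm d\nu$. The proof of part one then closes via the identity $-\int\log\frac{\mathrm d\mu}{\mathrm d\rho}\,\mathrm d\nu = \mathsf{KL}(\nu\|\mu)-\mathsf{KL}(\nu\|\rho)$, which follows from the chain rule $\frac{\mathrm d\nu}{\mathrm d\mu}=\frac{\mathrm d\nu}{\mathrm d\rho}\frac{\mathrm d\rho}{\mathrm d\mu}$ ($\nu$-a.e.); the key structural point is that the difference $\mathsf{KL}(\cdot\|\mu)-\mathsf{KL}(\cdot\|\rho)$ is \emph{affine} in its first argument, which is exactly what lets the average of the posteriors collapse into a single KL term for $\nu$.

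For the second inequality I would simply feed $\mu=\rho^{\mathrm{FR},\gamma}=(1-\gamma)\rho+\gamma\nu$ (recall $\rho^{\mathrm{FR},\gamma}=(1-\gamma)\rho+\gamma\rho^{\mathrm{FR},1}$) into the first inequality, obtaining $\ell_N(\rho^{\mathrm{FR},\gamma})-\ell_N(\rho)\le\mathsf{KL}(\nu\|\rho^{\mathrm{FR},\gamma})-\mathsf{KL}(\nu\|\rho)$. It then remains to prove the convexity-type bound $\mathsf{KL}(\rho^{\mathrm{FR},\gamma}\|\rho)+\mathsf{KL}(\nu\|\rho^{\mathrm{FR},\gamma})\le\mathsf{KL}(\nu\|\rho)$. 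Writing all densities against $\rho$ with $h=\frac{\mathrm d\nu}{\mathrm d\rho}$ and $m=\frac{\mathrm d\rho^{\mathrm{FR},\gamma}}{\mathrm d\rho}=1-\gamma+\gamma h$, a direct expansion reduces this to $\int(m-h)\log m\,\mathrm d\rho\le0$; since $m-h=\frac{1-\gamma}{\gamma}(1-m)$ and $\int m\,\mathrm d\rho=\int\mathrm d\rho=1$, this follows from the pointwise inequality $(1-m)\log m\le0$ (valid because $\log$ is increasing and $\log 1=0$). Chaining the two displays gives exactly $\ell_N(\rho^{\mathrm{FR},\gamma})-\ell_N(\rho)\le-\mathsf{KL}(\rho^{\mathrm{FR},\gamma}\|\rho)$.

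The conceptual content lies entirely in these two elementary facts—the EM/Jensen step and the pointwise bound $(1-m)\log m\le0$—so I do not expect analytic difficulty. The step I expect to require the most care in stating rigorously is the measure-theoretic bookkeeping of absolute continuity: verifying that the Radon–Nikodym derivatives and chain rules hold $\nu$-a.e., and disposing of the degenerate cases (where some KL term is infinite, or where $m$ vanishes on a set of positive $\rho$-measure) so that every manipulation is justified and the asserted inequalities hold trivially whenever a right-hand side diverges.
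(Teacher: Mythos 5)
Your proof is correct. For the first inequality it is essentially the paper's own argument: the paper likewise introduces the per-sample posteriors $\rho_x^{\mathrm{FR},1}$ with $\frac{\mathrm{d}\rho_x^{\mathrm{FR},1}}{\mathrm{d}\rho}(\cdot)=\phi(x-\cdot)/(\rho*\phi)(x)$ (your $p_i^\rho$), applies Jensen's inequality to $-\log$, and splits the result into the two KL terms plus $-\ell_N(\rho)$ (its terms $\beta_1,\beta_2$), using the same affine structure of $\mathsf{KL}(\cdot\Vert\mu)-\mathsf{KL}(\cdot\Vert\rho)$ to collapse the averaged posterior into $\rho^{\mathrm{FR},1}$. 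For the second inequality you take a genuinely different route. The paper specializes the first inequality to $\mu=\rho^{\mathrm{FR},1}$, then interpolates via two convexity facts: linear convexity of $\ell_N$ gives $\ell_N(\rho^{\mathrm{FR},\gamma})\leq\ell_N(\rho)-\gamma\,\mathsf{KL}(\rho^{\mathrm{FR},1}\Vert\rho)$, and convexity of $\mathsf{KL}(\cdot\Vert\rho)$ in its first argument gives $\mathsf{KL}(\rho^{\mathrm{FR},\gamma}\Vert\rho)\leq\gamma\,\mathsf{KL}(\rho^{\mathrm{FR},1}\Vert\rho)$. You instead specialize to $\mu=\rho^{\mathrm{FR},\gamma}$ and prove the three-point inequality $\mathsf{KL}(\rho^{\mathrm{FR},\gamma}\Vert\rho)+\mathsf{KL}(\nu\Vert\rho^{\mathrm{FR},\gamma})\leq\mathsf{KL}(\nu\Vert\rho)$; I verified the algebra: with $h=\frac{\mathrm{d}\nu}{\mathrm{d}\rho}$ and $m=1-\gamma+\gamma h$ the difference of the three terms is exactly $\int(m-h)\log m\,\mathrm{d}\rho$, which is nonpositive by $m-h=\frac{1-\gamma}{\gamma}(1-m)$ and the pointwise bound $(1-m)\log m\leq 0$ (the normalization $\int m\,\mathrm{d}\rho=1$ you cite is not actually needed). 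Your route is self-contained, dispensing with both convexity facts, and in fact passes through a tighter intermediate quantity: by concavity, $\log m=\log\left((1-\gamma)+\gamma h\right)\geq\gamma\log h$, so your bound $\mathsf{KL}(\nu\Vert\rho^{\mathrm{FR},\gamma})-\mathsf{KL}(\nu\Vert\rho)=-\int h\log m\,\mathrm{d}\rho$ is at most the paper's intermediate bound $-\gamma\,\mathsf{KL}(\nu\Vert\rho)$; the paper's argument is in turn more modular, resting only on standard convexity. Two pieces of bookkeeping you flagged should indeed be made explicit: treat $\gamma=0$ separately (your identity for $m-h$ divides by $\gamma$, but the claim is trivial there since $\rho^{\mathrm{FR},0}=\rho$), and note that $h=-\delta\ell_N(\rho)$ is bounded above by $\left\Vert\phi\right\Vert_\infty/\min_{1\leq i\leq N}(\rho*\phi)(X_i)$ while $m\geq1-\gamma$, so every KL term you subtract is finite and no $\infty-\infty$ ambiguity arises; when $\nu\not\ll\mu$ in part one, the right-hand side is $+\infty$ and the inequality is vacuous, as you say.
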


\begin{proof} See Appendix \ref{sec-thm-FR-KL-proof}. \end{proof}

Finally we define Wasserstein-Fisher-Rao gradient descent, which can
be viewed as iteratively applying one step of Wasserstein gradient
descent (cf.~Definition \ref{defn-gd}) and one step of Fisher-Rao
gradient descent (cf.~Definition \ref{defn-fr}).

\begin{definition}[Wasserstein-Fisher-Rao gradient descent] Let
$\rho_{0}\in\mathcal{P}(\mathbb{R}^{d})$, $\eta\geq0$ and $\gamma\in[0,1]$.
The Wasserstein-Fisher-Rao gradient descent is defined through $\widetilde{\rho}_{n}=\rho_{n}^{\mathrm{W},\eta}$
and $\rho_{n+1}=\widetilde{\rho}_{n}^{\mathrm{FR},\gamma}$ for all
$n\geq0$. \end{definition}

The Wasserstein gradient descent and Fisher-Rao gradient descent are
special cases of Wasserstein-Fisher-Rao gradient descent with $\gamma=0$
and $\eta=0$, respectively. The following theorem shows that if Wasserstein-Fisher-Rao
gradient descent converges weakly to a limit distribution, then this
weak limit is the NPMLE.

\begin{theorem}\label{thm-convergence-wfr-iteration} Suppose that
Assumption \ref{assumption-pdf-0} holds. Let $\{\rho_{n}\}_{n=0}^{\infty}$
be the iterates of Wasserstein-Fisher-Rao gradient descent, and 
\[
c_{0}=\frac{e^{-\ell_{N}(\rho_{0})}\underline{\phi}(\inf\{r\geq0:~\bar{\phi}(r)\leq e^{-\ell_{N}(\rho_{0})}/2\}+\mathrm{diam}(\Omega))}{2\bar{\phi}(0)}.
\]
If $\mathsf{supp}(\rho_{0})=\mathbb{R}^{d}$, $\gamma\in(0,1]$, and
\[
0\leq\eta<\frac{c_{0}}{\sup_{{x}\in\mathbb{R}^{d}}\|\nabla^{2}\phi(x)\|_{2}+\sup_{{x}\in\mathbb{R}^{d}}\|\nabla\phi({x})\|_{2}^{2}/c_{0}},
\]
then for all $n\geq0$, $\ell_{N}(\rho_{n+1})\leq\ell_{N}(\rho_{n})$
holds, and $\mathsf{supp}(\rho_{n})=\mathbb{R}^{d}$. Furthermore,
if $\{\rho_{n}\}_{n=0}^{\infty}$ converges weakly to $\rho_{\infty}\in\mathcal{P}(\mathbb{R}^{d})$,
then this limit $\rho_{\infty}$ is NPMLE, i.e.~an optimal solution
to \eqref{eq:NPMLE}. \end{theorem}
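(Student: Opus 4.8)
The plan is to deduce everything from the two one-step descent estimates already in hand (Lemma~\ref{lem-gd} for the Wasserstein half-step and Lemma~\ref{lem-FR-KL} for the Fisher--Rao half-step), together with the full-support hypothesis and the assumed weak limit. First I would establish the monotonicity and support claims by induction on $n$. The base case is the hypothesis $\supp(\rho_0)=\RR^d$. For the inductive step, the Wasserstein update $\widetilde\rho_n=\rho_n^{\mathrm{W},\eta}$ decreases $\ell_N$ and preserves full support by Lemma~\ref{lem-gd}, while the Fisher--Rao update $\rho_{n+1}=\widetilde\rho_n^{\mathrm{FR},\gamma}$ decreases $\ell_N$ by Lemma~\ref{lem-FR-KL} and preserves full support because its density $1-\gamma[1+\delta\ell_N(\widetilde\rho_n)]$ is strictly positive (here I use that $\delta\ell_N<0$ pointwise and $\gamma\in(0,1]$). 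One subtlety is that Lemma~\ref{lem-gd} carries a threshold on $\eta$ governed by the constant $c_0$ attached to $\rho_0$; since $\ell_N(\rho_n)\le\ell_N(\rho_0)$ for every $n$ by the descent just proved, and the admissible range of $\eta$ is increasing in $c_0$, which is in turn monotone in the loss level, the single condition on $\eta$ in the statement validates the Wasserstein step at every iteration. This yields $\ell_N(\rho_{n+1})\le\ell_N(\rho_n)$ and $\supp(\rho_n)=\RR^d$ for all $n$.

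Second, since $\ell_N\ge-\log\|\phi\|_\infty$ is bounded below, the decreasing sequence $\ell_N(\rho_n)$ converges, so the per-step decrements vanish. Summing the two half-step bounds, $\ell_N(\rho_n)-\ell_N(\rho_{n+1})\ge \tfrac{1}{2\eta}W_2^2(\widetilde\rho_n,\rho_n)+\mathsf{KL}(\rho_{n+1}\,\Vert\,\widetilde\rho_n)$, I conclude $W_2(\widetilde\rho_n,\rho_n)\to0$ and $\mathsf{KL}(\rho_{n+1}\,\Vert\,\widetilde\rho_n)\to0$. The first, combined with $\rho_n\overset{\mathrm{w}}{\to}\rho_\infty$, forces $\widetilde\rho_n\overset{\mathrm{w}}{\to}\rho_\infty$ as well. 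Because $\phi$ is bounded and continuous, weak convergence gives $(\widetilde\rho_n*\phi)(X_i)\to(\rho_\infty*\phi)(X_i)>0$ for each $i$, and since $\delta\ell_N$ is a finite sum of the bounded kernels $\phi(\cdot-X_i)$ divided by these denominators, I obtain $\delta\ell_N(\widetilde\rho_n)\to v:=\delta\ell_N(\rho_\infty)$ uniformly on $\RR^d$ (and likewise for $\delta\ell_N(\rho_n)$).

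Third, and this is the crux, I would verify the optimality condition $v(x)\ge-1$ for all $x$; by Theorem~\ref{thm:NPMLE-basics} and the following Remark, this alone certifies $\rho_\infty$ as the NPMLE. I argue by contradiction: if $v(x_0)<-1$ for some $x_0$, pick a level $c\in(v(x_0),-1)$ so that the sublevel set $S=\{v\le c\}$ is compact, nonempty, and has nonempty interior. On $S$ the Fisher--Rao density $1-\gamma[1+\delta\ell_N(\widetilde\rho_n)]$ exceeds $1+\gamma\epsilon'$ with $\epsilon'=|1+c|/2$ for all large $n$, by the uniform convergence above; hence the Fisher--Rao step amplifies the mass placed on (a sublevel set near) $S$ by a fixed factor larger than $1$. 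At the same time the Wasserstein map $x\mapsto x-\eta\nabla\delta\ell_N(\rho_n)(x)$ is exactly a gradient-descent step on $\delta\ell_N(\rho_n)$, so for $\eta$ below the stated threshold it does not increase $\delta\ell_N(\rho_n)$ and therefore, since $\delta\ell_N(\rho_n)\to v$ uniformly, keeps the relevant sublevel mass from leaking out of the bad region. Combining the two half-steps should yield a recursion of the form $m_{n+1}\ge(1+\gamma\epsilon')\,m_n$ for the mass $m_n$ that $\rho_n$ places on the bad sublevel set; since $\supp(\rho_n)=\RR^d$ makes $m_{n_0}>0$ at some large $n_0$, this forces $m_n\to\infty$, contradicting $m_n\le1$. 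Hence $v\ge-1$ on $\RR^d$, and $\rho_\infty$ is the NPMLE.

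The hard part will be precisely the bookkeeping in this last step: matching the sublevel set on which Fisher--Rao amplifies (defined through $\delta\ell_N(\widetilde\rho_n)$) with the sublevel set that the Wasserstein gradient-descent map leaves invariant (defined through $\delta\ell_N(\rho_n)$), and showing that the $o(1)$ drift of these two families of sets — and the corresponding boundary leakage — does not erode the clean geometric factor $1+\gamma\epsilon'$. I expect that the weaker, ``$\rho_\infty$-a.e.'' version of optimality ($v=-1$ on $\supp(\rho_\infty)$) falls out easily from the two vanishing quantities $\int(1+\delta\ell_N(\widetilde\rho_n))^2\mathrm{d}\widetilde\rho_n\to0$ and $\int\|\nabla\delta\ell_N(\rho_n)\|_2^2\mathrm{d}\rho_n\to0$, but that upgrading it to the everywhere bound genuinely requires the full-support hypothesis: it guarantees a strictly positive seed of mass in the bad region for the amplification to blow up, and this instability mechanism is the exact analogue of the genericity/separation argument underlying the conditional global-convergence results of~\citet{chizat2018global}.
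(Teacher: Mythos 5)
Your first two stages match the paper's proof essentially verbatim: the induction combining Lemma~\ref{lem-gd} (Wasserstein half-step, with the observation that $c_n\geq c_0$ because the loss is nonincreasing, so one step-size condition suffices for all $n$) with Lemma~\ref{lem-FR-KL} and the strict positivity of the Fisher--Rao density; then boundedness below of $\ell_N$, vanishing decrements, $W_2(\widetilde\rho_n,\rho_n)\to 0$, and convergence of the first variations. Your direct route to that convergence --- through the $N$ scalars $(\rho_n*\phi)(X_i)$, which are bounded below and converge by weak convergence --- is in fact cleaner than the paper's equicontinuity argument and does give uniformity on all of $\mathbb{R}^d$. The contradiction scheme in your third stage (Fisher--Rao amplification by a factor $1+\gamma\varepsilon'$ on a bad sublevel set, with full support supplying the positive seed) is also exactly the paper's.

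However, the step you defer as ``bookkeeping'' is a genuine gap, and it is precisely where the paper's mathematical content lies. Your justification for the Wasserstein half-step --- that $x\mapsto x-\eta\nabla\delta\ell_N(\rho_n)(x)$ ``does not increase $\delta\ell_N(\rho_n)$'' --- controls the wrong function: a descent step for $f_n=\delta\ell_N(\rho_n)$ preserves sublevel sets of $f_n$, not of the fixed limit $v=\delta\ell_N(\rho_\infty)$ through which your set $S=\{v\leq c\}$ is defined, and sup-norm closeness $\|f_n-v\|_\infty=o(1)$ says nothing about the relative geometry of their sublevel sets near the boundary $\{v=c\}$. If $\nabla v$ degenerates somewhere on that level set, the gradient there is comparable to the approximation error $\|\nabla f_n-\nabla v\|$, mass can leak out of $S$ at every iteration, and no fixed amplification factor survives an unbounded per-step leakage. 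The paper closes this with two ingredients absent from your proposal: (i) Sard's lemma --- using $f\in C^d(\mathbb{R}^d)$ from Assumption~\ref{assumption-pdf-0} --- to select the level $c$ as a \emph{regular value} of $v$ in $(-1-\varepsilon,-1-\varepsilon/2)$, so that $\xi=\inf_{\{v=c\}}\|\nabla v\|_2>0$; and (ii) the quantitative Lemma~\ref{lem-level-set}, which shows that one step of \emph{approximate} gradient descent (gradient error at most $\xi/11$ on $\bar S$, step size $\eta\leq 1/L$ with $L$ the Hessian bound $H/c_0$ from Lemma~\ref{lem-preservation}) maps $\bar S$ into itself, via a two-case analysis: where $\|\nabla v(x)\|_2$ is large the descent inequality for $v$ itself absorbs the gradient error, and where it is small the point $x$ must lie at distance at least $\xi/(2L)$ from the boundary level set, farther than the step can travel. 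You would also need the uniform-on-compacts convergence $\nabla\delta\ell_N(\rho_n)\to\nabla v$ (the paper's Claim~\ref{claim-uniform-grad}), which you use implicitly but never establish. Without the regular-value selection, the invariance claim as you state it is unsubstantiated, so the proposal as written does not yet constitute a proof of the final assertion.
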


\begin{proof} See Appendix \ref{sec-thm-convergence-wfr-iteration-proof}.
\end{proof}

Theorem \ref{thm-convergence-wfr-iteration} directly implies that
Theorem \ref{thm:WFR-convergence} holds. By taking $\eta=0$, this
also implies that Theorem \ref{thm:FR-convergence} holds. Note that
Theorem \ref{thm-convergence-wfr-iteration} requires $\gamma>0$,
therefore it does not provide convergence guarantee for Wasserstein
gradient descent.

Here are some key ideas for showing the optimality of the weak limit
$\rho_{\infty}$. If $\rho_{\infty}$ is not an optimal
solution, then Theorem \ref{thm:NPMLE-basics} implies that $\delta\ell_{N}(\rho_{\infty})(x_{0})<-1-\varepsilon$
holds for some $x_{0}\in\mathbb{R}^d$ and $\varepsilon>0$. We can
find some appropriate $y\in(-1-\varepsilon,-1-\varepsilon/2)$ and
 study the sublevel set $\bar{S}=\{x\in\mathbb{R}^{d}:\,\delta\ell_{N}(\rho_{\infty})\leq y\}$. Note that for $n$ large, we have $\nabla\delta\ell_{N}(\rho_{n})\approx\nabla\delta\ell_{N}(\rho_{\infty})$.
We analyze the Wasserstein step and the Fisher-Rao step separately:
\begin{itemize}
\item For the Wasserstein step we show that
 for any $x\in\bar{S}$, the gradient descent step  $x-\eta\nabla\delta\ell_{N}(\rho_{n})(x)$ remains in $\bar S$; this follows from the definition of the gradient step and the fact that the step size is chosen small enough.
A crucial step therein is to choose $y$ so that the gradient $\nabla\delta\ell_{N}(\rho_{\infty})$
does not vanish on the level set $\{x\in\mathbb{R}^{d}:~\delta\ell_{N}(\rho_{\infty})=y\}$.
The existence of such a $y$ follows readily from Sard's lemma. Since $\widetilde{\rho}_{n}=(\mathrm{id}-\nabla\delta\ell_{N}(\rho_{n}))_{\#}\rho_{n}$,
we get $\widetilde{\rho}_{n}(\bar{S})\geq\rho_{n}(\bar{S})$. 
\item For the Fisher-Rao step, we first establish
that $\delta\ell_{N}(\widetilde{\rho}_{n})(x)<-1-\varepsilon/4$ for all $x \in \bar S$. 
Recalling that
\begin{align*}
\frac{\mathrm{d}\rho_{n+1}}{\mathrm{d}\widetilde{\rho}_{n}}(\cdot)=(1-\gamma)+\gamma\cdot[-\delta\ell(\widetilde{\rho}_{n})(\cdot)],
\end{align*}
it readily yields$\rho_{n+1}(\bar{S})\geq(1+\varepsilon/4)\widetilde{\rho}_{n}(\bar{S})$. 
\end{itemize}
Putting both steps together, we get that there exists $N$ such that $\rho_{n+1}(\bar{S})\geq(1+\varepsilon/4)\rho_{n}(\bar{S})$
holds all $n>N$. Note that this geometric improvement is entirely driven by the Fisher-Rao part of the proof. To conclude, we use $\mathsf{supp}(\rho_{N})=\mathbb{R}^{d}$
to get $\widetilde{\rho}_{N}(\bar{S})>0$, and thus $\lim_{n\to\infty}\widetilde{\rho}_{n}(\bar{S})=\infty$,
which leads to a contradiction so that $\rho_\infty$ must be an optimal solution to~\eqref{eq:NPMLE}.

\subsection{Proof of Lemma \ref{lem-gd}\label{sec-lem-gd-proof}}

We invoke a descent lemma Wasserstein gradient
descent, whose proof is deferred to Appendix \ref{sec-lem-preservation-proof}. Such a lemma is standard in convex optimization optimization~\citep[see, e.g.,][eq. (3.5)]{Bub15}. It has appeared for optimization over the Wasserstein space in~\cite{SalKorLui20} under for functionals that are convex along generalized geodesics, an assumption that does not hold for the negative log-likelihood $\ell_N$. 

\begin{lemma}[A descent lemma]\label{lem-preservation}
Let Assumption \ref{assumption-pdf-0} hold. Choose any $\rho\in\mathcal{P}(\mathbb{R}^{d})$.
Define $c=\inf_{{x}\in\Omega}(\rho*\phi)({x})$, $G=\sup_{x\in\mathbb{R}^{d}}\|\nabla\phi(x)\|_{2}$
and $H=\sup_{x\in\mathbb{R}^{d}}\|\nabla^{2}\phi(x)\|_{2}$. 
\begin{itemize}
\item We have 
\begin{align*}
\ell_{N}(\rho^{\mathrm{W},\eta})-\ell_{N}(\rho) & \leq-\eta\bigg[1-\frac{\eta}{2c}\bigg(H+\frac{G^{2}}{c}\bigg)\bigg]\mathbb{E}_{Y\sim\rho}\|\nabla\delta\ell_{N}(\rho)(Y)\|_{2}^{2}.
\end{align*}
In addition, we have $\sup_{x\in\mathbb{R}^{d}}\|\nabla^{2}\delta\ell_{N}(\rho)(x)\|_{2}\leq H/c$. 
\item If $0\leq\eta<c/H$ and $\mathsf{supp}(\rho)=\mathbb{R}^{d}$, then
$\mathsf{supp}(\rho^{\mathrm{W},\eta})=\mathbb{R}^{d}$. 
\end{itemize}
\end{lemma}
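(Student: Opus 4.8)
The plan is to prove the descent inequality by a second-order Taylor expansion of $\ell_{N}$ along the straight-line transport interpolation between $\rho$ and $\rho^{\mathrm{W},\eta}$, and to prove support preservation by showing that the transport map is a homeomorphism of $\mathbb{R}^{d}$. Throughout write $g:=\nabla\delta\ell_{N}(\rho)$, $M:=\mathbb{E}_{Y\sim\rho}\|g(Y)\|_{2}^{2}$, and for $s\in[0,\eta]$ set $T_{s}:=\mathrm{id}-sg$, $\rho_{s}:=(T_{s})_{\#}\rho$ (so $\rho_{0}=\rho$ and $\rho_{\eta}=\rho^{\mathrm{W},\eta}$) and $h(s):=\ell_{N}(\rho_{s})$. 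Since $h(s)=-\tfrac1N\sum_{i}\log F_{i}(s)$ with $F_{i}(s):=(\rho_{s}*\phi)(X_{i})=\int\phi(X_{i}-x+sg(x))\,\rho(\mathrm{d}x)$ a finite sum, $h$ is twice differentiable in $s$ (differentiating under the integral is justified by the uniform bounds $\|\nabla\phi\|_{2}\le G$ and $\|\nabla^{2}\phi\|_{2}\le H$ of Assumption~\ref{assumption-pdf-0}). A second-order Taylor expansion with integral remainder gives $h(\eta)\le h(0)+\eta\,h'(0)+\tfrac{\eta^{2}}{2}\sup_{s\in[0,\eta]}|h''(s)|$, so it suffices to identify $h'(0)$ and to bound $h''$.

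For the first-order term I would invoke the first-variation/chain rule for $\ell_{N}$ along transport: moving $\rho$ with velocity field $-g$ yields $h'(0)=\int\langle\nabla\delta\ell_{N}(\rho),-g\rangle\,\mathrm{d}\rho=-\int\|g\|_{2}^{2}\,\mathrm{d}\rho=-M$ (equivalently, differentiate $F_{i}$ at $s=0$ and use the explicit form of $\delta\ell_{N}(\rho)$ to get $\tfrac1N\sum_{i}F_{i}'(0)/F_{i}(0)=M$). For the second-order term, differentiating twice gives $h''(s)=-\tfrac1N\sum_{i}\big[F_{i}''(s)/F_{i}(s)-(F_{i}'(s)/F_{i}(s))^{2}\big]$, where $F_{i}'(s)=\int\langle\nabla\phi(X_{i}-x+sg(x)),g(x)\rangle\rho(\mathrm{d}x)$ and $F_{i}''(s)=\int\langle g(x),\nabla^{2}\phi(X_{i}-x+sg(x))g(x)\rangle\rho(\mathrm{d}x)$; Assumption~\ref{assumption-pdf-0} together with Cauchy--Schwarz give $|F_{i}''(s)|\le HM$ and $(F_{i}'(s))^{2}\le G^{2}M$. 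The Hessian bound on $\delta\ell_{N}$ is immediate from the same ingredients: $\nabla^{2}\delta\ell_{N}(\rho)(x)=-\tfrac1N\sum_{i}\nabla^{2}\phi(x-X_{i})/(\rho*\phi)(X_{i})$, and since every $X_{i}\in\Omega$ we have $(\rho*\phi)(X_{i})\ge c$, whence $\|\nabla^{2}\delta\ell_{N}(\rho)(x)\|_{2}\le H/c$ by the triangle inequality.

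The crux, and the step I expect to be the main obstacle, is to lower bound $F_{i}(s)=(\rho_{s}*\phi)(X_{i})$ by $c$ uniformly on $s\in[0,\eta]$. Granting this, $|h''(s)|\le\tfrac{HM}{c}+\tfrac{G^{2}M}{c^{2}}=\tfrac{M}{c}(H+G^{2}/c)$, and the Taylor bound collapses to exactly $h(\eta)-h(0)\le-\eta M+\tfrac{\eta^{2}}{2}\tfrac{M}{c}(H+G^{2}/c)=-\eta\big[1-\tfrac{\eta}{2c}(H+G^{2}/c)\big]M$, which is the claim. The subtlety is genuine: although $F_{i}(0)=(\rho*\phi)(X_{i})\ge c$ because $X_{i}\in\Omega$, the transported density $(\rho_{s}*\phi)(X_{i})$ can dip below $c$ once particles have moved, so the bound is not monotone and fails for arbitrarily large $s$. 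I would establish it only in the relevant small-step regime (which is the only regime in which the inequality is informative and in which it is used): $F_{i}$ is continuous with $F_{i}(0)\ge c$, and combining $|F_{i}'(s)|\le G\sqrt{M}$ with a continuity/bootstrap argument keeps $F_{i}(s)$ above $c$ on $[0,\eta]$. This is precisely where the restriction on $\eta$ enters, and in the iterated application of Lemma~\ref{lem-gd} it is the uniform floor $c_{0}$ valid on the sublevel set $\{\mu:\ell_{N}(\mu)\le\ell_{N}(\rho_{0})\}$ supplied by Lemma~\ref{lem-loss-bounds} that makes the argument uniform across iterates.

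Finally, for support preservation I would show that $T_{\eta}=\mathrm{id}-\eta g$ is a homeomorphism of $\mathbb{R}^{d}$ whenever $\eta<c/H$. Indeed $g=\nabla\delta\ell_{N}(\rho)$ is Lipschitz with constant $\|\nabla^{2}\delta\ell_{N}(\rho)\|_{2}\le H/c$ by the Hessian bound just proved, so $\eta g$ has Lipschitz constant $\eta H/c<1$; a perturbation-of-the-identity (Banach fixed point) argument then shows that $T_{\eta}$ is a bijection of $\mathbb{R}^{d}$ with continuous inverse. Consequently, for any nonempty open set $U\subseteq\mathbb{R}^{d}$ the preimage $T_{\eta}^{-1}(U)$ is nonempty and open, so $\rho^{\mathrm{W},\eta}(U)=\rho(T_{\eta}^{-1}(U))>0$ because $\mathsf{supp}(\rho)=\mathbb{R}^{d}$; hence $\mathsf{supp}(\rho^{\mathrm{W},\eta})=\mathbb{R}^{d}$. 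This part is routine, and the only real work lies in the density floor of the previous paragraph.
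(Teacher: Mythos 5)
Your Hessian bound and your support-preservation argument are correct and coincide with the paper's proof: the paper likewise reads $\sup_{x}\|\nabla^{2}\delta\ell_{N}(\rho)(x)\|_{2}\leq H/c$ off the explicit formula $\nabla^{2}\delta\ell_{N}(\rho)(x)=\mathbb{E}_{X\sim\nu}[\nabla^{2}\phi(X-x)/(\rho*\phi)(X)]$ with $\nu=N^{-1}\sum_{i=1}^{N}\delta_{X_{i}}$, and it proves the second bullet by noting that $\eta g$ (with $g:=\nabla\delta\ell_{N}(\rho)$) is an $(\eta H/c)$-contraction and invoking a Banach fixed-point lemma (Lemma~\ref{lem-lipschitz-perturbation}) so that $\mathrm{id}-\eta g$ is a bi-Lipschitz bijection. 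The genuine gap is exactly where you predicted it, and the bootstrap you sketch cannot close it. Interpolating along $\rho_{s}=(\mathrm{id}-sg)_{\#}\rho$ forces you to lower bound $F_{i}(s)=(\rho_{s}*\phi)(X_{i})$ at \emph{intermediate} times, but the asserted conclusion --- that $F_{i}(s)\geq c$ on all of $[0,\eta]$ --- is false in general: $c$ is defined as the infimum of $\rho*\phi$ over $\Omega$ at $s=0$, and if that infimum is (nearly) attained at some $X_{i}$ with $F_{i}'(0)<0$, which nothing excludes, then $F_{i}(s)<c$ immediately. A continuity argument has no invariant to close on; the most it yields, via $|F_{i}'(s)|\leq G\sqrt{M}$ and $\sqrt{M}\leq G/c$, is the decaying floor $F_{i}(s)\geq c-sG^{2}/c$. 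Feeding that into your Lagrange remainder produces $-\eta M+\frac{\eta^{2}M}{2}\big(\frac{H}{c_{\eta}}+\frac{G^{2}}{c_{\eta}^{2}}\big)$ with $c_{\eta}=c-\eta G^{2}/c<c$, strictly weaker than the stated inequality with its exact coefficient $\frac{\eta}{2c}(H+G^{2}/c)$; worse, the first bullet is asserted for every $\eta\geq0$ (the restriction $\eta<c/(H+G^{2}/c)$ enters only downstream, in Lemma~\ref{lem-gd}), and once $\eta\gtrsim c^{2}/G^{2}$ your floor degenerates and the interpolation argument gives nothing at all.

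The paper's proof never sees the path, which is how it needs only the time-zero floor. Writing $h(u)=-\log u$, $b(x)=(\rho*\phi)(x)$ and $a(x)=(\rho^{\mathrm{W},\eta}*\phi)(x)$, it compares the two endpoints directly through the pointwise bound $h(a)\leq h(b)+h'(b)(a-b)+\frac{h''(b)}{2}(a-b)^{2}$, justified there via Taylor's theorem for $h=-\log$, whose second-order coefficient is evaluated at the \emph{base} value $b(X_{i})\geq c$ rather than along an intermediate measure. The increment is then controlled by expanding $\phi$ inside the expectation: $|a-b|\leq G\eta\,\mathbb{E}_{Y\sim\rho}\|g(Y)\|_{2}$ for the quadratic term, and $a-b\geq\eta\,\mathbb{E}_{Y\sim\rho}\langle\nabla\phi(X_{i}-Y),g(Y)\rangle-\frac{H}{2}\eta^{2}\,\mathbb{E}_{Y\sim\rho}\|g(Y)\|_{2}^{2}$ for the linear term, with the cross term resolved by the same identity you use to get $h'(0)=-M$, namely $\mathbb{E}_{Y\sim\rho}\big\langle\mathbb{E}_{X\sim\nu}[\nabla\phi(X-Y)/b(X)],\,g(Y)\big\rangle=M$. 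Every denominator in this computation is $b(X_{i})\geq c$, so the exact constant $-\eta\big[1-\frac{\eta}{2c}(H+\frac{G^{2}}{c})\big]$ falls out uniformly in $\eta\geq0$. To repair your write-up you must replace the path-dependent remainder $\sup_{s\in[0,\eta]}|h''(s)|$ by such an endpoint expansion anchored at $b$; as written, the crux step of your proposal fails.
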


We now come back to Lemma \ref{lem-gd}. Let $R=\inf\{r\geq0:~\bar{\phi}(r)\leq e^{-\ell_{N}(\rho_{0})}/2\}$.
Lemma \ref{lem-loss-bounds} implies that for any $\mu\in\mathcal{P}(\mathbb{R}^{d})$
with $\ell_{N}(\mu)\leq\ell_{N}(\rho_{0})$, we have 
\[
\inf_{x\in\Omega}(\mu*\phi)(x)\geq e^{-\ell_{N}(\rho_{0})}\underline{\phi}(R+\mathrm{diam}(\Omega))/[2\bar{\phi}(0)]=c_{0}.
\]
In particular, $\inf_{x\in\Omega}(\rho_{0}*\phi)(x)\geq c_{0}$. When
$\eta\leq c_{0}/(H+G^{2}/c_{0})$, Lemma \ref{lem-preservation} and
the definition $\rho_{1}=(\mathrm{id}-\eta\nabla\delta\ell_{N}(\rho_{0}))_{\#}\rho_{0}$
together yield 
\begin{align*}
\ell_{N}(\rho_{1})-\ell_{N}(\rho_{0}) & \leq-\frac{\eta}{2}\mathbb{E}_{Y\sim\rho_{0}}\|\nabla\delta\ell_{N}(\rho_{0})(Y)\|_{2}^{2}\leq-\frac{1}{2\eta}W_{2}^{2}(\rho_{1},\rho_{0})\leq0.
\end{align*}
Also, if $\mathsf{supp}(\rho_{0})=\mathbb{R}^{d}$, then $\mathsf{supp}(\rho_{1})=\mathbb{R}^{d}$.
From $\ell_{N}(\rho_{1})\leq\ell_{N}(\rho_{0})$ we obtain that $\inf_{x\in\Omega}(\rho_{1}*\phi)(x)\geq c_{0}$.
Then, the proof is completed by induction.

\subsection{Proof of Lemma \ref{lem-preservation} \label{sec-lem-preservation-proof}}

We prove the two results in Lemma \ref{lem-preservation} in sequence.

\paragraph{Part 1.}

Let $\nu=N^{-1}\sum_{i=1}^{N}\delta_{X_{i}}$ be the empirical data
distribution, and let $h(x)=-\log x$ for $x>0$. Then we can wirte
\begin{align*}
\ell_{N}(\rho) & =-\frac{1}{N}\sum_{i=1}^{N}\log\left[\rho*\phi\left(X_{i}\right)\right]=\mathbb{E}_{X\sim\nu}\left[h\left((\rho*\phi)(X)\right)\right]=\mathbb{E}_{X\sim\nu}\left[h\left(\mathbb{E}_{Y\sim\rho}\left[\phi(X-Y)\right]\right)\right]
\end{align*}
as well as
\[
\ell_{N}(\rho^{\mathrm{W},\eta})=\mathbb{E}_{X\sim\nu}\left[h\left(\mathbb{E}_{Y\sim\rho^{\mathrm{W},\eta}}\left[\phi(X-Y)\right]\right)\right]=\mathbb{E}_{X\sim\nu}\left[h\left(\mathbb{E}_{Y\sim\rho}\left[\phi\left(X-Y+\eta\nabla\delta\ell_{N}(\rho)(Y)\right)\right]\right)\right].
\]
Define $a(x)=\mathbb{E}_{Y\sim\rho}[\phi(x-Y+\eta\nabla\delta\ell_{N}(\rho)(Y))]$
and $b(x)=\mathbb{E}_{Y\sim\rho}[\phi(x-Y)]=(\rho*\phi)(x)$ for any
$x\in\mathbb{R}^{d}$. Then we can write
\[
\ell_{N}(\rho)=\mathbb{E}_{X\sim\nu}\left[h\left(b(X)\right)\right],\qquad\ell_{N}(\rho^{\mathrm{W},\eta})=\mathbb{E}_{X\sim\nu}\left[h\left(a(X)\right)\right].
\]
 Note that $h'(x)=-x^{-1}<0$, $h''(x)=x^{-2}>0$ and $h'''(x)=-2x^{-3}<0$.
For any $a,b>0$, by Taylor's theorem
\[
h(a)\leq h(b)+h'(b)(a-b)+\frac{h''(b)}{2}(a-b)^{2}=h(b)-\frac{a-b}{b}+\frac{(a-b)^{2}}{2b^{2}}.
\]
Taking the above two equations collectively gives
\begin{align}
\ell_{N}(\rho^{\mathrm{W},\eta})-\ell_{N}(\rho) & =\mathbb{E}_{X\sim\nu}\left[h\left(a(X)\right)-h\left(b(X)\right)\right]\nonumber \\
 & \leq\underbrace{-\mathbb{E}_{X\sim\nu}\left[\frac{a(X)-b(X)}{b(X)}\right]}_{\eqqcolon\alpha_{1}}+\underbrace{\frac{1}{2}\mathbb{E}_{X\sim\nu}\left[\frac{[a(X)-b(X)]^{2}}{b(X)^{2}}\right]}_{\eqqcolon\alpha_{2}}.\label{eq:proof-preserve-1}
\end{align}
Now derive upper bounds for $\alpha_{1}$ and $\alpha_{2}$ respectively. 
\begin{itemize}
\item To control $\alpha_{1}$, let $G=\sup_{x\in\mathbb{R}^{d}}\|\nabla\phi(x)\|_{2}$
and observe that
\[
\left|\phi\left(x-y+\eta\nabla\delta\ell_{N}(\rho)(y)\right)-\phi(x-y)\right|\leq G\eta\left\Vert \nabla\delta\ell_{N}(\rho)(y)\right\Vert _{2}
\]
for all $x,y\in\mathbb{R}^{d}$, therefore 
\begin{equation}
\left|a\left(x\right)-b\left(x\right)\right|\leq G\eta\mathbb{E}_{Y\sim\rho}\left[\left\Vert \nabla\delta\ell_{N}\left(\rho\right)\left(Y\right)\right\Vert _{2}\right]\label{eq:proof-preserve-2}
\end{equation}
holds for all $x\in\mathbb{R}^{d}$. Then we have
\begin{align}
\alpha_{1} & \overset{\text{(i)}}{\leq}\frac{G^{2}\eta^{2}\mathbb{E}_{Y\sim\rho}^{2}\left[\left\Vert \nabla\delta\ell_{N}\left(\rho\right)\left(Y\right)\right\Vert \right]}{c^{2}}\overset{\text{(ii)}}{\leq}\frac{G^{2}\eta^{2}}{c^{2}}\mathbb{E}_{Y\sim\rho}\left[\left\Vert \nabla\delta\ell_{N}\left(\rho\right)\left(Y\right)\right\Vert _{2}^{2}\right],\label{eq:proof-preserve-3}
\end{align}
where (i) follows from  \eqref{eq:proof-preserve-2} and the fact 
\begin{equation}
c=\inf_{x\in\Omega}(\rho*\phi)(x)=\inf_{x\in\Omega}b(x)\label{eq:proof-preserve-3.5}
\end{equation}
 and (ii) follows from Jensen's inequality. 
\item Regarding $\alpha_{2}$, let $H=\sup_{x\in\mathbb{R}^{d}}\|\nabla^{2}\phi(x)\|_{2}$
and we have
\begin{align*}
\phi\left(x-y+\eta\nabla\delta\ell_{N}(\rho)(y)\right)-\phi(x-y) & \geq\left\langle \nabla\phi(x-y),\eta\nabla\delta\ell_{N}(\rho)(y)\right\rangle -\frac{H}{2}\eta^{2}\|\nabla\delta\ell_{N}(\rho)(y)\|_{2}^{2},
\end{align*}
for any $x,y\in\mathbb{R}^{d}$, and therefore
\begin{equation}
a(x)-b(x)\geq\mathbb{E}_{Y\sim\rho}\left[\left\langle \nabla\phi(x-Y),\eta\nabla\delta\ell_{N}(\rho)(Y)\right\rangle \right]-\frac{H}{2}\eta^{2}\mathbb{E}_{Y\sim\rho}\left[\left\Vert \nabla\delta\ell_{N}(\rho)(Y)\right\Vert _{2}^{2}\right]\label{eq:proof-preserve-4}
\end{equation}
for any $x\in\mathbb{R}^{d}$. Since $b(x)>0$, we have 
\begin{align}
\alpha_{2} & \overset{\text{(i)}}{\leq}-\mathbb{E}_{X\sim\nu}\left[\frac{\mathbb{E}_{Y\sim\rho}\left[\left\langle \nabla\phi(X-Y),\eta\nabla\delta\ell_{N}(\rho)(Y)\right\rangle \right]-\frac{H}{2}\eta^{2}\mathbb{E}_{Y\sim\rho}\left[\left\Vert \nabla\delta\ell_{N}(\rho)(Y)\right\Vert _{2}^{2}\right]}{b\left(X\right)}\right]\nonumber \\
 & =-\eta\mathbb{E}_{Y\sim\rho}\left[\left\langle \mathbb{E}_{X\sim\nu}\bigg(\frac{\nabla\phi(X-Y)}{b(X)}\bigg),\nabla\delta\ell_{N}(\rho)(Y)\right\rangle \right]+\frac{H\eta^{2}}{2}\mathbb{E}_{X\sim\nu}\left[\frac{1}{b(X)}\right]\mathbb{E}_{Y\sim\rho}\left[\left\Vert \nabla\delta\ell_{N}(\rho)(Y)\right\Vert _{2}^{2}\right]\nonumber \\
 & \overset{\text{(ii)}}{=}-\eta\mathbb{E}_{Y\sim\rho}\left[\left\Vert \nabla\delta\ell_{N}(\rho)(Y)\right\Vert _{2}^{2}\right]+\frac{H\eta^{2}}{2}\mathbb{E}_{X\sim\nu}\left[\frac{1}{b(X)}\right]\mathbb{E}_{Y\sim\rho}\left[\left\Vert \nabla\delta\ell_{N}(\rho)(Y)\right\Vert _{2}^{2}\right]\nonumber \\
 & \overset{\text{(iii)}}{\leq}\left(-\eta+\frac{H\eta^{2}}{2c}\right)\mathbb{E}_{Y\sim\rho}\left[\left\Vert \nabla\delta\ell_{N}(\rho)(Y)\right\Vert _{2}^{2}\right].\label{eq:proof-preserve-5}
\end{align}
Here (i) utilizes  \eqref{eq:proof-preserve-4}; (ii) holds since 
\begin{equation}
\delta\ell_{N}(\rho)(y)=-\mathbb{E}_{X\sim\nu}\left[\frac{\phi(X-y)}{b(X)}\right],\qquad\nabla\delta\ell_{N}(\rho)(y)=\mathbb{E}_{X\sim\nu}\left[\frac{\nabla\phi(X-y)}{b(X)}\right],\label{eq:proof-preserve-6}
\end{equation}
and therefore 
\[
\mathbb{E}_{Y\sim\rho}\left[\bigg\langle\mathbb{E}_{X\sim\nu}\bigg(\frac{\nabla\phi(X-Y)}{b(X)}\bigg),\nabla\delta\ell_{N}(\rho)(Y)\bigg\rangle\right]=\mathbb{E}_{Y\sim\rho}\left[\left\Vert \nabla\delta\ell_{N}(\rho)(Y)\right\Vert _{2}^{2}\right];
\]
and (iii) follows from  \eqref{eq:proof-preserve-3.5}.
\end{itemize}
Taking \eqref{eq:proof-preserve-1}, \eqref{eq:proof-preserve-3}
and \eqref{eq:proof-preserve-5} collectively gives
\begin{align*}
\ell_{N}(\rho^{\mathrm{W},\eta})-\ell_{N}(\rho) & \leq\alpha_{1}+\alpha_{2}\leq\bigg(-\eta+\frac{H\eta^{2}}{2c}+\frac{G^{2}\eta^{2}}{2c^{2}}\bigg)\mathbb{E}_{Y\sim\rho}\left[\left\Vert \nabla\delta\ell_{N}(\rho)(Y)\right\Vert _{2}^{2}\right]\\
 & =-\eta\bigg[1-\frac{\eta}{2c}\bigg(H+\frac{G^{2}}{c}\bigg)\bigg]\mathbb{E}_{Y\sim\rho}\left[\left\Vert \nabla\delta\ell_{N}(\rho)(Y)\right\Vert _{2}^{2}\right].
\end{align*}
Finally, we learn from  \eqref{eq:proof-preserve-6} and  \eqref{eq:proof-preserve-3.5}
that
\[
\left\Vert \nabla^{2}\delta\ell_{N}(\rho)(x)\right\Vert _{2}=\left\Vert \mathbb{E}_{X\sim\nu}\left[\frac{\nabla^{2}\phi(X-x)}{b\left(X\right)}\right]\right\Vert _{2}\leq\frac{\sup_{x\in\mathbb{R}^{d}}\|\nabla^{2}\phi(x)\|_{2}}{\inf_{x\in\mathbb{R}^{d}}b\left(x\right)}=\frac{H}{c}.
\]

\paragraph{Part 2.}

When $\eta<c/H$, the mapping ${x}\mapsto\eta\nabla\delta\ell_{N}(\rho)({x})$
is a contraction. Let $\varphi(x)=x-\eta\nabla\delta\ell_{N}(\rho)(x)$.
By Lemma \ref{lem-lipschitz-perturbation}, $\varphi:\mathbb{R}^{d}\to\mathbb{R}^{d}$
is a bijection and $\varphi^{-1}$ is Lipschitz. The second-order
differentiability of $\delta\ell_{N}(\rho)$ implies the differentiability
of $\varphi$ and thus $\varphi^{-1}$. If $\mathsf{supp}(\rho)=\mathbb{R}^{d}$,
then $\mathsf{supp}(\rho^{\mathrm{W},\eta})=\mathsf{supp}(\varphi_{\#}\rho)=\mathbb{R}^{d}$.

\subsection{Proof of Lemma \ref{lem-FR-KL} \label{sec-thm-FR-KL-proof}}

Let $\nu=N^{-1}\sum_{i=1}^{N}\delta_{X_{i}}$ be the empirical data
distribution. For any $\mu\ll\rho$, we have 
\begin{align*}
\ell_{N}(\mu) & =-\mathbb{E}_{X\sim\nu}\left[\log\left((\mu*\phi)(X)\right)\right]=-\mathbb{E}_{X\sim\nu}\left[\log\left(\mathbb{E}_{Y\sim\mu}\left[\phi(X-Y)\right]\right)\right]\\
 & =-\mathbb{E}_{X\sim\nu}\left[\log\left(\mathbb{E}_{Y\sim\rho}\left[\frac{\mathrm{d}\mu}{\mathrm{d}\rho}(Y)\cdot\phi(X-Y)\right]\right)\right]\\
 & =-\mathbb{E}_{X\sim\nu}\left[\log\left(\mathbb{E}_{Y\sim\rho}\left[\frac{\mathrm{d}\mu}{\mathrm{d}\rho}(Y)\cdot(\rho*\phi)(X)\cdot\frac{\phi(X-Y)}{(\rho*\phi)(X)}\right]\right)\right].
\end{align*}
For any $x\in\mathbb{R}^{d}$, we can define a new probability measure
$\rho_{x}^{\mathrm{FR},1}\in\mathcal{P}(\mathbb{R}^{d})$ through
\begin{equation}
\frac{\mathrm{d}\rho_{{x}}^{\mathrm{FR},1}}{\mathrm{d}\rho}(\cdot)=\frac{\phi(x-\cdot)}{(\rho*\phi)(x)},\label{eq:proof-FR-1}
\end{equation}
and we can check that $\rho_{x}^{\mathrm{FR},1}$ is indeed a probability
measure since
\[
\int_{\mathbb{R}^{d}}\mathrm{d}\rho_{x}^{\mathrm{FR},1}=\int_{\mathbb{R}^{d}}\frac{\phi(x-y)}{(\rho*\phi)(x)}\rho\left(\mathrm{d}x\right)=1.
\]
Then we have, by the convexity of $t\mapsto-\log t$ and Jensen's
inequality, 
\begin{align}
\ell_{N}(\mu) & \overset{\text{(i)}}{=}-\mathbb{E}_{X\sim\nu}\left[\log\left(\mathbb{E}_{Y\sim\rho_{X}^{\mathrm{FR},1}}\left[\frac{\mathrm{d}\mu}{\mathrm{d}\rho}(Y)\cdot(\rho*\phi)(X)\right]\right)\right]\nonumber \\
 & \overset{\text{(ii)}}{\leq}-\mathbb{E}_{X\sim\nu}\left[\mathbb{E}_{Y\sim\rho_{X}^{\mathrm{FR},1}}\left[\log\bigg(\frac{\mathrm{d}\mu}{\mathrm{d}\rho}(Y)\cdot(\rho*\phi)(X)\bigg)\right]\right]\nonumber \\
 & \overset{\text{(iii)}}{=}-\mathbb{E}_{X\sim\nu}\left[\mathbb{E}_{Y\sim\rho}\bigg[\log\bigg(\frac{\mathrm{d}\mu}{\mathrm{d}\rho}(Y)\cdot(\rho*\phi)(X)\bigg)\cdot\frac{\phi(X-Y)}{(\rho*\phi)(X)}\bigg]\right]\nonumber \\
 & =-\underbrace{\mathbb{E}_{X\sim\nu,Y\sim\rho}\left[\log\left(\frac{\mathrm{d}\mu}{\mathrm{d}\rho}(Y)\right)\cdot\frac{\phi(X-Y)}{(\rho*\phi)(X)}\right]}_{\eqqcolon\beta_{1}}-\underbrace{\mathbb{E}_{X\sim\nu,Y\sim\rho}\left[\log\left((\rho*\phi)(X)\right)\cdot\frac{\phi(X-Y)}{(\rho*\phi)(X)}\right]}_{\eqqcolon\beta_{2}}.\label{eq:proof-FR-2}
\end{align}
Here (i) and (iii) utilizes  \eqref{eq:proof-FR-1}, and (ii) follows
from Jensen's inequality and the convexity of $t\mapsto-\log t$.
Then we study the two terms $\beta_{1}$ and $\beta_{2}$ respectively.
Regarding $\beta_{1}$, we have
\begin{align}
\beta_{1} & =\mathbb{E}_{Y\sim\rho}\left[\log\left(\frac{\mathrm{d}\mu}{\mathrm{d}\rho}(Y)\right)\mathbb{E}_{X\sim\nu}\left[\frac{\phi(X-Y)}{(\rho*\phi)(X)}\right]\right]\nonumber \\
 & =\mathbb{E}_{Y\sim\rho}\left[\log\left(\frac{\mathrm{d}\mu}{\mathrm{d}\rho}(Y)\right)\cdot\frac{\mathrm{d}\rho^{\mathrm{FR},1}}{\mathrm{d}\rho}(Y)\right]=\mathbb{E}_{Y\sim\rho^{\mathrm{FR},1}}\left[\log\left(\frac{\mathrm{d}\mu}{\mathrm{d}\rho}(Y)\right)\right]\nonumber \\
 & =\mathbb{E}_{Y\sim\rho^{\mathrm{FR},1}}\left[\log\left(\frac{\mathrm{d}\mu}{\mathrm{d}\rho^{\mathrm{FR},1}}(Y)\right)\right]+\mathbb{E}_{Y\sim\rho^{\mathrm{FR},1}}\left[\log\left(\frac{\mathrm{d}\rho^{\mathrm{FR},1}}{\mathrm{d}\rho}(Y)\right)\right]\nonumber \\
 & =-\mathsf{KL}\left(\rho^{\mathrm{FR},1}\,\Vert\,\mu\right)+\mathsf{KL}\left(\rho^{\mathrm{FR},1}\,\Vert\,\rho\right).\label{eq:proof-FR-3}
\end{align}
Regarding $\beta_{2}$, we have
\begin{equation}
\beta_{2}=\mathbb{E}_{X\sim\nu}\left[\log\left((\rho*\phi)(X)\right)\mathbb{E}_{Y\sim\rho}\left[\frac{\phi(X-Y)}{(\rho*\phi)(X)}\right]\right]=\mathbb{E}_{X\sim\nu}\left[\log\left((\rho*\phi)(X)\right)\right]=-\ell_{N}\left(\rho\right).\label{eq:proof-FR-4}
\end{equation}
Taking  \eqref{eq:proof-FR-2},  \eqref{eq:proof-FR-3} and  \eqref{eq:proof-FR-4}
collectively yields
\[
\ell_{N}(\mu)\leq\ell_{N}(\rho)+\mathsf{KL}\left(\rho^{\mathrm{FR},1}\,\Vert\,\mu\right)-\mathsf{KL}\left(\rho^{\mathrm{FR},1}\,\Vert\,\rho\right),\qquad\forall\,\mu\ll\rho.
\]
By taking $\mu=\rho^{\mathrm{FR},1}$, we get 
\begin{equation}
\ell_{N}\left(\rho^{\mathrm{FR},1}\right)\leq\ell_{N}\left(\rho\right)-\mathsf{KL}\left(\rho^{\mathrm{FR},1}\,\Vert\,\rho\right).\label{eq:proof-FR-5}
\end{equation}
Recall that for any $\gamma\in(0,1)$ we have $\rho^{\mathrm{FR},\gamma}=(1-\gamma)\rho+\gamma\rho^{\mathrm{FR},1}$.
Therefore
\begin{equation}
\ell_{N}\left(\rho^{\mathrm{FR},\gamma}\right)\overset{\text{(i)}}{\leq}(1-\gamma)\ell_{N}(\rho)+\gamma\ell_{N}(\rho^{\mathrm{FR},1})\overset{\text{(ii)}}{\leq}\ell_{N}(\rho)-\gamma\mathsf{KL}\left(\rho^{\mathrm{FR},1}\,\Vert\,\rho\right),\label{eq:proof-FR-6}
\end{equation}
where (i) holds since $\ell_{N}(\rho)$ is $\ell_{2}$-convex in $\rho$,
and (ii) follows from  \eqref{eq:proof-FR-5}. By the $\ell_{2}$-convexity
of $\mathsf{KL}(\cdot\,\Vert\,\rho)$, we have
\begin{equation}
\mathsf{KL}\left(\rho^{\mathrm{FR},\gamma}\,\Vert\,\rho\right)\leq(1-\gamma)\mathsf{KL}\left(\rho\,\Vert\,\rho\right)+\gamma\mathsf{KL}\left(\rho^{\mathrm{FR},1}\,\Vert\,\rho\right)=\gamma\mathsf{KL}\left(\rho^{\mathrm{FR},1}\,\Vert\,\rho\right).\label{eq:proof-FR-7}
\end{equation}
Combine  \eqref{eq:proof-FR-6} and  \eqref{eq:proof-FR-7} to achieve
\[
\ell_{N}\left(\rho^{\mathrm{FR},\gamma}\right)\leq\ell_{N}(\rho)-\mathsf{KL}\left(\rho^{\mathrm{FR},\gamma}\,\Vert\,\rho\right).
\]

\subsection{Proof of Theorem \ref{thm-convergence-wfr-iteration} \label{sec-thm-convergence-wfr-iteration-proof}}

Suppose that for some $n\geq0$, $\ell_{N}(\rho_{n})\leq\ell_{N}(\rho_{0})$
holds, and $\mathsf{supp}(\rho_{n})=\mathbb{R}^{d}$. Define 
\[
c_{n}=\frac{e^{-\ell_{N}(\rho_{n})}\underline{\phi}(\inf\{r\geq0:~\bar{\phi}(r)\leq e^{-\ell_{N}(\rho_{n})}/2\}+\mathrm{diam}(\Omega))}{2\bar{\phi}(0)}.
\]
Then, $c_{n}\geq c_{0}$ and thus 
\[
0\leq\eta<\frac{c_{n}}{\sup_{x\in\mathbb{R}^{d}}\|\nabla^{2}\phi(x)\|_{2}+\sup_{x\in\mathbb{R}^{d}}\|\nabla\phi(x)\|_{2}^{2}/c_{n}}.
\]
Theorem \ref{lem-gd} and the upper bound on $\eta$ immediately gives
$\ell(\widetilde{\rho}_{n})\leq\ell(\rho_{n})$ and $\mathsf{supp}(\widetilde{\rho}_{n})=\mathbb{R}^{d}$.
Let $\nu=N^{-1}\sum_{i=1}^{N}\delta_{X_{i}}$ be the empirical data
distribution. From Lemma \ref{lem-FR-KL}, the updating rule 
\begin{align}
\frac{\mathrm{d}\rho_{n+1}}{\mathrm{d}\widetilde{\rho}_{n}}(\cdot)=(1-\gamma)+\gamma\mathbb{E}_{X\sim\nu}\left[\frac{\phi(X-\cdot)}{(\widetilde{\rho}_{n}*\phi)(X)}\right],\label{eqn-thm-wfr-5}
\end{align}
and the positivity of $\phi$, we see that $\ell(\rho_{n+1})\leq\ell(\widetilde{\rho}_{n})$
and $\mathsf{supp}(\rho_{n+1})=\mathbb{R}^{d}$. Therefore we can
use induction to show that, for all $n\geq0$, the inequality $\ell(\rho_{n+1})\leq\ell(\widetilde{\rho}_{n})\leq\ell(\rho_{n})$
holds, and $\mathsf{supp}(\rho_{n})=\mathbb{R}^{d}$. In view of \eqref{eq:loss_lower_bounded},
both sequences $\{\ell(\rho_{n})\}_{n=0}^{\infty}$ and $\{\ell(\widetilde{\rho}_{n})\}_{n=0}^{\infty}$
converge and have the same limit. Consequently, 
\begin{align}
\ell(\rho_{n})-\ell(\widetilde{\rho}_{n})\to0.\label{eqn-thm-wfr-0}
\end{align}
Now, suppose that $\{\rho_{n}\}_{n=0}^{\infty}$ converges weakly
to some $\rho_{\infty}\in\mathcal{P}(\mathbb{R}^{d})$.

\begin{claim}\label{claim-uniform-0} $\{\delta\ell_{N}(\rho_{n})\}_{n=0}^{\infty}$
converges uniformly to $\delta\ell_{N}(\rho_{\infty})$ over compact
sets. \end{claim}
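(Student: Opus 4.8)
The plan is to exploit the very simple way in which $\delta\ell_{N}(\rho)$ depends on $\rho$. From the formula~\eqref{eq:first-variation}, the only dependence on $\rho$ enters through the $N$ scalars $(\rho*\phi)(X_{i})$, $1\le i\le N$: indeed
\[
\delta\ell_{N}(\rho)(x)=-\frac{1}{N}\sum_{i=1}^{N}\frac{1}{(\rho*\phi)(X_{i})}\,\phi(x-X_{i}),
\]
which is a fixed linear combination of the $\rho$-independent functions $x\mapsto\phi(x-X_{i})$, with coefficients $-\tfrac{1}{N}(\rho*\phi)(X_{i})^{-1}$. Thus the claim reduces to showing that the finitely many coefficients converge, i.e.~that $(\rho_{n}*\phi)(X_{i})^{-1}\to(\rho_{\infty}*\phi)(X_{i})^{-1}$ for each $i$.

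First I would establish convergence of the denominators. For each fixed $i$, the function $y\mapsto\phi(X_{i}-y)$ is bounded (by $\bar\phi(0)<\infty$) and continuous under Assumption~\ref{assumption-pdf-0}, so the definition of weak convergence $\rho_{n}\overset{\mathrm{w}}{\to}\rho_{\infty}$ gives directly
\[
(\rho_{n}*\phi)(X_{i})=\int_{\R^{d}}\phi(X_{i}-y)\,\rho_{n}(\mathrm{d}y)\;\longrightarrow\;\int_{\R^{d}}\phi(X_{i}-y)\,\rho_{\infty}(\mathrm{d}y)=(\rho_{\infty}*\phi)(X_{i}).
\]
To pass to reciprocals I need the limit to be bounded away from zero. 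Here I would invoke the monotonicity already proved in Theorem~\ref{thm-convergence-wfr-iteration}, namely $\ell_{N}(\rho_{n})\le\ell_{N}(\rho_{0})$ for all $n$, together with Lemma~\ref{lem-loss-bounds}: since each $X_{i}\in\Omega$, the lemma yields $(\rho_{n}*\phi)(X_{i})\ge\inf_{x\in\Omega}(\rho_{n}*\phi)(x)\ge c_{0}>0$ uniformly in $n$, and hence also $(\rho_{\infty}*\phi)(X_{i})\ge c_{0}$. Consequently the reciprocals converge, $(\rho_{n}*\phi)(X_{i})^{-1}\to(\rho_{\infty}*\phi)(X_{i})^{-1}$, for every $i\in\{1,\dots,N\}$.

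Finally I would assemble the uniform estimate. Bounding $\phi(x-X_{i})\le\bar\phi(0)$ pointwise, the triangle inequality gives, for every $x$,
\[
\sup_{x\in\R^{d}}\bigl|\delta\ell_{N}(\rho_{n})(x)-\delta\ell_{N}(\rho_{\infty})(x)\bigr|
\le\frac{\bar\phi(0)}{N}\sum_{i=1}^{N}\Bigl|\frac{1}{(\rho_{n}*\phi)(X_{i})}-\frac{1}{(\rho_{\infty}*\phi)(X_{i})}\Bigr|,
\]
and the right-hand side is a finite sum of terms each tending to $0$, hence tends to $0$. Since this bound is independent of $x$, the convergence is in fact uniform on all of $\R^{d}$, which is stronger than the stated claim of uniform convergence on compact sets. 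The only delicate point in this argument is the need for a denominator bound uniform in $n$; I expect this to be the main obstacle, and it is precisely what the loss-monotonicity from Theorem~\ref{thm-convergence-wfr-iteration} combined with the lower bound $c_{0}$ of Lemma~\ref{lem-loss-bounds} supplies. Everything else is an immediate consequence of weak convergence and the boundedness and continuity of $\phi$.
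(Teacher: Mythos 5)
Your proof is correct, and it takes a genuinely different and more economical route than the paper's. The paper argues in two steps: it first establishes uniform \emph{equicontinuity} of $\{\delta\ell_{N}(\rho_{n})\}_{n\ge0}$ by bounding $\sup_{x}\|\nabla\delta\ell_{N}(\rho_{n})(x)\|_{2}\le \sup_{x}\|\nabla\phi(x)\|_{2}/c_{0}$, then proves pointwise convergence via weak convergence and the bounded convergence theorem, and finally concludes uniform convergence on compact sets by the standard equicontinuity-plus-pointwise argument. You instead exploit the finite-sum structure of the empirical first variation: since $\delta\ell_{N}(\rho)$ is a fixed linear combination of the $N$ functions $\phi(\cdot-X_{i})$, with $\rho$ entering only through the scalar coefficients $-\tfrac{1}{N}(\rho*\phi)(X_{i})^{-1}$, the whole claim collapses to the convergence of $N$ real numbers, for which weak convergence (tested against the bounded continuous functions $y\mapsto\phi(X_{i}-y)$) and the uniform lower bound $c_{0}$ suffice. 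Both proofs rest on the same two nontrivial inputs --- the monotonicity $\ell_{N}(\rho_{n})\le\ell_{N}(\rho_{0})$ established earlier in the proof of Theorem~\ref{thm-convergence-wfr-iteration}, and Lemma~\ref{lem-loss-bounds} giving $\inf_{x\in\Omega}(\rho_{n}*\phi)(x)\ge c_{0}$ --- so your invocation of them is legitimate at the point where the claim is used. What your route buys is a strictly stronger conclusion (uniform convergence on all of $\mathbb{R}^{d}$, since $\phi(\cdot-X_{i})\le\bar\phi(0)<\infty$) with no appeal to gradients or an Arzel\`a--Ascoli-type argument; moreover the same reduction disposes of Claim~\ref{claim-uniform-grad} instantly, as $\nabla\delta\ell_{N}(\rho_{n})$ is the identical linear combination with $\phi$ replaced by $\nabla\phi$, which is bounded under Assumption~\ref{assumption-pdf-0}. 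What the paper's route buys in exchange is robustness: the equicontinuity argument survives when the empirical measure $\nu=N^{-1}\sum_{i}\delta_{X_{i}}$ is replaced by a general data distribution (as for $\ell_{\infty}$), where your finite-dimensional reduction is unavailable.
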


\begin{proof}

It is easily seen that 
\[
\sup_{x\in\mathbb{R}^{d}}\|\nabla\delta\ell_{N}(\rho_{n})(x)\|_{2}=\sup_{x\in\mathbb{R}^{d}}\left\Vert \mathbb{E}_{{X}\sim\nu}\left[\frac{\nabla\phi({X}-x)}{(\rho_{n}*\phi)({X})}\right]\right\Vert _{2}\leq\frac{\sup_{x\in\mathbb{R}^{d}}\|\nabla\phi(x)\|_{2}}{\inf_{x\in\Omega}(\rho_{n}*\phi)(x)}.
\]
Assumption \ref{assumption-pdf-0} forces $\sup_{x\in\mathbb{R}^{d}}\|\nabla\phi(x)\|_{2}<\infty$.
By Lemma \eqref{lem-loss-bounds} and the fact that $\ell(\rho_{n})\leq\ell(\rho_{0})$,
we have
\begin{align}
 & \inf_{x\in\Omega}(\rho_{n}*\phi)(x)\geq c_{0},\qquad\forall n\geq0.\label{eqn-thm-wfr-1}
\end{align}
Hence, $\{\delta\ell_{N}(\rho_{n})\}_{n=0}^{\infty}$ are uniformly
equicontinuous. Therefore, it suffices to prove that $\{\delta\ell_{N}(\rho_{n})\}_{n=0}^{\infty}$
converges pointwise to $\delta\ell_{N}(\rho_{\infty})$.

Since $\phi$ is bounded and continuous (cf.~Assumption \ref{assumption-pdf-0}),
$(\rho_{n}*\phi)(x)\to(\rho_{\infty}*\phi)(x)$ holds for every $x\in\mathbb{R}^{d}$.
Recall that 
\begin{equation}
\delta\ell(\rho)(x)=-\mathbb{E}_{X\sim\nu}\left[\frac{\phi(X-x)}{(\rho*\phi)(X)}\right],\qquad\forall\rho\in\mathcal{P}(\mathbb{R}^{d}).\label{eqn-thm-wfr-2}
\end{equation}
Based on the boundedness of $\phi$ and the lower bound \eqref{eqn-thm-wfr-1},
we use the bounded convergence theorem to derive $\delta\ell_{N}(\rho_{n})(x)\to\delta\ell_{N}(\rho_{\infty})(x)$
for every $x\in\mathbb{R}^{d}$. This proves the claim. \end{proof}

\begin{claim}\label{claim-uniform-grad} $\{\nabla\delta\ell_{N}(\rho_{n})\}_{n=0}^{\infty}$
converges uniformly to $\nabla\delta\ell_{N}(\rho_{\infty})$ over
compact sets. \end{claim}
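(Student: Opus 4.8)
The plan is to exploit the key structural fact that $\nabla\delta\ell_{N}(\rho)$ depends on $\rho$ \emph{only} through the $N$ scalar denominators $(\rho*\phi)(X_{1}),\dots,(\rho*\phi)(X_{N})$, so that the $\rho$-dependence is effectively finite-dimensional. Differentiating \eqref{eqn-thm-wfr-2} in $x$ gives
\[
\nabla\delta\ell_{N}(\rho)(x)=\frac{1}{N}\sum_{i=1}^{N}\frac{\nabla\phi(X_{i}-x)}{(\rho*\phi)(X_{i})},
\]
in which the numerators $\nabla\phi(X_{i}-x)$ do not involve $\rho$. First I would subtract the expressions for $\rho_{n}$ and $\rho_{\infty}$ to obtain, for every $x\in\mathbb{R}^{d}$,
\[
\nabla\delta\ell_{N}(\rho_{n})(x)-\nabla\delta\ell_{N}(\rho_{\infty})(x)=\frac{1}{N}\sum_{i=1}^{N}\nabla\phi(X_{i}-x)\left[\frac{1}{(\rho_{n}*\phi)(X_{i})}-\frac{1}{(\rho_{\infty}*\phi)(X_{i})}\right].
\]

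Next I would bound this uniformly in $x$. Assumption \ref{assumption-pdf-0} guarantees $G\coloneqq\sup_{x\in\mathbb{R}^{d}}\|\nabla\phi(x)\|_{2}<\infty$, so taking norms and then the supremum over $x$ yields
\[
\sup_{x\in\mathbb{R}^{d}}\left\Vert \nabla\delta\ell_{N}(\rho_{n})(x)-\nabla\delta\ell_{N}(\rho_{\infty})(x)\right\Vert _{2}\leq\frac{G}{N}\sum_{i=1}^{N}\left|\frac{1}{(\rho_{n}*\phi)(X_{i})}-\frac{1}{(\rho_{\infty}*\phi)(X_{i})}\right|.
\]
It therefore suffices to show that each of the $N$ reciprocal denominators converges. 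Since $\phi$ is bounded and continuous and $\rho_{n}\overset{\mathrm{w}}{\to}\rho_{\infty}$, we have $(\rho_{n}*\phi)(X_{i})\to(\rho_{\infty}*\phi)(X_{i})$ for every $i$, exactly as in the proof of Claim \ref{claim-uniform-0}. Moreover, the uniform lower bound \eqref{eqn-thm-wfr-1} gives $(\rho_{n}*\phi)(X_{i})\geq c_{0}>0$ for all $n$ (recall $X_{i}\in\Omega=\mathsf{conv}(\{X_{i}\})$), and passing to the limit gives $(\rho_{\infty}*\phi)(X_{i})\geq c_{0}$ as well. Hence
\[
\left|\frac{1}{(\rho_{n}*\phi)(X_{i})}-\frac{1}{(\rho_{\infty}*\phi)(X_{i})}\right|\leq\frac{\left|(\rho_{n}*\phi)(X_{i})-(\rho_{\infty}*\phi)(X_{i})\right|}{c_{0}^{2}}\to0.
\]

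Combining the two displays shows that $\nabla\delta\ell_{N}(\rho_{n})\to\nabla\delta\ell_{N}(\rho_{\infty})$ uniformly on all of $\mathbb{R}^{d}$, which is in fact stronger than the claimed uniform convergence over compact sets. I do not anticipate a genuine obstacle: the entire argument is driven by the finite-dimensionality of the $\rho$-dependence, and the only point requiring care is the uniform positivity of the $N$ denominators, which is already supplied by the loss-monotonicity estimate \eqref{eqn-thm-wfr-1} together with $X_{i}\in\Omega$. The same reasoning would in fact yield uniform convergence of every derivative of $\delta\ell_{N}(\rho_{n})$ up to the order permitted by the smoothness of $\phi$ in Assumption \ref{assumption-pdf-0}.
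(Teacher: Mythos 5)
Your proof is correct, but it takes a genuinely more direct route than the paper. The paper proves this claim by mimicking its proof of Claim \ref{claim-uniform-0}: it establishes uniform equicontinuity of $\{\nabla\delta\ell_{N}(\rho_{n})\}_{n=0}^{\infty}$ via the uniform Hessian bound $\sup_{x\in\mathbb{R}^{d}}\|\nabla^{2}\delta\ell_{N}(\rho_{n})(x)\|_{2}\leq H/c_{0}$ (with $H=\sup_{x\in\mathbb{R}^{d}}\|\nabla^{2}\phi(x)\|_{2}$, as in Lemma \ref{lem-preservation}), proves pointwise convergence of the gradients, and then concludes local uniform convergence by an Arzel\`a--Ascoli-type argument. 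You instead exploit the structural fact that $\rho$ enters $\nabla\delta\ell_{N}(\rho)$ only through the $N$ scalars $(\rho*\phi)(X_{i})$, so the $x$-dependence factors out of the $\rho$-dependence entirely; combined with $G=\sup_{x\in\mathbb{R}^{d}}\|\nabla\phi(x)\|_{2}<\infty$ and the lower bound \eqref{eqn-thm-wfr-1}, this yields uniform convergence on all of $\mathbb{R}^{d}$, which is strictly stronger than the claim and needs no equicontinuity or compactness. Both arguments ultimately rest on the same two inputs---weak convergence forces $(\rho_{n}*\phi)(X_{i})\to(\rho_{\infty}*\phi)(X_{i})$ because $y\mapsto\phi(X_{i}-y)$ is bounded and continuous, and the monotone-loss estimate keeps the $N$ denominators above $c_{0}>0$, a bound which, as you correctly note, survives passage to the limit---so here the paper's softer route buys nothing; it would only become necessary if the numerators also depended on $\rho$ (as they would for, say, $\delta\ell_{\infty}$). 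One small caveat: your closing remark that every higher derivative of $\delta\ell_{N}(\rho_{n})$ converges uniformly requires uniform boundedness of the corresponding derivatives of $\phi$, and Assumption \ref{assumption-pdf-0} only guarantees this up to second order (though it does hold for the Gaussian kernel).
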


\begin{proof}The proof is similar to that of Claim \ref{claim-uniform-0}
and is thus omitted. \end{proof}

\begin{claim}\label{claim-uniform} $\{\delta\ell_{N}(\widetilde{\rho}_{n})\}_{n=0}^{\infty}$
converges uniformly to $\delta\ell_{N}(\rho_{\infty})$ over compact
sets. \end{claim}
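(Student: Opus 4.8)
The plan is to exploit the fact that $\delta\ell_N(\widetilde{\rho}_n)$ depends on the measure $\widetilde{\rho}_n$ only through the finitely many convolution values $(\widetilde{\rho}_n*\phi)(X_1),\dots,(\widetilde{\rho}_n*\phi)(X_N)$. Writing out the first variation as in \eqref{eqn-thm-wfr-2} gives
\[
\delta\ell_N(\widetilde{\rho}_n)(x)-\delta\ell_N(\rho_\infty)(x)=-\frac1N\sum_{i=1}^N\phi(X_i-x)\left[\frac{1}{(\widetilde{\rho}_n*\phi)(X_i)}-\frac{1}{(\rho_\infty*\phi)(X_i)}\right].
\]
Since $\phi(X_i-x)\le\bar{\phi}(0)$ for every $x$, it suffices to prove that $(\widetilde{\rho}_n*\phi)(X_i)\to(\rho_\infty*\phi)(X_i)$ for each $i$, together with a uniform positive lower bound on these denominators; this will in fact yield uniform convergence over all of $\mathbb{R}^d$, hence over compact sets. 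The lower bound is immediate: since $\ell_N(\widetilde{\rho}_n)\le\ell_N(\rho_n)\le\ell_N(\rho_0)$, Lemma \ref{lem-loss-bounds} gives $\inf_{x\in\Omega}(\widetilde{\rho}_n*\phi)(x)\ge c_0$, and $X_i\in\Omega$ forces $(\widetilde{\rho}_n*\phi)(X_i)\ge c_0>0$; likewise $(\rho_\infty*\phi)(X_i)=\lim_n(\rho_n*\phi)(X_i)\ge c_0$ by \eqref{eqn-thm-wfr-1}.

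The central step is to show $\mathbb{E}_{Y\sim\rho_n}\|\nabla\delta\ell_N(\rho_n)(Y)\|_2^2\to0$. This follows by combining the descent estimate for the Wasserstein step (Lemma \ref{lem-gd}, applied to $\widetilde{\rho}_n=\rho_n^{\mathrm{W},\eta}$), namely $\ell_N(\widetilde{\rho}_n)-\ell_N(\rho_n)\le-\tfrac{\eta}{2}\mathbb{E}_{Y\sim\rho_n}\|\nabla\delta\ell_N(\rho_n)(Y)\|_2^2$, with the already established fact \eqref{eqn-thm-wfr-0} that $\ell_N(\rho_n)-\ell_N(\widetilde{\rho}_n)\to0$. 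Since $\eta>0$ is fixed, the squared gradient norm must vanish in the limit.

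With this in hand I would control the perturbation of the convolution at each sample point. Using $\widetilde{\rho}_n=(\mathrm{id}-\eta\nabla\delta\ell_N(\rho_n))_\#\rho_n$ and the Lipschitz bound $|\phi(u+v)-\phi(u)|\le G\|v\|_2$ with $G=\sup_x\|\nabla\phi(x)\|_2<\infty$,
\[
\left|(\widetilde{\rho}_n*\phi)(X_i)-(\rho_n*\phi)(X_i)\right|\le G\eta\int\|\nabla\delta\ell_N(\rho_n)(x)\|_2\,\rho_n(\mathrm{d}x)\le G\eta\left(\mathbb{E}_{Y\sim\rho_n}\|\nabla\delta\ell_N(\rho_n)(Y)\|_2^2\right)^{1/2},
\]
where the last step is Jensen's inequality, and the right-hand side tends to $0$ by the central step. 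Combining with $(\rho_n*\phi)(X_i)\to(\rho_\infty*\phi)(X_i)$ — the pointwise convergence already established in Claim \ref{claim-uniform-0}, a consequence of $\rho_n\overset{\mathrm{w}}{\to}\rho_\infty$ and the bounded continuity of $\phi$ — yields $(\widetilde{\rho}_n*\phi)(X_i)\to(\rho_\infty*\phi)(X_i)$ for each $i$. Feeding this and the lower bound $c_0$ back into the first display, via $|a_n^{-1}-a^{-1}|=|a_n-a|/(a_n a)\le|a_n-a|/c_0^2$, gives $\sup_{x\in\mathbb{R}^d}|\delta\ell_N(\widetilde{\rho}_n)(x)-\delta\ell_N(\rho_\infty)(x)|\to0$, which is stronger than the claim.

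The only genuinely new ingredient compared with Claim \ref{claim-uniform-0} is the vanishing of the Wasserstein gradient in $L^2(\rho_n)$, so I expect the main obstacle to be the bookkeeping around the descent lemma: one must check that the step-size hypothesis of Theorem \ref{thm-convergence-wfr-iteration} is exactly what licenses the clean bound $\ell_N(\widetilde{\rho}_n)-\ell_N(\rho_n)\le-\tfrac{\eta}{2}\mathbb{E}_{Y\sim\rho_n}\|\nabla\delta\ell_N(\rho_n)(Y)\|_2^2$ (this is where Lemma \ref{lem-preservation} Part~1 and the choice of $\eta$ enter). Once that is in place, the displacement between $\widetilde{\rho}_n$ and $\rho_n$ at the data points is automatically negligible and the equicontinuity reasoning of the previous claim transfers with no essential change.
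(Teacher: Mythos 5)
Your proposal is correct and is essentially the paper's own argument: the paper likewise combines Lemma \ref{lem-gd} with \eqref{eqn-thm-wfr-0} to get $W_{2}^{2}(\widetilde{\rho}_{n},\rho_{n})\leq2\eta[\ell_{N}(\rho_{n})-\ell_{N}(\widetilde{\rho}_{n})]\to0$, then uses the Lipschitzness of $\phi$, the lower bound \eqref{eqn-thm-wfr-1}, and Claim \ref{claim-uniform-0}; your explicit estimate $|(\widetilde{\rho}_{n}*\phi)(X_{i})-(\rho_{n}*\phi)(X_{i})|\leq G\eta\big(\mathbb{E}_{Y\sim\rho_{n}}\|\nabla\delta\ell_{N}(\rho_{n})(Y)\|_{2}^{2}\big)^{1/2}$ is the same bound in disguise, since the map $Y\mapsto Y-\eta\nabla\delta\ell_{N}(\rho_{n})(Y)$ furnishes a coupling witnessing $W_{2}(\widetilde{\rho}_{n},\rho_{n})\leq\eta\big(\mathbb{E}_{Y\sim\rho_{n}}\|\nabla\delta\ell_{N}(\rho_{n})(Y)\|_{2}^{2}\big)^{1/2}$. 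The only point to add is the degenerate case $\eta=0$ (permitted in Theorem \ref{thm-convergence-wfr-iteration}), where your division by $\eta$ in the central step is unavailable but $\widetilde{\rho}_{n}=\rho_{n}$ makes the claim reduce to Claim \ref{claim-uniform-0}.
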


\begin{proof}Lemma \ref{lem-gd} implies that $W_{2}^{2}(\widetilde{\rho}_{n},\rho_{n})\leq2\eta[\ell_{N}(\rho_{n})-\ell_{N}(\widetilde{\rho}_{n})]$.
By \eqref{eqn-thm-wfr-0}, $W_{2}(\widetilde{\rho}_{n},\rho_{n})\to0$
and thus $W_{1}(\widetilde{\rho}_{n},\rho_{n})\to0$. Since $\phi$
is Lipschitz, we have 
\[
\sup_{x\in\mathbb{R}^{d}}\left|(\widetilde{\rho}_{n}*\phi)(x)-(\rho_{n}*\phi)(x)\right|\to0.
\]
From the above uniform bound, \eqref{eqn-thm-wfr-1}, and \eqref{eqn-thm-wfr-2}
we obtain that 
\[
\sup_{x\in\mathbb{R}^{d}}\left|\delta\ell_{N}(\widetilde{\rho}_{n})(x)-\delta\ell_{N}(\rho_{n})(x)\right|\to0.
\]
Then, the proof is completed by applying Claim \ref{claim-uniform-0}.
\end{proof}

We now come back to Theorem \eqref{thm-convergence-wfr-iteration}.
Suppose that $\rho_{\infty}$ is not an optimal solution. Then in
view of Theorem \eqref{thm:NPMLE-basics}, there exists $\varepsilon>0$
such that $\delta\ell_{N}(\rho_{\infty})(x)<-1-\varepsilon$ for some
$x\in\mathbb{R}^{d}$.Similar to showing the pointwise convergence
of $\delta\ell_{N}(\rho_{n})$ to $\delta\ell_{N}(\rho_{\infty})$
in Claim \eqref{claim-uniform-0}, we can use the bounded convergence
theorem to show that $\ell(\rho_{n})\to\ell(\rho_{\infty})$. Hence,
$\ell(\rho_{\infty})\leq\ell(\rho_{0})$. Lemma \eqref{lem-loss-bounds}
immdeiately implies that 
\begin{align}
\inf_{x\in\Omega}(\rho_{\infty}*\phi)(x)\geq c_{0},\qquad\forall n\geq0,\label{eqn-thm-wfr-10}
\end{align}
and $\lim_{\|x\|_{2}\to\infty}\delta\ell(\rho_{\infty})(x)=0$. Therefore,
the function $\delta\ell(\rho_{\infty})$ achieves its minimum value
at some $x_{0}\in\mathbb{R}^{d}$. We have $\delta\ell_{N}(\rho_{\infty})(x_{0})<-1-\varepsilon$.

For notational simplicity, let $f_{n}(x)=\delta\ell_{N}(\rho_{n})(x)$
and $f(x)=\delta\ell_{N}(\rho_{\infty})(x)$. By Assumption \ref{assumption-pdf-0},
$f\in C^{d}(\mathbb{R}^{d})$. The second part of Lemma \eqref{lem-level-set}
guarantees the existence of some $y\in(-1-\varepsilon,-1-\varepsilon/2)$
such that 
\[
S(y)=\left\{ x\in\mathbb{R}^{d}:\,f(x)=y\right\} 
\]
is compact and $\inf_{x\in S(y)}\|\nabla f(x)\|_{2}>0$. Denote by
$\xi=\inf_{x\in S(y)}\|\nabla f(x)\|_{2}$ and $\bar{S}=\{x\in\mathbb{R}^{d}:\,f(x)\leq y\}$.
The sublevel set $\bar{S}$ is compact. According to the fact that
$f(x_{0})<y$ and the continuity of $f$, $\bar{S}$ has positive
Lebesgue measure. Therefore we have
\begin{align}
\rho_{n}(\bar{S})>0,\qquad\forall\,n\geq0.\label{eqn-thm-wfr-11}
\end{align}

\begin{itemize}
\item We first show that $\widetilde{\rho}_{n}(\bar{S})\geq\rho_{n}(\bar{S})$
holds for sufficently large $n$. By Claim \ref{claim-uniform-grad},
there exists $N>0$ such that 
\begin{align*}
\sup_{x\in\bar{S}}\|\nabla f_{n}(x)-\nabla f(x)\|_{2}\leq\xi/11,\qquad\forall\,n>N.
\end{align*}
By the assumed upper bound on $\eta$ and the estimate \eqref{eqn-thm-wfr-10},
we have 
\[
\eta\leq\frac{c_{0}}{\sup_{{x}\in\mathbb{R}^{d}}\|\nabla^{2}\phi(x)\|_{2}}\leq\frac{1}{\sup_{{x}\in\mathbb{R}^{d}}\|\nabla^{2}f(x)\|_{2}}.
\]
On top of the above, the first part of Lemma \eqref{eqn-lem-level-set}
implies that 
\[
\{x-\eta\nabla f_{n}(x):~x\in\bar{S}\}\subseteq\bar{S},\qquad\forall\,n>N.
\]
Since $\widetilde{\rho}_{n}=(\mathrm{id}-\eta\nabla f_{n})_{\#}\rho_{n}$,
we have 
\begin{align}
\widetilde{\rho}_{n}(\bar{S})\geq\rho_{n}(\bar{S}),\qquad\forall\,n>N.\label{eqn-thm-wfr-12}
\end{align}
\item Then we prove that $\rho_{n+1}(\bar{S})>(1+\varepsilon/4)\widetilde{\rho}_{n}(\bar{S})$
for sufficiently large $n$. By Claim \ref{claim-uniform}, there
exists $N'>0$ such that 
\begin{align*}
 & \sup_{x\in\bar{S}}|\delta\ell(\widetilde{\rho}_{n})(x)-f(x)|\leq\varepsilon/4,\qquad\forall\,n>N'.
\end{align*}
Consequently, 
\begin{align*}
\sup_{x\in\bar{S}}\delta\ell(\widetilde{\rho}_{n})(x)\leq\sup_{x\in\bar{S}}f(x)+\varepsilon/4\leq-1-\varepsilon/4,\qquad\forall\,n>N'.
\end{align*}
The expression \eqref{eqn-thm-wfr-5} implies that 
\[
\frac{\mathrm{d}\rho_{n+1}}{\mathrm{d}\widetilde{\rho}_{n}}(x)=(1-\gamma)+\gamma\cdot[-\delta\ell(\widetilde{\rho}_{n})(x)]\geq1+\gamma\varepsilon/4,\qquad\forall\,n>N',\,x\in\bar{S}.
\]
Therefore we have
\begin{align}
\rho_{n+1}(\bar{S})\geq(1+\gamma\varepsilon/4)\widetilde{\rho}_{n}(\bar{S}),\qquad\forall n>N'.\label{eqn-thm-wfr-13}
\end{align}
\end{itemize}
Taking \eqref{eqn-thm-wfr-12} and \eqref{eqn-thm-wfr-13} collectively
yields
\begin{align*}
\rho_{n+1}(\bar{S})\geq(1+\gamma\varepsilon/4)\rho_{n}(\bar{S}),\qquad\forall n>\max\{N,N'\}.
\end{align*}
This combined with \eqref{eqn-thm-wfr-11} immediately leads to $\lim_{n\to\infty}\rho_{n}(\bar{S})=\infty$,
which contradicts $\rho_{n}(\mathbb{R}^{d})=1$ for all $n\geq0$.
Therefore $\rho_{\infty}$ must be an optimal solution to \eqref{eq:NPMLE},
i.e.~$\rho_{\infty}$ is the NPMLE.

\subsection{Technical lemmas}

Here is a standard result about the Lipschitz perturbation of identity
mapping.

\begin{lemma}\label{lem-lipschitz-perturbation} Let $\mathbb{B}$
be a Banach space with norm $\|\cdot\|$. If $f:~\mathbb{B}\to\mathbb{B}$
has Lipschitz constant $c<1$, then $\varphi:~\mathbb{B}\to\mathbb{B}$,
$x\mapsto x+f(x)$ is bijective and $\varphi^{-1}$ is $(1-c)^{-1}$-Lipschitz.
\end{lemma}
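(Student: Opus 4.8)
The plan is to establish the two assertions separately: injectivity together with the Lipschitz bound on $\varphi^{-1}$ will follow from a single lower estimate, while surjectivity will come from the Banach fixed point theorem applied to a contraction built from $f$.

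First I would prove the quantitative lower bound
$$
\|\varphi(x)-\varphi(y)\| \;\geq\; (1-c)\,\|x-y\|, \qquad \forall\, x,y\in\mathbb{B}.
$$
This is immediate from the reverse triangle inequality and the Lipschitz hypothesis: writing $\varphi(x)-\varphi(y)=(x-y)+(f(x)-f(y))$, we have $\|\varphi(x)-\varphi(y)\|\geq\|x-y\|-\|f(x)-f(y)\|\geq\|x-y\|-c\|x-y\|$. Since $c<1$, this shows $\varphi(x)\neq\varphi(y)$ whenever $x\neq y$, so $\varphi$ is injective. Moreover, once surjectivity is in hand, substituting $u=\varphi(x)$ and $v=\varphi(y)$ into the displayed inequality yields $\|\varphi^{-1}(u)-\varphi^{-1}(v)\|\leq(1-c)^{-1}\|u-v\|$, which is precisely the asserted Lipschitz bound on the inverse.

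For surjectivity, I would fix $z\in\mathbb{B}$ and recast the equation $\varphi(x)=z$, i.e.~$x+f(x)=z$, as a fixed-point problem for the map $T_z\colon\mathbb{B}\to\mathbb{B}$ defined by $T_z(x)=z-f(x)$. Since $\|T_z(x)-T_z(y)\|=\|f(x)-f(y)\|\leq c\|x-y\|$ with $c<1$, the map $T_z$ is a contraction on the complete space $\mathbb{B}$. The Banach fixed point theorem then supplies a (unique) fixed point $x^\star$ satisfying $x^\star=z-f(x^\star)$, equivalently $\varphi(x^\star)=z$. As $z$ was arbitrary, $\varphi$ is onto; combined with injectivity we conclude that $\varphi$ is a bijection.

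There is no genuine obstacle here, as this is a textbook perturbation-of-identity result. The only ingredient beyond elementary norm inequalities is the completeness of $\mathbb{B}$, which is exactly what licenses the invocation of the Banach fixed point theorem in the surjectivity step; without it one would only obtain injectivity and the bound on $\varphi^{-1}$ restricted to the range of $\varphi$.
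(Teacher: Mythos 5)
Your proof is correct and follows essentially the same route as the paper's: both establish surjectivity by applying the Banach fixed-point theorem to the contraction $x\mapsto z-f(x)$, and both obtain the $(1-c)^{-1}$-Lipschitz bound on $\varphi^{-1}$ from the reverse triangle inequality lower bound $\|\varphi(x)-\varphi(y)\|\geq(1-c)\|x-y\|$. Your additional remark that this lower bound also yields injectivity directly (the paper instead gets bijectivity from uniqueness of the fixed point) is a negligible variation.
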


\begin{proof} Choose any $y\in\mathbb{B}$. The mapping $\psi:~x\mapsto y-f(x)$
has Lipschitz constant $c<1$ (in terms of the norm $\|\cdot\|$).
By the Banach fixed-point theorem, there exists a unique $z$ such
that $z=\psi(z)$, which implies that $y=\varphi(z)$. Hence $\varphi$
is bijective and $\varphi^{-1}:~\mathbb{B}\to\mathbb{B}$ is well-defined.

For any $y_{1},y_{2}\in\mathbb{B}$, define $x_{i}=\varphi^{-1}(y_{i})$.
Then, $y_{i}=x_{i}+f(x_{i})$ and 
\begin{align*}
 & \|y_{2}-y_{1}\|_{2}=\|\varphi(x_{2})-\varphi(x_{1})\|_{2}\geq\|x_{2}-x_{1}\|_{2}-\|f(x_{2})-f(x_{1})\|_{2}\geq(1-c)\|x_{2}-x_{1}\|_{2},\\
 & \|\varphi^{-1}(y_{2})-\varphi^{-1}(y_{1})\|_{2}=\|x_{2}-x_{1}\|_{2}\leq(1-c)^{-1}\|y_{2}-y_{1}\|_{2}.
\end{align*}
This proves the Lipschitz smoothness of $\varphi^{-1}$. \end{proof}

Below we show that one step of approximate gradient descent cannot
expand certain sub-level sets of a function.

\begin{lemma}\label{lem-level-set} Assume that $f\in C^{2}(\mathbb{R}^{d})$
has a finite minimum value $y_{0}$. Define $S(y)=\{x\in\mathbb{R}^{d}:~f(x)=y\}$
for $y\in\mathbb{R}$. We have the followings. 
\begin{itemize}
\item Suppose that $\sup_{{x}\in\mathbb{R}^{d}}\|\nabla^{2}f(x)\|_{2}\leq L<\infty$.
Choose any $\eta\in[0,1/L]$ and $y>y_{0}$. Define $\bar{S}=\{x\in\mathbb{R}^{d}:\,f(x)\leq y\}$.
If $g:\mathbb{R}^{d}\to\mathbb{R}^{d}$ satisfies 
\[
\sup_{x\in\bar{S}}\|g(x)-\nabla f(x)\|_{2}\leq\frac{1}{11}\inf_{x\in S(y)}\|\nabla f(x)\|_{2},
\]
then 
\[
\{x-\eta g(x):~x\in\bar{S}\}\subseteq\bar{S}.
\]
\item Suppose that $\inf_{\|x\|_{2}\geq R}f(x)>y_{0}$ holds for some $R$
and in addition, $f\in C^{d}(\mathbb{R}^{d})$. Then, for any $\varepsilon>0$,
there exists $y\in(y_{0},y_{0}+\varepsilon)$ such that $S(y)$ is
compact and 
\[
\inf_{x\in S(y)}\|\nabla f(x)\|_{2}>0.
\]
\end{itemize}
\end{lemma}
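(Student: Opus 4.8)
The plan is to argue by contradiction. Fix $x\in\bar S$, write $x^+=x-\eta g(x)$ and $\delta=g(x)-\nabla f(x)$, set $a=\|\nabla f(x)\|_2$ and $\xi=\inf_{z\in S(y)}\|\nabla f(z)\|_2$ (so the hypothesis reads $\|\delta\|_2\le\xi/11$), and suppose toward a contradiction that $f(x^+)>y$. Since $f(x)\le y<f(x^+)$, I would first apply the intermediate value theorem to $t\mapsto f(x-t\eta g(x))$ on $[0,1]$ to produce a crossing point $z$ on the segment $[x,x^+]$ with $f(z)=y$, i.e.\ $z\in S(y)$. The role of this crossing point is to propagate the gradient lower bound, which is only assumed on the level set $S(y)$, inward to $x$.

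Next I would use $z$ to force $a$ to be large. Because $z$ lies on the segment, $\|z-x\|_2\le\|x^+-x\|_2=\eta\|g(x)\|_2\le\eta(a+\|\delta\|_2)$, so the Hessian bound gives $\|\nabla f(z)-\nabla f(x)\|_2\le L\|z-x\|_2\le a+\|\delta\|_2$ after using $\eta\le 1/L$. Combined with $\|\nabla f(z)\|_2\ge\xi$ this yields $a\ge\xi-(a+\|\delta\|_2)$, hence $a\ge(\xi-\|\delta\|_2)/2\ge 5\xi/11$, and in particular $a\ge 5\|\delta\|_2$. The second step is a one-step descent estimate: by Taylor's theorem together with $\|\nabla^2 f\|_2\le L$ and $L\eta^2\le\eta$,
\[
f(x^+)\le f(x)-\eta\langle\nabla f(x),g(x)\rangle+\tfrac{L\eta^2}{2}\|g(x)\|_2^2\le f(x)+\eta\Big(-\tfrac12 a^2+2a\|\delta\|_2+\tfrac12\|\delta\|_2^2\Big),
\]
where I bounded $\langle\nabla f(x),g(x)\rangle\ge a^2-a\|\delta\|_2$ and $\|g(x)\|_2^2\le(a+\|\delta\|_2)^2$. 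Since $a\ge 5\|\delta\|_2>(2+\sqrt5)\|\delta\|_2$, the quadratic in parentheses is nonpositive, so $f(x^+)\le f(x)\le y$, contradicting $f(x^+)>y$; this gives $x^+\in\bar S$. The delicate point is exactly the crossing argument, and the constant $1/11$ is chosen so that $5>2+\sqrt5$ holds with margin.

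\textbf{Part 2.} Let $m=\inf_{\|x\|_2\ge R}f(x)>y_0$ and work in the nonempty interval $I=(y_0,\min\{y_0+\varepsilon,m\})$. For every $y\in I$ the sublevel set $\{f\le y\}$ is closed and, since $f\ge m>y$ outside the ball $\{\|x\|_2<R\}$, contained in that ball, hence compact; therefore $S(y)=f^{-1}(y)$ is compact as a closed subset. It then remains to pick $y\in I$ with $\inf_{S(y)}\|\nabla f\|_2>0$, for which I would invoke Sard's theorem: because $f\in C^d(\mathbb{R}^d)$—precisely the smoothness Sard's theorem requires for a map $\mathbb{R}^d\to\mathbb{R}$—the set of critical values of $f$ has Lebesgue measure zero, so almost every $y\in I$ is a regular value, meaning $\nabla f$ vanishes nowhere on $S(y)$. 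Fixing such a regular $y$ and using continuity of $\|\nabla f\|_2$ on the compact set $S(y)$ forces $\inf_{S(y)}\|\nabla f\|_2>0$, as desired. The only real subtlety here is matching the $C^d$ hypothesis to the dimension-dependent smoothness requirement in Sard's theorem.
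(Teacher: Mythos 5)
Your proof is correct, and for Part 1 it takes a genuinely different route from the paper's. The paper argues directly with a two-case split on $a=\|\nabla f(x)\|_{2}$: when $a>(\sqrt{5}+2)\delta$ the same one-step Taylor descent estimate you derive shows $f(x-\eta g(x))\leq f(x)\leq y$ outright; when $a\leq(\sqrt{5}+2)\delta$ it instead uses $\delta\leq\xi/11$ to get $a<\xi/2$, shows via the Hessian bound that the ball $B$ of radius $\xi/(2L)$ around $x$ is disjoint from $S(y)$, deduces $B\subseteq\bar{S}$ by a segment/continuity argument, and checks that the step length $\eta\|g(x)\|_{2}\leq(\sqrt{5}+3)\xi/(11L)<\xi/(2L)$ keeps the iterate inside $B$. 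You replace this case analysis by a single proof by contradiction: assuming $f(x-\eta g(x))>y$, the intermediate value theorem produces a crossing point $z\in S(y)$ on the segment, and the $L$-Lipschitzness of $\nabla f$ together with $\eta\leq 1/L$ transports the bound $\|\nabla f(z)\|_{2}\geq\xi$ back to $x$, giving $a\geq(\xi-\|\delta\|_{2})/2\geq 5\xi/11\geq 5\|\delta\|_{2}>(2+\sqrt{5})\|\delta\|_{2}$, after which the descent estimate forces $f(x-\eta g(x))\leq f(x)\leq y$, a contradiction. All the steps check out (including the edge cases $\xi=0$, where $g\equiv\nabla f$ on $\bar{S}$ and the bracket reduces to $-a^{2}/2\leq 0$, and a crossing at $t^{*}=0$), so your argument is shorter and makes the role of the constant $1/11$ transparent—it exactly enforces $a/\|\delta\|_{2}\geq 5>2+\sqrt{5}$; what the paper's direct two-case argument buys in exchange is slightly more information, namely that in the small-gradient regime the iterate provably stays within $\xi/(2L)$ of $x$. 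Your Part 2 is essentially the paper's proof: compactness of $S(y)$ from containment of the sublevel set in the ball of radius $R$, Sard's theorem with the $C^{d}$ hypothesis matching the required smoothness $k\geq d-1+1=d$ for maps $\mathbb{R}^{d}\to\mathbb{R}$, and continuity of $\|\nabla f\|_{2}$ on the compact regular level set.
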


\begin{proof} To prove the first part, we choose any $x\in\bar{S}$
and we will show that $x-\eta g(x)\in\bar{S}$. Let $\delta=\sup_{z\in\bar{S}}\|g(z)-\nabla f(z)\|_{2}$.
By $\sup_{{x}\in\mathbb{R}^{d}}\|\nabla^{2}f(x)\|_{2}\leq L$ and
$0\leq\eta\leq1/L$, 
\begin{align}
f(x-\eta g(x)) & \leq f(x)+\langle\nabla f(x),-\eta g(x)\rangle+\frac{L}{2}\|\eta g(x)\|_{2}^{2}\nonumber \\
 & \leq f(x)+\langle\nabla f(x),-\eta\nabla f(x)-\eta[g(x)-\nabla f(x)]\rangle+\frac{L\eta^{2}}{2}[\|\nabla f(x)\|_{2}+\|g(x)-\nabla f(x)\|_{2}]^{2}\nonumber \\
 & \leq f(x)-\eta\|\nabla f(x)\|_{2}^{2}+\eta\delta\|\nabla f(x)\|_{2}+\frac{L\eta^{2}}{2}[\|\nabla f(x)\|_{2}^{2}+2\delta\|\nabla f(x)\|_{2}+\delta^{2}]\nonumber \\
 & =f(x)+\eta\|\nabla f(x)\|_{2}^{2}\bigg[-\bigg(1-\frac{L\eta}{2}\bigg)+(1+L\eta)\frac{\delta}{\|\nabla f(x)\|_{2}}+\frac{L\eta}{2}\bigg(\frac{\delta}{\|\nabla f(x)\|_{2}}\bigg)^{2}\bigg]\nonumber \\
 & \leq f(x)+\eta\|\nabla f(x)\|_{2}^{2}\bigg[-\frac{1}{2}+\frac{2\delta}{\|\nabla f(x)\|_{2}}+\frac{1}{2}\bigg(\frac{\delta}{\|\nabla f(x)\|_{2}}\bigg)^{2}\bigg].\label{eqn-lem-level-set}
\end{align}
Define $\xi=\inf_{z\in S(y)}\|\nabla f(z)\|_{2}$. If $\xi=0$, then
$\delta=0$. The bound \eqref{eqn-lem-level-set} yields 
\[
f(x-\eta g(x))\leq f(x)-\frac{\eta}{2}\|\nabla f(x)\|_{2}^{2}\leq f(x)\leq y
\]
and thus $x-\eta g(x)\in\bar{S}$. From now on we assume that $\xi>0$.

\vspace{1em}

\noindent \textbf{Case 1. } When $\|\nabla f(x)\|_{2}>(\sqrt{5}+2)\delta$,
we have 
\[
\frac{\delta}{\|\nabla f(x)\|_{2}}<\frac{1}{\sqrt{5}+2}=\sqrt{5}-2.
\]
By \eqref{eqn-lem-level-set}, 
\begin{align*}
f(x-\eta g(x))-f(x) & \leq\eta\|\nabla f(x)\|_{2}^{2}\bigg[\frac{1}{2}\bigg(\frac{\delta}{\|\nabla f(x)\|_{2}}+2\bigg)^{2}-\frac{5}{2}\bigg]\leq0.
\end{align*}
Hence, $f(x-\eta g(x))\leq f(x)\leq y$ and $x-\eta g(x)\in\bar{S}$.

\vspace{1em}

\noindent \textbf{Case 2. } When $\|\nabla f(x)\|_{2}\leq(\sqrt{5}+2)\delta$,
we use $\delta\leq\xi/11$ and $\sqrt{5}<5/2$ to get 
\begin{align*}
\|\nabla f(x)\|_{2}\leq(\sqrt{5}+2)\delta\leq\frac{\sqrt{5}+2}{11}\xi<\frac{9\xi}{22}<\frac{\xi}{2}.
\end{align*}
Therefore, $x\notin S(y)$. As $x\in\bar{S}$, we must have $f(x)<y$.

Note that for any $z\in S(y)$, we have $\|\nabla f(z)\|_{2}\geq\xi>0$.
By $\sup_{{x}\in\mathbb{R}^{d}}\|\nabla^{2}f(x)\|_{2}\leq L$, we
have 
\[
\|z-x\|_{2}\geq\frac{\|\nabla f(z)-\nabla f(x)\|_{2}}{L}\geq\frac{\|\nabla f(z)\|_{2}-\|\nabla f(x)\|_{2}}{L}>\frac{\xi-\xi/2}{L}=\frac{\xi}{2L}.
\]
Therefore, $\inf_{z\in S(y)}\|z-x\|_{2}\geq\frac{\xi}{2L}$. We have
\begin{align}
B\cap S(y)=\varnothing,\qquad\text{where}\qquad B=\bigg\{ x'\in\mathbb{R}^{d}:\,\|x'-x\|_{2}<\frac{\xi}{2L}\bigg\}.\label{eqn-lem-level-set-1}
\end{align}
We claim that $B\subseteq\bar{S}$. If this is not true, then $f(x')>y$
holds for some $x'\in B$. Since $f(x)<y$ and $f$ is continuous,
there exists $t\in(0,1)$ such that $f((1-t)x+tx')=y$, i.e. $(1-t)x+tx'\in S(y)$.
The fact $(1-t)x+tx'\in B$ leads to $B\cap S(y)\neq\varnothing$,
which contradicts \eqref{eqn-lem-level-set-1}.

On the other hand, we have 
\begin{align*}
\|[x-\eta g(x)]-x\|_{2} & =\eta\|g(x)\|_{2}\leq\eta\|\nabla f(x)\|_{2}+\eta\delta\leq\frac{(\sqrt{5}+3)\delta}{L}\leq\frac{(\sqrt{5}+3)\xi}{11L}<\frac{\xi}{2L}.
\end{align*}
Therefore, $x-\eta g(x)\in B\subseteq\bar{S}$. This proves the first
part.

We now come to the second part. Let $y_{1}=\inf_{\|x\|_{2}\geq R}f(x)$.
Thanks to the continuity of $f$, the image set $\{f(x):\,x\in\mathbb{R}^{d}\}$
contains the interval $(y_{0},y_{1})$. Since $f:\mathbb{R}^{d}\to\mathbb{R}$
is $C^{d}$, Sard's lemma asserts that the set of critical
values, i.e. the image set of critical points $\{f(x):~\nabla f(x)=0\}$,
has Lebesgue measure 0. Consequently, for any $\varepsilon>0$, there
exists a regular value $y\in(y_{0},\min\{y_{0}+\varepsilon,~y_{1}\})$,
i.e. $\nabla f(x)\neq0$ so long as $f(x)=y$. Because of $y<y_{1}$,
$S(y)\subseteq\{x\in\mathbb{R}^{d}:\,\|x\|_{2}\geq R\}$ must be compact.
The continuity of $\nabla f$ implies that $\inf_{x\in S(y)}\|\nabla f(x)\|_{2}>0$.

\end{proof}

\section{Well-posedness of particle gradient flows}

\subsection{Fisher-Rao gradient flow (Proof of Theorem \ref{thm:FR-ODE}) \label{sec:proof-FR-ODE}}

In this section, we will show that for the ODE system  \eqref{eq:FR-ODE}
\[
\dot{\omega}_{t}^{(j)}=-\omega_{t}^{(j)}\left[1-\frac{1}{N}\sum_{i=1}^{N}\frac{\phi\left(X_{i}-\mu^{(j)}\right)}{\sum_{l=1}^{m}\omega_{t}^{(l)}\phi\left(X_{i}-\mu^{(l)}\right)}\right],\quad\forall\,t\geq0,\,j\in[m]
\]
with initial value $\omega_{0}\in\Delta^{m-1}$, the solution exists,
is unique, and $(\rho_{t})_{t\geq0}$ where $\rho_{t}\coloneqq\sum_{l=1}^{m}\omega_{t}^{(l)}\delta_{\mu^{(l)}}$
is a Fisher-Rao gradient flow in the sense of  \eqref{eq:Fisher-Rao-GF}. 

First of all, we will use Picard-Lindel\"of theorem to prove existence
and uniqueness of the solution. Define a function $f:\mathbb{R}^{m}\to\mathbb{R}^{m}$
as
\[
f\left(y\right)=\left[f_{j}\left(y\right)\right]_{1\leq j\leq m},\qquad f_{j}\left(y\right)=-y_{j}\left[1-\frac{1}{N}\sum_{i=1}^{N}\frac{\phi\left(X_{i}-\mu^{(j)}\right)}{\sum_{l=1}^{m}y_{l}\phi\left(X_{i}-\mu^{(l)}\right)}\right].
\]
Then we can rewrite the ODE system as $\dot{\omega}_{t}=f(\omega_{t})$.
For any $y\in\mathbb{R}^{m}$ such that $\Vert y-\omega_{0}\Vert_{2}\leq\varepsilon$
for some $\delta>0$ to be specified later, we know that
\begin{align*}
\sum_{l=1}^{m}y_{l}\phi\left(X_{i}-\mu^{(l)}\right) & \geq\sum_{l=1}^{m}\omega_{0}^{(l)}\phi\left(X_{i}-\mu^{(l)}\right)-\sum_{l=1}^{m}\left(\omega_{0}^{(l)}-y_{l}\right)\phi\left(X_{i}-\mu^{(l)}\right)\\
 & \overset{\text{(i)}}{\geq}\min_{i\in[N],l\in[m]}\phi\left(X_{i}-\mu^{(l)}\right)-\left\Vert y-\omega_{0}\right\Vert _{2}\sum_{l=1}^{m}\phi^{2}\left(X_{i}-\mu^{(l)}\right)\\
 & \overset{\text{(ii)}}{\geq}\min_{i\in[N],l\in[m]}\phi\left(X_{i}-\mu^{(l)}\right)-\frac{\varepsilon m}{\left(2\pi\right)^{d}}
\end{align*}
for any $i\in[N]$, where (i) follows from $\omega_{0}^{(l)}\in\Delta^{m-1}$
and Cauchy-Schwarz inequality, while (ii) holds since $\Vert\phi\Vert_{\infty}\leq(2\pi)^{-d/2}$.
Therefore by taking
\[
\varepsilon\coloneqq\frac{\left(2\pi\right)^{d}}{2m}\min_{i\in[N],l\in[m]}\phi\left(X_{i}-\mu^{(l)}\right),
\]
we know that for any $\Vert y-\omega_{0}\Vert_{2}\leq\varepsilon$
it holds that
\begin{equation}
\sum_{l=1}^{m}y_{l}\phi\left(X_{i}-\mu^{(l)}\right)\geq\frac{1}{2}\min_{i\in[N],l\in[m]}\phi\left(X_{i}-\mu^{(l)}\right)\triangleq\delta.\label{eq:proof-FR-ODE-1}
\end{equation}
Therefore we can check that
\begin{align*}
\left|\left[\nabla f_{j}\left(y\right)\right]_{j}\right| & =\left|-\left[1-\frac{1}{N}\sum_{i=1}^{N}\frac{\phi\left(X_{i}-\mu^{(j)}\right)}{\sum_{l=1}^{m}y_{l}\phi\left(X_{i}-\mu^{(l)}\right)}\right]-\left[\frac{1}{N}\sum_{i=1}^{N}\frac{y_{j}\phi^{2}\left(X_{i}-\mu^{(j)}\right)}{\left[\sum_{l=1}^{m}y_{l}\phi\left(X_{i}-\mu^{(l)}\right)\right]^{2}}\right]\right|\\
 & \leq1+\frac{\left\Vert \phi\right\Vert _{\infty}}{\delta}+\frac{\left(1+\varepsilon\right)\left\Vert \phi\right\Vert _{\infty}^{2}}{\delta^{2}}=1+\frac{1}{\left(2\pi\right)^{d/2}\delta}+\frac{1+\varepsilon}{\left(2\pi\right)^{d}\delta^{2}},
\end{align*}
and for $l\neq j$
\begin{align*}
\left|\left[\nabla f_{j}\left(y\right)\right]_{l}\right| & =\left|-y_{j}\left[\frac{1}{N}\sum_{i=1}^{N}\frac{\phi\left(X_{i}-\mu^{(j)}\right)\phi\left(X_{i}-\mu^{(l)}\right)}{\left[\sum_{l=1}^{m}y_{l}\phi\left(X_{i}-\mu^{(l)}\right)\right]^{2}}\right]\right|\leq\frac{\left(1+\varepsilon\right)\left\Vert \phi\right\Vert _{\infty}^{2}}{\delta^{2}}=\frac{1+\varepsilon}{\left(2\pi\right)^{d}\delta^{2}}.
\end{align*}
As a result, we know that for any 
\begin{equation}
\max_{y:\Vert y-\omega_{0}\Vert_{2}\leq\varepsilon}\left\Vert \nabla f_{j}\left(y\right)\right\Vert _{2}\leq\sqrt{m}\left(1+\frac{1}{\left(2\pi\right)^{d/2}\delta}+\frac{1+\varepsilon}{\left(2\pi\right)^{d}\delta^{2}}\right)\triangleq C_{\mathsf{lip}},\label{eq:proof-FR-ODE-2}
\end{equation}
and hence $f(y)$ is $C_{\mathsf{Lip}}$-Lipschitz continuous in $\{y:\Vert y-\omega_{0}\Vert_{2}\leq\varepsilon\}$
where $C_{\mathsf{Lip}}\coloneqq\sqrt{m}C_{\mathsf{lip}}$. In addition,
it is easy to show that 
\[
\max_{y:\Vert y-\omega_{0}\Vert_{2}\leq\varepsilon}\left\Vert f_{j}\left(y\right)\right\Vert _{2}\leq-y_{j}+\frac{1}{N}\sum_{i=1}^{N}\frac{y_{j}\phi\left(X_{i}-\mu^{(j)}\right)}{\sum_{l=1}^{m}y_{l}\phi\left(X_{i}-\mu^{(l)}\right)}\leq1+\varepsilon\triangleq M.
\]

By Picard-Lindel\"of theorem, there exists $t_{0}>0$ such that the
ODE has a unique solution on the time interval $[0,t_{0}]$. We first
check that $\omega_{t}^{(j)}>0$ for any $j\in[m]$ and $t\in[0,t_{0}]$.
If this is not true, define
\[
t_{\star}\coloneqq\min\left\{ t\in[0,t_{0}]:\,\exists\,j\in[m]\text{ s.t. }\omega_{t}^{(j)}\leq0\right\} 
\]
and suppose $\omega_{t_{\star}}^{(j_{\star})}\leq0$ for $j_{\star}\in[m]$.
Then we know that $\omega_{t}^{(j)}>0$ for any $j\in[m]$ and $0\leq t\leq t_{\star}$,
and hence $\dot{\omega}_{t}^{(j)}\geq-\omega_{t}^{(j)}$ for all $t\in[0,t_{\star}]$.
Then we can use Gr\"onwall's lemma to achieve $\omega_{t}^{(j)}\geq\omega_{0}^{(j)}e^{-t}$
for all $t\in[0,t_{\star}]$, and as a result $\omega_{t_{\star}}^{(j_{\star})}>0$,
which is a contradiction. In addition, we can also check that

\[
\frac{\mathrm{d}}{\mathrm{d}t}\sum_{j=1}^{m}\omega_{t}^{(j)}=\sum_{j=1}^{m}\dot{\omega}_{t}^{(j)}=-\sum_{j=1}^{m}\omega_{t}^{(j)}\left[1-\frac{1}{N}\sum_{i=1}^{N}\frac{\phi\left(X_{i}-\mu^{(j)}\right)}{\sum_{l=1}^{m}\omega_{t}^{(l)}\phi\left(X_{i}-\mu^{(l)}\right)}\right]=0
\]
for all $t\in[0,t_{0}]$. As as a result $\omega_{t}\in\Delta^{m-1}$
for any $0\leq t\leq t_{0}$. By repeating the same procedure as above
(notice that the above proof only depends on $\omega_{0}\in\Delta^{m-1}$,
and $t_{0}$ only depends on universal constants $C_{\mathsf{Lip}}$
and $M$ and does not depend on $\omega_{0}$), we can show that the
ODE has a unique solution on $[t_{0},2t_{0}]$, $[2t_{0},3t_{0}]$,
and so on. This shows the existence and uniqueness of the solution
to the ODE system  \eqref{eq:FR-ODE}.

Next, we show that $(\rho_{t})_{t\geq0}$ defined as $\rho_{t}\coloneqq\sum_{l=1}^{m}\omega_{t}^{(l)}\delta_{\mu^{(l)}}$
solves  \eqref{eq:Fisher-Rao-GF}. Note that $\rho_{t}$ is a probability
measure since we have shown that $\omega_{t}\in\Delta^{m-1}$ for
any $t\geq0$. For any test function $\varphi(x)\in C_{\mathrm{c}}^{\infty}$,
we have
\begin{align*}
\frac{\mathrm{d}}{\mathrm{d}t}\int_{\mathbb{R}^{d}}\varphi\left(x\right)\rho_{t}\left(\mathrm{d}x\right) & =\frac{\mathrm{d}}{\mathrm{d}t}\left[\sum_{j=1}^{m}\omega_{t}^{(j)}\varphi\left(\mu_{t}^{(j)}\right)\right]=\sum_{j=1}^{m}\dot{\omega}_{t}^{(j)}\varphi\left(\mu_{t}^{(j)}\right)\\
 & =-\sum_{j=1}^{m}\left[1+\delta\ell_{N}\left(\rho_{t}\right)\left(\mu_{t}^{(j)}\right)\right]\omega_{t}^{(j)}\varphi\left(\mu_{t}^{(j)}\right)\\
 & =-\int_{\mathbb{R}^{d}}\left[1+\delta\ell_{N}\left(\rho_{t}\right)\left(x\right)\right]\varphi\left(x\right)\rho_{t}\left(\mathrm{d}x\right).
\end{align*}
This proves that 
\[
\partial_{t}\rho_{t}=-\left[\delta\ell\left(\rho_{t}\right)+1\right]\rho_{t}
\]
holds in the sense of distributions.

\subsection{Wasserstein gradient flow (Proof of Theorem \ref{thm:Wass-ODE})\label{sec:proof-Wass-ODE}}

In this section we will show that the ODE system  \eqref{eq:Wass-ODE}
\[
\dot{\mu}_{t}^{(j)}=\frac{1}{N}\sum_{i=1}^{N}\frac{\phi\left(X_{i}-\mu_{t}^{(j)}\right)}{m^{-1}\sum_{l=1}^{m}\phi\left(X_{i}-\mu_{t}^{(l)}\right)}\left(X_{i}-\mu_{t}^{(j)}\right),\quad\forall\,t\geq0,\,j\in[m]
\]
has unique solution, and $(\rho_{t})_{t\geq0}$ where $\rho_{t}\coloneqq m^{-1}\sum_{l=1}^{m}\delta_{\mu_{t}^{(l)}}$
is a Wasserstein gradient flow in the sense of  \eqref{eq:Wass-GF}. 

We will use Picard-Lindel\"of theorem to prove existence and uniqueness
of the solution. Define a sufficiently large constant
\[
R\coloneqq\max_{1\leq i\leq N}\left\Vert X_{i}\right\Vert _{2}.
\]
For each $j\in[m]$, define a function $f^{(j)}:\mathbb{R}^{md}\to\mathbb{R}^{d}$
as
\[
f^{(j)}\left(z\right)=\frac{1}{N}\sum_{i=1}^{N}\frac{\phi\left(X_{i}-z_{j}\right)}{m^{-1}\sum_{l=1}^{m}\phi\left(X_{i}-z_{l}\right)}\left(X_{i}-z_{j}\right),
\]
where $z=[z_{j}]_{1\leq j\leq m}\in\mathbb{R}^{md}$ and $z_{1},\ldots,z_{m}\in\mathbb{R}^{d}$,
and let $f(z)=[f^{(j)}(z)]_{1\leq j\leq m}$. Then we can write the
ODE system as $\dot{\mu}_{t}=f(\mu_{t})$ where $\mu_{t}=[\mu_{t}^{(j)}]_{1\leq j\leq m}$.
Denote by $f^{(j)}(z)=[f_{k}^{(j)}(z)]_{1\leq k\leq d}$. For any
$z\in\mathbb{R}^{md}$ satisfying $\max_{j\in[m]}\Vert z_{j}\Vert_{2}\leq2R$,
we have
\begin{equation}
\min_{1\leq i\leq N}\frac{1}{m}\sum_{l=1}^{m}\phi\left(X_{i}-z_{l}\right)\geq\frac{1}{\left(2\pi\right)^{d/2}}\exp\left(-\frac{9}{2}R^{2}\right)\triangleq\delta.\label{eq:proof-Wass-ODE-1}
\end{equation}
Then we can compute for $l\neq j$
\begin{align*}
\left\Vert \nabla_{z_{l}}f_{k}^{(j)}\left(z\right)\right\Vert _{2} & =\left\Vert \nabla_{z_{l}}\frac{1}{N}\sum_{i=1}^{N}\frac{\phi\left(X_{i}-z_{j}\right)}{m^{-1}\sum_{l=1}^{m}\phi\left(X_{i}-z_{l}\right)}e_{k}^{\top}\left(X_{i}-z_{j}\right)\right\Vert _{2}\\
 & =\left\Vert -\frac{1}{N}\sum_{i=1}^{N}\frac{\phi\left(X_{i}-z_{j}\right)m^{-1}\phi\left(X_{i}-z_{l}\right)}{\left[m^{-1}\sum_{l=1}^{m}\phi\left(X_{i}-z_{l}\right)\right]^{2}}e_{k}^{\top}\left(X_{i}-z_{j}\right)\left(X_{i}-z_{l}\right)\right\Vert _{2}\\
 & \overset{\text{(i)}}{\leq}\frac{m^{-1}\left\Vert \phi\right\Vert _{\infty}^{2}}{\delta^{2}}\frac{1}{N}\sum_{i=1}^{N}\left|e_{k}^{\top}\left(X_{i}-z_{j}\right)\right|\left\Vert X_{i}-z_{l}\right\Vert _{2}\overset{\text{(ii)}}{\leq}\frac{9m^{-1}}{\delta^{2}\left(2\pi\right)^{d}}R^{2},
\end{align*}
where (i) utilizes  \eqref{eq:proof-Wass-ODE-1} and (ii) follows from
$\max_{i\in[N]}\Vert X_{i}\Vert_{2}\leq R$ and $\max_{j\in[m]}\Vert z_{j}\Vert_{2}\leq2R$.
Similarly we have
\begin{align*}
\left\Vert \nabla_{z_{j}}f_{k}^{(j)}\left(z\right)\right\Vert _{2} & =\left\Vert \nabla_{z_{j}}\frac{1}{N}\sum_{i=1}^{N}\frac{\phi\left(X_{i}-z_{j}\right)}{m^{-1}\sum_{l=1}^{m}\phi\left(X_{i}-z_{l}\right)}e_{k}^{\top}\left(X_{i}-z_{j}\right)\right\Vert _{2}\\
 & \leq\left\Vert -\frac{1}{N}\sum_{i=1}^{N}\frac{\phi^{2}\left(X_{i}-z_{j}\right)m^{-1}}{\left[m^{-1}\sum_{l=1}^{m}\phi\left(X_{i}-z_{l}\right)\right]^{2}}e_{k}^{\top}\left(X_{i}-z_{j}\right)\left(X_{i}-z_{j}\right)\right\Vert _{2}\\
 & \quad+\left\Vert \frac{1}{N}\sum_{i=1}^{N}\frac{\phi\left(X_{i}-z_{j}\right)}{m^{-1}\sum_{l=1}^{m}\phi\left(X_{i}-z_{l}\right)}\left[e_{k}^{\top}\left(X_{i}-z_{j}\right)\left(X_{i}-z_{j}\right)-e_{k}\right]\right\Vert _{2}\\
 & \leq9\left(\frac{m^{-1}}{\delta^{2}\left(2\pi\right)^{d}}+\frac{1}{\delta\left(2\pi\right)^{d/2}}\right)R^{2}+\frac{\left\Vert \phi\right\Vert _{\infty}}{\delta}.
\end{align*}
As a result, we have
\begin{equation}
\left\Vert \nabla_{z}f_{k}^{(j)}\left(z\right)\right\Vert _{2}=\sqrt{\sum_{l=1}^{m}\left\Vert \nabla_{z_{l}}f_{k}^{(j)}\left(z\right)\right\Vert _{2}^{2}}\leq\sqrt{\left(m+1\right)\left(\frac{9m^{-1}}{\delta^{2}\left(2\pi\right)^{d}}R^{2}\right)^{2}+2\frac{1}{\delta^{2}\left(2\pi\right)^{d}}}\triangleq C_{\mathsf{lip}}.\label{eq:proof-Wass-ODE-2}
\end{equation}
Therefore $f^{(j)}(z)$ is $\sqrt{d}C_{\mathsf{Lip}}$-Lipschitz continuous
in $\{z:\max_{j\in[m]}\Vert z_{j}\Vert_{2}\leq2R\}$, and hence $f(z)$
is $C_{\mathsf{Lip}}$-Lipschitz continous where $C_{\mathsf{Lip}}\coloneqq C_{\mathsf{lip}}\sqrt{md}$.
In addition, it is straightforward to show that for any $z\in\mathbb{R}^{md}$
satisfying $\max_{j\in[m]}\Vert z_{j}\Vert_{2}\leq2R$, 
\[
\left\Vert f^{(j)}\left(z\right)\right\Vert _{2}\leq\frac{\left\Vert \phi\right\Vert _{\infty}}{\delta}3R\triangleq M
\]
holds for all $1\leq j\leq m$.

Recall that $\mu_{0}^{(1)},\ldots,\mu_{0}^{(m)}$ are i.i.d.~sampled
from $\mathsf{Uniform}(\{X_{i}\}_{1\leq i\leq N})$, therefore $\max_{j\in[m]}\Vert\mu_{0}^{(j)}\Vert_{2}\leq R$.
Therefore we have
\[
\left\{ z:\left\Vert z-\mu_{0}\right\Vert _{2}\leq R\right\} \subseteq\left\{ z:\max_{j\in[m]}\left\Vert z_{j}\right\Vert _{2}\leq2R\right\} ,
\]
where $\mu_{0}=[\mu_{0}^{(j)}]_{1\leq j\leq m}$. Hence $f(z)$ is
$\sqrt{md}C_{\mathsf{Lip}}$-Lipschitz continous in $\{z:\Vert z-\mu_{0}\Vert_{2}\leq R\}$.
Then we can use Picard-Lindel\"of theorem to show that, there exists
$t_{0}>0$ such that the ODE has a unique solution on the time interval
$[0,t_{0}]$. For any $t\in[0,t_{0}]$ and $j\in[m]$, we can compute
\begin{align*}
\frac{\mathrm{d}}{\mathrm{d}t}\Vert\mu_{t}^{(j)}\Vert_{2}^{2} & =2\langle\mu_{t}^{(j)},\dot{\mu}_{t}^{(j)}\rangle=\frac{2}{N}\sum_{i=1}^{N}\frac{\phi\left(X_{i}-\mu_{t}^{(j)}\right)}{m^{-1}\sum_{l=1}^{m}\phi\left(X_{i}-\mu_{t}^{(l)}\right)}\mu_{t}^{(j)\top}\left(X_{i}-\mu_{t}^{(j)}\right)\\
 & \leq\frac{2}{N}\sum_{i=1}^{N}\frac{\phi\left(X_{i}-\mu_{t}^{(j)}\right)}{m^{-1}\sum_{l=1}^{m}\phi\left(X_{i}-\mu_{t}^{(l)}\right)}\left(\left\Vert X_{i}\right\Vert _{2}\Vert\mu_{t}^{(j)}\Vert_{2}-\Vert\mu_{t}^{(j)}\Vert_{2}^{2}\right)\\
 & \leq\frac{2}{N}\sum_{i=1}^{N}\frac{\phi\left(X_{i}-\mu_{t}^{(j)}\right)}{m^{-1}\sum_{l=1}^{m}\phi\left(X_{i}-\mu_{t}^{(l)}\right)}\left(R-\Vert\mu_{t}^{(j)}\Vert_{2}\right)\Vert\mu_{t}^{(j)}\Vert_{2},
\end{align*}
where we use Cauchy-Schwarz inequality in the penultimate step. This
shows that $\frac{\mathrm{d}}{\mathrm{d}t}\Vert\mu_{t}^{(j)}\Vert_{2}^{2}<0$
as long as $\Vert\mu_{t_{0}}^{(j)}\Vert_{2}>R$, and as a result $\max_{j\in[m]}\Vert\mu_{t_{0}}^{(j)}\Vert_{2}\leq R$.
Then we can repeat the same analysis as above (notice that the above
proof only requires $\max_{j\in[m]}\Vert\mu_{0}^{(j)}\Vert_{2}\leq R$,
and $t_{0}$ only depends on universal constants $C_{\mathsf{Lip}}$
and $M$) to show that the ODE has a unique solution on $[t_{0},2t_{0}]$,
$[2t_{0},3t_{0}]$, and so on. This shows the existence and uniqueness
of the solution to the ODE system  \eqref{eq:FR-ODE}. 

Finally we check that $(\rho_{t})_{t\geq0}$ defined as $\rho_{t}\coloneqq\sum_{l=1}^{m}m^{-1}\delta_{\mu_{t}^{(l)}}$
solves  \eqref{eq:Fisher-Rao-GF}. For any test function $\varphi(x)\in C_{\mathrm{c}}^{\infty}$,
we have
\begin{align*}
\frac{\mathrm{d}}{\mathrm{d}t}\int_{\mathbb{R}^{d}}\varphi\left(x\right)\rho_{t}\left(\mathrm{d}x\right) & =\frac{\mathrm{d}}{\mathrm{d}t}\left[\frac{1}{m}\sum_{j=1}^{m}\varphi\left(\mu_{t}^{(j)}\right)\right]=\frac{1}{m}\sum_{j=1}^{m}\left\langle \nabla\varphi\left(\mu_{t}^{(j)}\right),\dot{\mu}_{t}^{(j)}\right\rangle \\
 & =\frac{1}{m}\sum_{j=1}^{m}\left\langle \nabla\varphi\left(\mu_{t}^{(j)}\right),-\nabla\delta\ell_{N}\left(\rho_{t}\right)\left(\mu_{t}^{(j)}\right)\right\rangle \\
 & =-\int_{\mathbb{R}^{d}}\left\langle \nabla\varphi\left(x\right),\nabla\delta\ell_{N}\left(\rho_{t}\right)\right\rangle \rho_{t}\left(\mathrm{d}x\right).
\end{align*}
This proves that 
\[
\partial_{t}\rho_{t}=\mathsf{div}\left(\rho_{t}\nabla\delta\ell\left(\rho_{t}\right)\right)
\]
holds in the sense of distributions.

\subsection{Wasserstein-Fisher-Rao gradient flow (Proof of Theorem \ref{thm:WFR-ODE})
\label{sec:proof-WFR-ODE}}

In this section we will show that the ODE system  \eqref{eq:WFR-ODE}
\begin{align*}
\dot{\mu}_{t}^{(j)} & =\frac{1}{N}\sum_{i=1}^{N}\frac{\phi\left(X_{i}-\mu_{t}^{(j)}\right)}{m^{-1}\sum_{l=1}^{m}\phi\left(X_{i}-\mu_{t}^{(l)}\right)}\left(X_{i}-\mu_{t}^{(j)}\right),\\
\dot{\omega}_{t}^{(j)} & =\left[\frac{1}{N}\sum_{i=1}^{N}\frac{\phi\left(X_{i}-\mu_{t}^{(j)}\right)}{\sum_{l=1}^{m}\omega_{t}^{(j)}\phi\left(X_{i}-\mu_{t}^{(l)}\right)}-1\right]\omega_{t}^{(j)},
\end{align*}
has unique solution, and $(\rho_{t})_{t\geq0}$ where $\rho_{t}\coloneqq\sum_{l=1}^{m}\omega_{t}^{(l)}\delta_{\mu_{t}^{(l)}}$
is a Wasserstein-Fisher-Rao gradient flow in the sense of  \eqref{eq:WFR-GF}.
We will integrate the proof techniques used in the previous two sections.

We will again use Picard-Lindel\"of theorem to prove existence and
uniqueness of the solution. For each $j\in[m]$, define two functions
$f^{(j)}:\Delta^{m-1}\times\mathbb{R}^{md}\to\mathbb{R}^{d}$ and
$g^{(j)}:\Delta^{m-1}\times\mathbb{R}^{md}\to\mathbb{R}$ as
\begin{align*}
f^{(j)}\left(y,z\right) & =\frac{1}{N}\sum_{i=1}^{N}\frac{\phi\left(X_{i}-z_{j}\right)}{\sum_{l=1}^{m}y_{l}\phi\left(X_{i}-z_{l}\right)}\left(X_{i}-z_{j}\right),\\
g^{(j)}\left(y,z\right) & =-\left[1-\frac{1}{N}\sum_{i=1}^{N}\frac{\phi\left(X_{i}-z_{j}\right)}{\sum_{l=1}^{m}y_{l}\phi\left(X_{i}-z_{l}\right)}\right]y_{j},
\end{align*}
where $y=[y_{j}]_{1\leq j\leq m}\in\Delta^{m-1}$, $z=[z_{j}]_{1\leq j\leq m}\in\mathbb{R}^{md}$
with $z_{1},\ldots,z_{m}\in\mathbb{R}^{d}$. Let
\[
f\left(y,z\right)\coloneqq\left[\begin{array}{c}
f^{(1)}\left(y,z\right)\\
\vdots\\
f^{(m)}\left(y,z\right)
\end{array}\right],\quad g\left(y,z\right)\coloneqq\left[\begin{array}{c}
g^{(1)}\left(y,z\right)\\
\vdots\\
g^{(m)}\left(y,z\right)
\end{array}\right],\quad\text{and}\quad h\left(y,z\right)=\left[\begin{array}{c}
f\left(y,z\right)\\
g\left(y,z\right)
\end{array}\right].
\]
Then we can write the ODE system as
\[
\left[\begin{array}{c}
\dot{\mu}_{t}\\
\dot{\omega}_{t}
\end{array}\right]=h\left(\left[\begin{array}{c}
\mu_{t}\\
\omega_{t}
\end{array}\right]\right),
\]
where $\mu_{t}=[\mu_{t}^{(j)}]_{1\leq j\leq m}$ and $\omega_{t}=[\omega_{t}^{(j)}]_{1\leq j\leq m}$.
Denote by $f^{(j)}(z)=[f_{k}^{(j)}(z)]_{1\leq k\leq d}$. 

For any $z\in\mathbb{R}^{md}$ satisfying $\max_{j\in[m]}\Vert z_{j}\Vert_{2}\leq2R$
and any $y\in\mathbb{R}^{m}$ satisfying $\Vert y-\omega_{0}\Vert_{2}\leq\varepsilon$
where
\[
R\coloneqq\max_{1\leq i\leq N}\left\Vert X_{i}\right\Vert _{2},\qquad\varepsilon\coloneqq\frac{\left(2\pi\right)^{d/2}}{2m}\exp\left(-\frac{9}{2}R^{2}\right),
\]
we have for any $i\in[N]$
\begin{align}
\sum_{l=1}^{m}y_{l}\phi\left(X_{i}-z_{l}\right) & \geq\sum_{l=1}^{m}\omega_{0}^{(l)}\phi\left(X_{i}-z_{l}\right)-\sum_{l=1}^{m}\left(\omega_{0}^{(l)}-y_{l}\right)\phi\left(X_{i}-z_{l}\right)\nonumber \\
 & \overset{\text{(i)}}{\geq}\min_{i\in\left[N\right],l\in\left[m\right]}\phi\left(X_{i}-z_{l}\right)-\left\Vert y-\omega_{0}\right\Vert _{2}\sum_{l=1}^{m}\phi^{2}\left(X_{i}-\mu^{(l)}\right)\nonumber \\
 & \overset{\text{(ii)}}{\geq}\frac{1}{\left(2\pi\right)^{d/2}}\exp\left(-\frac{9}{2}R^{2}\right)-\frac{\varepsilon m}{\left(2\pi\right)^{d}}=\frac{1}{2\left(2\pi\right)^{d/2}}\exp\left(-\frac{9}{2}R^{2}\right)\triangleq\delta.\label{eq:proof-WFR-ODE-1}
\end{align}
Here (i) follows from $\omega_{0}\in\Delta^{m-1}$ and the Cauchy-Schwarz
inequality, while (ii) and (iii) holds since $\Vert X_{i}-z_{l}\Vert_{2}\leq3R$
for any $i\in[N]$ and $l\in[m]$. For any $j\in[m]$, denote by $f^{(j)}(z)=[f_{k}^{(j)}(z)]_{1\leq k\leq d}$.
Similar to the proof of  \eqref{eq:proof-Wass-ODE-2}, we can use  \eqref{eq:proof-WFR-ODE-1}
to show that 
\[
\left\Vert \nabla_{z}f_{k}^{(j)}\left(y,z\right)\right\Vert _{2}\leq\sqrt{\left(m+1\right)\left(\frac{9m^{-1}}{\delta^{2}\left(2\pi\right)^{d}}R^{2}\right)^{2}+2\frac{1}{\delta^{2}\left(2\pi\right)^{d}}}.
\]
We also have
\begin{align*}
\nabla_{y}f_{k}^{(j)}\left(y,z\right) & =\nabla_{y}\frac{1}{N}\sum_{i=1}^{N}\frac{\phi\left(X_{i}-z_{j}\right)}{\sum_{l=1}^{m}y_{l}\phi\left(X_{i}-z_{l}\right)}e_{k}^{\top}\left(X_{i}-z_{j}\right)\\
 & =-\frac{1}{N}\sum_{i=1}^{N}\frac{\phi\left(X_{i}-z_{j}\right)}{\left[\sum_{l=1}^{m}y_{l}\phi\left(X_{i}-z_{l}\right)\right]^{2}}e_{k}^{\top}\left(X_{i}-z_{j}\right)\left[\begin{array}{c}
\phi\left(X_{i}-z_{1}\right)\\
\vdots\\
\phi\left(X_{i}-z_{m}\right)
\end{array}\right],
\end{align*}
and as a result
\[
\left\Vert \nabla_{y}f_{k}^{(j)}\left(y,z\right)\right\Vert _{2}\leq\frac{\sqrt{m}\left\Vert \phi\right\Vert _{\infty}^{2}}{\delta^{2}}\max_{i\in[N],j\in[m]}\left\Vert X_{i}-z_{j}\right\Vert _{2}\leq\frac{3\sqrt{m}R}{\delta^{2}\left(2\pi\right)^{d}}.
\]
In addition, we can compute
\begin{align*}
\nabla_{y}g^{(j)}\left(y,z\right) & =-\nabla_{y}\left[\left(1-\frac{1}{N}\sum_{i=1}^{N}\frac{\phi\left(X_{i}-z_{j}\right)}{\sum_{l=1}^{m}y_{l}\phi\left(X_{i}-z_{l}\right)}\right)y_{j}\right]\\
 & =-\left[1-\frac{1}{N}\sum_{i=1}^{N}\frac{\phi\left(X_{i}-z_{j}\right)}{\sum_{l=1}^{m}y_{l}\phi\left(X_{i}-z_{l}\right)}\right]e_{j}+\frac{1}{N}\sum_{i=1}^{N}\frac{y_{j}\phi\left(X_{i}-z_{j}\right)}{\left[\sum_{l=1}^{m}y_{l}\phi\left(X_{i}-z_{l}\right)\right]^{2}}\left[\begin{array}{c}
\phi\left(X_{i}-z_{1}\right)\\
\vdots\\
\phi\left(X_{i}-z_{m}\right)
\end{array}\right]
\end{align*}
and for each $l\in[m]$
\begin{align*}
\nabla_{z_{l}}g^{(j)}\left(y,z\right) & =-\nabla_{z_{l}}\left[\left(1-\frac{1}{N}\sum_{i=1}^{N}\frac{\phi\left(X_{i}-z_{j}\right)}{\sum_{l=1}^{m}y_{l}\phi\left(X_{i}-z_{l}\right)}\right)y_{j}\right]\\
 & =y_{j}\frac{1}{N}\sum_{i=1}^{N}\left[\frac{\nabla_{z_{l}}\phi\left(X_{i}-z_{j}\right)}{\sum_{l=1}^{m}y_{l}\phi\left(X_{i}-z_{l}\right)}-\frac{\phi\left(X_{i}-z_{j}\right)y_{l}\nabla_{z_{l}}\phi\left(X_{i}-z_{l}\right)}{\left[\sum_{l=1}^{m}y_{l}\phi\left(X_{i}-z_{l}\right)\right]^{2}}\right]\\
 & =\frac{1}{N}\sum_{i=1}^{N}\left[\ind\left\{ l=j\right\} \frac{y_{j}\phi\left(X_{i}-z_{j}\right)}{\sum_{l=1}^{m}y_{l}\phi\left(X_{i}-z_{l}\right)}\left(X_{i}-z_{j}\right)-\frac{y_{j}\phi\left(X_{i}-z_{j}\right)y_{l}\phi\left(X_{i}-z_{l}\right)}{\left[\sum_{l=1}^{m}y_{l}\phi\left(X_{i}-z_{l}\right)\right]^{2}}\left(X_{i}-z_{l}\right)\right].
\end{align*}
As a result, we have
\begin{align*}
\left\Vert \nabla_{y}g^{(j)}\left(y,z\right)\right\Vert _{2} & \leq\left|1-\frac{1}{N}\sum_{i=1}^{N}\frac{\phi\left(X_{i}-z_{j}\right)}{\sum_{l=1}^{m}y_{l}\phi\left(X_{i}-z_{l}\right)}\right|+\frac{1}{N}\sum_{i=1}^{N}\left|\frac{y_{j}\phi\left(X_{i}-z_{j}\right)}{\left[\sum_{l=1}^{m}y_{l}\phi\left(X_{i}-z_{l}\right)\right]^{2}}\right|\sqrt{m}\left\Vert \phi\right\Vert _{\infty}\\
 & \leq1+\frac{\left\Vert \phi\right\Vert _{\infty}}{\delta}+\frac{\left(1+\varepsilon\right)\sqrt{m}}{\delta^{2}}\left\Vert \phi\right\Vert _{\infty}^{2}=1+\frac{1}{\left(2\pi\right)^{d/2}\delta}+\frac{\left(1+\varepsilon\right)\sqrt{m}}{\left(2\pi\right)^{d}\delta^{2}}
\end{align*}
and
\begin{align*}
\left\Vert \nabla_{z_{l}}g^{(j)}\left(y,z\right)\right\Vert _{2} & \leq\frac{1}{N}\sum_{i=1}^{N}\frac{\left(1+\varepsilon\right)\left\Vert \phi\right\Vert _{\infty}}{\delta}\left\Vert X_{i}-z_{j}\right\Vert _{2}+\frac{1}{N}\sum_{i=1}^{N}\frac{\left(1+\varepsilon\right)^{2}\left\Vert \phi\right\Vert _{\infty}^{2}}{\delta^{2}}\left\Vert X_{i}-z_{l}\right\Vert _{2}\\
 & \leq\frac{3\left(1+\varepsilon\right)R}{\left(2\pi\right)^{d/2}\delta}+\frac{3\left(1+\varepsilon\right)^{2}R\left\Vert \phi\right\Vert _{\infty}^{2}}{\left(2\pi\right)^{d}\delta^{2}}.
\end{align*}
 Therefore for any $k\in[d]$, $j\in[m]$, $z\in\mathbb{R}^{md}$
satisfying $\max_{j\in[m]}\Vert z_{j}\Vert_{2}\leq2R$ and $y\in\mathbb{R}^{m}$
such that $\Vert y-\omega_{0}\Vert_{2}\leq\varepsilon$, we have
\begin{align*}
\left\Vert \nabla f_{k}^{(j)}\left(y,z\right)\right\Vert _{2} & =\sqrt{\left\Vert \nabla_{z}f_{k}^{(j)}\left(y,z\right)\right\Vert _{2}^{2}+\left\Vert \nabla_{y}f_{k}^{(j)}\left(y,z\right)\right\Vert _{2}^{2}}\\
 & \leq\sqrt{\left(m+1\right)\left(\frac{9m^{-1}}{\delta^{2}\left(2\pi\right)^{d}}R^{2}\right)^{2}+\frac{2}{\delta^{2}\left(2\pi\right)^{d}}+\left(\frac{3\sqrt{m}R}{\delta^{2}\left(2\pi\right)^{d}}\right)^{2}}\triangleq C_{\mathsf{lip},f},
\end{align*}
which suggests that $f^{(j)}(y,z)$ is $\sqrt{d}C_{\mathsf{lip},f}$-Lipschitz
continuous, and 
\begin{align*}
\left\Vert \nabla g^{(j)}\left(y,z\right)\right\Vert _{2} & =\sqrt{\left\Vert \nabla_{y}g^{(j)}\left(y,z\right)\right\Vert _{2}^{2}+\sum_{l=1}^{m}\left\Vert \nabla_{z_{l}}g^{(j)}\left(y,z\right)\right\Vert _{2}^{2}}\\
 & \leq\sqrt{\left(1+\frac{1}{\left(2\pi\right)^{d/2}\delta}+\frac{\left(1+\varepsilon\right)\sqrt{m}}{\left(2\pi\right)^{d}\delta^{2}}\right)^{2}+m\left(\frac{3\left(1+\varepsilon\right)R}{\left(2\pi\right)^{d/2}\delta}+\frac{3\left(1+\varepsilon\right)^{2}R\left\Vert \phi\right\Vert _{\infty}^{2}}{\left(2\pi\right)^{d}\delta^{2}}\right)^{2}}\triangleq C_{\mathsf{lip},g}.
\end{align*}
which suggests that $g^{(j)}(y,z)$ is $C_{\mathsf{lip},g}$-Lipschitz
continuous. This allows us to conclude that $h(y,z)$ is $C_{\mathsf{Lip}}$-continuous
in $\{(y,z):\Vert y-\omega_{0}\Vert_{2}\leq\varepsilon,\max_{j\in[m]}\Vert z_{j}\Vert_{2}\leq2R\}$,
where $C_{\mathsf{Lip}}=\sqrt{mC_{\mathsf{lip},g}^{2}+mdC_{\mathsf{lip},f}^{2}}$.
In addition, it is easy to check that for any $z\in\mathbb{R}^{md}$
satisfying $\max_{j\in[m]}\Vert z_{j}\Vert_{2}\leq2R$ and any $y\in\mathbb{R}^{m}$
satisfying $\Vert y-\omega_{0}\Vert_{2}\leq\varepsilon$, 
\[
\max_{j\in[m]}\left\Vert f^{(j)}\left(y,z\right)\right\Vert _{2}\leq\frac{3R}{\delta\left(2\pi\right)^{d/2}},\quad\max_{j\in[m]}\left|g^{(j)}\left(y,z\right)\right|\leq\left(1+\frac{1}{\delta\left(2\pi\right)^{d/2}}\right)\left(1+\varepsilon\right)
\]
and therefore
\[
\left\Vert h\left(y,z\right)\right\Vert _{2}\leq\sqrt{m\left(\frac{3R}{\delta\left(2\pi\right)^{d/2}}\right)^{2}+m\left[\left(1+\frac{1}{\delta\left(2\pi\right)^{d/2}}\right)\left(1+\varepsilon\right)\right]^{2}}\triangleq M.
\]

Recall that $\mu_{0}^{(1)},\ldots,\mu_{0}^{(m)}$ are i.i.d.~sampled
from $\mathsf{Uniform}(\{X_{i}\}_{1\leq i\leq N})$, therefore
\[
\left(\omega_{0},\mu_{0}\right)\in(y,z):\left\{ \left(y,z\right):\Vert y-\omega_{0}\Vert_{2}\leq\varepsilon,\max_{j\in[m]}\Vert z_{j}\Vert_{2}\leq2R\right\} 
\]
where $\mu_{0}=[\mu_{0}^{(j)}]_{1\leq j\leq m}$. We are ready to
apply Picard-Lindel\"of theorem to show that there exists $t_{0}>0$,
only depending on $C_{\mathsf{Lip}}$ and $M$, such that the ODE
has a unique solution on the time interval $[0,t_{0}]$. We can use
the same argument in the proof of Theorem \ref{thm:FR-ODE} in Appendix
\ref{sec:proof-FR-ODE} to show that $\omega_{t}\in\Delta^{m-1}$
for all $t\in[0,t_{0}]$, and can use the same argument in the proof
of Theorem \ref{thm:Wass-ODE} in Appendix \ref{sec:proof-Wass-ODE}
to show that $\max_{j\in[m]}\Vert\mu_{t}^{(j)}\Vert_{2}\leq R$ for
all $t\in[0,t_{0}]$. Then we can repeat the same analysis as above
(notice that the above proof only requires $\omega_{0}\in\Delta^{m-1}$
and $\max_{j\in[m]}\Vert\mu_{0}^{(j)}\Vert_{2}\leq R$, and $t_{0}$
only depends on universal constants $C_{\mathsf{Lip}}$ and $M$)
to show that the ODE has a unique solution on $[t_{0},2t_{0}]$, $[2t_{0},3t_{0}]$,
and so on. This shows the existence and uniqueness of the solution
to the ODE system  \eqref{eq:WFR-ODE}. 

Finally we check that $(\rho_{t})_{t\geq0}$ defined as $\rho_{t}\coloneqq\sum_{l=1}^{m}\omega_{t}^{(l)}\delta_{\mu_{t}^{(l)}}$
solves  \eqref{eq:Fisher-Rao-GF}. Note that $\rho_{t}$ is a probability
measure since we have shown that $\omega_{t}\in\Delta^{m-1}$ for
any $t\geq0$. For any test function $\varphi(x)\in C_{\mathrm{c}}^{\infty}$,
we have
\begin{align*}
\frac{\mathrm{d}}{\mathrm{d}t}\int_{\mathbb{R}^{d}}\varphi\left(x\right)\rho_{t}\left(\mathrm{d}x\right) & =\frac{\mathrm{d}}{\mathrm{d}t}\left[\sum_{j=1}^{m}\omega_{t}^{(j)}\varphi\left(\mu_{t}^{(j)}\right)\right]=\sum_{j=1}^{m}\left[\dot{\omega}_{t}^{(j)}\varphi\left(\mu_{t}^{(j)}\right)+\omega_{t}^{(j)}\left\langle \nabla\varphi\left(\mu_{t}^{(j)}\right),\dot{\mu}_{t}^{(j)}\right\rangle \right]\\
 & =-\sum_{j=1}^{m}\left[1+\delta\ell_{N}\left(\rho_{t}\right)\left(\mu_{t}^{(j)}\right)\right]\omega_{t}^{(j)}\varphi\left(\mu_{t}^{(j)}\right)+\sum_{j=1}^{m}\omega_{t}^{(j)}\left\langle \nabla\varphi\left(\mu_{t}^{(j)}\right),-\nabla\delta\ell_{N}\left(\rho_{t}\right)\left(\mu_{t}^{(j)}\right)\right\rangle \\
 & =-\int_{\mathbb{R}^{d}}\left[1+\delta\ell_{N}\left(\rho_{t}\right)\left(x\right)\right]\varphi\left(x\right)\rho_{t}\left(\mathrm{d}x\right)-\int_{\mathbb{R}^{d}}\left\langle \nabla\varphi\left(x\right),\nabla\delta\ell_{N}\left(\rho_{t}\right)\right\rangle \rho_{t}\left(\mathrm{d}x\right).
\end{align*}
This proves that 
\[
\partial_{t}\rho_{t}=\mathsf{div}\left(\rho_{t}\nabla\delta\ell\left(\rho_{t}\right)\right)-\left[\delta\ell\left(\rho_{t}\right)+1\right]\rho_{t}
\]
holds in the sense of distributions.
 
\section{Properties of Wasserstein gradient flow \label{sec:Wass-theory}}

In this section, we present some preliminary results on Wasserstein
gradient flow for learning Gaussian mixtures. We also discuss the
implications of these results, as well as the technical difficulty
of obtaining more general results.

We first establish the connection between the Wasserstein gradient
flow and the classical gradient flow in the Euclidean space. Suppose
we fit the data $\{X_{i}\}_{1\leq i\leq N}$ using a $m$-component
Gaussian mixture model
\[
\frac{1}{m}\sum_{j=1}^{m}\mathcal{N}\left(\mu^{(j)},I_{d}\right),
\]
where $\{\mu^{(j)}\}_{1\leq j\leq m}$ is the location of the $m$
Gaussian components. The negative likelihood function is
\begin{equation}
\ell_{N,m}\left(\mu^{(1)},\ldots,\mu^{(m)}\right)\coloneqq-\frac{1}{N}\sum_{i=1}^{N}\log\left[\frac{1}{m}\sum_{j=1}^{m}\phi\left(X_{i}-\mu^{(j)}\right)\right].\label{eq:finite-MLE}
\end{equation}
The gradient flow for minimizing (\ref{eq:finite-MLE}), denoted by
$(\mu_{t})_{t\geq0}$ where $\mu_{t}=[\mu_{t}^{(j)}]_{1\leq j\leq m}$,
is given by the following ODE system
\begin{equation}
\dot{\mu}_{t}^{(j)}=-\nabla_{\mu^{(j)}}\ell_{N,m}\left(\mu_{t}^{(1)},\ldots,\mu_{t}^{(m)}\right)\label{eq:Euclidean-GF}
\end{equation}
with initialization $\mu_{0}^{(1)},\ldots,\mu_{0}^{(m)}\overset{i.i.d.}{\sim}\mathsf{Uniform}(\{X_{i}\}_{1\leq i\leq N})$.
The following theorem shows that the gradient flow (\ref{eq:Euclidean-GF})
captures the evolution of the location of particles in the Wasserstein
gradient flow (\ref{eq:Wass-GF}) initialized from a discrete distribution
$\frac{1}{m}\sum_{l=1}^{m}\delta_{\mu_{0}^{(l)}}$. The proof is deferred
to Appendix \ref{subsec:proof-thm-Wass-Euclidean-connection}.

\begin{theorem} \label{thm:Wass-Euclidean-connection} Consider the Euclidean gradient flow $(\mu_{t})_{t\geq0}$ in (\ref{eq:Euclidean-GF}).
Then the flow $(\rho_{t})_{t\geq0}$ defined as
\begin{equation}
\rho_{t}\coloneqq\frac{1}{m}\sum_{l=1}^{m}\delta_{\mu_{t}^{(l)}}\label{eq:distributional-Euclidean-GF}
\end{equation}
is the Wasserstein gradient flow, i.e.~(\ref{eq:distributional-Euclidean-GF})
is a distributional solution to the PDE (\ref{eq:Wass-GF}).

\end{theorem}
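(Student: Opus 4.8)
The plan is to reduce the statement to the already-established Theorem~\ref{thm:Wass-ODE} by showing that the Euclidean gradient flow~\eqref{eq:Euclidean-GF} on the location parameters and the particle Wasserstein gradient flow~\eqref{eq:Wass-ODE} are driven by proportional vector fields, hence (after accounting for the mean-field normalization) the same autonomous ODE system. Both flows start from the same initialization $\mu_0^{(1)},\ldots,\mu_0^{(m)}\overset{\mathrm{i.i.d.}}{\sim}\mathsf{Uniform}(\{X_i\}_{1\le i\le N})$, so once the two driving fields are matched the uniqueness of the ODE solution (proved in Appendix~\ref{sec:proof-Wass-ODE} via the Picard--Lindel\"of theorem) forces the two trajectories to coincide, and the distributional-solution conclusion of Theorem~\ref{thm:Wass-ODE} transfers to $\rho_t=\frac1m\sum_{l=1}^m\delta_{\mu_t^{(l)}}$.

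The crux is a single gradient computation. Differentiating $\ell_{N,m}$ from~\eqref{eq:finite-MLE} in $\mu^{(j)}$, only the $l=j$ summand of the inner sum contributes, and the chain rule $\nabla_{\mu^{(j)}}\phi(X_i-\mu^{(j)})=-(\nabla\phi)(X_i-\mu^{(j)})$ gives
\[
\nabla_{\mu^{(j)}}\ell_{N,m}\bigl(\mu^{(1)},\ldots,\mu^{(m)}\bigr)=\frac1N\sum_{i=1}^N\frac{(\nabla\phi)(X_i-\mu^{(j)})}{\sum_{l=1}^m\phi(X_i-\mu^{(l)})}.
\]
I would then compare this with the velocity field of~\eqref{eq:Wass-ODE}. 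From the first variation~\eqref{eq:first-variation} one has $\nabla\delta\ell_N(\rho)(x)=\frac1N\sum_i(\nabla\phi)(X_i-x)/(\rho*\phi)(X_i)$, and since $(\rho_t*\phi)(X_i)=\frac1m\sum_l\phi(X_i-\mu_t^{(l)})$ for the empirical measure $\rho_t$, the display above is a positive scalar multiple of the Wasserstein velocity $-\nabla\delta\ell_N(\rho_t)(\mu_t^{(j)})$; for the Gaussian kernel the identity $(\nabla\phi)(u)=-u\,\phi(u)$ turns the numerator into $(X_i-\mu_t^{(j)})\phi(X_i-\mu_t^{(j)})$ and recovers the explicit form of~\eqref{eq:Wass-ODE}. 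This exhibits~\eqref{eq:Euclidean-GF} as the particle Wasserstein system.

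Having matched the fields, I would conclude by invoking the uniqueness part of Theorem~\ref{thm:Wass-ODE}: the two flows solve the same system from the same initial data, so they agree, and Theorem~\ref{thm:Wass-ODE} then gives that $\rho_t$ is a distributional solution of~\eqref{eq:Wass-GF}. A self-contained alternative tests $\rho_t$ against $\varphi\in C_{\mathrm{c}}^{\infty}(\mathbb{R}^d)$ via $\frac{\mathrm d}{\mathrm dt}\int\varphi\,\mathrm d\rho_t=\frac1m\sum_j\langle\nabla\varphi(\mu_t^{(j)}),\dot\mu_t^{(j)}\rangle$ and substitutes $\dot\mu_t^{(j)}=-\nabla\delta\ell_N(\rho_t)(\mu_t^{(j)})$ to reproduce the weak form of~\eqref{eq:Wass-GF} directly. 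The only delicate point—and where I expect the bookkeeping to require genuine care—is the relation between the \emph{Euclidean} gradient $\nabla_{\mu^{(j)}}\ell_{N,m}$ and the \emph{first-variation} gradient $\nabla\delta\ell_N(\rho_t)$: because $\rho_t$ places uniform mass $1/m$ on each atom, the two differ by the normalization factor $m$, namely $\nabla_{\mu^{(j)}}\ell_{N,m}=\frac1m\nabla\delta\ell_N(\rho_t)(\mu_t^{(j)})$, so the correspondence between~\eqref{eq:Euclidean-GF} and~\eqref{eq:Wass-ODE} holds up to the global time reparametrization absorbing this factor. This normalization, together with the sign from differentiating $\phi(X_i-\mu^{(j)})$ in its second argument, must be tracked consistently; once it is, the conclusion follows from Theorem~\ref{thm:Wass-ODE}.
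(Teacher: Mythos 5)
Your overall route is the same as the paper's: compute $\nabla_{\mu^{(j)}}\ell_{N,m}$, identify the resulting ODE with the particle system of Theorem~\ref{thm:Wass-ODE}, and let that theorem supply both well-posedness and the distributional-solution conclusion; the ``self-contained alternative'' you sketch (testing against $\varphi\in C_{\mathrm{c}}^{\infty}(\mathbb{R}^{d})$) is precisely how Theorem~\ref{thm:Wass-ODE} is proved in Appendix~\ref{sec:proof-Wass-ODE}. Your gradient computation is correct, modulo one slip of prose: the displayed gradient $\nabla_{\mu^{(j)}}\ell_{N,m}$ is a \emph{negative} scalar multiple of the Wasserstein velocity $-\nabla\delta\ell_{N}(\rho_{t})(\mu_{t}^{(j)})$; it is the negative gradient, i.e., the right-hand side of \eqref{eq:Euclidean-GF}, that is the positive multiple.

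The factor-$m$ issue you flag at the end is real, and it is the one place where your middle paragraph overreaches. Since $(\rho_{t}*\phi)(X_{i})=\frac{1}{m}\sum_{l=1}^{m}\phi(X_{i}-\mu_{t}^{(l)})$, the relation $\nabla_{\mu^{(j)}}\ell_{N,m}=\frac{1}{m}\nabla\delta\ell_{N}(\rho_{t})(\mu_{t}^{(j)})$ that you state is exactly right, and therefore \eqref{eq:Euclidean-GF} does \emph{not} literally recover ``the explicit form of \eqref{eq:Wass-ODE}'': with the paper's normalization (the denominator in \eqref{eq:Wass-ODE}, as used in Appendix~\ref{sec:proof-Wass-ODE} and Algorithm~\ref{alg:Wasserstein-GD}, is $m^{-1}\sum_{l}\phi(X_{i}-\mu_{t}^{(l)})$, not $\sum_{l}\phi(X_{i}-\mu_{t}^{(l)})$), the two driving fields differ by the constant factor $m$. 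Consequently your uniqueness step (``two flows solving the same system from the same initial data coincide'') cannot be applied as written; what the weak-form computation actually yields for the Euclidean flow is $\partial_{t}\rho_{t}=\frac{1}{m}\mathsf{div}(\rho_{t}\nabla\delta\ell_{N}(\rho_{t}))$, so that \eqref{eq:Wass-GF} holds only after the time dilation $t\mapsto mt$ you mention. Notably, the paper's own proof has the same gap unflagged: it derives the Euclidean ODE with denominator $\sum_{l}\phi(X_{i}-\mu^{(l)})$ and then invokes Theorem~\ref{thm:Wass-ODE} without reconciling the missing $m^{-1}$. Your final-paragraph bookkeeping is thus the correct account, and the honest version of the argument is your direct verification with $\dot{\mu}_{t}^{(j)}=-\frac{1}{m}\nabla\delta\ell_{N}(\rho_{t})(\mu_{t}^{(j)})$ substituted, followed by the time rescaling --- equivalently, redefining the Euclidean flow as $\dot{\mu}_{t}^{(j)}=-m\nabla_{\mu^{(j)}}\ell_{N,m}$, the natural scaling since each particle carries mass $1/m$.
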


Similar connection can also be established for the gradient descent
algorithm for minimizing (\ref{eq:finite-MLE}) and the particle Wasserstein
gradient descent (cf.~Algorithm \ref{alg:Wasserstein-GD}), which
is omitted for brevity.

Then we focus on the infinite sample limit of Wasserstein gradent
flow and analyze its convergence property. The population level loss
function is 
\begin{equation}
\ell_{\infty}\left(\rho\right)=-\mathbb{E}_{X\sim\rho^{\star}*\phi}\left\{ \log\left[\rho*\phi\left(X\right)\right]\right\} =\mathsf{KL}\left(\rho^{\star}*\phi\,\Vert\,\rho*\phi\right)+\mathsf{const}.\label{eq:loss-infinite}
\end{equation}
In Appendix \ref{subsec:First-variation} we have computed that 
\begin{equation}
\delta\ell_{\infty}\left(\rho\right)=-\int_{\mathbb{R}^{d}}\frac{\rho^{\star}\ast\phi\left(y\right)}{\rho*\phi\left(y\right)}\phi\left(x-y\right)\mathrm{d}y.\label{eq:FR-l-infty}
\end{equation}
We know that the Wasserstein gradient flow $(\rho_{t})_{t\geq0}$
with respect to $\ell_{\infty}(\rho)$ is described by the following
PDE: 
\begin{equation}
\partial_{t}\rho_{t}=\mathsf{div}\left(\rho_{t}\nabla\delta\ell_{\infty}\left(\rho_{t}\right)\right)\label{eq:population-WGF}
\end{equation}
with $\rho_{0}=\rho^{\star}*\mathcal{N}(0,I_{d})$, which is the data
distribution when we have infinite samples. This Wasserstein gradient
flow has the following particle interpretation: suppose at time $t=0$
we initialize a particle $x_{0}\sim\rho_{0}$ in the vector field
$(v_{t})_{t\geq0}$ where $v_{t}=-\nabla\delta\ell_{\infty}(\rho_{t})$,
namely 
\[
\dot{x}_{t}=v_{t}\left(x_{t}\right),
\]
then $x_{t}\sim\rho_{t}$, namely the marginal distribution of $(x_{t})_{t\geq0}$
evolves according to the Wasserstein gradient flow.

The following theorem shows that, when the true mixing distribution
$\rho^{\star}$ is a singleton (we assume without loss of generality
that $\rho^{\star}=\delta_{0}$), Wasserstein gradient flow converges
to $\rho^{\star}$. The proof can be found in Appendix \ref{subsec:proof-thm-Wass-convergence}.

\begin{theorem}\label{thm:Wass-convergence}Consider the Wasserstein
gradient flow in \eqref{eq:population-WGF} with $\rho^{\star}=\delta_{0}$.
For any $\varepsilon<1$, we have $\int_{\mathbb{R}^{d}}\Vert x\Vert_{2}^{2}\rho_{t}(\mathrm{d}x)=O(\varepsilon)$
as long as 
\[
t\geq\exp\left(2d\right)\varepsilon^{-1-\max\left\{ 8,\sqrt{8d}\right\} }.
\]
\end{theorem}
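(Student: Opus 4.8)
The plan is to track the second moment $m_2(t)\eqqcolon\int_{\mathbb{R}^d}\|x\|_2^2\,\rho_t(\mathrm{d}x)$ directly and to show it obeys a closed differential inequality $\dot m_2(t)\le-\Psi(m_2(t))$ whose integration produces the stated time bound. First I would use the continuity-equation description of \eqref{eq:population-WGF}: since $\partial_t\rho_t=\mathsf{div}(\rho_t\nabla\delta\ell_\infty(\rho_t))$, testing against $\|x\|_2^2$ gives
\[
\frac{\mathrm{d}}{\mathrm{d}t}m_2(t)=-2\int_{\mathbb{R}^d}\langle x,\nabla\delta\ell_\infty(\rho_t)(x)\rangle\,\rho_t(\mathrm{d}x)\eqqcolon-2D(\rho_t).
\]
With $\rho^\star=\delta_0$ we have $\rho^\star*\phi=\phi$, so \eqref{eq:FR-l-infty} together with $\nabla\phi(z)=-z\phi(z)$ yields the explicit drift kernel $\nabla\delta\ell_\infty(\rho)(x)=\int_{\mathbb{R}^d}\frac{\phi(y)}{(\rho*\phi)(y)}(x-y)\phi(x-y)\,\mathrm{d}y$, and hence an exact expression for $D(\rho)$ in terms of the posterior measures $\rho_y(\mathrm{d}x)\propto\phi(x-y)\rho(\mathrm{d}x)$. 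Since $\rho_0=\phi$ we have $m_2(0)=d$, and the target is to reach $m_2=O(\varepsilon)$.

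Next I would record two a priori facts. The sign $D(\rho)\ge0$ (so that $m_2$ is non-increasing) follows cleanly from the scaling identity $D(\rho)=\frac{\mathrm{d}}{\mathrm{d}s}\big|_{s=0}\ell_\infty\big((e^{s}\,\mathrm{id})_{\#}\rho\big)$, obtained by inserting the dilation field $V(x)=x$ into the continuity equation and integrating by parts; since $\ell_\infty(\rho)=\mathsf{KL}(\phi\,\Vert\,\rho*\phi)+\mathrm{const}$ is minimized at $\delta_0$, dilating an already spread-out $\rho$ cannot decrease it. The second fact is that the flow dissipates energy, $\ell_\infty(\rho_t)\le\ell_\infty(\rho_0)$, which — exactly as in Lemma~\ref{lem-loss-bounds}, but at the population level — translates into a quantitative tail bound on $\rho_t$: only an exponentially small amount of mass can sit at large radius. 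This energy constraint is essential, since without it the drift could be made arbitrarily small by placing a sliver of mass infinitely far away.

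The crux, and the step I expect to be hardest, is a quantitative lower bound $D(\rho)\ge\Psi(m_2(\rho))$ valid for every $\rho$ obeying the tail constraint. The difficulty lies entirely in the factor $1/(\rho*\phi)(y)$: where $\rho$ has heavy tails this denominator is tiny, the kernel $\phi(y)\phi(x-y)$ is suppressed like $e^{-\|x\|_2^2/4}$, and the corresponding particles barely move. To make this precise I would split $\mathbb{R}^d$ into a core region near the data and a far region, lower bound the core contribution to $D$ in terms of the mass and spread there, and use the Gaussian decay of $\phi$ together with the energy-derived tail bound to control the far region. Optimizing the resulting weight-versus-radius trade-off is what produces the dimension-dependent constants: the normalizations $(2\pi)^{-d/2}$ and the tail exponents generate the factors $\exp(2d)$ and $\sqrt{8d}$, yielding a bound of the schematic form $D(\rho)\gtrsim e^{-2d}\,m_2^{\,2+\max\{8,\sqrt{8d}\}}$ in the relevant small-$m_2$ regime.

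Finally I would integrate. Writing the inequality as $\dot m_2\le-c_d\,m_2^{1+\beta}$ with $\beta=1+\max\{8,\sqrt{8d}\}$ and $c_d\asymp e^{-2d}$, the substitution $u=m_2^{-\beta}$ gives $\dot u\ge c_d\beta$, hence $m_2(t)\le(c_d\beta t)^{-1/\beta}$; forcing the right-hand side to be $O(\varepsilon)$ gives $t\gtrsim c_d^{-1}\varepsilon^{-\beta}=\exp(2d)\,\varepsilon^{-1-\max\{8,\sqrt{8d}\}}$, as claimed. The main obstacle throughout is the uniform drift lower bound: controlling the $1/(\rho*\phi)$ factor in the tails without any convexity or log-Sobolev structure (the flow is not geodesically convex, cf.\ the discussion after Theorem~\ref{thm:WFR-convergence}) forces the delicate region-splitting and is exactly what makes the provable rate pessimistic and dimension-sensitive.
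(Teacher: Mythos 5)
Your outer shell (monitor $m_2(t)=\int\Vert x\Vert_2^2\,\rho_t(\mathrm{d}x)$, close a differential inequality, integrate) agrees with the paper, but the step you yourself flag as the crux --- a uniform drift bound $D(\rho)\geq\Psi(m_2(\rho))$, schematically $D(\rho)\gtrsim e^{-2d}m_2^{2+\max\{8,\sqrt{8d}\}}$, valid over an energy sublevel set --- is genuinely false, and energy dissipation cannot repair it. Test it on $\rho_{\delta,R}=(1-\delta)\delta_0+\delta\,\delta_{Re_1}$ with $\delta R^2=m_2$ fixed and $R\to\infty$. Since $\rho_{\delta,R}*\phi\geq(1-\delta)\phi$ pointwise, $\ell_\infty(\rho_{\delta,R})-\ell_\infty(\delta_0)\leq-\log(1-\delta)\to0$, so these measures eventually lie in any energy sublevel set. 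Yet the atom at the origin contributes nothing to $D$, while the far atom feels a velocity of size at most $\frac{1}{1-\delta}\int\Vert x-y\Vert_2\,\phi(x-y)\phi(y)\,\mathrm{d}y\lesssim\mathrm{poly}(R)\,e^{-R^2/4}$ at $x=Re_1$ (using $\Vert x-y\Vert_2^2+\Vert y\Vert_2^2\geq\Vert x\Vert_2^2/2$), whence $D(\rho_{\delta,R})\lesssim \frac{m_2}{R}\,\mathrm{poly}(R)\,e^{-R^2/4}\to0$ at fixed $m_2$. So no positive $\Psi$ exists. The reason your proposed patch fails is that the population analogue of Lemma~\ref{lem-npmle-support}/Lemma~\ref{lem-loss-bounds} only \emph{lower}-bounds the mass near the data; it never upper-bounds the second-moment contribution of far-away mass, which is exactly the quantity that must be controlled. (Your dilation argument for $D\geq0$ is also only sketched --- being minimized at $\delta_0$ does not by itself give monotonicity of $\ell_\infty$ along dilations --- but that is secondary.)

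The paper's proof evades this obstruction with two structural ingredients absent from your plan. First, with $\rho^\star=\delta_0$ and $\rho_0=\mathcal{N}(0,I_d)$ the flow \eqref{eq:population-WGF} preserves spherical symmetry, and a symmetry computation upgrades your aggregate sign $D\geq0$ to an \emph{exact pointwise} inward radial velocity $v_t(x)=-c_t\phi(x)x$ with $c_t\geq\frac{m_t}{3d}\bigl(\frac{2\pi}{3}\bigr)^{d/2}$, where $m_t=\int\Vert z\Vert_2^2e^{-\Vert z\Vert_2^2/2}\rho_t(\mathrm{d}z)$. Second, pointwise inward drift makes each particle's radius non-increasing along trajectories, so the far-region second moment at time $t$ is dominated by its \emph{initial} value, and the $\chi^2(d)$ tail of the Gaussian initialization --- not any property of a generic $\rho$ at moment level $m_2$ --- is truncated at $\tau(\varepsilon)=\max\bigl\{8\log\frac{d+1}{\varepsilon},\sqrt{8d\log\frac{d+1}{\varepsilon}}\bigr\}$, giving $\mathbb{E}[\Vert x_t\Vert_2^2\phi(x_t)]\geq(2\pi)^{-d/2}e^{-(d+\tau)/2}\bigl(f(t)-\varepsilon\bigr)$ with $f(t)=\mathbb{E}\Vert x_t\Vert_2^2$. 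The resulting inequality is the $\varepsilon$-truncated quadratic $\dot f\leq-\frac{2}{3d}3^{-d/2}e^{-(d+\tau)}(f-\varepsilon)^2$, whose solution plateaus at $\varepsilon$; the exponent $\max\{8,\sqrt{8d}\}$ enters through $e^{-\tau(\varepsilon)}$ in the \emph{coefficient}, not as a dimension-dependent power of $m_2$ in a closed law $\dot m_2\leq-c_dm_2^{1+\beta}$ (which would assert a uniform polynomial rate your counterexample-vulnerable drift bound cannot deliver). To fix your proof you would need to replace the energy-based tail control with this symmetry-plus-Lagrangian-monotonicity mechanism, at which point you would essentially be reproducing the paper's argument.
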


Although Theorem \ref{thm:Wass-convergence} only focuses on the case
when $\rho^{\star}$ is a singleton, the convergence result already
provides some intuition about the behavior of Wasserstein gradient
flow in more general setting. Consider a well-separated Gaussian mixture
model with $K$ components. Assume that the mixing distribution is
$\rho^{\star}=\sum_{j=1}^{K}\omega_{j}^{\star}\delta_{\mu_{j}^{\star}}$,
and the location of each Gaussian components, $\{\mu_{j}^{\star}\}_{1\leq j\leq K}$,
are well-separated. Since the push-forward mapping $v_{t}=-\nabla\delta\ell_{\infty}(\rho_{t})$
is localized (see \ref{eq:FR-l-infty}), there exists some $T>0$
such that the Wasserstein gradient flow \eqref{eq:population-WGF}
initialized from $\rho^{\star}*\mathcal{N}(0,I_{d})$ can be approximated,
up to time $T$, by 
\[
\rho_{t}\approx\sum_{j=1}^{K}\omega_{j}^{\star}\rho_{t}^{(j)}\qquad\forall\,t\in\left[0,T\right],
\]
where for each $j\in[K]$, $\rho_{t}^{(j)}$ is the Wasserstein gradient
flow $\partial_{t}\rho_{t}^{(j)}=\mathsf{div}(\rho_{t}^{(j)}\nabla\delta\ell_{\infty}(\rho_{t}^{(j)}))$
with initialization $\rho_{0}^{(j)}=\mathcal{N}(\mu_{j}^{\star},I_{d})$.
This suggests that Wasserstein gradient flow approximately converges
to $\rho^{\star}$ since, by Theorem \ref{thm:Wass-convergence},
each $\rho_{t}^{(j)}$ converges to $\delta_{\mu_{j}^{\star}}$. However
this observation also suggests that Wasserstein gradient flow is not
robust to weight mismatch. Consider initializing the Wasserstein gradient
flow \eqref{eq:population-WGF} with $\rho_{0}=\widetilde{\rho}*\mathcal{N}(0,1)$,
where $\widetilde{\rho}=\sum_{j=1}^{K}\widetilde{\omega}_{j}\delta_{\mu_{j}^{\star}}$
is a mixing distribution with correct support $\{\mu_{j}^{\star}\}_{1\leq j\leq K}$
but wrong weights $\{\widetilde{\omega}_{j}\}_{1\leq j\leq K}\neq\{\omega_{j}^{\star}\}_{1\leq j\leq K}$.
Then we also have 
\[
\rho_{t}\approx\sum_{j=1}^{K}\widetilde{\omega}_{j}\rho_{t}^{(j)}\qquad\forall\,t\in\left[0,T\right],
\]
which shows that $\rho_{t}$ approximately converges to $\widetilde{\rho}$
instead of $\rho^{\star}$ when $0\leq t\leq T$. Note that the time
length $T$ that such approximations are valid can be arbitrarily
large as long as the separation $\min_{i\neq j}\Vert\mu_{i}^{\star}-\mu_{j}^{\star}\Vert_{2}\to\infty$.
The above discussion suggests that using the correct initial weights
are important for Wasserstein gradient flow to converge to the true
mixing distribution.

We also would like to compare the convergence rate in Theorem \ref{thm:Wass-convergence}
to a benchmark provided by the Bures-Wasserstein gradient flow. The
Bures-Wasserstein gradient flow is defined on the space of non-degenerate
Gaussian distributions on $\mathbb{R}^{d}$, denoted by $\mathsf{BW}(\mathbb{R}^{d})=\mathbb{R}^{d}\times\mathbb{S}_{++}^{d}$
(where we identify a non-degenerate Gaussian distribution $\nu=\mathcal{N}(\mu,\Sigma)$
with $(\mu,\Sigma)\in\mathbb{R}^{d}\times\mathbb{S}_{++}^{d}$) equipped
with the Wasserstein distance \eqref{eq:Wass-metric}, which has the
following closed form expression 
\[
d_{\mathsf{W}}^{2}\left(\nu_{1},\nu_{2}\right)=\left\Vert \mu_{1}-\mu_{2}\right\Vert _{2}^{2}+\mathsf{tr}\left[\Sigma_{1}+\Sigma_{2}-2\left(\Sigma_{1}^{1/2}\Sigma_{2}\Sigma_{1}^{1/2}\right)^{1/2}\right]
\]
when $\nu_{1}=\mathcal{N}(\mu_{1},\Sigma_{1})$ and $\nu_{2}=\mathcal{N}(\mu_{2},\Sigma_{2})$
are both non-degenerate Gaussians. The Bures-Wasserstein gradient
flow $(\nu_{t})_{t\geq0}$ can be viewed as the Wasserstein gradient
flow $(\rho_{t})_{t\geq0}$ constrained to lie on $\mathsf{BW}(\mathbb{R}^{d})$.
We refer interested readers to \citet{lambert2022variational,altschuler2021averaging}
for more detailed discussion. We can see from the proof of Theorem
\ref{thm:Wass-convergence} that the push-forward mapping $v_{t}(x)$
of Wasserstein gradient flow decays exponentially fast as $\Vert x\Vert_{2}\to\infty$,
this will make the Wasserstein gradient flow $(\rho_{t})_{t\geq0}$
becomes more and more heavy-tailed . However the push-forward mapping
of Bures-Wasserstein gradient flow is always linear, and the Bures-Wasserstein
gradient flow $(\nu_{t})_{t\geq0}$ is always Gaussian. For example,
in Appendix \ref{subsec:BW-1-Gaussian} we can compute the push forward
mapping explicitly for the two gradient flows at $t=0$: 
\begin{align*}
v_{0}\left(x\right) & =-\frac{1}{3}\left(\frac{4}{3}\right)^{d/2}\exp\left(-\frac{\left\Vert x\right\Vert _{2}^{2}}{6}\right)x\qquad\qquad\qquad\;\text{(Wasserstein)},\\
v_{0}\left(x\right) & =\frac{x}{4}\qquad\qquad\qquad\qquad\qquad\qquad\qquad\quad\text{(Bures-Wasserstein).}
\end{align*}
Therefore it is natural to expect that Bures-Wasserstein gradient
flow $(\nu_{t})_{t\geq0}$ initialized from $\nu_{0}=\mathcal{N}(0,I_{d})$
converges faster than Wasserstein gradient flow $(\rho_{t})_{t\geq0}$
initialized from $\rho_{0}=\delta_{0}$. In Appendix \ref{subsec:BW-1-Gaussian}
we show that the Bures-Wasserstein gradient flow $(\nu_{t}=\mathcal{N}(\mu_{t},\Sigma_{t}))_{t\geq0}$
is characterized by the following ODE: 
\begin{align*}
\mu_{t} & =0\\
\dot{\Sigma}_{t} & =-2\left(\Sigma_{t}+I_{d}\right)^{-1}\Sigma_{t}^{2}\left(\Sigma_{t}+I_{d}\right)^{-1}.
\end{align*}
We also show that $\Sigma_{t}$ is sandwiched between 
\[
\frac{1}{1+2t}I\preceq\Sigma_{t}\preceq\frac{2}{2+t}I,
\]
and as a result $\int_{\mathbb{R}^{d}}\Vert x\Vert_{2}^{2}\nu_{t}(\mathrm{d}x)=O(d/t)$.
Since Bures-Wasserstein gradient flow is not converging exponentially
fast (we can see that the convergence rate is polynomial in $t$),
we conjecture that Wasserstein gradient flow does not enjoy exponential
convergence as well.

Lastly, we numerically show in Figure \ref{fig:geodesic} that the
loss function $\ell_{\infty}(\rho)$ (cf.~ \eqref{eq:loss-infinite})
is not geodesically convex \citep{ambrosio2008gradient} even when
$\rho^{\star}=\delta_{0}$. We can also check that Polyak-\L ojasiewicz
(PL) inequality 
\[
\forall\,\rho:\quad\left\Vert \nabla_{\mathsf{W}}\ell_{\infty}\left(\rho\right)\right\Vert _{\rho}^{2}\geq C_{\mathsf{PL}}\left[\ell_{\infty}\left(\rho\right)-\ell_{\infty}\left(\rho^{\star}\right)\right]\quad\text{for some }C_{\mathsf{PL}}>0
\]
does not hold in general: consider $\rho^{\star}=\frac{1}{2}\delta_{-1}+\frac{1}{2}\delta_{1}$
and $\rho=\delta_{0}$, then it is straightforward to check that $\nabla_{\mathsf{W}}\ell_{\infty}(\rho)=0$
but $\ell_{\infty}(\rho)>\ell_{\infty}(\rho^{\star})$. Therefore
we cannot use standard proof technique (e.g.~\citet{ambrosio2008gradient}
when the loss function is geodesically convex, or \citet{chewi2020gradient}
when there is a PL inequality) to show exponential convergence for
the Wasserstein gradient flow \eqref{eq:population-WGF}.

\begin{figure}[t]
\centering

\begin{tabular}{cc}
\multicolumn{2}{c}{\qquad{}(a) constant speed geodesic $(\rho_{t})_{0\leq t\leq1}$
joining $\rho_{0}=\mathcal{N}(0,3)$ to $\rho_{1}=\mathcal{N}(0,1)$}\tabularnewline
\includegraphics[scale=0.4]{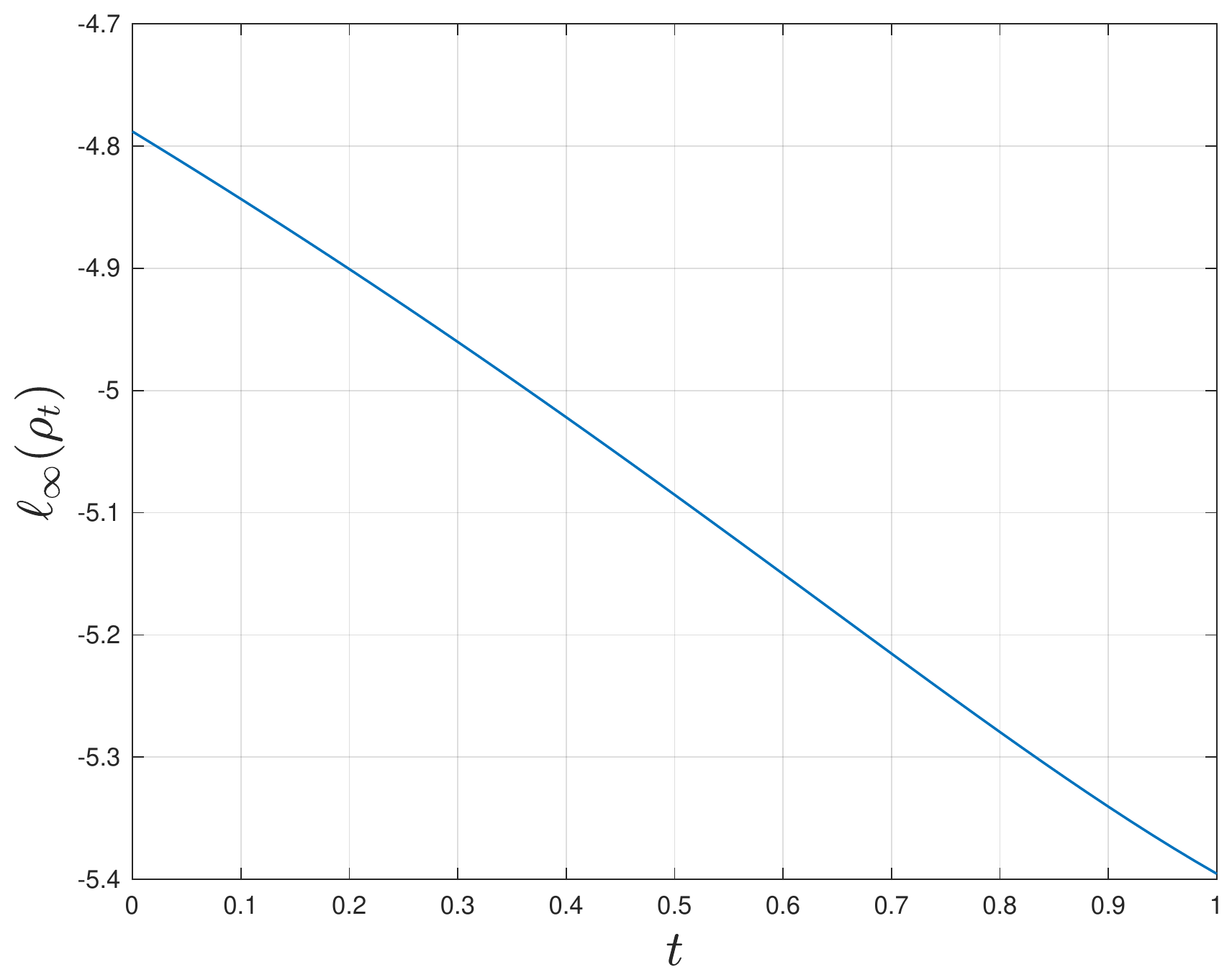}  & \includegraphics[scale=0.4]{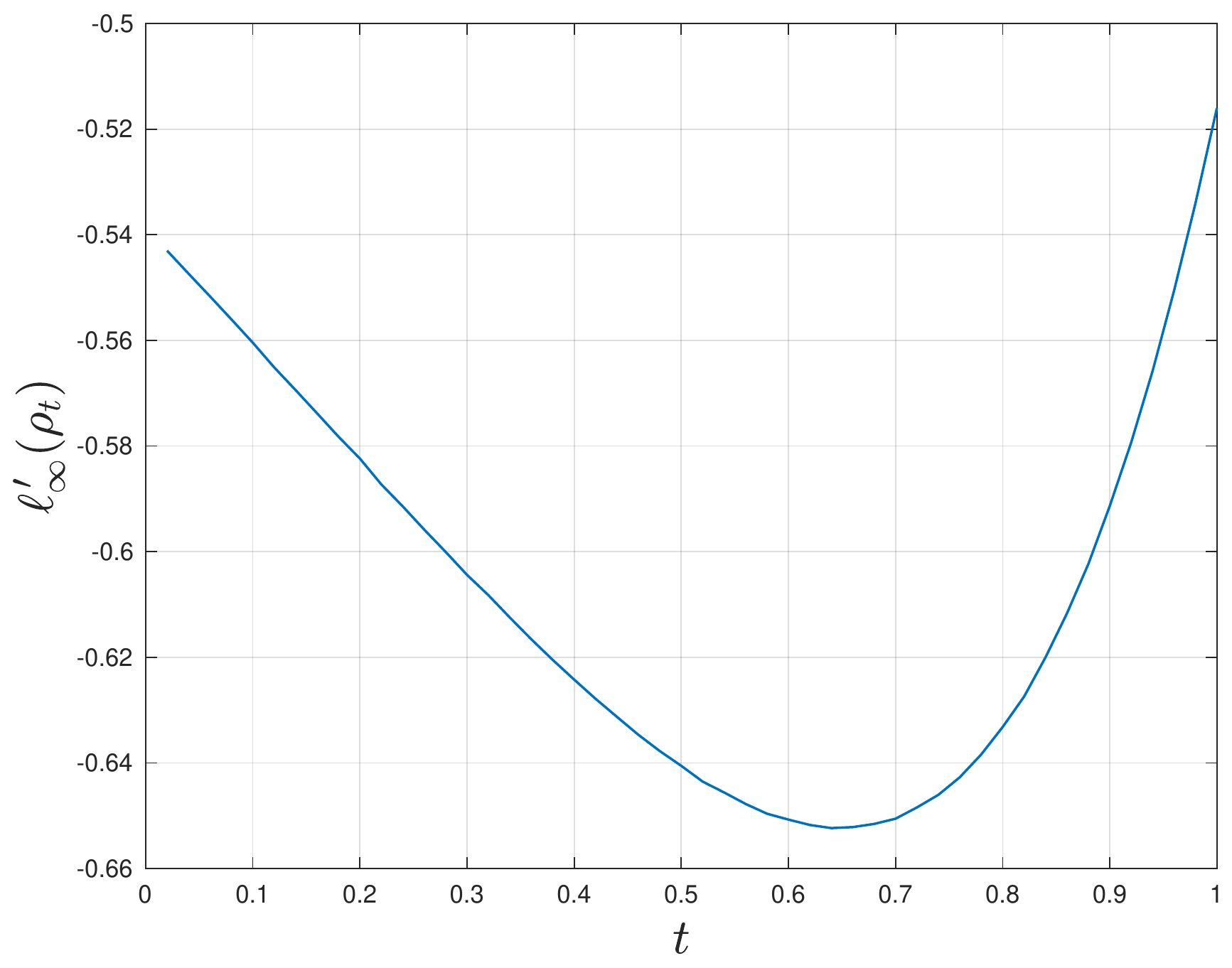}\tabularnewline
\multicolumn{2}{c}{\qquad{}(b) constant speed geodesic $(\rho_{t})_{0\leq t\leq1}$
joining $\rho_{0}=\mathcal{N}(0,1)$ to $\rho_{1}=\delta_{0}$}\tabularnewline
\includegraphics[scale=0.4]{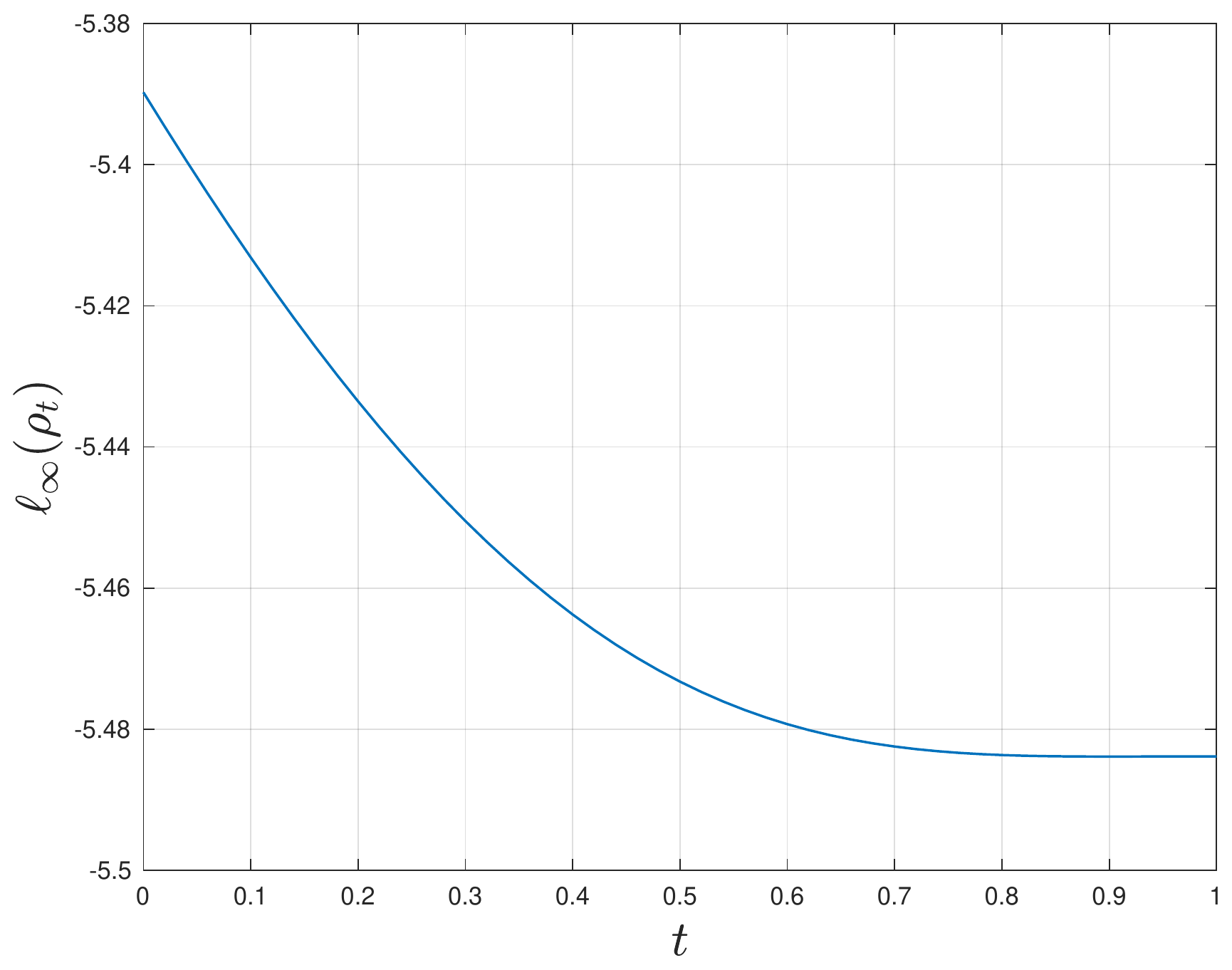}  & \includegraphics[scale=0.4]{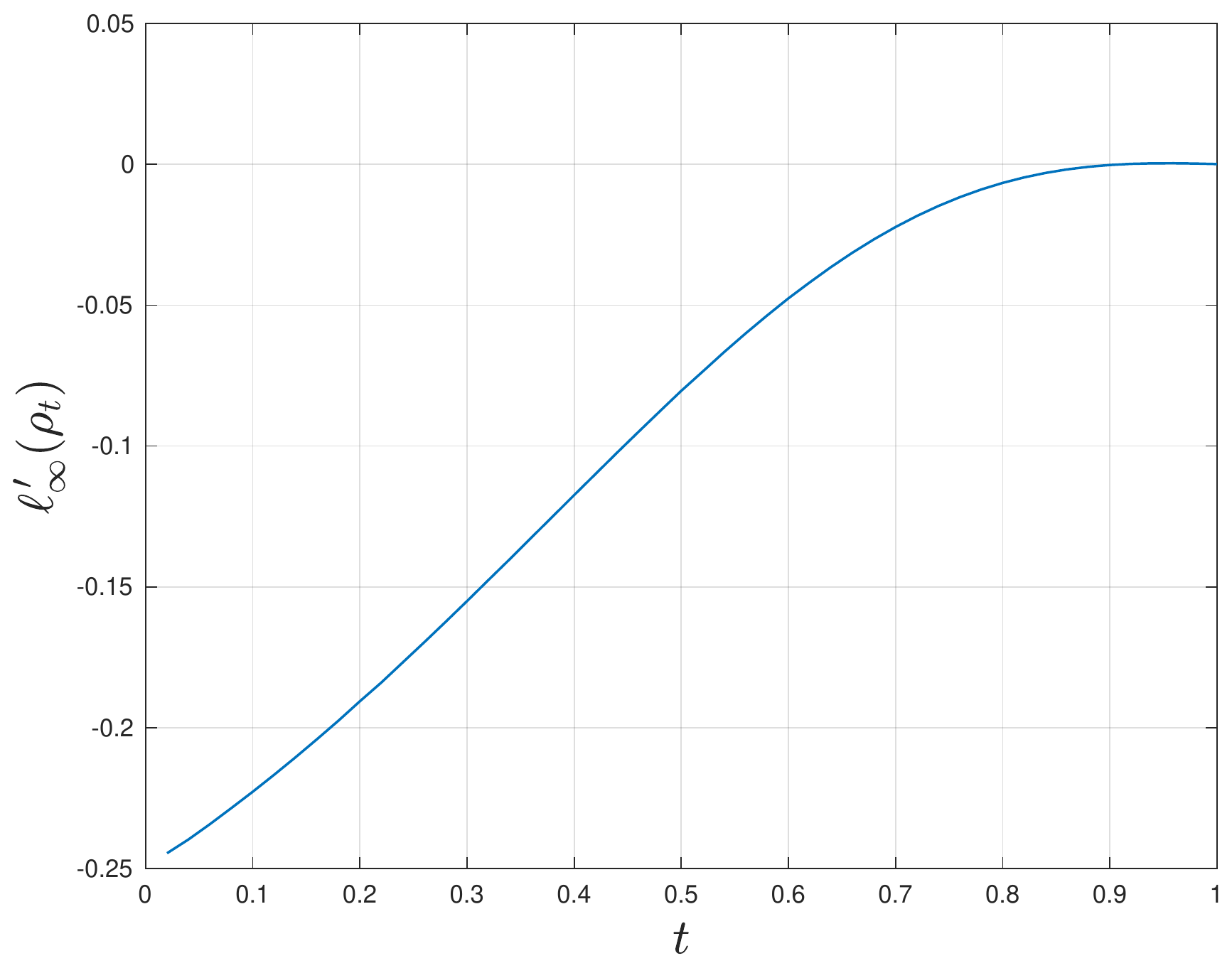}\tabularnewline
\end{tabular}

\caption{The loss function $\ell_{\infty}(\rho_{t})$ or its derivative $\ell_{\infty}'(\rho_{t})$
vs.~$t$. In Figures (a), $(\rho_{t})_{0\protect\leq t\protect\leq1}$
is the constant speed geodesic joining $\rho_{0}=\mathcal{N}(0,3)$
to $\rho_{1}=\mathcal{N}(0,1)$. In Figures (b), $(\rho_{t})_{0\protect\leq t\protect\leq1}$
is the constant speed geodesic joining $\rho_{0}=\mathcal{N}(0,1)$
to $\rho_{1}=\delta_{0}$. This shows that $\ell_{\infty}(\rho)$
is not globally geodesically convex, but might be locally geodesically
conex around $\rho^{\star}=\delta_{0}$.\label{fig:geodesic}}
\end{figure}

\subsection{Proof of Theorem \ref{thm:Wass-Euclidean-connection} \label{subsec:proof-thm-Wass-Euclidean-connection}}

It is straightforward to compute the gradient of $\ell_{N,m}$. For
any $j\in[m]$, we have
\begin{align*}
\nabla_{\mu^{(j)}}\ell_{N,m}\left(\mu^{(1)},\ldots,\mu^{(m)}\right) & =-\frac{1}{N}\sum_{i=1}^{N}\frac{1}{\sum_{l=1}^{m}\phi\left(X_{i}-\mu^{(l)}\right)}\nabla_{\mu^{(j)}}\phi\left(X_{i}-\mu^{(j)}\right)\\
 & =-\frac{1}{N}\sum_{i=1}^{N}\frac{\phi\left(X_{i}-\mu^{(l)}\right)}{\sum_{l=1}^{m}\phi\left(X_{i}-\mu^{(l)}\right)}\left(X_{i}-\mu^{(l)}\right).
\end{align*}
Therefore the Euclidean gradient flow (\ref{eq:Euclidean-GF}) is
given by
\[
\dot{\mu}_{t}^{(j)}=-\nabla_{\mu^{(j)}}\ell_{N,m}\left(\mu_{t}^{(1)},\ldots,\mu_{t}^{(m)}\right)=\frac{1}{N}\sum_{i=1}^{N}\frac{\phi\left(X_{i}-\mu_{t}^{(l)}\right)}{\sum_{l=1}^{m}\phi\left(X_{i}-\mu_{t}^{(l)}\right)}\left(X_{i}-\mu_{t}^{(l)}\right).
\]
Then we can invoke Theorem \ref{thm:Wass-ODE} to finish the proof.

\subsection{Proof of Theorem \ref{thm:Wass-convergence} \label{subsec:proof-thm-Wass-convergence}}

\paragraph{Step 1: characterizing the push-forward mapping.}

First of all, it is straightforward to check that $(\rho_{t})_{t\geq0}$
is spherically symmetric for all $t\geq0$, namely $\rho_{t}(\mathrm{d}x)$
only depends on $\Vert x\Vert_{2}$. The push-forward mapping $v_{t}(x):\mathbb{R}^{d}\to\mathbb{R}^{d}$
at time $t$ is 
\begin{align}
v_{t}\left(x\right) & =-\nabla\delta\ell_{\infty}\left(\rho_{t}\right)\left(x\right)=\nabla_{x}\int_{\mathbb{R}^{d}}\frac{\rho^{\star}\ast\phi\left(y\right)}{\rho_{t}*\phi\left(y\right)}\phi\left(x-y\right)\mathrm{d}y\nonumber \\
 & =-\int_{\mathbb{R}^{d}}\frac{\rho^{\star}\ast\phi\left(y\right)}{\rho_{t}*\phi\left(y\right)}\left(x-y\right)\phi\left(x-y\right)\mathrm{d}y\nonumber \\
 & =\int_{\mathbb{R}^{d}}\nabla_{y}\frac{\rho^{\star}\ast\phi\left(y\right)}{\rho_{t}*\phi\left(y\right)}\phi\left(y-x\right)\mathrm{d}y\nonumber \\
 & =\int_{\mathbb{R}^{d}}h_{t}\left(y\right)\phi\left(y-x\right)\mathrm{d}y,\label{eq:Wass-convergence-inter-1}
\end{align}
where the penultimate line follows from Stein's lemma or Gaussian
integration by parts, and $h_{t}(y)$ in the last line is defined
as 
\begin{align}
h_{t}\left(y\right) & \coloneqq\nabla\frac{\phi\left(y\right)}{\rho_{t}*\phi\left(y\right)}=\nabla_{y}\frac{1}{\int\exp\left(-\frac{1}{2}\left\Vert z\right\Vert _{2}^{2}+y^{\top}z\right)\rho_{t}\left(\mathrm{d}z\right)}\nonumber \\
 & =-\frac{\int\exp\left(-\frac{1}{2}\left\Vert z\right\Vert _{2}^{2}+y^{\top}z\right)z\rho_{t}\left(\mathrm{d}z\right)}{\left[\int\exp\left(-\frac{1}{2}\left\Vert z\right\Vert _{2}^{2}+y^{\top}z\right)\rho_{t}\left(\mathrm{d}z\right)\right]^{2}}=-\phi\left(y\right)\frac{\int\phi\left(y-z\right)z\rho_{t}\left(\mathrm{d}z\right)}{\left[\int\phi\left(y-z\right)\rho_{t}\left(\mathrm{d}z\right)\right]^{2}}\nonumber \\
 & =-\phi\left(y\right)\frac{\int\phi\left(y-z\right)z\rho_{t}\left(\mathrm{d}z\right)}{\left[\rho_{t}*\phi\left(y\right)\right]^{2}}=-\frac{\phi\left(y\right)}{\rho_{t}*\phi\left(y\right)}\cdot\frac{\int\phi\left(y-z\right)z\rho_{t}\left(\mathrm{d}z\right)}{\rho_{t}*\phi\left(y\right)}.\label{eq:Wass-convergence-inter-2}
\end{align}
For any $y\in\mathbb{R}^{d}$, we can compute 
\begin{align}
\int\phi\left(y-z\right)z\rho_{t}\left(\mathrm{d}z\right) & =\int_{y^{\top}z>0}\phi\left(y-z\right)z\rho_{t}\left(\mathrm{d}z\right)+\int_{y^{\top}z<0}\phi\left(y-z\right)z\rho_{t}\left(\mathrm{d}z\right)+\int_{y^{\top}z=0}\phi\left(y-z\right)z\rho_{t}\left(\mathrm{d}z\right)\nonumber \\
 & \overset{\text{(i)}}{=}\int_{y^{\top}z>0}\left[\phi\left(y-z\right)-\phi\left(y+z\right)\right]z\rho_{t}\left(\mathrm{d}z\right)\nonumber \\
 & =\phi\left(y\right)\int_{y^{\top}z>0}\left[\exp\left(y^{\top}z\right)-\exp\left(-y^{\top}z\right)\right]z\exp\left(-\frac{1}{2}\left\Vert z\right\Vert _{2}^{2}\right)\rho_{t}\left(\mathrm{d}z\right)\nonumber \\
 & \overset{\text{(ii)}}{=}\phi\left(y\right)\int_{y^{\top}z>0}\left[\exp\left(y^{\top}z\right)-\exp\left(-y^{\top}z\right)\right]\frac{y^{\top}z}{\left\Vert y\right\Vert _{2}^{2}}y\exp\left(-\frac{1}{2}\left\Vert z\right\Vert _{2}^{2}\right)\rho_{t}\left(\mathrm{d}z\right)\nonumber \\
 & =\underbrace{\int_{y^{\top}z>0}\left[\exp\left(y^{\top}z\right)-\exp\left(-y^{\top}z\right)\right]\frac{y^{\top}z}{\left\Vert y\right\Vert _{2}^{2}}\exp\left(-\frac{1}{2}\left\Vert z\right\Vert _{2}^{2}\right)\rho_{t}\left(\mathrm{d}z\right)}_{\eqqcolon a_{t}}\,\cdot\,\,\phi\left(y\right)y.\label{eq:Wass-convergence-inter-3}
\end{align}
Here (i) and (ii) both follow from the spherical symmetry of $\rho_{t}$,
and it is straightforward to check that the integral in the last line
does not depend on $y$ due to the spherical symmetry of $\rho_{t}$,
therefore $a_{t}$ is a universal constant that is independent of
$y$. Note that when $y^{\top}z>0$, we have $\exp(y^{\top}z)-\exp(-y^{\top}z)\geq2y^{\top}z$,
therefore 
\begin{align*}
a_{t} & \geq2\int_{y^{\top}z>0}\frac{\left(y^{\top}z\right)^{2}}{\left\Vert y\right\Vert _{2}^{2}}\exp\left(-\frac{1}{2}\left\Vert z\right\Vert _{2}^{2}\right)\rho_{t}\left(\mathrm{d}z\right)=\int_{\mathbb{R}^{d}}\frac{\left(y^{\top}z\right)^{2}}{\left\Vert y\right\Vert _{2}^{2}}\exp\left(-\frac{1}{2}\left\Vert z\right\Vert _{2}^{2}\right)\rho_{t}\left(\mathrm{d}z\right).
\end{align*}
Since $a_{t}$ does not depend on $y$, we take $y=e_{i}$ for $i\in[d]$
to achieve 
\[
a_{t}\geq\int_{\mathbb{R}^{d}}z_{i}^{2}\exp\left(-\frac{1}{2}\left\Vert z\right\Vert _{2}^{2}\right)\rho_{t}\left(\mathrm{d}z\right),\qquad\forall\,i\in[d].
\]
By taking average over $d$, we have 
\begin{equation}
a_{t}\geq\frac{1}{d}\int_{\mathbb{R}^{d}}\left\Vert z\right\Vert _{2}^{2}\exp\left(-\frac{1}{2}\left\Vert z\right\Vert _{2}^{2}\right)\rho_{t}\left(\mathrm{d}z\right)=\frac{m_{t}}{d},\label{eq:Wass-convergence-inter-4}
\end{equation}
where we define 
\[
m_{t}\coloneqq\int\left\Vert z\right\Vert _{2}^{2}\exp\left(-\frac{1}{2}\left\Vert z\right\Vert _{2}^{2}\right)\rho_{t}\left(\mathrm{d}z\right).
\]
Taking \eqref{eq:Wass-convergence-inter-2}, \eqref{eq:Wass-convergence-inter-3}
and \eqref{eq:Wass-convergence-inter-4} collectively gives 
\[
h_{t}\left(y\right)=-a_{t}y\left[\frac{\phi\left(y\right)}{\rho_{t}*\phi\left(y\right)}\right]^{2},\qquad\text{where}\qquad a_{t}\geq\frac{m_{t}}{d}.
\]
Then we use \eqref{eq:Wass-convergence-inter-1} to characterize the
push-forward mapping: 
\begin{align*}
v_{t}\left(x\right) & =\int_{y^{\top}x>0}h\left(y\right)\phi\left(y-x\right)\mathrm{d}y+\int_{y^{\top}x<0}h\left(y\right)\phi\left(y-x\right)\mathrm{d}y+\int_{y^{\top}x=0}h\left(y\right)\phi\left(y-x\right)\mathrm{d}y\\
 & \overset{\text{(i)}}{=}\int_{y^{\top}x>0}h\left(y\right)\left[\phi\left(y-x\right)-\phi\left(-y-x\right)\right]\mathrm{d}y\\
 & =-\phi\left(x\right)a_{t}\int_{y^{\top}x>0}y\left[\frac{\phi\left(y\right)}{\rho_{t}*\phi\left(y\right)}\right]^{2}\left[\exp\left(y^{\top}x\right)-\exp\left(-y^{\top}x\right)\right]\exp\left(-\frac{1}{2}\left\Vert y\right\Vert _{2}^{2}\right)\mathrm{d}y\\
 & \overset{\text{(ii)}}{=}-\phi\left(x\right)a_{t}\int_{y^{\top}x>0}\frac{x^{\top}y}{\left\Vert x\right\Vert _{2}^{2}}x\left[\frac{\phi\left(y\right)}{\rho_{t}*\phi\left(y\right)}\right]^{2}\left[\exp\left(y^{\top}x\right)-\exp\left(-y^{\top}x\right)\right]\exp\left(-\frac{1}{2}\left\Vert y\right\Vert _{2}^{2}\right)\mathrm{d}y\\
 & =-a_{t}\underbrace{\int_{y^{\top}x>0}\frac{x^{\top}y}{\left\Vert x\right\Vert _{2}^{2}}\left[\frac{\phi\left(y\right)}{\rho_{t}*\phi\left(y\right)}\right]^{2}\left[\exp\left(y^{\top}x\right)-\exp\left(-y^{\top}x\right)\right]\exp\left(-\frac{1}{2}\left\Vert y\right\Vert _{2}^{2}\right)\mathrm{d}y}_{\eqqcolon b_{t}}\,\cdot\,\,\phi\left(x\right)x.
\end{align*}
Similar to \eqref{eq:Wass-convergence-inter-3}, here (i) and (ii)
both follow from the spherical symmetry of $\rho_{t}$, and the integral
in the last line does not depend on $x$ due to the spherical symmetry
of $\rho_{t}$, as a result $b_{t}$ is a universal constant that
is independent of $x$. Note that when $y^{\top}x>0$, we have $\exp(y^{\top}x)-\exp(-y^{\top}x)\geq2y^{\top}x$,
therefore 
\begin{align*}
b_{t} & \geq2\int_{y^{\top}x>0}\frac{x^{\top}y}{\left\Vert x\right\Vert _{2}^{2}}\left[\frac{\phi\left(y\right)}{\rho_{t}*\phi\left(y\right)}\right]^{2}y^{\top}x\exp\left(-\frac{1}{2}\left\Vert y\right\Vert _{2}^{2}\right)\mathrm{d}y\\
 & =\int_{\mathbb{R}^{d}}\frac{\left(x^{\top}y\right)^{2}}{\left\Vert x\right\Vert _{2}^{2}}\left[\frac{\phi\left(y\right)}{\rho_{t}*\phi\left(y\right)}\right]^{2}\exp\left(-\frac{1}{2}\left\Vert y\right\Vert _{2}^{2}\right)\mathrm{d}y.
\end{align*}
Note that $\Vert\rho_{t}*\phi\Vert_{\infty}\leq\Vert\phi\Vert_{\infty}\leq(2\pi)^{-d/2}$,
and as a result 
\begin{align*}
b_{t} & \geq\int\frac{\left(x^{\top}y\right)^{2}}{\left\Vert x\right\Vert _{2}^{2}}\exp\left(-\frac{3}{2}\left\Vert y\right\Vert _{2}^{2}\right)\mathrm{d}y=\frac{1}{3}\left(\frac{2\pi}{3}\right)^{d/2}
\end{align*}
Therefore we have 
\begin{equation}
v_{t}\left(x\right)=-a_{t}b_{t}\phi\left(x\right)x=-c_{t}\phi\left(x\right)x\label{eq:Wass-convergence-inter-5}
\end{equation}
where 
\begin{equation}
c_{t}\coloneqq a_{t}b_{t}\geq\frac{1}{3d}\left(\frac{2\pi}{3}\right)^{d/2}m_{t}.\label{eq:Wass-convergence-inter-6}
\end{equation}

\paragraph{Step 2: showing the convergence of Wasserstein gradient flow.}

Recall the particle interpretation of Wasserstein gradient flow as
follows: let $x_{0}\sim\rho_{0}=\mathcal{N}(0,I_{d})$ and $\dot{x}_{t}=v_{t}(x_{t})$,
then for any $t\geq0$ we have $x_{t}\sim\rho_{t}$. This allows us
to compute 
\begin{align}
\partial_{t}\mathbb{E}\left[\left\Vert x_{t}\right\Vert _{2}^{2}\right] & =2\mathbb{E}\left[\left\langle x_{t},\dot{x}_{t}\right\rangle \right]=2\mathbb{E}\left[\left\langle x_{t},v_{t}\left(x_{t}\right)\right\rangle \right]\overset{\text{(i)}}{=}-2c_{t}\mathbb{E}\left[\left\Vert x_{t}\right\Vert _{2}^{2}\phi\left(x_{t}\right)\right]\nonumber \\
 & \overset{\text{(ii)}}{\leq}-\frac{2}{3d}\left(\frac{2\pi}{3}\right)^{d/2}m_{t}\mathbb{E}\left[\left\Vert x_{t}\right\Vert _{2}^{2}\phi\left(x_{t}\right)\right]\nonumber \\
 & \overset{\text{(iii)}}{=}-\frac{2}{3d}\left(\frac{4\pi^{2}}{3}\right)^{d/2}\mathbb{E}^{2}\left[\left\Vert x_{t}\right\Vert _{2}^{2}\phi\left(x_{t}\right)\right],\label{eq:Wass-convergence-inter-7}
\end{align}
where (i) follows from \eqref{eq:Wass-convergence-inter-5}, (ii)
utilizes \eqref{eq:Wass-convergence-inter-6}, and (iii) holds since
\[
m_{t}=\int\left\Vert z\right\Vert _{2}^{2}\exp\left(-\frac{1}{2}\left\Vert z\right\Vert _{2}^{2}\right)\rho_{t}\left(\mathrm{d}z\right)=\left(2\pi\right)^{d/2}\mathbb{E}\left[\left\Vert x_{t}\right\Vert _{2}^{2}\phi\left(x_{t}\right)\right].
\]
For any $\tau>0$, by Cauchy-Schwarz inequality we have 
\[
\mathbb{E}\left[\left\Vert x_{0}\right\Vert _{2}^{2}\ind\left\{ \left\Vert x_{0}\right\Vert _{2}^{2}>d+\tau\right\} \right]\overset{\text{(i)}}{\leq}\left[\mathbb{E}\left\Vert x_{0}\right\Vert _{2}^{4}\right]^{1/2}\left[\mathbb{P}\left(\left\Vert x_{0}\right\Vert _{2}^{2}>d+\tau\right)\right]^{1/2}.
\]
Note that $\Vert x_{0}\Vert_{2}^{2}\sim\chi^{2}(d)$, therefore $\mathbb{E}\Vert x_{0}\Vert_{2}^{4}=\mathsf{var}(\Vert x_{0}\Vert_{2}^{2})+(\mathbb{E}\Vert x_{0}\Vert_{2}^{2})^{2}=2d+d^{2}$.
In addition, by the tail probability bound for $\chi^{2}$ random
variables (e.g.~\citet[equation (2.18)]{wainwright2019high}), we
have 
\[
\mathbb{P}\left(\left\Vert x_{0}\right\Vert _{2}^{2}>d+\tau\right)\leq\exp\left(-\min\left\{ \frac{\tau^{2}}{8d},\frac{\tau}{8}\right\} \right).
\]
Therefore we have 
\begin{align*}
\mathbb{E}\left[\left\Vert x_{0}\right\Vert _{2}^{2}\ind\left\{ \left\Vert x_{0}\right\Vert _{2}^{2}>d+\tau\right\} \right] & \leq\sqrt{2d+d^{2}}\exp\left(-\min\left\{ \frac{\tau^{2}}{8d},\frac{\tau}{8}\right\} \right)\\
 & \leq\left(d+1\right)\exp\left(-\min\left\{ \frac{\tau^{2}}{8d},\frac{\tau}{8}\right\} \right)\leq\varepsilon
\end{align*}
as long as we choose 
\[
\tau\triangleq\max\left\{ 8\log\frac{d+1}{\varepsilon},\sqrt{8d\log\frac{d+1}{\varepsilon}}\right\} .
\]
Since the push forward mapping $v_{t}(x)$ is always pointing towards
zero (cf.~ \eqref{eq:Wass-convergence-inter-5} and \eqref{eq:Wass-convergence-inter-6}),
we know that $\Vert x_{t}\Vert_{2}$ is non-increasing in $t$. Therefore
we have 
\begin{align}
\mathbb{E}\left[\left\Vert x_{t}\right\Vert _{2}^{2}\phi\left(x_{t}\right)\right] & \geq\mathbb{E}\left[\left\Vert x_{t}\right\Vert _{2}^{2}\phi\left(x_{t}\right)\ind\left\{ \left\Vert x_{0}\right\Vert _{2}^{2}\leq d+\tau\right\} \right]\nonumber \\
 & \overset{\text{(i)}}{\geq}\left(2\pi\right)^{-d/2}\exp\left(-\frac{d+\tau}{2}\right)\mathbb{E}\left[\left\Vert x_{t}\right\Vert _{2}^{2}\ind\left\{ \left\Vert x_{0}\right\Vert _{2}^{2}\leq d+\tau\right\} \right]\nonumber \\
 & \geq\left(2\pi\right)^{-d/2}\exp\left(-\frac{d+\tau}{2}\right)\left(\mathbb{E}\left[\left\Vert x_{t}\right\Vert _{2}^{2}\right]-\mathbb{E}\left[\left\Vert x_{t}\right\Vert _{2}^{2}\ind\left\{ \left\Vert x_{0}\right\Vert _{2}^{2}>d+\tau\right\} \right]\right)\nonumber \\
 & \overset{\text{(ii)}}{\geq}\left(2\pi\right)^{-d/2}\exp\left(-\frac{d+\tau}{2}\right)\left(\mathbb{E}\left[\left\Vert x_{t}\right\Vert _{2}^{2}\right]-\mathbb{E}\left[\left\Vert x_{0}\right\Vert _{2}^{2}\ind\left\{ \left\Vert x_{0}\right\Vert _{2}^{2}>d+\tau\right\} \right]\right)\nonumber \\
 & \geq\left(2\pi\right)^{-d/2}\exp\left(-\frac{d+\tau}{2}\right)\left(\mathbb{E}\left[\left\Vert x_{t}\right\Vert _{2}^{2}\right]-\varepsilon\right),\label{eq:Wass-convergence-inter-8}
\end{align}
where both (i) and (ii) follows from the fact that $\Vert x_{t}\Vert_{2}$
is non-increasing. Taking \eqref{eq:Wass-convergence-inter-7} and
\eqref{eq:Wass-convergence-inter-8} collectively gives 
\begin{align*}
\partial_{t}\mathbb{E}\left[\left\Vert x_{t}\right\Vert _{2}^{2}\right] & \leq-\frac{2}{3d}\left(\frac{4\pi^{2}}{3}\right)^{d/2}\left(2\pi\right)^{-d}\exp\left[-\left(d+\tau\right)\right]\left(\mathbb{E}\left[\left\Vert x_{t}\right\Vert _{2}^{2}\right]-\varepsilon\right)^{2}\\
 & =-\frac{2}{3d}\left(\frac{1}{3}\right)^{d/2}\exp\left[-\left(d+\tau\right)\right]\left(\mathbb{E}\left[\left\Vert x_{t}\right\Vert _{2}^{2}\right]-\varepsilon\right)^{2}.
\end{align*}
Let $f(t)=\mathbb{E}[\Vert x_{t}\Vert_{2}^{2}]$, we know that $f(0)=d$
and 
\begin{align*}
\frac{\mathrm{d}f}{\mathrm{d}t} & \leq-\frac{2}{3d}\left(\frac{1}{3}\right)^{d/2}\exp\left[-\left(d+\tau\right)\right]\left(f-\varepsilon\right)^{2}.
\end{align*}
Solving this ordinary differential inequality gives 
\[
\frac{1}{f\left(t\right)-\varepsilon}-\frac{1}{f\left(0\right)-\varepsilon}\geq\frac{2}{3d}\left(\frac{1}{3}\right)^{d/2}\exp\left[-\left(d+\tau\right)\right]t,
\]
which is equivalent to 
\[
\mathbb{E}\left[\left\Vert x_{t}\right\Vert _{2}^{2}\right]\leq\varepsilon+\left\{ \frac{2}{3d}\left(\frac{1}{3}\right)^{d/2}\exp\left(-d-\tau\right)t+\frac{1}{d-\varepsilon}\right\} ^{-1}.
\]
Then we immediately know that $\mathbb{E}[\Vert x_{t}\Vert_{2}^{2}]\leq O(\varepsilon)$
as long as 
\[
t\geq\exp\left(2d\right)\varepsilon^{-1-\max\left\{ 8,\sqrt{8d}\right\} }.
\]

\subsection{Calculation for Bures-Wasserstein gradient flow \label{subsec:BW-1-Gaussian}}

Define $\ell(\mu,\Sigma)=\ell_{\infty}(\rho)$ where we parameterize
$\rho=\mathcal{N}(\mu,\Sigma)$. Then we can compute 
\begin{align*}
\ell\left(\mu,\Sigma\right) & =-\int\log\left[\left(2\pi\right)^{-d/2}\left[\det\left(\Sigma+I_{d}\right)\right]^{-1/2}\exp\left(-\frac{1}{2}\left(x-\mu\right)^{\top}\left(\Sigma+I_{d}\right)^{-1}\left(x-\mu\right)\right)\right]\phi\left(x\right)\mathrm{d}x+\mathsf{constant}\\
 & =\frac{1}{2}\log\det\left(\Sigma+I_{d}\right)+\int\frac{1}{2}\left(x-\mu\right)^{\top}\left(\Sigma+I_{d}\right)^{-1}\left(x-\mu\right)\phi\left(x\right)\mathrm{d}x+\mathsf{constant}\\
 & =\frac{1}{2}\log\det\left(\Sigma+I_{d}\right)+\frac{1}{2}\mathbb{E}_{x\sim\mathcal{N}(0,I)}\left[\left(x-\mu\right)^{\top}\left(\Sigma+I_{d}\right)^{-1}\left(x-\mu\right)\right]+\mathsf{constant}\\
 & =\frac{1}{2}\log\det\left(\Sigma+I_{d}\right)+\frac{1}{2}\mathsf{tr}\left[\left(\Sigma+I_{d}\right)^{-1}\right]+\frac{1}{2}\mu^{\top}\left(\Sigma+I_{d}\right)^{-1}\mu+\mathsf{constant}.
\end{align*}
Then we can compute the Euclidean gradient of $\ell(\mu,\Sigma)$
as follows: 
\begin{align*}
\nabla_{\mu}\ell\left(\mu,\Sigma\right) & =\left(\Sigma+I_{d}\right)^{-1}\mu,\\
\nabla_{\Sigma}\ell\left(\mu,\Sigma\right) & =\frac{1}{2}\left(\Sigma+I_{d}\right)^{-1}-\frac{1}{2}\left(\Sigma+I_{d}\right)^{-2}-\frac{1}{2}\left(\Sigma+I_{d}\right)^{-1}\mu\mu^{\top}\left(\Sigma+I_{d}\right)^{-1}\\
 & =\frac{1}{2}\left(\Sigma+I_{d}\right)^{-1}\left(\Sigma+I_{d}-I_{d}-\mu\mu^{\top}\right)\left(\Sigma+I_{d}\right)^{-1}\\
 & =\frac{1}{2}\left(\Sigma+I_{d}\right)^{-1}\left(\Sigma-\mu\mu^{\top}\right)\left(\Sigma+I_{d}\right)^{-1}.
\end{align*}
According to \citet[Appendix B.3]{lambert2022variational}, when initialized
from $(\mu_{0},\Sigma_{0})=(0,I_{d})$, the Bures-Wasserstein gradient
flow can be described using the following ODE: 
\begin{align*}
\dot{\mu}_{t} & =-\left(\Sigma_{t}+I_{d}\right)^{-1}\mu_{t}\\
\dot{\Sigma}_{t} & =-\Sigma_{t}\left(\Sigma_{t}+I_{d}\right)^{-1}\left[\Sigma_{t}-\mu\mu^{\top}\right]\left(\Sigma_{t}+I_{d}\right)^{-1}-\left(\Sigma_{t}+I_{d}\right)^{-1}\left[\Sigma_{t}-\mu\mu^{\top}\right]\left(\Sigma_{t}+I_{d}\right)^{-1}\Sigma_{t}
\end{align*}
with initial condition $\mu_{0}=0$ and $\Sigma_{0}=I_{d}$. It is
straightforward to check that $\mu_{t}=0$ for all $t\geq0$, and
the dynamic of $\Sigma_{t}$ is governed by 
\begin{align*}
\dot{\Sigma}_{t} & =-\Sigma_{t}\left(\Sigma_{t}+I_{d}\right)^{-1}\Sigma_{t}\left(\Sigma_{t}+I_{d}\right)^{-1}-\left(\Sigma_{t}+I_{d}\right)^{-1}\Sigma_{t}\left(\Sigma_{t}+I_{d}\right)^{-1}\Sigma_{t}\\
 & =-\Sigma_{t}\left(\Sigma_{t}+I_{d}\right)^{-1}+2\left(\Sigma_{t}+I_{d}\right)^{-1}\Sigma_{t}\left(\Sigma_{t}+I_{d}\right)^{-1}-\left(\Sigma_{t}+I_{d}\right)^{-1}\Sigma_{t}\\
 & =-2I_{d}+2\left(\Sigma_{t}+I_{d}\right)^{-1}+2\left(\Sigma_{t}+I_{d}\right)^{-1}\Sigma_{t}\left(\Sigma_{t}+I_{d}\right)^{-1}\\
 & =-2\left(\Sigma_{t}+I_{d}\right)^{-1}\Sigma_{t}^{2}\left(\Sigma_{t}+I_{d}\right)^{-1}
\end{align*}
with initial condition $\Sigma_{0}=I_{d}$. We can check that the
off-diagonal entries of $\Sigma_{t}$ are always zero, and its diagonal
entries are identical and evloves according to the following ODE 
\[
\dot{\sigma}_{t}=-2\frac{\sigma_{t}^{2}}{\left(\sigma_{t}+1\right)^{2}}
\]
with initial condition $\sigma_{0}=1$. It is straightforward to check
that $\sigma_{t}$ is monotonically decreasing and is always non-negative,
namely $0\leq\sigma_{t}\leq1$ always holds. Therefore we have 
\[
-2\sigma_{t}^{2}\leq\dot{\sigma}_{t}\leq-\frac{1}{2}\sigma_{t}^{2}.
\]
This gives 
\[
\frac{1}{1+2t}\leq\sigma_{t}\leq\frac{2}{2+t},
\]
and therefore 
\[
\frac{1}{1+2t}I\preceq\Sigma_{t}\preceq\frac{2}{2+t}I,
\]
which suggests that $\rho_{t}$ converges to $\rho^{\star}$ at the
speed of $O(d/t)$.

When $t=0$, we can compute the push forward mapping of Wasserstein
gradient flow explicitly, which intuitively explains why Wasserstein
gradient flow does not converge exponentially fast. We first compute
\[
\nabla\frac{\rho^{\star}\ast\phi\left(y\right)}{\rho_{0}*\phi\left(y\right)}==\nabla\frac{\left(\det I\right)^{-d/2}\exp\left(-\frac{1}{2}\left\Vert y\right\Vert _{2}^{2}\right)}{\left(\det2I\right)^{-d/2}\exp\left(-\frac{1}{4}\left\Vert y\right\Vert _{2}^{2}\right)}=2^{d/2}\nabla\exp\left(-\frac{1}{4}\left\Vert y\right\Vert _{2}^{2}\right)=-2^{d/2-1}y\exp\left(-\frac{1}{4}\left\Vert y\right\Vert _{2}^{2}\right),
\]
then the push forward mapping at $t=0$ is given by $x\mapsto v_{0}(x)$
where 
\begin{align*}
v_{0}\left(x\right) & =\int\left(\nabla_{y}\frac{\rho^{\star}\ast\phi\left(y\right)}{\rho_{0}*\phi\left(y\right)}\right)\phi\left(y-x\right)\mathrm{d}y=-2^{d/2-1}\int y\exp\left(-\frac{1}{4}\left\Vert y\right\Vert _{2}^{2}\right)\cdot\frac{1}{\left(2\pi\right)^{d/2}}\exp\left(-\frac{1}{2}\left\Vert x-y\right\Vert _{2}^{2}\right)\mathrm{d}y\\
 & =-\frac{2^{d/2-1}}{\left(2\pi\right)^{d/2}}\int y\exp\left(-\frac{1}{2}\left\Vert x\right\Vert _{2}^{2}+x^{\top}y-\frac{3}{4}\left\Vert y\right\Vert _{2}^{2}\right)\mathrm{d}y=-\frac{2^{d/2-1}}{\left(2\pi\right)^{d/2}}\int y\exp\left(-\frac{1}{6}\left\Vert x\right\Vert _{2}^{2}-\frac{3}{4}\left\Vert y-\frac{2}{3}x\right\Vert _{2}^{2}\right)\mathrm{d}y\\
 & =-2^{d/2-1}\left(\frac{2}{3}\right)^{d/2}\exp\left(-\frac{1}{6}\left\Vert x\right\Vert _{2}^{2}\right)\int\frac{1}{\left(2\pi\right)^{d/2}\left(2/3\right)^{d/2}}y\exp\left(-\frac{3}{4}\left\Vert y-\frac{2}{3}x\right\Vert _{2}^{2}\right)\mathrm{d}y\\
 & =-\frac{1}{3}\left(\frac{4}{3}\right)^{d/2}\exp\left(-\frac{1}{6}\left\Vert x\right\Vert _{2}^{2}\right)x.
\end{align*}
On the other hand, in view of \citet[Appendix B.3]{lambert2022variational},
the Bures-Wasserstein gradient at time $t=0$ is given by 
\[
\nabla_{\mathsf{BW}}\ell_{\infty}\left(\rho_{0}\right)=\left[\begin{array}{c}
\nabla_{\mu}\ell\left(\mu_{0},\Sigma_{0}\right)\\
2\nabla_{\Sigma}\ell\left(\mu_{0},\Sigma_{0}\right)
\end{array}\right]=\left[\begin{array}{c}
0\\
\frac{1}{4}I_{d}
\end{array}\right],
\]
and therefore the push forward mapping at $t=0$ is given by $x\mapsto x/4$. 

\bibliographystyle{apalike}
\bibliography{bibfileNonconvex}

\end{document}